\tikzset{cross/.style={cross out, draw=blue, minimum size=1*(#1-\pgflinewidth), inner sep=0pt, outer sep=0pt},
cross/.default={6pt}}
\def\u{\boldsymbol{u}}
\def\v{\boldsymbol{v}}
\def\w{\boldsymbol{w}}
\def\E{\boldsymbol{E}}
\def\H{\boldsymbol{H}}
\def\Hspace{\boldsymbol{\mrm{H}}}
\def\Lspace{\boldsymbol{\mrm{L}}}
\def\J{\boldsymbol{J}}
\def\C{\mathbb C}		
\newcommand{\dsp}{\displaystyle}
\newcommand{\eps}{\varepsilon}
\newcommand{\om}{\omega}
\newcommand{\Om}{\Omega}
\newcommand{\mrm}[1]{\mathrm{#1}}
\newcommand{\Cplx}{\mathbb{C}}
\newcommand{\N}{\mathbb{N}}
\newcommand{\R}{\mathbb{R}}
\renewcommand{\div}{\mrm{div}}
\newcommand{\mL}{\mrm{L}}
\newcommand{\mH}{\mrm{H}}
\newcommand{\mV}{\mrm{V}}
\newcommand{\mX}{\boldsymbol{\mrm{X}}}
\newcommand{\mZ}{\boldsymbol{\mrm{Z}}}
\newcommand{\coker}{\mrm{coker}\,}
\newcommand{\curl}{\boldsymbol{\mrm{curl}}\,}
\newcommand{\fintH}{\fint}
\newtheorem{theorem}{Theorem}[section]
\newtheorem{lemma}[theorem]{Lemma}
\newtheorem{remark}[theorem]{Remark}
\newtheorem{proposition}[theorem]{Proposition}
\begin{document}

~\vspace{0.0cm}
\begin{center}
{\sc \bf\LARGE  Maxwell's equations with hypersingularities\\[6pt] at a conical plasmonic tip}
\end{center}
\begin{center}
\textsc{Anne-Sophie Bonnet-Ben Dhia}$^1$, \textsc{Lucas Chesnel}$^2$, \textsc{Mahran Rihani}$^{1,2}$\\[16pt]
\begin{minipage}{0.96\textwidth}
{\small
$^1$ Laboratoire  Poems, CNRS/INRIA/ENSTA Paris, Institut Polytechnique de Paris, 828 Boulevard des Mar\'echaux, 91762 Palaiseau, France;\\
$^2$ INRIA/Centre de math\'ematiques appliqu\'ees, \'Ecole Polytechnique,  Institut Polytechnique de Paris, Route de Saclay, 91128 Palaiseau, France.\\[10pt]
E-mails: \texttt{anne-sophie.bonnet-bendhia@ensta-paris.fr}, \texttt{lucas.chesnel@inria.fr}, \texttt{mahran.rihani@ensta-paris.fr}\\[-14pt]
\begin{center}
(\today)
\end{center}
}
\end{minipage}
\end{center}
\vspace{0.4cm}

\noindent\textbf{Abstract.} In this work, we are interested in the analysis of time-harmonic  Maxwell's equations in presence of a conical tip of a material with negative dielectric constants. When these constants belong to some critical range, the electromagnetic field exhibits strongly oscillating singularities at the tip which have infinite energy. Consequently Maxwell's equations are not well-posed in the classical $\mrm{L}^2$ framework. The goal of the present work is to provide an appropriate functional setting for 3D Maxwell's equations when the dielectric permittivity (but not the magnetic permeability) takes critical values. Following what has been done for the 2D scalar case, the idea is to work  in weighted Sobolev spaces,  adding to the space  the so-called outgoing propagating singularities. The analysis requires new results of scalar and vector potential representations of singular fields. The outgoing behaviour is selected via the limiting absorption principle. \\

\noindent\textbf{Key words.} Time-harmonic Maxwell's equations, negative metamaterials, Kondratiev weighted Sobolev spaces, $T$-coercivity, compact embeddings, scalar and vector potentials, limiting absorption principle.

\section{Introduction}

For the past two decades, the scientific community has been particularly interested in the study of Maxwell's equations in the unusual case where the dielectric permittivity $\eps$ is a real-valued sign-changing function. There are several motivations to this which are all related to  spectacular progress in physics. Such sign-changing $\eps$ appear for example in the field of plasmonics \cite{BaDE03,Maier07,BVNMRB08}. The existence of surface plasmonic waves is mainly due to the fact that, at optical frequencies, some metals like silver or gold have an $\eps$  with a small imaginary part and a negative real part. Neglecting the imaginary part, at a given frequency, one is led to consider a real-valued $\eps$ which is negative in the metal and positive in the air around the metal. A second more prospective motivation concerns the so-called metamaterials, whose micro-structure is designed so that their effective electromagnetic constants may have a  negative real part and a small imaginary part in some frequency ranges \cite{SmiPenWil04,Sih07,SalEng06}. Let us emphasize that for such metamaterials not only the dielectric permittivity $\eps$ may become negative but the magnetic permeability $\mu$ as well. At the interface between dielectrics and negative-index metamaterials, one can observe a negative refraction phenomenon which opens a lot of exciting prospects. Finally let us mention that negative $\eps$ also appear in plasmas, together with strong anisotropic effects.  But we want to underline a main difference between  plasmas and the previous applications. In the case of plasmonics and metamaterials, $\eps$ is sign-changing but does not vanish (and similarly for $\mu$), while in plasmas, $\eps$ vanishes on some particular surfaces, leading to the phenomenon of hybrid resonance (see \cite{DeIW14,NiCD19}). The theory developed in the present paper does no apply to the case where $\eps$ vanishes.\\
\newline
The goal of the present work is to study the Maxwell's system in the case where $\eps$, $\mu$ change sign but do not vanish. In case of invariance with respect to one variable, the analysis of time-harmonic Maxwell's problem leads to consider the 2D scalar Helmholtz equation
\[
\div\left(\frac{1}{\eps}\nabla \varphi\right)+\om^2\mu\varphi=f.
\]
Here $f$ denotes the source term and the unknown $\varphi$ is a component of the magnetic field. For this scalar equation, only the change of sign of $\eps$ matters because roughly speaking, the term involving $\mu$ is compact (or locally compact in freespace). In the particular case where $\eps$ takes constant values $\eps_+>0$ and $\eps_-<0$ in two subdomains separated by a a curve $\Sigma$, the results are quite complete \cite{BoChCi12}. If $\Sigma$ is smooth (of class ${\mathcal C}^1$), the equation has the same properties in the $\mH^1$ framework as in the case of positive coefficients, except when the contrast  $\kappa_{\eps}:=\eps_-/\eps_+$ takes the particular value $-1$. One way to show this consists in finding an appropriate operator $\mathbb T$ such that the coercivity of the variational formulation is restored when testing with functions of the form $\mathbb T\varphi'$ (instead of $\varphi'$). This approach is called the $\mathbb T$-coercivity technique. 
When $\kappa_{\eps}=-1$, Fredholmness is lost in $\mH^1$ but some results can be established in some weighted Sobolev spaces where the weight is adapted to the shape of $\Sigma$ \cite{Ola95,Nguy16,Pank19}. The picture is quite different when $\Sigma$ has corners. For instance, in the case of a polygonal curve $\Sigma$, Fredholmness in $\mH^1$ is lost not only for $\kappa_{\eps}=-1$ but for a whole interval of values of $\kappa_{\eps}$ around $-1$. We name this interval the critical interval. The smaller the angle of the corners, the larger the critical interval is. In fact, we can still find a solution in that case but this solution has a strongly singular behaviour at the corners in $r^{i\eta}$ where $r$ is the distance to the corner and $\eta$ is a real coefficient. In particular, this hypersingular solution does not belong to $\mH^1$. It has been shown 
that Fredholmness can be recovered in an appropriate unusual framework \cite{BonCheCla13} which is obtained by adding a singular function to a Kondratiev weighted Sobolev space of regular functions. The proof requires to adapt Mellin techniques in Kondratiev  spaces \cite{Kond67} to an equation which is not elliptic due to the change of sign of $\eps$ (see \cite{DaTe97} for the first analysis). From a physical point of view, the singular\footnote{From now on, we simply write ``singular'' instead of ``hypersingular''.} function corresponds to a wave which propagates towards the corner, without never reaching it because its group velocity tends to zero with the distance to the corner \cite{BCCC16,HeKa18,HeKa20}. In the literature, this wave which is trapped by the corner is commonly referred to as a black-hole wave. It leads to a strange phenomenon of leakage of energy while only non-dissipative materials are considered.\\
\newline
The objective of this article is to extend this type of results to 3D Maxwell's equations. The case where the contrasts in $\eps$ and $\mu$ do not take critical values has been considered in \cite{BoCC14}. Using the $\mathbb T$-coercivity technique, a Fredholm property has been proved for Maxwell equations in a classical functional framework as soon as two scalar problems (one for $\eps$ and one for $\mu$) are well-posed in $\mH^1$. The case where these problems satisfy a Fredholm property in $\mH^1$ but with a non trivial kernel has also been treated in \cite{BoCC14}. Let us finally mention \cite{NgSi19} where different types of results have been established for a smooth inclusion of class ${\mathscr C}^1$. In the present work, we consider a 3D configuration with an inclusion of material with a negative dielectric permeability $\eps$. We suppose that this inclusion has a tip at which singularities of the electromagnetic field exist. The objective is to combine Mellin analysis in Kondratiev  spaces with the $\mathbb T$-coercivity technique to derive an appropriate functional framework for Maxwell's equations when the contrast $\kappa_{\eps}$ takes critical values (but not the contrast in $\mu$). We emphasize that due to the non standard singularities we have to deal with, the results we obtain are quite different from the ones existing for classical Maxwell's equations with positive materials in non smooth domains \cite{BiSo87b,Cost90,BiSo94,CoDN99,CoDa00}.\\
\newline
The outline is as follows. In the remaining part of the introduction, we present some general notation. In Section \ref{sec-assumptions}, we describe the assumptions made on the dielectric constants $\eps$, $\mu$. Then we propose a new functional framework for the problem for the electric field and show its well-posedness in Section \ref{SectionChampE}. Section \ref{SectionChampH} is dedicated to the analysis of the problem for the magnetic field. We emphasize that due to the assumptions made on $\eps$, $\mu$ (the contrast in $\eps$ is critical but the one in $\mu$ is not), the studies in sections  \ref{SectionChampE} and \ref{SectionChampH} are quite different. We give a few words of conclusion in Section \ref{SectionConclusion} before presenting technical results needed in the analysis in two sections of appendix. The main outcomes of this work are Theorem \ref{MainThmE} (well-posedness for the electric problem) and Theorem \ref{MainThmH} (well-posedness for the magnetic problem).\\
\newline
All the study will take place in some domain $\Om$ of $\R^3$. More precisely, $\Om$ is an open, connected and bounded subset of $\R^3$ with a Lipschitz-continuous boundary $\partial\Om$. Once for all, we make the following assumption:\\
\newline
\textbf{Assumption 1.} \textit{The domain $\Om$ is simply connected and $\partial\Om$ is connected.}\\
\newline
When this assumption is not satisfied, the analysis below must be adapted (see the discussion in the conclusion). For some $\om\ne0$ ($\om\in\R$), the time-harmonic Maxwell's equations are given by
\begin{equation}\label{Eqs Maxwell}
\curl\E-i\om\,\mu\,\H = 0\qquad\mbox{ and }\qquad\curl\H+i\om\,\eps\,\E = \J\,\mbox{ in }\Om.
\end{equation}
Above $\E$ and $\H$ are respectively the electric and magnetic components of the electromagnetic field. The source term $\J$ is the current density. We suppose that the medium $\Om$ is surrounded by a perfect conductor and we impose the boundary conditions
\begin{equation}\label{CL Maxwell}
\E\times\nu=0\qquad\mbox{ and }\qquad\mu\H\cdot\nu=0\,\mbox{ on }\partial\Om,
\end{equation}
where $\nu$ denotes the unit outward normal vector field to $\partial\Om$. The dielectric permittivity $\eps$ and the magnetic permeability $\mu$ are real valued functions which belong to $\mrm{L}^{\infty}(\Om)$, with $\eps^{-1},\,\mu^{-1}\in \mrm{L}^{\infty}(\Om)$ (without assumption of sign). Let us introduce some usual spaces in the study of Maxwell's equations:
$$
\begin{array}{rcl}
\Lspace^2(\Omega)&:=&(\mL^2(\Omega))^3\\
\mH^1_{0}(\Omega)&:=&\{\varphi\in\mH^1(\Om)\,|\,\varphi=0\mbox{ on }\partial\Om\}\\
\mH^1_{\#}(\Omega)&:=&\{\varphi\in\mH^1(\Om)\,|\,\int_{\Om}\varphi\,dx=0\}\\
\dsp \Hspace(\curl) &:=& \dsp \{ \boldsymbol{H}\in \Lspace^2(\Omega) \,|\, \curl \boldsymbol{H}\in\Lspace^2(\Omega)\}\\[2pt]
\dsp \Hspace_N(\curl) &:=& \dsp \{ \boldsymbol{E}\in \Hspace(\curl) \,|\, \boldsymbol{E}\times\nu=0 \mbox { on } \partial\Omega\}
\end{array}
$$
and for $\xi\in \mrm{L}^{\infty}(\Om)$:
$$
\begin{array}{rcl}
\mX_T(\xi) & :=& \left\{\boldsymbol{H}\in \Hspace(\curl)\,|\,\div(\xi\boldsymbol{H})=0,\,\xi\boldsymbol{H}\cdot\nu=0 \mbox{ on }\partial\Om \right\}\\[2pt]
\mX_N(\xi) & := &\left\{\boldsymbol{E}\in \Hspace_N(\curl)\,|\,\div(\xi\boldsymbol{E})=0\right\}.
\end{array}
$$
We denote indistinctly by $(\cdot,\cdot)_{\Om}$ the classical inner products of $\mL^2(\Om)$ and $\Lspace^2(\Om)$. Moreover, $\|\cdot\|_{\Om}$ stands for the corresponding norms. We endow the spaces $\Hspace(\curl)$, $\Hspace_N(\curl)$, $\mX_T(\xi)$, $\mX_N(\xi)$ with the norm 
\[
\|\cdot\|_{\Hspace(\curl)}:=(\|\cdot\|^2_{\Om}+\|\curl\cdot\|^2_{\Om})^{1/2}.
\]
Let us recall a well-known property for the particular spaces $\mX_T(1)$ and $\mX_N(1)$ (cf. \cite{Webe80,AmrBerDau98}).
\begin{proposition}\label{PropoEmbeddingCla}
Under Assumption 1, the embeddings of $\mX_T(1)$ in $\Lspace^2(\Om)$ and of $\mX_N(1)$ in $\Lspace^2(\Om)$ are compact. And there is a constant $C>0$ such that 
\[
\|\u\|_{\Om}\le C\,\|\curl\u\|_{\Om},\qquad \forall\u\in\mX_T(1)\cup\mX_N(1).
\]
Therefore, in $\mX_T(1)$ and in $\mX_N(1)$, $\|\curl\cdot\|_{\Om}$ is a norm which is equivalent to $\|\cdot\|_{\Hspace(\curl)}$.
\end{proposition}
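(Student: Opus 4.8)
The plan is to reduce the statement to two facts: the \emph{compactness} of the two embeddings, and the \emph{triviality} of the curl-free subspaces $\{\u\in\mX_T(1)\,:\,\curl\u=0\}$ and $\{\u\in\mX_N(1)\,:\,\curl\u=0\}$. Once these are available, the Poincaré-type estimate and the equivalence of norms follow by a routine compactness–contradiction argument. Since $\mX_T(1)$ and $\mX_N(1)$ are treated in the same way, I describe the argument for one of them and indicate the minor differences for the other.

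For the compactness, I would rely on a vector potential representation. Given $\u\in\mX_T(1)$, so that $\div\u=0$ in $\Om$ and $\u\cdot\nu=0$ on $\partial\Om$, Assumption 1 guarantees (this is the classical result of \cite{AmrBerDau98}) the existence of $\w\in\Hspace^1(\Om)$ with $\u=\curl\w$, $\w\times\nu=0$ on $\partial\Om$, and $\|\w\|_{\Hspace^1(\Om)}\le C\|\u\|_{\Om}$; for $\mX_N(1)$ one uses instead the condition $\u\times\nu=0$ and any $\Hspace^1$ potential. Now take a bounded sequence $(\u_n)$ in $\mX_T(1)$ and the associated potentials $(\w_n)$, bounded in $\Hspace^1(\Om)$. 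By the Rellich theorem we may extract a subsequence with $\w_n\to\w$ in $\Lspace^2(\Om)$. Writing $\u_n=\curl\w_n$ and integrating by parts via the identity $\int_{\Om}\curl\w\cdot\u-\int_{\Om}\w\cdot\curl\u=\int_{\partial\Om}\w\cdot(\u\times\nu)$, in which the boundary term vanishes (using $\w\times\nu=0$ for $\mX_T(1)$, resp.\ $\u\times\nu=0$ for $\mX_N(1)$), I obtain $\|\u_n-\u_m\|_{\Om}^2=(\w_n-\w_m,\curl(\u_n-\u_m))_{\Om}\le \|\w_n-\w_m\|_{\Om}\,\|\curl(\u_n-\u_m)\|_{\Om}$. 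Since $\|\curl\u_n\|_{\Om}$ is bounded and $(\w_n)$ is a Cauchy sequence in $\Lspace^2(\Om)$, this shows that $(\u_n)$ is Cauchy in $\Lspace^2(\Om)$, which proves compactness. Alternatively, one could invoke the continuous embedding of $\mX_T(1)$ and $\mX_N(1)$ into $\Hspace^s(\Om)$ for some $s>0$ and conclude directly by Rellich.

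For the triviality of the kernels, I would use that a curl-free field on the simply connected domain $\Om$ is a gradient. If $\u\in\mX_N(1)$ with $\curl\u=0$, then $\u=\nabla p$ with $p\in\mH^1(\Om)$; the condition $\div\u=0$ gives $\Delta p=0$, while $\u\times\nu=0$ forces the tangential gradient of $p$ to vanish, so that $p$ is constant on the connected boundary $\partial\Om$. A harmonic function with constant Dirichlet data is constant, hence $\u=\nabla p=0$. If instead $\u\in\mX_T(1)$ with $\curl\u=0$, then again $\u=\nabla p$ with $\Delta p=0$, and now $\u\cdot\nu=0$ yields $\partial_\nu p=0$; a harmonic function with vanishing Neumann data is constant, so $\u=0$. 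This is precisely where the two parts of Assumption 1 (simple connectedness for the existence of $p$ and for the $\mX_T$ kernel, connectedness of $\partial\Om$ for the $\mX_N$ kernel) enter.

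Finally, the Poincaré inequality follows by contradiction: were it false, there would exist $\u_n$ in $\mX_T(1)$ (or $\mX_N(1)$) with $\|\u_n\|_{\Om}=1$ and $\|\curl\u_n\|_{\Om}\to0$. The sequence is bounded in the $\Hspace(\curl)$-norm, so by the compactness just proved a subsequence converges in $\Lspace^2(\Om)$ to some $\u$ with $\|\u\|_{\Om}=1$; passing to the limit, $\curl\u=0$, and $\div\u=0$ together with the boundary condition are preserved, so that $\u$ lies in the corresponding curl-free subspace and must vanish, contradicting $\|\u\|_{\Om}=1$. The estimate $\|\u\|_{\Om}\le C\|\curl\u\|_{\Om}$ then gives $\|\u\|_{\Hspace(\curl)}^2\le(1+C^2)\|\curl\u\|_{\Om}^2$, while $\|\curl\u\|_{\Om}\le\|\u\|_{\Hspace(\curl)}$ is trivial, whence the equivalence of norms. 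The only genuinely delicate ingredient is the regular ($\Hspace^1$) vector potential with controlled norm on the Lipschitz domain $\Om$, whose construction crucially exploits Assumption 1.
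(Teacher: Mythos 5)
The paper itself gives no proof of this proposition: it is recalled as a classical result with references to Weber and to Amrouche--Bernardi--Dauge--Girault, so there is no internal argument to compare yours against. Your overall strategy --- compactness via vector potentials and an integration by parts, triviality of the curl-free subspaces using simple connectedness of $\Om$ and connectedness of $\partial\Om$, then a compactness--contradiction argument for the Poincar\'e inequality and the norm equivalence --- is the standard route, and the second and third steps are correct as written.

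There is, however, one step that fails in the stated generality. For $\u\in\mX_T(1)$ you invoke ``the classical result of \cite{AmrBerDau98}'' to produce a potential $\w$ that is \emph{simultaneously} in $\Hspace^1(\Om)$ and satisfies $\w\times\nu=0$ on $\partial\Om$, with $\|\w\|_{\Hspace^1(\Om)}\le C\|\u\|_{\Om}$. On a general Lipschitz domain such a $\w$ need not exist: the divergence-free potential with $\w\times\nu=0$ is unique and belongs to $\mX_N(1)$ with $\curl\w=\u\in\Lspace^2(\Om)$, so requiring $\w\in\Hspace^1(\Om)$ amounts to the embedding of $\{\w\in\mX_N(1)\,|\,\curl\w\in\Lspace^2(\Om)\}$ into $\Hspace^1(\Om)$, which holds for convex or $\mathscr{C}^{1,1}$ domains but is false for non-convex Lipschitz polyhedra (Birman--Solomyak, Costabel); only $\Hspace^{1/2}$ regularity is available in general, and $\Om$ here is merely Lipschitz. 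What \cite{AmrBerDau98} provide is either an $\Hspace^1$ potential with \emph{no} boundary condition (Theorem 3.12) or a potential in $\mX_N(1)$ with the boundary condition but without $\Hspace^1$ regularity (Theorem 3.17); you need the boundary condition to kill the boundary term and some precompactness of $(\w_n)$ in $\Lspace^2(\Om)$ to run the Cauchy argument, and you cannot get both from Rellich. The repair is cheap and keeps your structure: first treat $\mX_N(1)$ exactly as you do, using the free $\Hspace^1$ potential (the boundary term vanishes there because $\u_n\times\nu=0$), and deduce the compact embedding and the Poincar\'e inequality for $\mX_N(1)$; then, for $\mX_T(1)$, take the potentials $\w_n\in\mX_N(1)$ of Theorem 3.17, observe that $(\w_n)$ is bounded for the $\Hspace(\curl)$ norm since $\curl\w_n=\u_n$ and $\|\w_n\|_{\Om}\le C\|\curl\w_n\|_{\Om}$, and extract an $\Lspace^2$-convergent subsequence from the compactness just established rather than from Rellich. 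Alternatively, the parenthetical route you mention --- the continuous embedding of both spaces into $\Hspace^{1/2}(\Om)$ for Lipschitz domains --- bypasses the issue entirely and is closer to what the cited references actually justify.
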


\section{Assumptions for the dielectric constants $\eps$, $\mu$}\label{sec-assumptions}

In this document, for a Banach space $\mrm{X}$, $\mrm{X}^{\ast}$ stands for the topological antidual space of $\mrm{X}$ (the set of continuous anti-linear forms on $\mrm{X}$).
\\In the analysis of the Maxwell's system (\ref{Eqs Maxwell})-(\ref{CL Maxwell}), the properties of two scalar operators associated respectively with $\eps$ and $\mu$ play a key role. Define $A_{\eps}:\mH^1_0(\Om)\to(\mH^1_0(\Om))^{\ast}$ such that 
\begin{equation}\label{DefAeps}
\langle A_{\eps}\varphi,\varphi'\rangle=\int_{\Om}\eps\nabla\varphi\cdot\nabla\overline {\varphi'}\,dx,\qquad \forall \varphi,\varphi'\in\mH^1_0(\Om)
\end{equation}
and $A_{\mu}:\mH^1_{\#}(\Om)\to(\mH^1_{\#}(\Om))^{\ast}$ such that 
\[
\langle A_{\mu}\varphi,\varphi'\rangle=\int_{\Om}\mu\nabla\varphi\cdot\nabla\overline{\varphi'}\,dx,\qquad \forall \varphi,\varphi'\in\mH^1_{\#}(\Om).
\]
\textbf{Assumption 2.} \textit{We assume that $\mu$ is such that $A_{\mu}:\mH^1_{\#}(\Om)\to(\mH^1_{\#}(\Om))^{\ast}$ is an isomorphism.}\\
\newline
Assumption 2 is satisfied in particular if $\mu$ has a constant sign (by Lax-Milgram theorem). We underline however that we allow $\mu$ to change sign (see in particular \cite{CosSte85,BonCiaZwo08,BoChCi12,BoCC14} for examples of sign-changing $\mu$ such that Assumption 2 is verified). The assumption on $\eps$, that will be responsible for the presence of (hyper)singularities, requires to consider a more specific configuration as explained below. 
\subsection{Conical tip and scalar (hyper)singularities}
\label{subsec-hypersingularities}
We assume that $\Om$ contains an inclusion of a particular material (metal at optical frequency, metamaterial, ...) located in some domain $\mathcal{M}$ such that $\overline{\mathcal{M}}\subset\Om$ ($\mathcal{M}$ like metal or metamaterial). We assume that $\partial\mathcal{M}$ is of class $\mathscr{C}^2$ except at the origin $O$ where $\mathcal{M}$ coincides locally with a conical tip. More precisely, there are $\rho>0$ and some smooth domain $\varpi$ of the unit sphere $\mathbb{S}^2:=\{x\in\R^3\,|\,|x|=1\}$ such that  $B(O,\rho)\subset \Omega$ and 
\[
\mathcal{M}\cap B(O,\rho)=\mathcal{K}\cap B(O,\rho)\qquad\mbox{ with }\ \mathcal{K}:=\{r\,\boldsymbol{\theta}\,|\,r>0,\,\boldsymbol{\theta}\in\varpi \}.
\]
Here $B(O,\rho)$ stands for the open ball centered at $O$ and of radius $\rho$. We assume that $\eps$ takes the constant value $\eps_-<0$ (resp. $\eps_+>0$) in $\mathcal{M}\cap B(O,\rho)$ (resp. $(\Om\setminus\overline{\mathcal{M}})\cap B(O,\rho)$). And we assume that the contrast $\kappa_{\eps}:=\eps_-/\eps_+<0$ and $\varpi$ (which characterizes the geometry of the conical tip) are such that there exist singularities of the form
\begin{equation}\label{defSing}
\mathfrak{s}(x)=r^{-1/2+i\eta}\Phi(\theta,\phi)
\end{equation}
satisfying  $\div(\eps\nabla\mathfrak{s})=0$ in $\mathcal{K}$
with $\eta\in\R,\eta\neq 0$. Here $(r,\theta,\phi)$ are the spherical coordinates associated with $O$ while $\Phi$ is a function which is smooth in $\varpi$ and in $\mathbb{S}^2\setminus\overline{\varpi}$. We emphasize that since the interface between the metamaterial and the exterior material is not smooth, singularities always exist at the conical tip. However, here we make a particular assumption on the singular exponent  which has to be of the form $-1/2+i\eta$ with  $\eta\in\R,\eta\neq 0$. Such singularities play a particular role for the operator $A_{\eps}$ introduced in (\ref{DefAeps}) because they are ``just'' outside $\mH^1$. More precisely, we have $\mathfrak{s}\notin\mH^1(\Om)$ but $r^\gamma\mathfrak{s}\in\mH^1(\Om)$ for all $\gamma>0$. With them, we can construct a sequence of functions $u_n\in\mH^1_0(\Om)$ such that 
\[
\forall n\in\N,\quad\|u_n\|_{\mH^1(\Om)}=1\qquad\mbox{ and }\qquad\lim_{n\to+\infty}\|\div(\eps\nabla u_n)\|_{(\mH^1_0(\Om))^{\ast}}+\|u_n\|_{\Om}=0.
\]
Then this allows one to prove that the range of $A_{\eps}:\mH^1_0(\Om)\to(\mH^1_0(\Om))^{\ast}$ is not closed (see \cite{BonDauRam99,BoChCi12,BonCheCla13} in 2D). Of course, for any given geometry, such singularities do not exist when $\kappa_{\eps}>0$ because we know that in this case $A_{\eps}:\mH^1_0(\Om)\to(\mH^1_0(\Om))^{\ast}$ is an isomorphism. On the other hand, when 
\begin{equation}\label{ConicalTip}
\varpi=\{(\cos\theta\cos\phi,\sin\theta\cos\phi,\sin\phi)\,|\,-\pi\le\theta\le\pi,\,-\pi/2\le\phi <-\pi/2+\alpha\}\mbox{ for some }\alpha\in(0;\pi)
\end{equation}
(the circular conical tip, see Figure \ref{fig-geom}), it can be shown that such $\mathfrak{s}$ exists for  $\kappa_{\eps}>-1$ (resp. $\kappa_{\eps}<-1$) and $|\kappa_{\eps}+1|$ small enough (see \cite{KCHWS14}) when $\alpha<\pi/2$ (resp. $\alpha>\pi/2$). 
\begin{figure}[!ht]
\centering
\includegraphics[width=6cm]{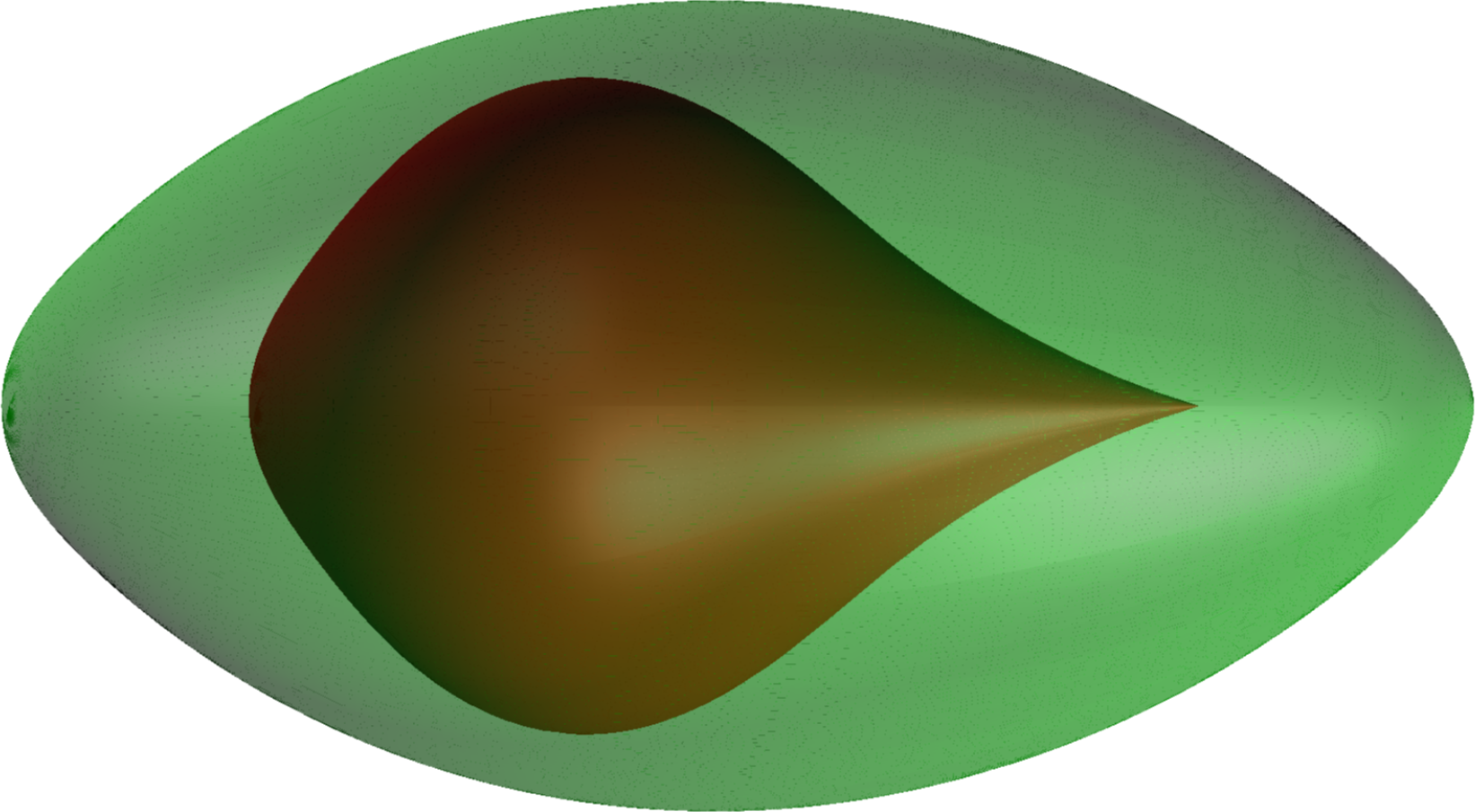}
\caption{The domain $\Om$ with the inclusion  $\mathcal{M}$ exhibiting a conical tip. \label{fig-geom}}
\end{figure}
For a general smooth domain $\varpi\subset\mathbb{S}^2$ and a given contrast $\kappa_{\eps}$, in order to know if such $\mathfrak{s}$ exists, one has to solve the spectral problem 
\begin{equation}\label{spectralPb}
\begin{array}{|l}
\mbox{Find }(\Phi,\lambda)\in\mH^1(\mathbb{S}^2)\setminus\{0\}\times\C\mbox{ such that }\\
\dsp\int_{\mathbb{S}^2}\eps\nabla_S\Phi\cdot\nabla_S\overline{\Phi'}\,ds 
= \lambda(\lambda+1)\int_{\mathbb{S}^2}\eps\Phi\,\overline{\Phi'}\,ds,\qquad \forall\Phi'\in\mH^1(\mathbb{S}^2),
\end{array}
\end{equation}
and see if among the eigenvalues some of them are of the form $\lambda=-1/2+i\eta$ with $\eta\in\R, \eta\neq 0$.  Above, $\nabla_S$ stands for the surface gradient. With a slight abuse, when $\eps$ is involved into integrals over $\mathbb{S}^2$, we write $\eps$ instead of $\eps(\rho\,\cdot)$. Note that since $\eps$ is real-valued,  if $\lambda=-1/2+i\eta$ is an eigenvalue, we have $\lambda(\lambda+1)=-\eta^2-1/4$, so that $\lambda=-1/2-i\eta$ is also an eigenvalue for the same eigenfunction. And since $\lambda(\lambda+1)\in\R$, we can find a corresponding eigenfunction which is real-valued. From now on, we assume that $\Phi$ in (\ref{defSing}) is real-valued. Let us mention that this problem of existence of singularities of the form (\ref{defSing}) is directly related to the problem of existence of essential spectrum for the so-called Neumann-Poincar\'e operator \cite{KaPS07,PePu17,BoZh19,HePe18}. A noteworthy difference with the 2D case of a corner in the interface is that several singularities of the form (\ref{defSing}) with different values of $|\eta|$  can exist in 3D \cite{KCHWS14}  (this depends on $\eps$ and on $\varpi$). To simplify the presentation, we assume that for the case of interest, singularities of the form (\ref{defSing}) exist for only one value of $|\eta|$. Moreover we assume that the quantity $\textstyle\int_{\mathbb{S}^2}\eps|\Phi|^2ds$ does not vanish. In this case, exchanging $\eta$ by $-\eta$ if necessary, we can set $\eta$ so that 
\begin{equation}\label{eq-intPhineq0}
\eta \int_{\mathbb{S}^2}\eps|\Phi|^2ds>0.
\end{equation}
For the 2D problem, it can be proved that the quantity corresponding to $\textstyle\int_{\mathbb{S}^2}\eps|\Phi|^2ds$ vanishes if and only if the contrast $\kappa_\eps$ coincides with a bound of the critical interval. We conjecture that this also holds in 3D. Note that when $\textstyle\int_{\mathbb{S}^2}\eps|\Phi|^2ds=0$, the singularities have a different form from (\ref{defSing}). To fix notations, we  set 
\begin{equation}\label{DefSinguEss}
s^{\pm}(x)=\chi(r)r^{-1/2\pm i\eta}\Phi(\theta,\phi)
\end{equation}
In this definition the smooth cut-off function $\chi$ is equal to one in a neighbourhood of $0$ and is supported in $[-\rho;\rho]$. In particular, we emphasize that $s^{\pm}$ vanish in a neighbourhood of $\partial\Om$.\\ 
\newline 
In order to recover Fredholmness for the scalar problem involving $\eps$, an important idea is too add one (and only one) of the singularities (\ref{DefSinguEss}) to the functional framework. From a mathematical point of view, working with the complex conjugation, it is obvious to see that adding $s^+$ or $s^-$ does not change the results. However physically one framework is more relevant than the other. More precisely, we will explain in \S\ref{ParagLimiting} with the limiting absorption principle why selecting $s^+$, with $\eta$ such that (\ref{eq-intPhineq0}) holds, together with a certain convention for the time-harmonic dependence, is more natural.

\subsection{Kondratiev functional framework}
In this paragraph, adapting what is done in \cite{BonCheCla13} for the 2D case, we describe in more details how to get a Fredholm operator for the scalar operator associated with $\eps$. For $\beta \in \mathbb{R}$ and $m \in \mathbb{N}$, let us introduce the weighted Sobolev (Kondratiev) space $\mV_\beta^m(\Om)$ (see \cite{Kond67}) defined as the closure of $\mathscr{C}^{\infty}_0(\overline{\Om}\setminus \{O\})$
for the norm
\[
\|\varphi\|_{\mV_\beta^m(\Om)}=\left(\sum_{|\alpha| \leq m} \|r^{|\alpha|-m+\beta} \partial_x^{\alpha}\varphi\|_{\mL^2(\Om)}^2\right)^{1/2}.
\]
Here $\mathscr{C}^{\infty}_0(\overline{\Om}\setminus \{O\})$ denotes the space of infinitely differentiable functions which are supported in $\overline{\Om}\setminus \{O\}$. We also denote $\mathring{\mV}_\beta^1(\Om)$ the closure of  $\mathscr{C}^{\infty}_0(\Om\setminus \{O\})$ for the norm
$\|\cdot\|_{\mV_\beta^1(\Om)}$. We have the characterisation
\[
\mathring{\mV}_\beta^1(\Om)=\{\varphi\in\mV_\beta^1(\Om)\,|\,\varphi=0\mbox{ on }\partial\Om\}.
\]
Note that using Hardy's inequality 
\[
\int_{0}^1\frac{|u(r)|^2}{r^2}\,r^2dr \le 4\,\int_{0}^1|u'(r)|^2\,r^2dr,\qquad\forall u\in\mathscr{C}^1_0[0;1),
\]
one can show the estimate $\|r^{-1}\varphi\|_{\Om}\le C\,\|\nabla\varphi\|_{\Om}$ for all 
$\varphi\in\mathscr{C}^{\infty}_0(\Om\setminus \{O\})$. This proves that $\mathring{\mV}_{0}^1(\Om)=\mH^1_0(\Om)$. Now set $\beta>0$. Observe that we have 
\[
\mathring{\mV}_{-\beta}^1(\Om)\subset \mH^1_0(\Om)\subset \mathring{\mV}_\beta^1(\Om)\qquad\mbox{ so that }\qquad(\mathring{\mV}^1_{\beta}(\Om))^{\ast}\subset (\mH^1_0(\Om))^{\ast}\subset (\mathring{\mV}^1_{-\beta}(\Om))^{\ast}.
\]
Define the operators $A^{\pm\beta}_{\eps}:\mathring{\mV}^1_{\pm\beta}(\Om)\to(\mathring{\mV}^1_{\mp\beta}(\Om))^{\ast}$ such that
\begin{equation}\label{DefOperatorWeight}
\langle A^{\pm\beta}_{\eps}\varphi,\varphi'\rangle=\int_{\Om}\eps\nabla\varphi\cdot\nabla\overline{\varphi'}\,dx,\qquad \forall \varphi\in\mathring{\mV}^1_{\pm\beta}(\Om),\,\varphi'\in\mathring{\mV}^1_{\mp\beta}(\Om).
\end{equation}
Working as in \cite{BonCheCla13} for the 2D case of the corner, one can show that there is $\beta_0>0$ (depending only on $\kappa_{\eps}$ and $\varpi$) such that for all $\beta\in(0;\beta_0)$, $A^{\beta}_{\eps}$ is Fredholm of index $+1$ while $A^{-\beta}_{\eps}$ is Fredholm of index $-1$. We remind the reader that for a bounded linear operator between two Banach spaces $T:\mrm{X}\to\mrm{Y}$ whose range is closed, its index is defined as $\mrm{ind}\,T:=\dim\ker\,T-\dim\coker\,T$, with $\dim\coker\,T=\dim\,(\mrm{Y}/\mrm{range}(T))$. On the other hand, application of Kondratiev calculus guarantees that if $\varphi\in\mathring{\mV}^1_{\beta}(\Om)$ is such that $A^{+\beta}_{\eps}\varphi\in(\mathring{\mV}^1_{\beta}(\Om))^{\ast}$ (the important point here being that $(\mathring{\mV}^1_{\beta}(\Om))^{\ast}\subset (\mathring{\mV}^1_{-\beta}(\Om))^{\ast}$), then there holds the following representation 
\begin{equation}\label{PropoRepresentation}
\varphi=c_-\,s^-+c_+\,s^++\tilde{\varphi}\qquad\mbox{ with }c_{\pm}\in\Cplx\mbox{ and }\tilde{\varphi}\in\mathring{\mV}^1_{-\beta}(\Om).
\end{equation}
Note that  $s^\pm$, with $s^\pm$ defined by (\ref{DefSinguEss}), belongs to $\mathring{\mV}^1_{\beta}(\Om)$, but not to $\mH^1_0(\Omega)$, and a fortiori not to  $\mathring{\mV}^1_{-\beta}(\Om)$. Then introduce the space $\mathring{\mV}^{\mrm{out}}:=\mrm{span}(s^+)\,\oplus\,\mathring{\mV}^1_{-\beta}(\Om)$,  endowed with the norm 
\begin{equation}\label{WPNonStandard}
\|\varphi\|_{\mV^{\mrm{out}}}=(|c|^2+\|\tilde{\varphi}\|^2_{\mV^1_{-\beta}(\Om))})^{1/2},\qquad \forall\varphi=c\,s^++\tilde{\varphi}\in\mathring{\mV}^{\mrm{out}},
\end{equation}
which is a Banach space. Introduce also the operator $A^{\mrm{out}}_{\eps}$ such that for all $\varphi=c\,s^++\tilde{\varphi}\in\mathring{\mV}^{\mrm{out}}$ and $\varphi'\in\mathscr{C}^{\infty}_0(\Om\setminus\{O\})$, 
\[
\langle A^{\mrm{out}}_{\eps}\varphi,\varphi'\rangle=\int_{\Om}\eps\nabla\varphi\cdot\nabla\overline{\varphi'}\,dx=-c\int_{\Om}\div(\eps\nabla s^+)\overline{\varphi'}\,dx+\int_{\Om}\eps\nabla\tilde{\varphi}\cdot\nabla\overline{\varphi'}\,dx.
\]
Note that due to the features of the cut-off function $\chi$, we have $\div(\eps\nabla s^+)\in\mL^2(\Om)$. And since $\div(\eps\nabla s^+)=0$ in a neighbourhood of $O$, we observe that there is a constant $C>0$ such that $|\langle A^{\mrm{out}}_{\eps}\varphi,\varphi'\rangle|\le C\,\|\varphi\|_{\mV^{\mrm{out}}}\,\|\varphi'\|_{\mV^1_{\beta}(\Om)}$. The density of $\mathscr{C}^{\infty}_0(\Om\setminus\{O\})$ in $\mathring{\mV}^1_{\beta}(\Om)$ then allows us to extend $A^{\mrm{out}}_{\eps}$ as a continuous operator from $\mathring{\mV}^{\mrm{out}}$ to $(\mathring{\mV}^1_{\beta}(\Om))^{\ast}$. And we have
\[
\langle A^{\mrm{out}}_{\eps}\varphi,\varphi'\rangle=-c\int_{\Om}\div(\eps\nabla s^+)\overline{\varphi'}\,dx+\int_{\Om}\eps\nabla\tilde{\varphi}\cdot\nabla\overline{\varphi'}\,dx,\quad \forall\varphi=c\,s^++\tilde{\varphi},\,\varphi'\in\mathring{\mV}^1_{\beta}(\Om).
\]
Working as in \cite{BonCheCla13} (see Proposition 4.4.) for the 2D case of the corner, one can prove that $A^{\mrm{out}}_{\eps}:\mathring{\mV}^{\mrm{out}}\to(\mathring{\mV}^1_{\beta}(\Om))^{\ast}$ is Fredholm of index zero and that $\ker\,A^{\mrm{out}}_{\eps}=\ker\,A^{-\beta}_{\eps}$. In order to simplify the analysis below, we shall make the following assumption.\\
\newline
\textbf{Assumption 3.} \textit{We assume that $\eps$ is such that for $\beta\in (0;\beta_0)$,  $A^{-\beta}_{\eps}$ is injective, which guarantees that $A^{\mrm{out}}_{\eps}:\mathring{\mV}^{\mrm{out}}\to(\mathring{\mV}^1_{\beta}(\Om))^{\ast}$ is an isomorphism.}\\
\newline
In what follows, we shall also need to work with the usual Laplace operator  in weighted Sobolev spaces. For $\gamma\in\R$,  define $A^{\gamma}:\mathring{\mV}^1_{\gamma}(\Om)\to(\mathring{\mV}^1_{-\gamma}(\Om))^{\ast}$ such that
\[
\langle A^{\gamma}\varphi,\varphi'\rangle=\int_{\Om}\nabla\varphi\cdot\nabla\overline{\varphi'}\,dx,\qquad \forall \varphi\in\mathring{\mV}^1_{\gamma}(\Om),\,\varphi'\in\mathring{\mV}^1_{-\gamma}(\Om)
\]
(observe that there is no $\eps$ here). Combining the theory presented in \cite{KoMR97} (see also the founding article \cite{Kond67} as well as the monographs \cite{MaNP00,NaPl94}) together with the result of \cite[Corollary 2.2.1]{KoMR01}, we get the following proposition.
\begin{proposition}\label{PropoLaplaceOp}
\noindent For all $\gamma\in(-1/2;1/2)$, the operator $A^{\gamma}:\mathring{\mV}^1_{\gamma}(\Om)\to(\mathring{\mV}^1_{-\gamma}(\Om))^{\ast}$ is an isomorphism.
\end{proposition}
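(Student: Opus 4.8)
The plan is to localise the analysis at the point $O$ and to feed the result into the Mellin--Kondratiev machinery of \cite{Kond67,KoMR97,KoMR01}. Since $B(O,\rho)\subset\Om$, the point $O$ is interior, so near $O$ the domain $\Om$ coincides with a full three--dimensional cone whose cross-section is the \emph{entire} sphere $\mathbb{S}^2$; moreover no $\eps$ enters $A^{\gamma}$, so the relevant operator pencil is simply $\lambda\mapsto -\Delta_{\mathbb{S}^2}-\lambda(\lambda+1)$. Writing $-\Delta(r^{\lambda}\Phi)=r^{\lambda-2}(-\lambda(\lambda+1)\Phi-\Delta_{\mathbb{S}^2}\Phi)$, the homogeneous solutions $r^{\lambda}\Phi$ correspond to $-\Delta_{\mathbb{S}^2}\Phi=\lambda(\lambda+1)\Phi$, i.e. to $\lambda(\lambda+1)=\ell(\ell+1)$ with $\ell=0,1,2,\dots$ the eigenvalues of $-\Delta_{\mathbb{S}^2}$. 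Hence the singular exponents are exactly $\lambda=\ell$ and $\lambda=-\ell-1$, that is $\lambda\in\Z$, and in particular the open strip $\{-1<\mathrm{Re}\,\lambda<0\}$ is free of exponents. Away from $O$ the weight $r^{\gamma}$ is bounded above and below, so $\mathring{\mV}^1_{\gamma}(\Om)$ behaves like $\mH^1$ there and the Dirichlet condition on the Lipschitz boundary $\partial\Om$ causes no difficulty: Fredholmness is governed solely by the behaviour at $O$.

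A direct computation (using $dx=r^2\,dr\,d\sigma$ in $\R^3$) shows that $r^{\lambda}$ belongs to $\mathring{\mV}^1_{\gamma}(\Om)$ near $O$ if and only if $\mathrm{Re}\,\lambda>-\gamma-1/2$, so that the space $\mathring{\mV}^1_{\gamma}(\Om)$ is attached to the weight line $\{\mathrm{Re}\,\lambda=-\gamma-1/2\}$. For $\gamma\in(-1/2;1/2)$ this line lies strictly inside the exponent-free strip $(-1;0)$, whence Kondratiev's theorem makes $A^{\gamma}:\mathring{\mV}^1_{\gamma}(\Om)\to(\mathring{\mV}^1_{-\gamma}(\Om))^{\ast}$ Fredholm. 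Its index is constant as $\gamma$ ranges over $(-1/2;1/2)$, because on this range the weight line never meets an exponent. At $\gamma=0$ we have $\mathring{\mV}^1_{0}(\Om)=\mH^1_0(\Om)$ and, by the Poincaré inequality, the form $(\varphi,\varphi')\mapsto\int_{\Om}\nabla\varphi\cdot\nabla\overline{\varphi'}\,dx$ is coercive, so Lax--Milgram makes $A^{0}$ an isomorphism; thus $\mrm{ind}\,A^{0}=0$ and $\ker A^{0}=\{0\}$. Consequently $\mrm{ind}\,A^{\gamma}=0$ for every $\gamma\in(-1/2;1/2)$.

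It then remains only to establish injectivity, after which Fredholmness of index $0$ yields the isomorphism property. When $\gamma\le 0$ one has $\mathring{\mV}^1_{\gamma}(\Om)\subset\mH^1_0(\Om)$, so $\ker A^{\gamma}\subset\ker A^{0}=\{0\}$. When $\gamma\in(0;1/2)$, let $\varphi\in\ker A^{\gamma}$; then $\varphi$ solves $-\Delta\varphi=0$ in the weak sense dual to $\mathring{\mV}^1_{-\gamma}(\Om)$, and since the strip $\{-\gamma-1/2<\mathrm{Re}\,\lambda<\gamma-1/2\}$ has both endpoints in $(-1;0)$ and therefore contains no exponent, the Kondratiev asymptotic-expansion result carries no singular term and gives $\varphi\in\mathring{\mV}^1_{-\gamma}(\Om)\subset\mH^1_0(\Om)$; hence $\varphi\in\ker A^{0}=\{0\}$. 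The main obstacle is not a new estimate but the correct bookkeeping of the Mellin analysis — precisely matching the weight index $\gamma$ to the line $\{\mathrm{Re}\,\lambda=-\gamma-1/2\}$, and invoking the index-invariance and regularity-shift statements of \cite{KoMR97,KoMR01} in the exact functional setting of the spaces $\mathring{\mV}^1_{\pm\gamma}(\Om)$.
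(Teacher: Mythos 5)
Your argument is correct and follows exactly the route the paper takes: the paper gives no proof of this proposition beyond citing the Kondratiev theory of \cite{Kond67,KoMR97} and \cite[Corollary 2.2.1]{KoMR01}, and your write-up is a faithful unpacking of that citation (interior conical point with full spherical cross-section, integer singular exponents, exponent-free strip $(-1;0)$, weight line $\Re e\,\lambda=-\gamma-1/2$, index invariance anchored at the Lax--Milgram case $\gamma=0$, and injectivity by the regularity shift). No gap; the only glossed point is the density of $\mathscr{C}^{\infty}_0(\Om\setminus\{O\})$ in $\mH^1_0(\Om)$ needed to pass from $\ker A^{\gamma}$ to $\ker A^{0}$, which is standard since a point has zero capacity in 3D.
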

\noindent Note in particular that for $\gamma=0$, this proposition simply says that $\Delta:\mH^1_0(\Om)\to(\mH^1_0(\Om))^{\ast}$ is an isomorphism. In order to have a result of isomorphism both for $A^{\mrm{out}}_{\eps}$ and $A^{\beta}$, we shall often make the assumption that the weight $\beta$  is   such that 
\begin{equation}\label{betainfbeta_0et1demi}
0<\beta<\min(1/2,\beta_0)
\end{equation}
where $\beta_0$ is defined after (\ref{DefOperatorWeight}).
\newline
To measure electromagnetic fields in weighted Sobolev norms, in the following we shall work in the spaces  
\[
\begin{array}{rcl}
\boldsymbol{\mV}^0_{\beta}(\Om)&:=&(\mV^0_{\beta}(\Om))^3\\[4pt]
\boldsymbol{\mathring{\mV}}^1_{\beta}(\Om)&:=&(\mathring{\mV}^1_{\beta}(\Om))^3.
\end{array}
\]
Note that we have $\boldsymbol{\mV}^0_{-\beta}(\Om)\subset \Lspace^2(\Om)\subset \boldsymbol{\mV}^0_{\beta}(\Om)$.

\section{Analysis of the problem for the electric component}\label{SectionChampE}

In this section, we consider the problem for the electric field associated with (\ref{Eqs Maxwell})-(\ref{CL Maxwell}). Since the scalar problem involving $\eps$ is well-posed  in a non standard framework involving the propagating singularity $s^+$ (see (\ref{WPNonStandard})), we shall add its gradient in the space for the electric field. Then we define a variational problem in this unsual space, and prove its well-posedness. Finally we justify our choice by a limiting absorption principle. 
\subsection{A well-chosen space for the electric field}
Define the space of electric fields with the divergence free condition
\begin{equation}
\label{eq-XNout}
\begin{array}{l}
\mX^{\mrm{out}}_N(\eps):=\{\u=c\nabla s^{+}+\tilde{\u},\ c\in\mathbb{C},\,\tilde{\u}\in\Lspace^2(\Om)\,|\,\curl\u\in\Lspace^2(\Omega),\,\div(\eps\u)=0\mbox{ in }\Om\setminus\{O\},\\
\hspace{11.6cm}\u\times\nu=0\mbox{ on }\partial\Om\}.
\end{array}
\end{equation}
In this definition, for $\u=c\nabla s^{+}+\tilde{\u}$, the condition $\div(\eps\u)=0$ in $\Om\setminus\{O\}$ means that there holds 
\begin{equation}\label{defDivFaible}
\int_\Omega \eps\u\cdot\nabla\varphi\,dx=0,\qquad\forall\varphi\in\mathscr{C}^{\infty}_0(\Om\setminus\{O\}),
\end{equation}
which after integration by parts and by density of $\mathscr{C}^{\infty}_0(\Om\setminus\{O\})$ in $\mH^1_0(\Om)$ is equivalent to 
\begin{equation}\label{defDivFaible-bis}
-c\int_{\Om}\div(\eps\nabla s^{+})\varphi\,dx+\int_{\Om}\eps\tilde{\u}\cdot\nabla\varphi\,dx=0,\qquad\forall\varphi\in\mathscr{C}^{\infty}_0(\Om).
\end{equation}
Note that we have $\mX_N(\eps)\subset\mX^{\mrm{out}}_N(\eps)$ and that $\dim\,(\mX^{\mrm{out}}_N(\eps)/\mX_N(\eps))=1$ (see Lemma \ref{LemmaCodim} in Appendix). For $\u=c\nabla s^{+}+\tilde{\u}$ with $c\in\mathbb{C}$ and $\tilde{\u}\in\Lspace^2(\Om)$, we set  
\[
\|\u\|_{\mX^{\mrm{out}}_N(\eps)}=(|c|^2+\|\tilde{\u}\|^2_{\Om}+\|\curl\u\|^2_{\Om})^{1/2}\,.
\]
Endowed with this norm, $\mX^{\mrm{out}}_N(\eps)$ is a Banach space.
\begin{lemma}\label{LemmaNormeEquiv}
Pick some $\beta$ satisfying (\ref{betainfbeta_0et1demi}). Under Assumptions 1 and 3, for any $\u=c\nabla s^{+}+\tilde{\u}\in\mX^{\mrm{out}}_N(\eps)$, we have $\tilde{\u}\in\boldsymbol{\mV}^0_{-\beta}(\Om)$ and there is a constant $C>0$ independent of $\u$ such that 
\begin{equation}\label{EstimBas}
|c|+\|\tilde{\u}\|_{\boldsymbol{\mV}^0_{-\beta}(\Om)} \le C\,\|\curl\u\|_{\Om}.
\end{equation}
As a consequence, the norm $\|\cdot\|_{\mX^{\mrm{out}}_N(\eps)}$ is equivalent to the norm $\|\curl\cdot\|_{\Om}$ in $\mX^{\mrm{out}}_N(\eps)$ and $\mX^{\mrm{out}}_N(\eps)$ endowed with the inner product $(\curl\cdot,\curl\cdot)_{\Om}$ is a Hilbert space.

\end{lemma}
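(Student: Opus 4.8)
The plan is to reconstruct $\u$ from $\fv:=\curl\u$ by a vector-potential/scalar-potential splitting, and then to transfer the weighted regularity furnished by the isomorphism $A^{\mrm{out}}_{\eps}$ back onto the regular part $\tilde\u$. First I would record that $\fv\in\Lspace^2(\Om)$ satisfies $\div\fv=0$ in $\Om$ and $\fv\cdot\nu=0$ on $\partial\Om$: since $\curl\nabla s^+=0$ one has $\curl\u=\curl\tilde\u$, and testing against $\nabla\varphi$ for $\varphi\in\mH^1(\Om)$ together with $\u\times\nu=0$ gives $\int_\Om\fv\cdot\nabla\overline{\varphi}\,dx=0$. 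Under Assumption 1 the vector-potential theory of \cite{AmrBerDau98} then yields $\w\in\mX_N(1)$ with $\curl\w=\fv$, and Proposition \ref{PropoEmbeddingCla} gives $\|\w\|_{\Hspace(\curl)}\le C\,\|\curl\u\|_{\Om}$.

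The key step, and the main obstacle, is to upgrade this $\Lspace^2$ potential to $\boldsymbol{\mV}^0_{-\beta}(\Om)$. As $B(O,\rho)\subset\Om$, the domain is smooth near $O$, so elliptic regularity for the system $\curl\w=\fv\in\Lspace^2$, $\div\w=0$ gives $\w\in(\mH^1(B(O,\rho/2)))^3$ with $\mH^1$-norm controlled by $\|\curl\u\|_{\Om}$. Cutting off near $O$ and invoking Hardy's inequality in dimension three, $\int|x|^{-2}|v|^2\le 4\int|\nabla v|^2$, shows $r^{-1}\w\in\Lspace^2$ near $O$; since $\beta<1$ this gives $r^{-\beta}\w\in\Lspace^2$, and away from $O$ the $\boldsymbol{\mV}^0_{-\beta}$- and $\Lspace^2$-norms agree. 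Hence $\w\in\boldsymbol{\mV}^0_{-\beta}(\Om)$ with $\|\w\|_{\boldsymbol{\mV}^0_{-\beta}(\Om)}\le C\,\|\curl\u\|_{\Om}$; this is the single estimate that drives the rest of the proof.

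Next, $\curl(\u-\w)=0$ on the simply connected $\Om$, so $\u-\w=\nabla q$; since $s^+$ and $\w$ have vanishing tangential trace near the connected boundary, $q$ can be normalized as $q=c\,s^++\tilde q$ with $\tilde q\in\mH^1_0(\Om)$ and the same coefficient $c$ as in $\u$ (the splitting of $\u$ is unique because $\nabla s^+\notin\Lspace^2(\Om)$). In particular $q\in\mathring{\mV}^1_\beta(\Om)$, as $s^+\in\mathring{\mV}^1_\beta(\Om)$. Feeding $\u=\nabla q+\w$ into $\div(\eps\u)=0$ shows $A^{+\beta}_{\eps}q=G$ with $G:\varphi\mapsto-\int_\Om\eps\,\w\cdot\nabla\overline{\varphi}\,dx$; the weighted bound on $\w$ gives $|G(\varphi)|\le C\,\|r^{-\beta}\w\|_{\Om}\,\|r^{\beta}\nabla\varphi\|_{\Om}\le C\,\|\curl\u\|_{\Om}\,\|\varphi\|_{\mathring{\mV}^1_\beta(\Om)}$, so $G\in(\mathring{\mV}^1_\beta(\Om))^{\ast}$. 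The representation (\ref{PropoRepresentation}) then applies to $q$; matching it with the splitting $q=c\,s^++\tilde q$ and using that $s^\pm$ are linearly independent modulo $\mH^1$ forces the $s^-$-coefficient to vanish, the $s^+$-coefficient to equal $c$, and $\tilde q\in\mathring{\mV}^1_{-\beta}(\Om)$, i.e.\ $q\in\mathring{\mV}^{\mrm{out}}$ and $A^{\mrm{out}}_{\eps}q=G$.

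Finally, Assumption 3 makes $A^{\mrm{out}}_{\eps}:\mathring{\mV}^{\mrm{out}}\to(\mathring{\mV}^1_\beta(\Om))^{\ast}$ an isomorphism, so $|c|+\|\tilde q\|_{\mathring{\mV}^1_{-\beta}(\Om)}=\|q\|_{\mathring{\mV}^{\mrm{out}}}\le C\,\|G\|\le C\,\|\curl\u\|_{\Om}$. Since $\tilde\u=\w+\nabla\tilde q$ with both summands in $\boldsymbol{\mV}^0_{-\beta}(\Om)$, estimate (\ref{EstimBas}) follows. The equivalence of norms is then immediate (on the bounded domain $\|\tilde\u\|_{\Om}\le C\,\|\tilde\u\|_{\boldsymbol{\mV}^0_{-\beta}(\Om)}$, while $\|\curl\u\|_{\Om}\le\|\u\|_{\mX^{\mrm{out}}_N(\eps)}$ trivially), and the completeness of $(\mX^{\mrm{out}}_N(\eps),\|\cdot\|_{\mX^{\mrm{out}}_N(\eps)})$ transfers to the equivalent inner product $(\curl\cdot,\curl\cdot)_{\Om}$, so that $\mX^{\mrm{out}}_N(\eps)$ is a Hilbert space.
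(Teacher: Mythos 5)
Your proof is correct, and its architecture coincides with the paper's: decompose $\u$ into a gradient part plus a solenoidal potential, push the weighted regularity of the solenoidal part through the divergence-free condition into the scalar operator, and invoke the isomorphism $A^{\mrm{out}}_{\eps}$ of Assumption 3 together with the representation (\ref{PropoRepresentation}) to identify $c_-=0$, $c_+=c$ and bound $|c|+\|\tilde q\|_{\mV^1_{-\beta}(\Om)}$. Your solenoidal part $\w\in\mX_N(1)$ (obtained by applying item $ii)$ of Proposition \ref{propoPotential} to $\curl\u$) is in fact the same field as the paper's $\curl\boldsymbol{\psi}$ with $\boldsymbol{\psi}\in\mX_T(1)$ coming from the Helmholtz decomposition of $\tilde{\u}$ (item $iv)$), so that difference is cosmetic. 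The one genuine variation is the auxiliary weighted estimate $\|\w\|_{\boldsymbol{\mV}^0_{-\beta}(\Om)}\le C\|\curl\u\|_{\Om}$: the paper obtains it from Proposition \ref{propoLaplacienVect} (equivalently Lemma \ref{LemmaWeightedClaBis}), whose proof goes through the weighted isomorphism for the vector Laplacian, whereas you get it directly from interior $\mH^1$ regularity of the div--curl system near $O$ combined with the three-dimensional Hardy inequality. Your route is more elementary and self-contained for this first-order bound (and even yields the estimate for all $\beta<1$); the paper's heavier machinery is however needed elsewhere (e.g.\ for the compactness statement of Proposition \ref{propoLaplacienVectcompact} used in Proposition \ref{PropCompactE}), so nothing is saved globally. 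All the delicate points — the identification of the coefficients in (\ref{PropoRepresentation}) using that $s^{\pm}\notin\mH^1$, and the membership $G\in(\mathring{\mV}^1_{\beta}(\Om))^{\ast}$ which is what makes (\ref{PropoRepresentation}) applicable — are handled correctly.
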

\begin{proof}
Let $\u=c\nabla s^{+}+\tilde{\u}$ be an element of $\mX^{\mrm{out}}_N(\eps)$. The field $\tilde{\u}$ is in $\Lspace^2(\Omega)$ and therefore decomposes as 
\begin{equation}\label{DecompoHelm1}
\tilde{\u}=\nabla\varphi+\curl\boldsymbol{\psi}
\end{equation}
with $\varphi\in\mH^1_{0}(\Om)$ and $\boldsymbol{\psi}\in \mX_T(1)$ (item $iv)$ of Proposition \ref{propoPotential}). Moreover, since $\u\times\nu=0$ on $\partial\Omega$ and  since both $s^+$ and $\varphi$ vanish on $\partial\Omega$, we know that $\curl\boldsymbol{\psi}\times\nu=0$ on $\partial\Omega$. 
Then noting that  $-\boldsymbol{\Delta\psi}=\curl\tilde{\u}=\curl\u\in \Lspace^2(\Omega)$, we deduce  from Proposition \ref{propoLaplacienVect} that $\curl\boldsymbol{\psi} \in \boldsymbol{\mV}^0_{-\beta}(\Om)$ with the estimate
\begin{equation}\label{DecompoHelm2}
\|\curl\boldsymbol{\psi}\|_{\boldsymbol{\mV}^0_{-\beta}(\Om)} \le C\,\|\curl\u\|_{\Om}.
\end{equation}
Using (\ref{defDivFaible}), the condition $\div(\eps\u)=0$ in $\Om\setminus\{O\}$ implies 
\[
\int_{\Om}\eps\nabla(c\,s^++\varphi)\cdot\nabla\varphi'\,dx=-\int_{\Om} \eps \curl\boldsymbol{\psi}\cdot\nabla\varphi'\,dx,\qquad\forall\varphi'\in \mathring{\mV}^1_{-\beta}(\Om),
\]
which means exactly that $A^{\beta}_{\eps}(c\,s^++\varphi)=-\div(\eps\,\curl\boldsymbol{\psi})\in(\mathring{\mV}^1_{-\beta}(\Om))^{\ast}$. Since additionally $-\div(\eps\,\curl\boldsymbol{\psi})\in(\mathring{\mV}^1_{\beta}(\Om))^{\ast}$, from (\ref{PropoRepresentation}) we know that there are some complex constants $c_{\pm}$ and some $\tilde{\varphi}\in\mathring{\mV}^1_{-\beta}(\Om)$ such that
\[
c\,s^++\varphi = c_-\,s^-+c_+\,s^++\tilde{\varphi}.
\]
This implies $c_-=0$, $c_+=c$ (because $\varphi\in\mH^1_0(\Om)$) and so $\varphi=\tilde{\varphi}$ is an element of  $\mathring{\mV}^1_{-\beta}(\Om)$. This shows that $c\,s^++\varphi\in\mathring{\mV}^{\mrm{out}}$ and that $A^{\mrm{out}}_{\eps}(c\,s^++\varphi)=-\div(\eps\,\curl\boldsymbol{\psi}).$ Since $A^{\mrm{out}}_{\eps}:\mathring{\mV}^{\mrm{out}}\to(\mathring{\mV}^1_{\beta}(\Om))^{\ast}$ is an isomorphism, we have the estimate
\begin{equation}\label{DecompoHelm3}
|c|+\|\varphi\|_{\mV^1_{-\beta}(\Om)} \le C\,\|\div(\eps\,\curl\boldsymbol{\psi})\|_{(\mathring{\mV}^1_{\beta}(\Om))^{\ast}} \le C\,\|\curl\boldsymbol{\psi}\|_{\boldsymbol{\mV}^0_{-\beta}(\Om)}.
\end{equation}
Finally gathering (\ref{DecompoHelm1})--(\ref{DecompoHelm3}), we obtain that $\tilde{\u}\in\boldsymbol{\mV}^0_{-\beta}(\Om)$ and that the estimate (\ref{EstimBas}) is valid. Noting that 
$\|\tilde{\u}\|_{\Om}\leq C\,\|\tilde{\u}\|_{\boldsymbol{\mV}^0_{-\beta}(\Om)}$, this implies that the norms $\|\cdot\|_{\mX^{\mrm{out}}_N(\eps)}$ and  $\|\curl\cdot\|_{\Om}$  are equivalent in $\mX^{\mrm{out}}_N(\eps)$.
\end{proof}
\noindent Thanks to the previous lemma and by density of $\mathscr{C}^{\infty}_0(\Om\setminus\{O\})$ in $\mathring{\mV}^1_{\beta}(\Om)$,  the condition (\ref{defDivFaible-bis}) for  $\u=c\nabla s^{+}+\tilde{\u}\in\mX^{\mrm{out}}_N(\eps)$ is equivalent to 
\begin{equation}\label{defDivFaible-ter}
-c\int_{\Om}\div(\eps\nabla s^{+})\varphi\,dx+\int_{\Om}\eps\tilde{\u}\cdot\nabla\varphi\,dx=0,\qquad\forall\varphi\in\mathring{\mV}^1_{\beta}(\Om)
\end{equation}
where all the terms are well-defined as soon as $\beta$ satisfies (\ref{betainfbeta_0et1demi}).

\subsection{Definition of the problem for the electric field}

Our objective is to define the problem for the electric field as a variational formulation set in $\mX^{\mrm{out}}_N(\eps)$. For some $\gamma>0$, let $\J$ be an element of $\boldsymbol{\mV}^0_{-\gamma}(\Om)$ such that $\div\,\J=0$ in $\Om$. Consider the problem  
\begin{equation}\label{MainPbE}
	\begin{array}{|l}
		\mbox{Find }\u\in\mX^{\mrm{out}}_N(\eps)\mbox{ such that }\\
		\dsp\int_{\Om}\mu^{-1}\curl\u\cdot\curl\overline{\v}\,dx -\om^2 \fint_\Om \eps \u\cdot\overline{\v}\,dx
		= i\om\int_\Om \J\cdot\overline{\v}\,dx,\qquad \forall\v\in\mX^{\mrm{out}}_N(\eps),
	\end{array}
\end{equation}
where the term 
\begin{equation}\label{defInt}
\fint_\Om \eps \u\cdot\overline{\v}\,dx
\end{equation}
has to be carefully defined. The difficulty comes from the fact that $\mX^{\mrm{out}}_N(\eps)$ is not a subspace of $\Lspace^2(\Om)$ so that this quantity cannot be considered as a classical integral. \\
Let $\u=c_{\u}\nabla s^{+}+\tilde{\u}\in \mX^{\mrm{out}}_N(\eps)$. First, for $\tilde{\v}\in \boldsymbol{\mV}^0_{-\beta}(\Om)$ with $\beta>0$, it is natural to set 
\begin{equation}
	\label{def-newint-vtilde}\fint_\Om \eps \u\cdot\overline{\tilde{\v}}\,dx:=\int_\Om \eps \u\cdot\overline{\tilde{\v}}\,dx.
\end{equation}
To complete the definition, we have to give a sense to (\ref{defInt}) when $\v=\nabla s^+$. Proceeding as for the derivation of (\ref{defDivFaible-ter}), we start from the identity
\[
\int_\Om \eps \u\cdot\overline{\nabla\varphi}\,dx=-c_{\u}\int_{\Om}\div(\eps\nabla s^{+})\overline{\varphi} \,dx+\int_{\Om}\eps\tilde{\u}\cdot\overline{\nabla\varphi}\,dx,\qquad\forall\varphi\in\mathscr{C}^{\infty}_0(\Om\setminus\{O\}).
\]
By density of $\mathscr{C}^{\infty}_0(\Om\setminus\{O\})$ in $\mathring{\mV}^1_{\beta}(\Om)$, this leads to set 
\begin{equation}\label{def-newint-V1beta}
	\fint_\Om \eps \u\cdot \overline{\nabla\varphi}\,dx:=-c_{\u}\int_{\Om}\div(\eps\nabla s^{+})\overline{\varphi} \,dx+\int_{\Om}\eps\tilde{\u}\cdot\overline{\nabla\varphi}\,dx,\qquad\forall\varphi\in\mathring{\mV}^1_{\beta}(\Om).
\end{equation} 
With this definition, condition \eqref{defDivFaible-ter} can be written as 
$$
	\fint_\Om \eps \u\cdot \nabla \varphi\,dx=0,\qquad\forall\varphi\in\mathring{\mV}^1_{\beta}(\Om).
$$
In particular, since $s^+\in \mathring{\mV}^1_{\beta}(\Om)$, for all $\u\in\mX^{\mrm{out}}_N(\eps)$ we have
\begin{equation}\label{defDivFaible-4}
\fint_\Om \eps \u\cdot \nabla \overline{s^+}\,dx=0\qquad\mbox{and so}\qquad \int_\Omega \eps \tilde{\u}\cdot\nabla \overline{s^{+}}\,dx=c_{\u} \int_{\Om}\div(\eps\nabla{s^+})\overline{s^{+}}\,dx.
\end{equation}
Finally for all $\u=c_{\u}\nabla s^{+}+\tilde{\u}$ and  $\v=c_{\v}\nabla s^{+}+\tilde{\v}$ in $\mX^{\mrm{out}}_N(\eps)$, using \eqref{def-newint-vtilde} and \eqref{defDivFaible-4}, we find 
\[
\fint_\Om \eps \u\cdot\overline{\v}\,dx=\int_\Om \eps \u\cdot\overline{\tilde{\v}}\,dx=c_{\u}\int_\Omega \eps \nabla s^{+}\cdot\overline{\tilde{\v}}\,dx+\int_\Om \eps \tilde{\u}\cdot\overline{\tilde{\v}}\,dx.
\]
But since $\v\in\mX^{\mrm{out}}_N(\eps)$, we deduce from the second identity of \eqref{defDivFaible-4} that
\begin{equation}\label{RelationPart}
\int_\Omega \eps \nabla s^+\cdot\overline{\tilde{\v}}\,dx=\overline{c_{\v}} \int_{\Om}\div(\eps\nabla{\overline{s^{+}} )}{s^+}\,dx.
\end{equation}
Summing up, we get 
\begin{equation}\label{newint-epsuvbar}
	\fint_\Om \eps \u\cdot\overline{\v}\,dx=c_{\u}\overline{c_{\v}}\int_{\Om}\div(\eps\nabla{\overline{s^{+}} )}{s^+}\,dx+\int_\Om \eps \tilde{\u}\cdot\overline{\tilde{\v}}\,dx,\qquad \forall\u, \v\in\mX^{\mrm{out}}_N(\eps).
\end{equation}
\begin{remark}
Even if we use an integral symbol to keep the usual aspects of formulas and facilitate the reading, it is important to consider this new quantity as a  sesquilinear form 
$$(\u, \v) \mapsto 	\fint_\Om \eps \u\cdot\overline{\v}\,dx$$
on $\mX^{\mrm{out}}_N(\eps)\times\mX^{\mrm{out}}_N(\eps)$. In particular, we point out that this sesquilinear form 
is not hermitian on $\mX^{\mrm{out}}_N(\eps)\times\mX^{\mrm{out}}_N(\eps)$. Indeed, we have
\[
\overline{\fint_\Om \eps \v\cdot\overline{\u}\,dx}=\int_\Om \eps \tilde{\u}\cdot\overline{\tilde{\v}}\,dx+c_{\u}\overline{c_{\v}}\int_{\Om}\div(\eps\nabla{s^{+} )}{\overline{s^+}}\,dx
\]
so that 
\begin{equation}\label{FormulaAdjoint}
\fint_\Om \eps \u\cdot\overline{\v}\,dx-\overline{\fint_\Om \eps \v\cdot\overline{\u}\,dx}=2ic_{\u}\overline{c_{\v}}\,\Im m\,\bigg(\int_{\Om}\div(\eps\nabla\overline{s^{+}} )\,s^+\,dx\bigg).
\end{equation}
But 
Lemma \ref{lemmaNRJ} and assumption \eqref{eq-intPhineq0} show that
\[
\Im m\,\bigg(\int_{\Om}\div(\eps\nabla\overline{s^{+}} )\,s^+\,dx\bigg) \neq 0.
\]
\end{remark}
\noindent In the sequel, we denote by $a_N(\cdot,\cdot)$ (resp. $\ell_N(\cdot)$) the sesquilinear form (resp. the antilinear form) appearing in the left-hand side (resp. right-hand side) of (\ref{MainPbE}).

\subsection{Equivalent formulation}
Define the space 
\[
\Hspace_N^{\mrm{out}}(\curl):=\mrm{span}(\nabla s^+)\oplus\Hspace_N(\curl)\supset\mX^{\mrm{out}}_N(\eps)
\]
(without the divergence free condition) and consider the problem 
\begin{equation}\label{Formu2}
\begin{array}{|l}
\mbox{Find }\u\in \Hspace_N^{\mrm{out}}(\curl)\mbox{ such that }\\[2pt]
a_N(\u,\v)= \ell_N(\v),\quad\forall \v \in \Hspace_N^{\mrm{out}}(\curl),
\end{array}
\end{equation}
where  the definition of 
	$$\fint_\Om \eps \u\cdot\overline{\v}\,dx$$
	has to be extended to the space $\Hspace_N^{\mrm{out}}(\curl)$. Working exactly as in the beginning of the proof of Lemma \ref{LemmaNormeEquiv}, one can show that any $\u\in\Hspace_N^{\mrm{out}}(\curl)$ admits the decomposition 
\begin{equation}\label{DecompoGene}
\u=c_{\u}\nabla s^++\nabla\varphi_{\u}+\curl\boldsymbol{\psi}_{\u},
\end{equation}
with $c_{\u}\in\Cplx$, $\varphi_{\u}\in\mH^1_{0}(\Om)$ and $\boldsymbol{\psi}_{\u}\in \mX_T(1)$, such that $\curl\boldsymbol{\psi}_{\u}\in \boldsymbol{\mV}^0_{-\beta}(\Om)$, for $\beta$ satisfying (\ref{betainfbeta_0et1demi}). Then, for all $\u=c_{\u}\nabla s^++\nabla\varphi_{\u}+\curl\boldsymbol{\psi}_{\u}$ and $\v=c_{\v}\nabla s^++\nabla\varphi_{\v}+\curl\boldsymbol{\psi}_{\v}$ in $\Hspace_N^{\mrm{out}}(\curl)$, a natural extension of the previous definitions  leads to set
\begin{equation}\label{DefExtensionH}
\begin{array}{lcl}
\fint_{\Om}\eps\u\cdot\overline{\v}\,dx&:=&\int_{\Om}\eps\,(\nabla\varphi_{\u}+\curl\boldsymbol{\psi}_{\u})\cdot(\nabla\overline{\varphi_{\v}}+\curl\overline{\boldsymbol{\psi}_{\v}})\,dx\\[10pt]
& & +\int_{\Om}c_{\u}\,\eps\nabla s^+\cdot\overline{\curl\boldsymbol{\psi}_{\v}}+\overline{c_{\v}}\,\eps\,\curl\boldsymbol{\psi}_{\u}\cdot\nabla \overline{s^+}\,dx\\[10pt]
& & -\int_{\Om}c_{\u}\overline{c_{\v}}\,\div(\eps\nabla s^+)\overline{s^+}+c_{\u}\,\div(\eps\nabla s^+)\overline{\varphi_{\v}}+\overline{c_{\v}}\,\varphi_{\u}\div(\eps\nabla \overline{s^+})\,dx.
\end{array}
\end{equation}
Note that (\ref{DefExtensionH}) is indeed an extension of 
(\ref{newint-epsuvbar}). To show it, first observe that for $\u=c_{\u}\nabla s^++\nabla\varphi_{\u}+\curl\boldsymbol{\psi}_{\u}$, $\v=c_{\v}\nabla s^++\nabla\varphi_{\v}+\curl\boldsymbol{\psi}_{\v}$ in $\mX^{\mrm{out}}_N(\eps)$, the proof of Lemma \ref{LemmaNormeEquiv} guarantees that $\varphi_{\u}$, $\varphi_{\v}\in\mathring{\mV}^1_{-\beta}(\Om)$ with $\beta$ satisfying (\ref{betainfbeta_0et1demi}). This allows us to integrate by parts in the last two terms of (\ref{DefExtensionH}) to get
\begin{equation}\label{Calcul1}
\begin{array}{lcl}
\fint_{\Om}\eps\u\cdot\overline{\v}\,dx&:=&\int_{\Om}\eps\,(\nabla\varphi_{\u}+\curl\boldsymbol{\psi}_{\u})\cdot(\nabla\overline{\varphi_{\v}}+\curl\overline{\boldsymbol{\psi}_{\v}})\,dx\\[10pt]
& & +\int_{\Om}c_{\u}\,\eps\nabla s^+\cdot(\nabla\overline{\varphi_{\v}}+\overline{\curl\boldsymbol{\psi}_{\v}})+\overline{c_{\v}}\,\eps\,(\nabla\varphi_{\u}+\curl\boldsymbol{\psi}_{\u})\cdot\nabla \overline{s^+}\,dx\\[10pt]
& & -c_{\u}\overline{c_{\v}}\int_{\Om}\div(\eps\nabla s^+)\overline{s^+}\,dx.
\end{array}
\end{equation}
Using (\ref{defDivFaible-4}), (\ref{RelationPart}), the second line above can be written as 
\begin{equation}\label{Calcul2}
\begin{array}{ll}
&\int_{\Om}c_{\u}\,\eps\nabla s^+\cdot(\nabla\overline{\varphi_{\v}}+\overline{\curl\boldsymbol{\psi}_{\v}})+\overline{c_{\v}}\,\eps\,(\nabla\varphi_{\u}+\curl\boldsymbol{\psi}_{\u})\cdot\nabla \overline{s^+}\,dx\\[10pt]
=&c_{\u}\overline{c_{\v}}\int_{\Om}\div(\eps\nabla \overline{s^+})s^+\,dx+c_{\u}\overline{c_{\v}}\int_{\Om}\div(\eps\nabla s^+)\overline{s^+}\,dx.
\end{array}
\end{equation}
Inserting (\ref{Calcul2}) in (\ref{Calcul1}) yields exactly (\ref{newint-epsuvbar}).
\begin{lemma}\label{lemmaEquivE}
Under Assumptions 1 and 3,  the field $\u$ is a solution of (\ref{MainPbE}) if and only if it solves the problem (\ref{Formu2}).
\end{lemma}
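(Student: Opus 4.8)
The plan is to prove the two implications separately, exploiting that \eqref{MainPbE} is nothing but the restriction of \eqref{Formu2} to the test functions belonging to the subspace $\mX^{\mrm{out}}_N(\eps)\subset\Hspace_N^{\mrm{out}}(\curl)$. The whole mechanism rests on testing against curl-free (gradient) fields, for which the first term of $a_N$ drops out, together with the two structural facts $\div\,\J=0$ and the divergence constraint built into $\mX^{\mrm{out}}_N(\eps)$. Throughout I fix some $\beta$ satisfying \eqref{betainfbeta_0et1demi} and, in addition, $\beta\le\gamma$, which is harmless since the statement does not involve $\beta$.

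For the implication \eqref{Formu2} $\Rightarrow$ \eqref{MainPbE}, suppose $\u\in\Hspace_N^{\mrm{out}}(\curl)$ solves \eqref{Formu2}. I would first test with $\v=\nabla\varphi$ where $\varphi\in\mathscr{C}^{\infty}_0(\Om\setminus\{O\})$; such a $\v$ lies in $\Hspace_N(\curl)\subset\Hspace_N^{\mrm{out}}(\curl)$ and satisfies $\curl\v=0$, so the curl term of $a_N$ vanishes. Because $\nabla\varphi$ is supported away from $O$, the quantity $\fint_\Om\eps\u\cdot\overline{\nabla\varphi}\,dx$ coincides with the ordinary integral $\int_\Om\eps\u\cdot\overline{\nabla\varphi}\,dx$ (this is exactly the integration by parts underlying \eqref{def-newint-V1beta}), while $\ell_N(\nabla\varphi)=i\om\int_\Om\J\cdot\overline{\nabla\varphi}\,dx=0$ since $\div\,\J=0$. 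As $\om\neq0$ and $\mathscr{C}^{\infty}_0(\Om\setminus\{O\})$ is stable under conjugation, this gives $\int_\Om\eps\u\cdot\nabla\varphi\,dx=0$ for all $\varphi\in\mathscr{C}^{\infty}_0(\Om\setminus\{O\})$, which is precisely the weak divergence condition \eqref{defDivFaible}. Hence $\u\in\mX^{\mrm{out}}_N(\eps)$, and restricting the test functions in \eqref{Formu2} to $\mX^{\mrm{out}}_N(\eps)$ then yields \eqref{MainPbE}.

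For the converse, let $\u\in\mX^{\mrm{out}}_N(\eps)$ solve \eqref{MainPbE}; I must check $a_N(\u,\v)=\ell_N(\v)$ for every $\v\in\Hspace_N^{\mrm{out}}(\curl)$. The key step is the decomposition $\v=\v_0+\nabla\phi$ with $\v_0\in\mX^{\mrm{out}}_N(\eps)$ and $\phi\in\mathring{\mV}^{\mrm{out}}$. To build $\phi$, observe that $\varphi'\mapsto\fint_\Om\eps\v\cdot\nabla\overline{\varphi'}\,dx$ is a continuous antilinear form on $\mathring{\mV}^1_{\beta}(\Om)$ (continuity follows from \eqref{DecompoGene}, the formula \eqref{DefExtensionH} and $\div(\eps\nabla s^+)\in\mL^2(\Om)$). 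By Assumption 3 the operator $A^{\mrm{out}}_{\eps}:\mathring{\mV}^{\mrm{out}}\to(\mathring{\mV}^1_{\beta}(\Om))^{\ast}$ is an isomorphism, so there is a unique $\phi\in\mathring{\mV}^{\mrm{out}}$ with $\langle A^{\mrm{out}}_{\eps}\phi,\varphi'\rangle=\fint_\Om\eps\v\cdot\nabla\overline{\varphi'}\,dx$, i.e. $\fint_\Om\eps(\v-\nabla\phi)\cdot\nabla\overline{\varphi'}\,dx=0$ for all $\varphi'\in\mathring{\mV}^1_{\beta}(\Om)$. Setting $\v_0:=\v-\nabla\phi$ and using \eqref{DecompoGene}, one checks that $\v_0=c\nabla s^++\tilde{\v}_0$ with $\tilde{\v}_0\in\Lspace^2(\Om)$ and $\curl\v_0\in\Lspace^2(\Om)$, that $\v_0\times\nu=0$ on $\partial\Om$, and that $\div(\eps\v_0)=0$; hence $\v_0\in\mX^{\mrm{out}}_N(\eps)$.

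It then remains to see that the gradient part contributes nothing. On the one hand $a_N(\u,\nabla\phi)=-\om^2\fint_\Om\eps\u\cdot\overline{\nabla\phi}\,dx$ since $\curl\nabla\phi=0$, and this vanishes because $\phi\in\mathring{\mV}^{\mrm{out}}\subset\mathring{\mV}^1_{\beta}(\Om)$ while $\u$ satisfies \eqref{defDivFaible-ter}; here one uses that the extension \eqref{DefExtensionH} agrees with \eqref{def-newint-V1beta} on such arguments, which follows from the same integration by parts as in \eqref{Calcul1}--\eqref{Calcul2} (the boundary terms vanishing because $\varphi_{\u}\in\mathring{\mV}^1_{-\beta}(\Om)$). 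On the other hand $\ell_N(\nabla\phi)=i\om\int_\Om\J\cdot\nabla\overline{\phi}\,dx=0$: since $\overline{\phi}\in\mathring{\mV}^1_{\beta}(\Om)$ and $\J\in\boldsymbol{\mV}^0_{-\beta}(\Om)$, the form $\psi\mapsto\int_\Om\J\cdot\nabla\psi\,dx$ is continuous on $\mathring{\mV}^1_{\beta}(\Om)$ and vanishes on the dense subspace $\mathscr{C}^{\infty}_0(\Om\setminus\{O\})$ by $\div\,\J=0$. Combining, $a_N(\u,\v)=a_N(\u,\v_0)=\ell_N(\v_0)=\ell_N(\v)$, which proves \eqref{Formu2}. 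The main obstacle is the construction of the scalar potential $\phi$ in the nonstandard space $\mathring{\mV}^{\mrm{out}}$: it is exactly here that Assumption 3, encoding the hypersingular framework through the isomorphism property of $A^{\mrm{out}}_{\eps}$, is indispensable, the classical potential in $\mH^1_0(\Om)$ being unavailable because $A_{\eps}$ fails to be Fredholm in the critical case.
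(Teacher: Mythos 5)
Your first implication reproduces the paper's argument and is correct. The gap is in the converse, at the construction of the scalar potential $\phi$. You claim that $\varphi'\mapsto\fint_\Om\eps\v\cdot\nabla\overline{\varphi'}\,dx$ is a continuous antilinear form on $\mathring{\mV}^1_{\beta}(\Om)$ for a general $\v\in\Hspace_N^{\mrm{out}}(\curl)$, but it is not even well defined there. Writing $\v=c_{\v}\nabla s^++\nabla\varphi_{\v}+\curl\boldsymbol{\psi}_{\v}$ as in \eqref{DecompoGene}, the contributions of $c_{\v}\nabla s^+$ and of $\curl\boldsymbol{\psi}_{\v}\in\boldsymbol{\mV}^0_{-\beta}(\Om)$ do pair with $\nabla\varphi'\in\boldsymbol{\mV}^0_{\beta}(\Om)$, but the remaining term $\int_\Om\eps\nabla\varphi_{\v}\cdot\nabla\overline{\varphi'}\,dx$ involves $\nabla\varphi_{\v}$ which is only in $\Lspace^2(\Om)$: here $\varphi_{\v}$ is a generic element of $\mH^1_0(\Om)$, not of $\mathring{\mV}^1_{-\beta}(\Om)$, because $\v$ is not assumed to satisfy the divergence constraint and Lemma \ref{LemmaNormeEquiv} does not apply to it. An $\Lspace^2$ field against a $\boldsymbol{\mV}^0_{\beta}$ field with $\beta>0$ is not integrable in general; equivalently, $\div(\eps\v)$ contains $\div(\eps\nabla\varphi_{\v})\in(\mH^1_0(\Om))^{\ast}\setminus(\mathring{\mV}^1_{\beta}(\Om))^{\ast}$, so the equation $A^{\mrm{out}}_{\eps}\phi=\div(\eps\v)$ is meaningless. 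Worse, the decomposition you aim for simply does not exist: if $\v=\v_0+\nabla\phi$ with $\v_0\in\mX^{\mrm{out}}_N(\eps)$ and $\phi\in\mathring{\mV}^{\mrm{out}}$, then Lemma \ref{LemmaNormeEquiv} forces $\v-c\nabla s^+\in\boldsymbol{\mV}^0_{-\beta}(\Om)$ for some constant $c$, which already fails for $\v=\nabla\varphi$ with $\varphi\in\mH^1_0(\Om)\setminus\mathring{\mV}^1_{-\beta}(\Om)$ (such $\varphi$ exist, e.g.\ with behaviour $r^{\lambda}$, $-1/2<\lambda<-1/2+\beta$, near $O$).

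The repair is what the paper does: strip off the gradient part of the Helmholtz decomposition by hand, i.e.\ subtract $\nabla(c_{\v}s^++\varphi_{\v})$ directly, and only then invoke Assumption 3 to solve $A^{\mrm{out}}_{\eps}\zeta=-\div(\eps\,\curl\boldsymbol{\psi}_{\v})$, whose right-hand side genuinely belongs to $(\mathring{\mV}^1_{\beta}(\Om))^{\ast}$ since $\curl\boldsymbol{\psi}_{\v}\in\boldsymbol{\mV}^0_{-\beta}(\Om)$. This yields $\hat{\v}=\curl\boldsymbol{\psi}_{\v}-\nabla\zeta\in\mX^{\mrm{out}}_N(\eps)$ and $\v=\hat{\v}+\nabla\Phi$ with total scalar potential $\Phi=c_{\v}s^++\varphi_{\v}+\zeta$ lying in $\mrm{span}(s^+)\oplus\mH^1_0(\Om)$, not in $\mathring{\mV}^{\mrm{out}}$. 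Since that larger space is still contained in $\mathring{\mV}^1_{\beta}(\Om)$, the remainder of your argument --- that the gradient part contributes nothing to $a_N(\u,\cdot)$ by \eqref{defDivFaible-ter} nor to $\ell_N$ by $\div\,\J=0$ and density --- goes through unchanged.
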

\begin{proof}
If $\u\in\Hspace_N^{\mrm{out}}(\curl)$ satisfies (\ref{Formu2}), then taking $\v=\nabla\varphi$ with $\varphi\in\mathscr{C}^{\infty}_0(\Om\setminus\{O\})$ in (\ref{Formu2}), and using that $\div\,\J=0$ in $\Om$, we get \eqref{defDivFaible}, which implies that $\u\in\mX^{\mrm{out}}_N(\eps)$. This shows that $\u$ solves (\ref{MainPbE}).\\
\newline
Now assume that $\u\in\mX^{\mrm{out}}_N(\eps)\subset\Hspace_N^{\mrm{out}}(\curl)$ is a solution of (\ref{MainPbE}). Let $\v$ be an element of $\Hspace_N^{\mrm{out}}(\curl)$. As in (\ref{DecompoGene}), we have the decomposition 
\begin{equation}\label{DecompoGenev}
\v=c_{\v}\nabla s^++\nabla\varphi_{\v}+\curl\boldsymbol{\psi}_{\v},
\end{equation}
with $c_{\v}\in\Cplx$, $\varphi_{\v}\in\mH^1_{0}(\Om)$ and $\boldsymbol{\psi}_{\v}\in \mX_T(1)$ such that $\curl\boldsymbol{\psi}_{\v}\in \boldsymbol{\mV}^0_{-\beta}(\Om)$ for all $\beta$ satisfying (\ref{betainfbeta_0et1demi}). By Assumption 3, there is $\zeta \in \mathring{\mV}^{\mrm{out}}$ such that 
\begin{equation}\label{ResolPbAdd}
A^{\mrm{out}}_{\eps}\zeta=-\div(\eps\,\curl\boldsymbol{\psi}_{\v}) \in(\mathring{\mV}^1_{\beta}(\Om))^{\ast} .
\end{equation}
The function $\zeta$ decomposes as $\zeta=\alpha s^++\tilde{\zeta}$ with $\tilde{\zeta}\in\mathring{\mV}^1_{-\beta}(\Om)$. Finally, set 
\[
\hat{\v}=\curl\boldsymbol{\psi}_{\v}-\nabla \zeta=\v-\nabla(c_{\v}s^++\varphi_{\v}+\zeta).
\]
The function $\hat{\v}$ is in $\mX^{\mrm{out}}_N(\eps)$, it satisfies $\curl\hat{\v}=\curl\v$  and  from \eqref{defDivFaible-4}, 
we deduce that
\[
\fint_{\Om}\eps\u\cdot\overline{\hat{\v}}\,dx=\fint_{\Om}\eps\u\cdot\overline{\v}\,dx.
\]
Using also that $\J\in\boldsymbol{\mV}^0_{-\gamma}(\Om)$ for some $\gamma>0$ and is such that $\div\,\J=0$ in $\Om$, so that
\[
\int_\Om \J\cdot\overline{\hat{\v}}\,dx=\int_\Om \J\cdot\overline{\v}\,dx,
\]
this shows that $a_N(\u,\v)=a_N(\u,\hat{\v})=\ell_N(\hat{\v})=\ell_N(\v)$ and ends the proof. 
\end{proof}
\noindent In the following, we shall work with the formulation (\ref{MainPbE}) set in $\mX^{\mrm{out}}_N(\eps)$. The reason being that, as usual in the analysis of Maxwell's equations, the divergence free condition will yield a compactness property allowing us to deal with the term involving the frequency $\om$. 
\subsection{Main analysis for the electric field}\label{subsec-mainanalysisE}
\label{subsec-mainanalysisE}
\noindent Define the continuous operators $\mathbb{A}^{\mrm{out}}_N:\mX^{\mrm{out}}_N(\eps)\to(\mX^{\mrm{out}}_N(\eps))^{\ast}$ and $\mathbb{K}^{\mrm{out}}_N:\mX^{\mrm{out}}_N(\eps)\to(\mX^{\mrm{out}}_N(\eps))^{\ast}$ such that for all $\u,\,\v\in\mX^{\mrm{out}}_N(\eps)$, 
\[
\langle\mathbb{A}^{\mrm{out}}_N\u,\v\rangle = \int_{\Om}\mu^{-1}\curl\u\cdot\curl\overline{\v}\,dx,\qquad\qquad\langle\mathbb{K}^{\mrm{out}}_N\u,\v\rangle = \fint_{\Om}\eps\u\cdot\overline{\v}\,dx.
\]
With this notation, we have $\langle(\mathbb{A}^{\mrm{out}}_N+\mathbb{K}^{\mrm{out}}_N)\u,\v\rangle=a_{N}(\u,\v)$. 

\begin{proposition}\label{propoUniqueness}
Under Assumptions 1--3, the operator $\mathbb{A}^{\mrm{out}}_N:\mX^{\mrm{out}}_N(\eps)\to(\mX^{\mrm{out}}_N(\eps))^{\ast}$ is an isomorphism. 
\end{proposition}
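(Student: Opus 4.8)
The plan is to exploit the fact that the sesquilinear form behind $\mathbb{A}^{\mrm{out}}_N$ only sees the curls of its arguments, and to reduce the statement to the scalar problem governed by Assumption~2. Recall from Lemma~\ref{LemmaNormeEquiv} that $\curl$ is an isometry from $(\mX^{\mrm{out}}_N(\eps),\|\curl\cdot\|_{\Om})$ into $\Lspace^2(\Om)$. Introduce the closed subspace $W:=\{\w\in\Lspace^2(\Om)\,|\,\div\w=0\mbox{ in }\Om,\ \w\cdot\nu=0\mbox{ on }\partial\Om\}$, endowed with the $\Lspace^2$ inner product. My first objective is to prove that $\curl:\mX^{\mrm{out}}_N(\eps)\to W$ is an isometric isomorphism. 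The inclusion of the range in $W$ is immediate: for $\u\in\mX^{\mrm{out}}_N(\eps)$ one has $\div\curl\u=0$, while $\u\times\nu=0$ on $\partial\Om$ gives $\curl\u\cdot\nu=0$, since the normal component of a curl is the surface divergence of $\u\times\nu$.

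The heart of the matter, and the step I expect to be the main obstacle, is surjectivity. Given $\w\in W$, I would first invoke the vector potential representation of Proposition~\ref{propoPotential} (this is where Assumption~1 enters: simple connectedness of $\Om$ and connectedness of $\partial\Om$ rule out any flux constraints on $\w$), together with the weighted regularity of Proposition~\ref{propoLaplacienVect}, to produce a field $\u_0$ with $\curl\u_0=\w$, $\u_0\times\nu=0$ on $\partial\Om$, and $\u_0\in\boldsymbol{\mV}^0_{-\beta}(\Om)$. The field $\u_0$ need not satisfy the divergence constraint, so, proceeding exactly as in the proof of Lemma~\ref{LemmaNormeEquiv}, I would correct it by a gradient. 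Since $\varphi'\mapsto\int_\Om\eps\,\u_0\cdot\nabla\overline{\varphi'}\,dx$ is a continuous antilinear form on $\mathring{\mV}^1_\beta(\Om)$ and $A^{\mrm{out}}_\eps:\mathring{\mV}^{\mrm{out}}\to(\mathring{\mV}^1_\beta(\Om))^{\ast}$ is an isomorphism (Assumption~3), there is a unique $q=c\,s^++\tilde q\in\mathring{\mV}^{\mrm{out}}$ whose image under $A^{\mrm{out}}_\eps$ equals that form. Then $\u:=\u_0-\nabla q=c\,\nabla s^++(\u_0-\nabla\tilde q)$ lies in $\mX^{\mrm{out}}_N(\eps)$, satisfies $\div(\eps\u)=0$ in the sense of (\ref{defDivFaible}), and still has $\curl\u=\w$. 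This shows $\curl$ is onto $W$; combined with Lemma~\ref{LemmaNormeEquiv} it is an isometric isomorphism. The delicate point here is precisely that the correction must respect the nonstandard constraint built on $s^+$, which is exactly what the isomorphism $A^{\mrm{out}}_\eps$ of Assumption~3 provides.

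With this reduction in hand, $\langle\mathbb{A}^{\mrm{out}}_N\u,\v\rangle=\langle\mathbb{B}\,\curl\u,\curl\v\rangle$ where $\mathbb{B}:W\to W^{\ast}$ is defined by $\langle\mathbb{B}\w,\w'\rangle=\int_\Om\mu^{-1}\w\cdot\overline{\w'}\,dx$; since $\curl$ is an isometric isomorphism, $\mathbb{A}^{\mrm{out}}_N$ is an isomorphism if and only if $\mathbb{B}$ is. To treat $\mathbb{B}$ I would use the orthogonal decomposition $\Lspace^2(\Om)=W\oplus\nabla\mH^1_{\#}(\Om)$, so that $W^{\perp}=\nabla\mH^1_{\#}(\Om)$. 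Given $\ell\in W^{\ast}$ with $\Lspace^2$-representative $\boldsymbol{g}$, solving $\mathbb{B}\w=\ell$ amounts to finding $\w\in W$ with $\mu^{-1}\w-\boldsymbol{g}\in\nabla\mH^1_{\#}(\Om)$, i.e. $\w=\mu(\boldsymbol{g}+\nabla\phi)$ for some $\phi\in\mH^1_{\#}(\Om)$; imposing $\w\in W$ then reads exactly as the variational equation $\langle A_\mu\phi,\chi\rangle=-\int_\Om\mu\,\boldsymbol{g}\cdot\nabla\overline{\chi}\,dx$ for all $\chi\in\mH^1_{\#}(\Om)$. By Assumption~2, $A_\mu$ is an isomorphism, so $\phi$ exists and is unique, and $\w=\mu(\boldsymbol{g}+\nabla\phi)$ is the unique solution; the same computation run backwards gives injectivity. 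Hence $\mathbb{B}$, and therefore $\mathbb{A}^{\mrm{out}}_N$, is an isomorphism.

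It is worth emphasizing that the possible sign change of $\mu^{-1}$ is dealt with through this duality $\mu\leftrightarrow\mu^{-1}$: the indefinite form on divergence-free fields is converted, via the Lagrange multiplier $\phi$, into the scalar problem controlled by Assumption~2, with no further $T$-coercivity argument needed at the vector level. Thus the only genuinely technical ingredient is the surjectivity of $\curl$ onto $W$ compatibly with the constraint $\div(\eps\u)=0$, which rests on Assumption~3 and the potential representations of Propositions~\ref{propoPotential} and~\ref{propoLaplacienVect}.
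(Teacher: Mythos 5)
Your proof is correct, and every estimate you invoke is available in the paper (for the weighted regularity $\u_0=\boldsymbol{\psi}\in\boldsymbol{\mV}^0_{-\beta}(\Om)$ the precise reference is Lemma \ref{LemmaWeightedClaBis} rather than Proposition \ref{propoLaplacienVect}, although the former rests on the latter, so your argument via $\boldsymbol{\psi}=\curl\boldsymbol{\chi}$ with $\boldsymbol{\chi}\in\mX_T(1)$ and $-\boldsymbol{\Delta\chi}=\w$ also goes through). The route differs from the paper's in its packaging rather than in its ingredients. The paper uses the $\mathbb{T}$-coercivity technique: it builds $\mathbb{T}:\mX^{\mrm{out}}_N(\eps)\to\mX^{\mrm{out}}_N(\eps)$ with $\int_{\Om}\mu^{-1}\curl\u\cdot\curl\overline{\mathbb{T}\v}\,dx=\int_{\Om}\curl\u\cdot\curl\overline{\v}\,dx$ by exactly your three steps --- solve the scalar $A_{\mu}$ problem for $\zeta$, take the vector potential $\boldsymbol{\psi}\in\mX_N(1)$ of $\mu(\curl\v-\nabla\zeta)$, correct by a gradient using the isomorphism $A^{\mrm{out}}_{\eps}$ --- and then concludes by Lax--Milgram together with the symmetry identity $\mathbb{T}^{\ast}\mathbb{A}^{\mrm{out}}_N=\mathbb{A}^{\mrm{out}}_N\mathbb{T}$. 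You instead factor $\mathbb{A}^{\mrm{out}}_N=C^{\ast}\mathbb{B}\,C$ with $C=\curl$, prove that $C$ is an isomorphism onto $W$ (your surjectivity step is literally the paper's steps 2)--3) applied to an arbitrary $\w\in W$ rather than to $\mu(\curl\v-\nabla\zeta)$), and invert $\mathbb{B}$ by hand through the orthogonal Helmholtz decomposition and Assumption 2 (the paper's step 1). What your version buys is a cleaner conclusion --- bijectivity is read off directly, with no need for the final adjoint/symmetry argument --- and an explicit structural statement, namely that $\curl$ maps $\mX^{\mrm{out}}_N(\eps)$ isometrically onto the whole space of divergence-free $\Lspace^2$ fields with vanishing normal trace, which the paper leaves implicit. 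What it costs is nothing of substance: both proofs rest on the same three isomorphisms ($A_{\mu}$ from Assumption 2, the potential representations of Proposition \ref{propoPotential} under Assumption 1, and $A^{\mrm{out}}_{\eps}$ under Assumption 3), and on Lemma \ref{LemmaNormeEquiv} to make $\|\curl\cdot\|_{\Om}$ a norm on $\mX^{\mrm{out}}_N(\eps)$.
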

\begin{proof}
Let us construct a continuous operator $\mathbb{T}:\mX^{\mrm{out}}_N(\eps)\to\mX^{\mrm{out}}_N(\eps)$ such that for all $\u,\,\v\in\mX^{\mrm{out}}_N(\eps)$, 
\[
\int_{\Om}\mu^{-1}\curl\u\cdot\curl(\overline{\mathbb{T}\v})\,dx=\int_{\Om}\curl\u\cdot\curl\overline{\v}\,dx.
\]
To proceed, we adapt the method presented in \cite{BoCC14}. Assume that $\v\in\mX^{\mrm{out}}_N(\eps)$ is given. We construct $\mathbb{T}\v$ in three steps.\\
\newline
1) Since $\curl\v\in\Lspace^2(\Omega)$ and $A_{\mu}:\mH^1_{\#}(\Om)\to(\mH^1_{\#}(\Om))^{\ast}$ is an isomorphism, there is a unique $\zeta\in\mH^1_{\#}(\Om)$ such that 
\[
\int_{\Om}\mu\nabla\zeta\cdot\nabla\zeta'\,dx=\int_{\Om}\mu\,\curl\v\cdot\nabla\zeta'\,dx,\qquad\forall\zeta'\in\mH^1_{\#}(\Om).
\]
Then the field  $\mu(\curl\v-\nabla\zeta)\in\Lspace^2(\Omega) $ is divergence free in $\Om$ and satisfies $\mu(\curl\v-\nabla\zeta)\cdot\nu=0$ on $\partial\Om$.\\
\newline
2) From item $ii)$ of Proposition \ref{propoPotential}, we infer that there is $\boldsymbol{\psi}\in \mX_N(1)$ such that
$$
\mu(\curl\v-\nabla\zeta)= \curl \boldsymbol{\psi}.
$$
Thanks to Lemma \ref{LemmaWeightedClaBis}, we deduce that $\boldsymbol{\psi}\in \boldsymbol{\mV}^0_{-\beta}(\Om)$ for all $\beta\in(0;1/2)$ and a fortiori  for  $\beta$ satisfying (\ref{betainfbeta_0et1demi}).\\
\newline
3) Suppose now that   $\beta$ satisfies (\ref{betainfbeta_0et1demi}). Then we know from the previous step that  $\div(\eps \boldsymbol{\psi})\in(\mathring{\mV}^1_{\beta}(\Om))^{\ast}$. On the other hand, by Assumption 3,  $A^{\mrm{out}}_{\eps}:\mathring{\mV}^{\mrm{out}}\to(\mathring{\mV}^1_{\beta}(\Om))^{\ast}$ is an isomorphism. Consequently we can introduce $\varphi\in \mathring{\mV}^{\mrm{out}}$ such that $A^{\mrm{out}}_{\eps}\varphi=-\div(\eps \boldsymbol{\psi})$.\\ 
\newline
Finally, we set $\mathbb{T}\v=\boldsymbol{\psi}-\nabla \varphi$. Clearly $\mathbb{T}\v$ is an element of $\mX^{\mrm{out}}_N(\eps)$. Moreover, for all $\u$, $\v$ in $\mX^{\mrm{out}}_N(\eps)$, we have 
\[
\begin{array}{lcl}
\dsp\int_\Om \mu^{-1} \curl\u\cdot\curl\overline{\mathbb{T}\v}\,dx&=&\dsp\int_\Om \mu^{-1} \curl\u\cdot\curl\overline{\boldsymbol{\psi}}\,dx\\[10pt]
&=&\dsp\int_\Om  \curl\u\cdot\curl\overline{\v}\,dx-\dsp\int_\Om  \curl\u\cdot\nabla\overline{\zeta}\,dx\\[10pt]
&=&\dsp\int_\Om  \curl\u\cdot\curl\overline{\v}\,dx.
\end{array}
\]
From Lemma \ref{LemmaNormeEquiv} and the Lax-Milgram theorem, we deduce that $\mathbb{T}^{\ast}\mathbb{A}^{\mrm{out}}_N:\mX^{\mrm{out}}_N(\eps)\to(\mX^{\mrm{out}}_N(\eps))^{\ast}$ is an isomorphism. And by symmetry, permuting the roles of $\u$ and $\v$, it is obvious that $\mathbb{T}^{\ast}\mathbb{A}^{\mrm{out}}_N=\mathbb{A}^{\mrm{out}}_N\mathbb{T}$, which allows us to conclude that $\mathbb{A}^{\mrm{out}}_N:\mX^{\mrm{out}}_N(\eps)\to(\mX^{\mrm{out}}_N(\eps))^{\ast}$ is an isomorphism.
\end{proof}

\begin{proposition}\label{PropCompactE}
Under Assumptions 1 and 3, if $(\u_n=c_n\nabla s^++\tilde{\u}_n)$ is a sequence which is bounded in $\mX^{\mrm{out}}_N(\eps)$, then we can extract a subsequence such that $(c_n)$ and $(\tilde{\u}_n)$ converge respectively in $\mathbb{C}$ and in $\boldsymbol{\mV}^0_{-\beta}(\Om)$ for $\beta$ satisfying (\ref{betainfbeta_0et1demi}). As a consequence, the operator $\mathbb{K}^{\mrm{out}}_N:\mX^{\mrm{out}}_N(\eps)\to(\mX^{\mrm{out}}_N(\eps))^{\ast}$ is compact.
\end{proposition}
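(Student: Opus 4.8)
The plan is to prove the stated sequential compactness first, and then deduce the compactness of $\mathbb{K}^{\mrm{out}}_N$ from it by an elementary duality estimate. Let $(\u_n=c_n\nabla s^++\tilde{\u}_n)$ be bounded in $\mX^{\mrm{out}}_N(\eps)$. By Lemma \ref{LemmaNormeEquiv} the norm is equivalent to $\|\curl\cdot\|_\Om$, so $(\curl\u_n)$ is bounded in $\Lspace^2(\Om)$ and, by (\ref{EstimBas}), $(c_n)$ is bounded in $\Cplx$ while $(\tilde{\u}_n)$ is bounded in $\boldsymbol{\mV}^0_{-\beta}(\Om)$. Exactly as in the proof of Lemma \ref{LemmaNormeEquiv}, I would decompose $\tilde{\u}_n=\nabla\varphi_n+\curl\boldsymbol{\psi}_n$ with $\boldsymbol{\psi}_n\in\mX_T(1)$ and $\varphi_n\in\mathring{\mV}^1_{-\beta}(\Om)$, the gradient part being tied to the solenoidal one through the relation $A^{\mrm{out}}_\eps(c_n s^++\varphi_n)=-\div(\eps\,\curl\boldsymbol{\psi}_n)$ in $(\mathring{\mV}^1_\beta(\Om))^{\ast}$. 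The task is then to upgrade the available boundedness into strong convergence of each part of $\tilde{\u}_n$ in $\boldsymbol{\mV}^0_{-\beta}(\Om)$, along a common subsequence.

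The core of the argument, and the step I expect to be the main obstacle, is the strong convergence of the solenoidal part. Since $\boldsymbol{\psi}_n\in\mX_T(1)$ and $-\boldsymbol{\Delta}\boldsymbol{\psi}_n=\curl\u_n$ is bounded in $\Lspace^2(\Om)$, I would invoke the weighted elliptic regularity for the plain vector Laplacian already used in Lemma \ref{LemmaNormeEquiv} (Proposition \ref{propoLaplacienVect}), but pushed one order further, so as to control $\curl\boldsymbol{\psi}_n$ not merely in $\boldsymbol{\mV}^0_{-\beta}(\Om)$ but in a Kondratiev space carrying one extra degree of regularity (of type $\boldsymbol{\mV}^1$ near $O$). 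The gain of one derivative is decisive: by the Kondratiev compact embedding theorem such a space embeds compactly into $\boldsymbol{\mV}^0_{-\beta}(\Om)$ as soon as $\beta<1$, which holds under (\ref{betainfbeta_0et1demi}). This yields a subsequence along which $\curl\boldsymbol{\psi}_n$ converges strongly in $\boldsymbol{\mV}^0_{-\beta}(\Om)$ to some $\boldsymbol{g}$. The delicate point is that $\curl\u_n$ is only bounded, not convergent, in $\Lspace^2(\Om)$, so the compactness cannot come from the data but must be extracted from the regularising effect of $-\boldsymbol{\Delta}$ together with the strict inequality $\beta<1$; one must also check that the weight line $0$ is admissible for the plain Laplacian (no Mellin exponent lies on it), which is what legitimises the extra-regularity estimate.

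Once the solenoidal part converges strongly, the gradient part follows for free from Assumption 3. Indeed, the map $\boldsymbol{h}\mapsto-\div(\eps\,\boldsymbol{h})$ is continuous from $\boldsymbol{\mV}^0_{-\beta}(\Om)$ into $(\mathring{\mV}^1_\beta(\Om))^{\ast}$, so $-\div(\eps\,\curl\boldsymbol{\psi}_n)$ converges strongly in $(\mathring{\mV}^1_\beta(\Om))^{\ast}$; since $A^{\mrm{out}}_\eps:\mathring{\mV}^{\mrm{out}}\to(\mathring{\mV}^1_\beta(\Om))^{\ast}$ is an isomorphism, $c_n s^++\varphi_n=(A^{\mrm{out}}_\eps)^{-1}(-\div(\eps\,\curl\boldsymbol{\psi}_n))$ converges strongly in $\mathring{\mV}^{\mrm{out}}$. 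By definition of the norm (\ref{WPNonStandard}), this gives both $c_n\to c$ in $\Cplx$ and $\varphi_n\to\varphi$ in $\mathring{\mV}^1_{-\beta}(\Om)$, hence $\nabla\varphi_n\to\nabla\varphi$ in $\boldsymbol{\mV}^0_{-\beta}(\Om)$. Adding the two contributions shows $\tilde{\u}_n\to\tilde{\u}:=\nabla\varphi+\boldsymbol{g}$ strongly in $\boldsymbol{\mV}^0_{-\beta}(\Om)$, which is the first claim.

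Finally, to obtain the compactness of $\mathbb{K}^{\mrm{out}}_N$, I would use the explicit expression (\ref{newint-epsuvbar}): for any $\v=c_\v\nabla s^++\tilde{\v}\in\mX^{\mrm{out}}_N(\eps)$,
\[
\langle(\mathbb{K}^{\mrm{out}}_N\u_n-\mathbb{K}^{\mrm{out}}_N\u_m),\v\rangle=(c_n-c_m)\overline{c_\v}\int_\Om\div(\eps\nabla\overline{s^+})s^+\,dx+\int_\Om\eps\,(\tilde{\u}_n-\tilde{\u}_m)\cdot\overline{\tilde{\v}}\,dx.
\]
Bounding the first term by $C\,|c_n-c_m|\,\|\v\|_{\mX^{\mrm{out}}_N(\eps)}$, and the second, via Cauchy--Schwarz with the weight split $r^{-\beta}r^{\beta}$ together with $\boldsymbol{\mV}^0_{-\beta}(\Om)\subset\Lspace^2(\Om)\subset\boldsymbol{\mV}^0_{\beta}(\Om)$, by $C\,\|\tilde{\u}_n-\tilde{\u}_m\|_{\boldsymbol{\mV}^0_{-\beta}(\Om)}\,\|\v\|_{\mX^{\mrm{out}}_N(\eps)}$, I would obtain
\[
\|\mathbb{K}^{\mrm{out}}_N\u_n-\mathbb{K}^{\mrm{out}}_N\u_m\|_{(\mX^{\mrm{out}}_N(\eps))^{\ast}}\le C\big(|c_n-c_m|+\|\tilde{\u}_n-\tilde{\u}_m\|_{\boldsymbol{\mV}^0_{-\beta}(\Om)}\big).
\]
Along the subsequence constructed above the right-hand side tends to $0$, so $(\mathbb{K}^{\mrm{out}}_N\u_n)$ is Cauchy in $(\mX^{\mrm{out}}_N(\eps))^{\ast}$ and therefore converges; this is precisely the compactness of $\mathbb{K}^{\mrm{out}}_N$.
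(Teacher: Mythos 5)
Your proposal is correct and follows essentially the same route as the paper: the same decomposition $\tilde{\u}_n=\nabla\varphi_n+\curl\boldsymbol{\psi}_n$, compactness of the solenoidal part via weighted elliptic regularity for $-\boldsymbol{\Delta}$ (this is exactly Proposition \ref{propoLaplacienVectcompact}, which the paper simply cites), convergence of $(c_n,\varphi_n)$ through the isomorphism $A^{\mrm{out}}_{\eps}$, and the final duality estimate for $\mathbb{K}^{\mrm{out}}_N$. The only quibble is that the Kondratiev compact embedding you invoke actually requires $\beta<1/2$ rather than $\beta<1$ (one lands in $\boldsymbol{\mV}^1_{\gamma}$ only for $\gamma>1/2$, and the embedding into $\boldsymbol{\mV}^0_{-\beta}$ needs $\gamma-1<-\beta$), which is harmless since (\ref{betainfbeta_0et1demi}) already imposes it.
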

\begin{proof} 
	
Let $(\u_n)$ be a bounded sequence of elements of $\mX^{\mrm{out}}_N(\eps)$. From the proof of Lemma \ref{LemmaNormeEquiv}, we know that for $n\in\N$, we have 
\begin{equation}\label{DecompoCompa}
\u_n=c_n\nabla s^++\nabla\varphi_n+\curl\boldsymbol{\psi}_n
\end{equation}
where the sequences $(c_n)$, $(\varphi_n)$, $(\boldsymbol{\psi}_n)$ and $(\curl\boldsymbol{\psi}_n)$ are bounded respectively in $\mathbb{C}$, $\mathring{\mV}^1_{-\beta}(\Om)$, $\mX_T(1)$ and $\boldsymbol{\mV}^0_{-\beta}(\Om)$. Observing that $\curl\u_n=\curl\curl\boldsymbol{\psi_n}=-\boldsymbol{\Delta \psi_n} $ is bounded in $\Lspace^2(\Omega)$, we deduce from Proposition \ref{propoLaplacienVectcompact} that there exists a subsequence such that  $\curl\boldsymbol{\psi_n}$ converges in $\boldsymbol{\mV}^0_{-\beta}(\Om)$.  Moreover, by  \eqref{DecompoHelm3}, we have
\[
|c_n-c_m|+\|\varphi_n-\varphi_m\|_{\mV^1_{-\beta}(\Om)}\leq C\|\curl(\boldsymbol{\psi}_n-\boldsymbol{\psi}_m)\|_{\boldsymbol{\mV}^0_{-\beta}(\Om)},
\]
which implies that $(c_n)$ and $(\varphi_n)$ converge respectively in $\mathbb{C}$ and in $\mathring{\mV}^1_{-\beta}(\Om)$. From (\ref{DecompoCompa}), we see that this is enough to conclude to the first part of the proposition.\\
Finally, observing that
\[
\|\mathbb{K}_N^{\mrm{out}}\u\|_{(\mX^{\mrm{out}}_N(\eps))^{\ast}} \le C\,(\|\tilde{\u}\|_{\boldsymbol{\mV}^0_{-\beta}(\Om)}+|c_{\u}|),
\]
we deduce that $\mathbb{K}^{\mrm{out}}_N:\mX^{\mrm{out}}_N(\eps)\to(\mX^{\mrm{out}}_N(\eps))^{\ast}$ is a compact operator.
\end{proof}
\noindent We can now state the main theorem of the analysis of the problem for the electric field.
\begin{theorem}\label{MainThmE}
Under Assumptions 1--3, for all $\om\in\R$ the operator $\mathbb{A}^{\mrm{out}}_N-\om^2\mathbb{K}^{\mrm{out}}_N:\mX^{\mrm{out}}_N(\eps)\to(\mX^{\mrm{out}}_N(\eps))^{\ast}$ is Fredholm of index zero. 
\end{theorem}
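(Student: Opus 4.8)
The plan is to write the operator as $\mathbb{A}^{\mrm{out}}_N-\om^2\mathbb{K}^{\mrm{out}}_N$ and exploit the fact that the two building blocks have already been analysed separately in Propositions \ref{propoUniqueness} and \ref{PropCompactE}. The guiding principle is the classical Fredholm-analytic strategy used throughout the theory of Maxwell's equations: isolate the ``principal part'' governed by the curl--curl term, show it is an isomorphism, and then treat the frequency term as a relatively compact perturbation.

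More precisely, I would argue as follows. By Proposition \ref{propoUniqueness}, under Assumptions 1--3 the operator $\mathbb{A}^{\mrm{out}}_N:\mX^{\mrm{out}}_N(\eps)\to(\mX^{\mrm{out}}_N(\eps))^{\ast}$ is an isomorphism; in particular it is Fredholm of index zero. By Proposition \ref{PropCompactE}, under Assumptions 1 and 3 the operator $\mathbb{K}^{\mrm{out}}_N:\mX^{\mrm{out}}_N(\eps)\to(\mX^{\mrm{out}}_N(\eps))^{\ast}$ is compact, and since $\om\in\R$ is a fixed real number, $\om^2\mathbb{K}^{\mrm{out}}_N$ is also compact. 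The final step is then to invoke the standard stability result that the sum of a Fredholm operator of index zero and a compact operator is again Fredholm of index zero. This immediately yields that $\mathbb{A}^{\mrm{out}}_N-\om^2\mathbb{K}^{\mrm{out}}_N$ is Fredholm of index zero, for every $\om\in\R$.

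I do not expect any serious obstacle here, since the theorem is essentially an assembly of the two preceding propositions via a standard perturbation argument. The only point requiring mild care is to confirm that the compactness of $\mathbb{K}^{\mrm{out}}_N$ really holds in the operator-norm topology of $\mathcal{L}(\mX^{\mrm{out}}_N(\eps),(\mX^{\mrm{out}}_N(\eps))^{\ast})$, which is exactly what Proposition \ref{PropCompactE} provides through the sequential extraction of convergent subsequences for $(c_n)$ and $(\tilde{\u}_n)$ together with the bound on $\|\mathbb{K}_N^{\mrm{out}}\u\|_{(\mX^{\mrm{out}}_N(\eps))^{\ast}}$. Once this is granted, the index-zero Fredholm conclusion is automatic and uniform in $\om$. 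It is worth noting that this theorem establishes Fredholmness but not invertibility: injectivity (and hence well-posedness) would require additional arguments, presumably via the limiting absorption principle announced in the introduction, and is not claimed at this stage.
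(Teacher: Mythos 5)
Your proposal is correct and follows exactly the paper's own argument: invoke Proposition \ref{propoUniqueness} for the isomorphism property of $\mathbb{A}^{\mrm{out}}_N$, Proposition \ref{PropCompactE} for the compactness of $\mathbb{K}^{\mrm{out}}_N$, and conclude by the standard stability of the Fredholm property of index zero under compact perturbation. Nothing is missing.
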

\begin{proof}
Since $\mathbb{K}^{\mrm{out}}_N:\mX^{\mrm{out}}_N(\eps)\to(\mX^{\mrm{out}}_N(\eps))^{\ast}$ is compact (Proposition \ref{PropCompactE}) and $\mathbb{A}^{\mrm{out}}_N:\mX^{\mrm{out}}_N(\eps)\to(\mX^{\mrm{out}}_N(\eps))^{\ast}$ is an isomorphism (Proposition \ref{propoUniqueness}),  $\mathbb{A}^{\mrm{out}}_N-\om^2\mathbb{K}^{\mrm{out}}_N:\mX^{\mrm{out}}_N(\eps)\to(\mX^{\mrm{out}}_N(\eps))^{\ast}$ is Fredholm of index zero. 
\end{proof}
\noindent The previous theorem guarantees that the problem (\ref{MainPbE}) is well-posed if and only if uniqueness holds, that is if and only if the only solution for $\J=0$ is $\u=0$. Since uniqueness holds for $\om=0$, one can prove with the analytic Fredholm theorem that (\ref{MainPbE}) is well-posed except for at most a countable set of values of $\om$ with no accumulation points (note that Theorem \ref{MainThmE} remains true for $\om\in\Cplx$). \\
However this result is not really relevant from a physical point of view. Indeed, negative values of $\eps$ can occur only if $\eps$ is itself a function of $\om$. For instance,  if the inclusion  $\mathcal{M}$ is metallic,  it is commonly admitted that the Drude's law gives a good model for $\eps$. But taking into account the dependence of $\eps$ with respect to $\om$ when studying uniqueness of problem (\ref{MainPbE}) leads to a non-linear eigenvalue problem, where the functional space $\mX^{\mrm{out}}_N(\eps)$ itself depends on $\om$. This study is beyond the scope of the present paper (see \cite{HaPa17} for such questions in the case of the 2D scalar problem). \\
Nonetheless, there is a result that we can prove concerning the cases of non-uniqueness for problem (\ref{MainPbE}). 
\begin{proposition}\label{trappedmodes}
If $\u=c\nabla s^{+}+\tilde{\u}\in\mX^{\mrm{out}}_N(\eps)$ is a solution of (\ref{MainPbE}) for $\J=0$, then $c=0$ and $\u\in\mX_N(\eps)$.  
\end{proposition}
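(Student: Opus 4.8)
The plan is to exploit the fact, recorded in \eqref{FormulaAdjoint}, that the sesquilinear form $(\u,\v)\mapsto\fint_\Om\eps\,\u\cdot\overline{\v}\,dx$ fails to be hermitian \emph{only} through the product of the singular coefficients $c_\u\overline{c_\v}$. This makes an energy (imaginary-part) argument single out $c$ directly. First I would simply test the homogeneous problem \eqref{MainPbE} (with $\J=0$) against $\v=\u$, which is legitimate since $\u\in\mX^{\mrm{out}}_N(\eps)$. This gives
\[
\int_{\Om}\mu^{-1}\curl\u\cdot\curl\overline{\u}\,dx-\om^2\fint_\Om\eps\,\u\cdot\overline{\u}\,dx=0.
\]
Because $\mu^{-1}$ is real-valued, the first term is $\int_\Om\mu^{-1}|\curl\u|^2\,dx\in\R$, and $\om^2\in\R$ as $\om\in\R$. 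Taking the imaginary part therefore kills the magnetic term and leaves $\om^2\,\Im m\big(\fint_\Om\eps\,\u\cdot\overline{\u}\,dx\big)=0$.

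Next I would evaluate this imaginary part through \eqref{FormulaAdjoint} specialized to $\v=\u$, so that $c_\u=c_\v=c$ and the left-hand side becomes $2i\,\Im m\big(\fint_\Om\eps\,\u\cdot\overline{\u}\,dx\big)$. This yields the clean identity
\[
\Im m\Big(\fint_\Om\eps\,\u\cdot\overline{\u}\,dx\Big)=|c|^2\,\Im m\Big(\int_{\Om}\div(\eps\nabla\overline{s^{+}})\,s^{+}\,dx\Big).
\]
By Lemma \ref{lemmaNRJ} together with the sign normalization \eqref{eq-intPhineq0}, the scalar factor $\Im m\big(\int_{\Om}\div(\eps\nabla\overline{s^{+}})\,s^{+}\,dx\big)$ is nonzero. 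Since the standing assumption is $\om\neq0$, combining the last two displays forces $|c|^2=0$, i.e. $c=0$ (for the excluded value $\om=0$, Proposition \ref{propoUniqueness} already gives $\u=0$, hence $c=0$ a fortiori).

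Once $c=0$ is established, the field reduces to $\u=\tilde{\u}\in\Lspace^2(\Om)$ with $\curl\u\in\Lspace^2(\Om)$ and $\u\times\nu=0$ on $\partial\Om$, so $\u\in\Hspace_N(\curl)$; moreover the weak divergence condition \eqref{defDivFaible-bis} collapses, for $c=0$, to $\int_\Om\eps\,\tilde{\u}\cdot\nabla\varphi\,dx=0$ for all $\varphi\in\mathscr{C}^\infty_0(\Om)$, which is exactly $\div(\eps\u)=0$ on all of $\Om$ (the conical tip $O$ is no longer an excluded point). Hence $\u\in\mX_N(\eps)$, as claimed. The substantive content here is entirely borrowed from \eqref{FormulaAdjoint} and Lemma \ref{lemmaNRJ}; the only place where care is genuinely needed is the conjugation bookkeeping in specializing \eqref{FormulaAdjoint} to $\v=\u$ and in checking that the magnetic term is real, after which the statement is a direct consequence expressing that the singular ``black-hole'' component $\nabla s^+$ would carry a nonzero outgoing energy flux, which is incompatible with a source-free (trapped) solution.
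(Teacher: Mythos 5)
Your proof is correct and follows essentially the same route as the paper's: test the homogeneous problem with $\v=\u$, take the imaginary part, reduce it via the non-hermitian defect of the form $\fint_\Om\eps\,\cdot\,\overline{\cdot}\,dx$ to $|c|^2\,\Im m\big(\int_{\Om}\div(\eps\nabla\overline{s^{+}})\,s^{+}\,dx\big)=0$, and conclude with Lemma \ref{lemmaNRJ} and \eqref{eq-intPhineq0}. The only cosmetic difference is that you invoke \eqref{FormulaAdjoint} where the paper reads the same imaginary part directly off \eqref{newint-epsuvbar}; these are equivalent.
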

\begin{proof}
When $\om=0$, the result is a direct consequence of Theorem \ref{MainThmE} (because zero is the only solution of (\ref{MainPbE}) for $\J=0$). From now on, we assume that $\om\in\R\setminus\{0\}$. Suppose that  $\u=c\nabla s^{+}+\tilde{\u}\in\mX^{\mrm{out}}_N(\eps)$ is such that
\[
\int_{\Om}\mu^{-1}\curl\u\cdot\curl\overline{\v}\,dx -\om^2 \fint_\Om \eps \u\cdot\overline{\v}\,dx
	= 0,\qquad \forall\v\in\mX^{\mrm{out}}_N(\eps).
\]	
Taking the imaginary part of the previous identity for $\v=\u$, we get
\[
\Im m\,\bigg(\fint_\Om \eps \u\cdot\overline{\u}\,dx\bigg) =0.
\]
On the other  hand, by (\ref{newint-epsuvbar}), we have
\[
\fint_\Om \eps \u\cdot\overline{\u}\,dx=\int_\Om \eps |\tilde{\u}|^2\,dx+|c|^2\int_{\Om}\div(\eps\nabla\overline{s^{+}} )\,{s^+}\,dx,
\]
so that
\[
|c|^2	\Im m\,\bigg(\int_{\Om}\div(\eps\nabla\overline{s^{+}} )\,s^+\,dx\bigg) =0.
\]
The result of the proposition is then a consequence of Lemma \ref{lemmaNRJ} in Appendix where it is proved that
\[
\Im m\,\bigg(\int_{\Om}\div(\eps\nabla\overline{s^{+}} )\,s^+\,dx\bigg) =\eta\int_{\mathbb{S}^2}\eps|\Phi|^2ds,
\]
and of the assumption \eqref{eq-intPhineq0}.
\end{proof}
\begin{remark}As a consequence, from Lemma \ref{LemmaNormeEquiv}, we infer that elements of the kernel of $\mathbb{A}^{\mrm{out}}_N-\om^2\mathbb{K}^{\mrm{out}}_N$ are in $\boldsymbol{\mV}^0_{-\beta}(\Om)$ for all $\beta$ satisfying  (\ref{betainfbeta_0et1demi}).
\end{remark}

\subsection{Problem in the classical framework}

In the previous paragraph, we have shown that the Maxwell's problem (\ref{MainPbE}) for the electric field set in the non standard space $\mX^{\mrm{out}}_N(\eps)$, and so in $\Hspace_N^{\mrm{out}}(\curl)$ according to Lemma \ref{lemmaEquivE}, is well-posed. Here, we wish to analyse the properties of the problem for the electric field set in the classical space $\mX_N(\eps)$ (which does not contain $\nabla s^+$). Since this space is a closed subspace of $\mX^{\mrm{out}}_N(\eps)$, it inherits the main properties of the problem in $\mX^{\mrm{out}}_N(\eps)$ proved in the previous section. More precisely, we deduce from Lemma \ref{LemmaNormeEquiv} and Proposition \ref{PropCompactE}  the following result. 
\begin{proposition}\label{PropX_N(eps)}
Under Assumptions 1 and 3, the embedding of $\mX_N(\eps)$ in $\Lspace^2(\Om)$ is compact, and  $\|\curl\cdot\|_{\Om}$ is a norm in $\mX_N(\eps)$ which is equivalent to the norm  $\|\cdot\|_{\Hspace(\curl)}$.
\end{proposition}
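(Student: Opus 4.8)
The plan is to regard $\mX_N(\eps)$ as the closed subspace of $\mX^{\mrm{out}}_N(\eps)$ consisting of those fields whose singular coefficient $c$ vanishes, and then simply to specialise the results of Lemma \ref{LemmaNormeEquiv} and Proposition \ref{PropCompactE} to this subspace. First I would observe that if $\u=c\nabla s^{+}+\tilde{\u}\in\mX^{\mrm{out}}_N(\eps)$ belongs to $\Lspace^2(\Om)$, then necessarily $c=0$: indeed $\tilde{\u}\in\Lspace^2(\Om)$ by definition, while $\nabla s^{+}\notin\Lspace^2(\Om)$ since $s^{+}\notin\mH^1(\Om)$. Conversely, any $\u\in\mX_N(\eps)$ corresponds to the choice $c=0$, $\tilde{\u}=\u$. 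Hence $\mX_N(\eps)$ is exactly the kernel of the continuous linear map $\u\mapsto c_{\u}$ on $\mX^{\mrm{out}}_N(\eps)$, in particular a closed subspace on which the two ambient norms coincide.

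For the norm equivalence, I would apply the estimate (\ref{EstimBas}) of Lemma \ref{LemmaNormeEquiv} with $c=0$ and $\tilde{\u}=\u$, which gives directly $\|\u\|_{\boldsymbol{\mV}^0_{-\beta}(\Om)}\le C\,\|\curl\u\|_{\Om}$ for $\u\in\mX_N(\eps)$ and any $\beta$ satisfying (\ref{betainfbeta_0et1demi}). Since $\Om$ is bounded, the weight $r^{-\beta}$ is bounded from below by a positive constant, so $\|\u\|_{\Om}\le C\,\|\u\|_{\boldsymbol{\mV}^0_{-\beta}(\Om)}$; combining the two inequalities yields the Poincar\'e-type bound $\|\u\|_{\Om}\le C\,\|\curl\u\|_{\Om}$. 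This shows $\|\u\|_{\Hspace(\curl)}^2=\|\u\|_{\Om}^2+\|\curl\u\|_{\Om}^2\le(1+C^2)\|\curl\u\|_{\Om}^2$, while trivially $\|\curl\u\|_{\Om}\le\|\u\|_{\Hspace(\curl)}$. Hence $\|\curl\cdot\|_{\Om}$ is a norm on $\mX_N(\eps)$ equivalent to $\|\cdot\|_{\Hspace(\curl)}$.

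For the compact embedding, I would take a sequence $(\u_n)$ bounded in $\mX_N(\eps)$. By the norm equivalence it is bounded in $\mX^{\mrm{out}}_N(\eps)$, so Proposition \ref{PropCompactE} provides a subsequence along which the singular coefficients (here all equal to zero) converge and $\tilde{\u}_n=\u_n$ converges in $\boldsymbol{\mV}^0_{-\beta}(\Om)$. Because $\boldsymbol{\mV}^0_{-\beta}(\Om)$ embeds continuously into $\Lspace^2(\Om)$ (again using that $r^{-\beta}$ is bounded below on the bounded domain $\Om$), the subsequence converges in $\Lspace^2(\Om)$. This is precisely the compactness of the embedding of $\mX_N(\eps)$ in $\Lspace^2(\Om)$.

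The argument is essentially a transfer of already-established results, so I expect no genuine analytic difficulty to remain; the only points requiring care are the identification of $\mX_N(\eps)$ with the slice $\{c=0\}$, which rests on $\nabla s^{+}\notin\Lspace^2(\Om)$, and the continuous inclusion $\boldsymbol{\mV}^0_{-\beta}(\Om)\subset\Lspace^2(\Om)$ that upgrades the weighted convergence to $\Lspace^2(\Om)$ convergence. All the substantial work has already been carried out in Lemma \ref{LemmaNormeEquiv} and Proposition \ref{PropCompactE}.
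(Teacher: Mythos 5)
Your argument is correct and is exactly the route the paper takes: it views $\mX_N(\eps)$ as the closed slice $\{c_{\u}=0\}$ of $\mX^{\mrm{out}}_N(\eps)$ and deduces both the norm equivalence and the compact embedding from Lemma \ref{LemmaNormeEquiv} and Proposition \ref{PropCompactE}, using the continuous inclusion $\boldsymbol{\mV}^0_{-\beta}(\Om)\subset\Lspace^2(\Om)$. The paper merely states this transfer without detail; your write-up fills in the same steps.
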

\noindent Note that we recover classical properties similar to what is known for positive $\eps$, or more generally \cite{BoCC14} for $\eps$ such that the operator $A_{\eps}:\mH^1_0(\Om)\to(\mH^1_0(\Om))^{\ast}$ defined by (\ref{DefAeps}) is an isomorphism (which allows for sign-changing $\eps$). But we want to underline  the fact that under Assumption 3, these classical results could not be proved by using   classical arguments.  
They require  the introduction of the bigger space $\mX^{\mrm{out}}_N(\eps)$, with  the singular function $\nabla s^+$.
\newline 
Let us now consider the problem
\begin{equation}\label{MainPbECla}
\begin{array}{|l}
\mbox{Find }\u\in\mX_N(\eps)\mbox{ such that }\\
\dsp\int_{\Om}\mu^{-1}\curl\u\cdot\curl\overline{\v}\,dx -\om^2 \int_\Om \eps \u\cdot\overline{\v}\,dx= i\om\int_\Om \J\cdot \overline{\v}\,dx,\qquad \forall\v\in\mX_N(\eps).
\end{array}
\end{equation}
An important remark is that one cannot prove that problem (\ref{MainPbECla}) is equivalent to a similar problem set in $\Hspace_N(\curl)$ (the analogue of Lemma \ref{lemmaEquivE}). Again, the difficulty comes from the fact that $A_{\eps}$ is not an isomorphism, and the trouble would appear when solving  (\ref{ResolPbAdd}). Therefore, a solution of (\ref{MainPbECla}) is not in general a distributional solution of the equation 
\[
\curl \left(\mu^{-1}\curl \u\right)-\omega^2\eps u=i\om \J. 
\]
To go further in the analysis of (\ref{MainPbECla}), we recall that  $\mX_N(\eps)$ is a subspace of codimension one of $\mX^{\mrm{out}}_N(\eps)$ (Lemma \ref{LemmaCodim} in Appendix). Let $\v_0$ be an element of $\mX^{\mrm{out}}_N(\eps)$ which does not belong to $\mX_N(\eps)$. Then we denote by $\ell_0$ the continuous linear form on $\mX^{\mrm{out}}_N(\eps)$ such that:
\begin{equation}
\label{eq-decompv_0}
\forall v\in\mX^{\mrm{out}}_N(\eps)\qquad\v-\ell_0(\v)\v_0\in \mX_N(\eps).
\end{equation}
Let us now define the operators $\mathbb{A}_N:{\mX}_N(\eps)\to(\mX_N(\eps))^{\ast}$ and $\mathbb{K}_N:{\mX}_N(\eps)\to(\mX_N(\eps))^{\ast}$ by
\[
\langle\mathbb{A}_N\u,\v\rangle = \int_{\Om}\mu^{-1}\curl\u\cdot\curl\overline{\v}\,dx,\qquad \langle\mathbb{K}_N\u,\v\rangle = \int_{\Om}\eps\u\cdot\overline{\v}\,dx.
\]
\begin{proposition}\label{propoFredholmclas}
	Under Assumptions 1--3, the operator $\mathbb{A}_N:{\mX}_N(\eps)\to(\mX_N(\eps))^{\ast}$ is Fredholm of index zero.
\end{proposition}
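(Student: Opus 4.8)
The plan is to express $\mathbb{A}_N$ as a composition built around the isomorphism $\mathbb{A}^{\mrm{out}}_N$ of Proposition~\ref{propoUniqueness}, and then to add up Fredholm indices. Let $\iota:\mX_N(\eps)\to\mX^{\mrm{out}}_N(\eps)$ denote the canonical injection and let $\iota^{\ast}:(\mX^{\mrm{out}}_N(\eps))^{\ast}\to(\mX_N(\eps))^{\ast}$ be the associated restriction map $f\mapsto f|_{\mX_N(\eps)}$, which is a continuous linear map between the antidual spaces. The starting observation is the algebraic identity $\mathbb{A}_N=\iota^{\ast}\,\mathbb{A}^{\mrm{out}}_N\,\iota$, which holds because, for $\u,\v\in\mX_N(\eps)$, the sesquilinear form defining $\mathbb{A}^{\mrm{out}}_N$ reduces to the one defining $\mathbb{A}_N$:
\[
\langle\iota^{\ast}\mathbb{A}^{\mrm{out}}_N\iota\,\u,\v\rangle=\langle\mathbb{A}^{\mrm{out}}_N\u,\v\rangle=\int_{\Om}\mu^{-1}\curl\u\cdot\curl\overline{\v}\,dx=\langle\mathbb{A}_N\u,\v\rangle.
\]

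Next I would compute the indices of the two outer factors. By Lemma~\ref{LemmaCodim}, $\mX_N(\eps)$ is a closed subspace of codimension one in $\mX^{\mrm{out}}_N(\eps)$; hence $\iota$ is injective with $\dim\coker\,\iota=\dim(\mX^{\mrm{out}}_N(\eps)/\mX_N(\eps))=1$, so $\iota$ is Fredholm of index $-1$. For its transpose, $\ker\,\iota^{\ast}$ is the annihilator of $\mX_N(\eps)$ in $(\mX^{\mrm{out}}_N(\eps))^{\ast}$, which is one-dimensional since $\mX_N(\eps)$ has codimension one, while $\iota^{\ast}$ is onto by the Hahn--Banach extension theorem; thus $\iota^{\ast}$ is Fredholm of index $+1$.

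Finally I would invoke the stability of the Fredholm property and the additivity of the index under composition. Under Assumptions 1--3, $\mathbb{A}^{\mrm{out}}_N$ is an isomorphism (Proposition~\ref{propoUniqueness}), hence Fredholm of index $0$. Therefore $\mathbb{A}_N=\iota^{\ast}\,\mathbb{A}^{\mrm{out}}_N\,\iota$ is Fredholm with
\[
\mrm{ind}\,\mathbb{A}_N=\mrm{ind}\,\iota^{\ast}+\mrm{ind}\,\mathbb{A}^{\mrm{out}}_N+\mrm{ind}\,\iota=(+1)+0+(-1)=0,
\]
which is the desired conclusion. The argument is soft once Proposition~\ref{propoUniqueness} is available; the only points deserving care are the index bookkeeping --- in particular the fact that passing to the antidual converts the index $-1$ of the inclusion into the index $+1$ of its transpose, so that the two contributions cancel --- and the verification that $\mathbb{A}_N$ is genuinely the compression of $\mathbb{A}^{\mrm{out}}_N$ to the subspace $\mX_N(\eps)$. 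I do not anticipate a real obstacle here, since the substance of the statement is entirely carried by Proposition~\ref{propoUniqueness} together with the codimension-one property of Lemma~\ref{LemmaCodim}. Note that this argument yields index zero but not invertibility, consistent with the fact that $\mathbb{A}_N$ may have a nontrivial kernel.
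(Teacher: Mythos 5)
Your proof is correct, but it follows a genuinely different route from the paper. The paper starts from the identity $\|\curl\u\|_{\Om}^2=\langle\mathbb{A}^{\mrm{out}}_N\u,\mathbb{T}\u\rangle$ (using the explicit operator $\mathbb{T}$ built in the proof of Proposition \ref{propoUniqueness}), splits $\mathbb{T}\u$ along the decomposition (\ref{eq-decompv_0}) to obtain the a priori estimate $\|\u\|_{\mX_N(\eps)}\le C(\|\mathbb{A}_N\u\|_{(\mX_N(\eps))^{\ast}}+|\ell_0(\mathbb{T}\u)|)$, invokes Peetre's lemma (the perturbation $\u\mapsto\ell_0(\mathbb{T}\u)$ being of rank one, hence compact) to get finite-dimensional kernel and closed range, and finally uses the fact that $\mathbb{A}_N$ is bounded and hermitian to conclude that the index is zero. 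You instead factor $\mathbb{A}_N=\iota^{\ast}\,\mathbb{A}^{\mrm{out}}_N\,\iota$ and add up indices: $\iota$ is an isometric injection with one-dimensional cokernel by Lemma \ref{LemmaCodim} (index $-1$), its Banach-space adjoint $\iota^{\ast}$ has index $+1$ (one-dimensional kernel equal to the annihilator of $\mX_N(\eps)$, surjective by Hahn--Banach), and $\mathbb{A}^{\mrm{out}}_N$ is an isomorphism by Proposition \ref{propoUniqueness}. Both arguments ultimately rest on the same two ingredients (Proposition \ref{propoUniqueness} and the codimension-one property), but yours is more economical in two respects: it uses only the isomorphism statement of Proposition \ref{propoUniqueness} and not the internal construction of $\mathbb{T}$, and it delivers index zero directly from the index bookkeeping without appealing to the hermitian character of $\mathbb{A}_N$. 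The price is that it is purely algebraic and yields no quantitative estimate, whereas the paper's route produces an explicit stability inequality that can be reused elsewhere. Your closing remark is also accurate: neither argument rules out a nontrivial kernel.
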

\begin{proof}
Let $\u\in \mX_N(\eps)$. By Proposition \ref{propoUniqueness}, for the operator ${\mathbb T}$ introduced in the corresponding proof, one has:
$$\|\u\|^2_{\mX_N(\eps)}=\|\curl \u\|^2_\Om=\langle\mathbb{A}^{\mrm{out}}_N\u,{\mathbb T}\u\rangle.$$
Then, using (\ref{eq-decompv_0}), we get:
$$\|\u\|^2_{\mX_N(\eps)}=\langle\mathbb{A}_N\u,{\mathbb T}\u-\ell_0({\mathbb T}\u)v_0\rangle+\langle\mathbb{A}^{\mrm{out}}_N\u,\ell_0({\mathbb T}\u)v_0\rangle,$$
which implies that
$$\|\u\|_{\mX_N(\eps)}\leq C\left(\|\mathbb{A}_N\u\|_{(\mX_N(\eps))^{\ast}}+|\ell_0({\mathbb T}\u)|\right) .$$
The result of the proposition then follows from a classical adaptation of Peetre's lemma (see for example \cite[Theorem 12.12]{Wlok87}) together with the fact that $\mathbb{A}_N$ is bounded and hermitian.
\end{proof}
\noindent Combining the two previous propositions, we obtain the
\begin{theorem}	\label{the-Fredholm-cla}
Under Assumptions 1--3, for all $\om\in\R$, the operator $\mathbb{A}_N-\om^2\mathbb{K}_N:{\mX}_N(\eps)\to(\mX_N(\eps))^{\ast}$ is Fredholm of index zero.	
\end{theorem}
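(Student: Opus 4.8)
The plan is to obtain the result as an immediate consequence of the two preceding propositions, by writing $\mathbb{A}_N-\om^2\mathbb{K}_N$ as a compact perturbation of the Fredholm index-zero operator $\mathbb{A}_N$. The strategy mirrors the standard treatment of Maxwell's equations: the curl–curl part carries the Fredholm structure, while the lower-order (frequency) term is compact thanks to a compact embedding.

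First I would record that by Proposition \ref{propoFredholmclas} the operator $\mathbb{A}_N:{\mX}_N(\eps)\to(\mX_N(\eps))^{\ast}$ is Fredholm of index zero. Since the class of Fredholm operators of a given index is stable under compact perturbations, and multiplication by the scalar $-\om^2$ preserves compactness, it suffices to establish that $\mathbb{K}_N:{\mX}_N(\eps)\to(\mX_N(\eps))^{\ast}$ is compact. I emphasize that, in contrast with the outgoing setting of $\mX^{\mrm{out}}_N(\eps)$, here $\mathbb{K}_N$ is defined through the ordinary $\Lspace^2(\Om)$ integral $\langle\mathbb{K}_N\u,\v\rangle=\int_{\Om}\eps\u\cdot\overline{\v}\,dx$, with no hypersingular correction, because $\mX_N(\eps)\subset\Lspace^2(\Om)$.

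To prove compactness of $\mathbb{K}_N$ I would use the compact embedding of $\mX_N(\eps)$ into $\Lspace^2(\Om)$ provided by Proposition \ref{PropX_N(eps)}. Concretely, let $(\u_n)$ be bounded in $\mX_N(\eps)$; by Proposition \ref{PropX_N(eps)} one may extract a subsequence converging strongly in $\Lspace^2(\Om)$. Since $\eps\in\mrm{L}^{\infty}(\Om)$, the estimate
\[
|\langle\mathbb{K}_N\u,\v\rangle|=\Big|\int_{\Om}\eps\u\cdot\overline{\v}\,dx\Big|\le\|\eps\|_{\mrm{L}^{\infty}(\Om)}\,\|\u\|_{\Om}\,\|\v\|_{\Om}
\]
yields $\|\mathbb{K}_N\u_n-\mathbb{K}_N\u_m\|_{(\mX_N(\eps))^{\ast}}\le\|\eps\|_{\mrm{L}^{\infty}(\Om)}\,\|\u_n-\u_m\|_{\Om}$, so that $(\mathbb{K}_N\u_n)$ is Cauchy, hence convergent, in $(\mX_N(\eps))^{\ast}$. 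This is exactly the compactness of $\mathbb{K}_N$.

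I do not expect a genuine obstacle at this stage: all the difficulty specific to Assumption 3 has already been absorbed into the two cited propositions. In particular, the non-trivial content lies in Proposition \ref{PropX_N(eps)}, whose compact-embedding statement is obtained indirectly by enlarging $\mX_N(\eps)$ to the space $\mX^{\mrm{out}}_N(\eps)$ containing the singular field $\nabla s^+$, and in Proposition \ref{propoFredholmclas}, which relies on the $\mathbb{T}$-coercivity argument together with Peetre's lemma. Granting these, the present statement follows at once: for every $\om\in\R$, $\mathbb{A}_N-\om^2\mathbb{K}_N$ is the sum of a Fredholm index-zero operator and a compact operator, hence is Fredholm of index zero.
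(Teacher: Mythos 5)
Your proof is correct and follows exactly the route the paper intends: the paper's own justification is the one-line "combining the two previous propositions," namely Proposition \ref{propoFredholmclas} for the Fredholmness of $\mathbb{A}_N$ and the compact embedding of Proposition \ref{PropX_N(eps)} for the compactness of $\mathbb{K}_N$, followed by stability of the index under compact perturbation. You have simply spelled out the details that the paper leaves implicit.
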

\noindent But as mentioned above, even if uniqueness holds and if Problem (\ref{MainPbECla}) is well-posed, it does not provide a solution of Maxwell's equations.

\subsection{Expression of the singular coefficient}\label{paraSingCoef}
Under Assumptions 1--3, Theorem \ref{MainThmE} guarantees that for all $\om\in\R$ the operator $\mathbb{A}^{\mrm{out}}_N-\om^2\mathbb{K}^{\mrm{out}}_N:\mX^{\mrm{out}}_N(\eps)\to(\mX^{\mrm{out}}_N(\eps))^{\ast}$ is Fredholm of index zero. Assuming that it is injective, the problem (\ref{MainPbE}) admits a unique solution $\u=c_{\u}\nabla s^++\tilde{\u}$. The goal of this paragraph is to derive a formula allowing one to compute $c_{\u}$ without knowing $\u$. Such kind of results are classical for scalar operators (see e.g. \cite{Gris92}, \cite[Theorem 6.4.4]{KoMR97},
 \cite{DNBL90a,DNBL90b,AsCS00,HaLo02,YoAS02,Nkem16}). They are used in particular for numerical purposes. But curiously they do not seem to exist for Maxwell's equations in 3D, not even for classical situations with positive materials in non smooth domains. We emphasize that the analysis we develop can be adapted to the latter case.\\
\newline
In order to establish the desired expression, for $\om\in\R$, first we introduce the field $\w_N\in\mX^{\mrm{out}}_N(\eps)$ such that 
\begin{equation}\label{defwSingu}
		\dsp\int_{\Om}\mu^{-1}\curl\v\cdot\curl\overline{\w_N}\,dx -\om^2 \fint_\Om \eps \v\cdot \overline{\w_N}\,dx
		= \int_\Om \eps\tilde{\v}\cdot\nabla \overline{s^+}\,dx,\qquad \forall\v\in\mX^{\mrm{out}}_N(\eps).
\end{equation}
Note that Problem (\ref{defwSingu}) is well-posed when $\mathbb{A}^{\mrm{out}}_N-\om^2\mathbb{K}^{\mrm{out}}_N$ is an isomorphism. Indeed, using (\ref{FormulaAdjoint}), one can check that it involves the operator $(\mathbb{A}^{\mrm{out}}_N-\om^2\mathbb{K}^{\mrm{out}}_N)^{\ast}$, that is the adjoint of $\mathbb{A}^{\mrm{out}}_N-\om^2\mathbb{K}^{\mrm{out}}_N$. Moreover $\v\mapsto \textstyle\int_\Om \eps\tilde{\v}\cdot\nabla\overline{s^+}\,dx$ is a linear form over $\mX^{\mrm{out}}_N(\eps)$.
\begin{theorem}\label{ThmCoefSinguE}
Assume that $\om\in\R$, Assumptions 1--3 are valid and $\mathbb{A}^{\mrm{out}}_N-\om^2\mathbb{K}^{\mrm{out}}_N:\mX^{\mrm{out}}_N(\eps)\to(\mX^{\mrm{out}}_N(\eps))^{\ast}$ is injective. Then the solution $\u=c_{\u}\nabla s^++\tilde{\u}$ of the electric problem (\ref{MainPbE}) is such that 
\begin{equation}\label{FormulaCoefSinguE}
c_{\u}=i\om\int_{\Om} \J\cdot\overline{\w_N}\,dx\bigg/\int_{\Om}\div(\eps\nabla s^{+})\,\overline{s^+}\,dx.
\end{equation}
Here $\w_N$ is the function which solves (\ref{defwSingu}).
\end{theorem}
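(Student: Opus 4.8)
The goal is to extract the singular coefficient $c_{\u}$ from the solution $\u = c_{\u}\nabla s^+ + \tilde{\u}$ of (\ref{MainPbE}) by testing against the specially constructed field $\w_N$ solving (\ref{defwSingu}). The plan is to apply the variational formulation (\ref{MainPbE}) with the test function $\v = \w_N$, and simultaneously apply the defining relation (\ref{defwSingu}) for $\w_N$ with the test function $\v = \u$. These two identities share the same bilinear expression $\int_{\Om}\mu^{-1}\curl\u\cdot\curl\overline{\w_N}\,dx - \om^2\fint_\Om \eps\,\u\cdot\overline{\w_N}\,dx$, so subtracting them (taking care to conjugate correctly, since the sesquilinear form is not hermitian, cf.\ (\ref{FormulaAdjoint})) will eliminate the left-hand sides and leave an identity relating the two right-hand side data terms.

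First I would write (\ref{MainPbE}) with $\v=\w_N$, giving $a_N(\u,\w_N)=i\om\int_\Om \J\cdot\overline{\w_N}\,dx$. The left-hand side is exactly the quantity appearing in (\ref{defwSingu}) with the roles reversed: (\ref{defwSingu}) tested at $\v=\u$ reads $a_N(\u,\w_N)=\int_\Om \eps\,\tilde{\u}\cdot\nabla\overline{s^+}\,dx$, since the left-hand side of (\ref{defwSingu}) is precisely $a_N(\u,\w_N)$ when one identifies the bilinear structure. Equating the two expressions for $a_N(\u,\w_N)$ yields
\[
i\om\int_\Om \J\cdot\overline{\w_N}\,dx=\int_\Om \eps\,\tilde{\u}\cdot\nabla\overline{s^+}\,dx.
\]
The final step is to evaluate the right-hand side in terms of $c_{\u}$. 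This is exactly what the second identity of (\ref{defDivFaible-4}) provides: since $\u\in\mX^{\mrm{out}}_N(\eps)$, we have $\int_\Om \eps\,\tilde{\u}\cdot\nabla\overline{s^+}\,dx = c_{\u}\int_\Om \div(\eps\nabla s^+)\overline{s^+}\,dx$. Substituting and dividing by $\int_\Om \div(\eps\nabla s^+)\overline{s^+}\,dx$ (which is nonzero by Lemma \ref{lemmaNRJ} together with (\ref{eq-intPhineq0}), since its imaginary part equals $\eta\int_{\mathbb{S}^2}\eps|\Phi|^2\,ds\neq 0$) gives precisely (\ref{FormulaCoefSinguE}).

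The one subtlety I would watch carefully is the matching of the bilinear forms in (\ref{MainPbE}) and (\ref{defwSingu}). Because the sesquilinear form $\fint_\Om \eps\,\u\cdot\overline{\v}\,dx$ is not hermitian, one must verify that the left-hand side of (\ref{defwSingu}) genuinely represents $a_N(\u,\w_N)$ and not its adjoint when tested at $\v=\u$; this is the reason (\ref{defwSingu}) was designed to involve the adjoint operator $(\mathbb{A}^{\mrm{out}}_N-\om^2\mathbb{K}^{\mrm{out}}_N)^{\ast}$, so that the pairing closes correctly. Once this bookkeeping is checked, the proof is a short two-line computation; the well-posedness of (\ref{defwSingu}) under the injectivity hypothesis guarantees $\w_N$ exists and is unique, so the formula is well-defined. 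The main conceptual point is not an obstacle but a verification: confirming that the test-and-subtract cancellation is exact given the non-hermitian pairing.
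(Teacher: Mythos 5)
Your proposal is correct and follows essentially the same route as the paper: test (\ref{MainPbE}) with $\v=\w_N$, test (\ref{defwSingu}) with $\v=\u$, equate the two expressions for $a_N(\u,\w_N)$, and conclude via the second identity of (\ref{defDivFaible-4}) and the non-vanishing of $\int_{\Om}\div(\eps\nabla s^{+})\,\overline{s^+}\,dx$ from Lemma \ref{lemmaNRJ}. The subtlety you flag about the non-hermitian pairing and the adjoint operator is exactly the point the paper addresses in the remark preceding the theorem, so nothing is missing.
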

\begin{remark}
Note that in practice $\w_N$ can be computed once for all because it does not depend on $\J$. Then the value of $c_{\u}$ can be determined very simply via Formula (\ref{FormulaCoefSinguE}).
\end{remark}
\begin{proof}
By definition of $\u$, we have
\[
\dsp\int_{\Om}\mu^{-1}\curl\u\cdot\curl\overline{\w_N}\,dx -\om^2 \fint_\Om \eps \u\cdot\overline{\w_N}\,dx= i\om\int_{\Om} \J\cdot\overline{\w_N}\,dx.
\]
On the other hand, from (\ref{defwSingu}), there holds
\[
\dsp\int_{\Om}\mu^{-1}\curl\u\cdot\curl\overline{\w_N}\,dx-\om^2 \fint_\Om \eps \u\cdot\overline{\w_N}\,dx= \int_\Om \eps\tilde{\u}\cdot\nabla\overline{s^+}\,dx.
\]
From these two relations as well as (\ref{defDivFaible-4}), we get
\[
i\om\int_{\Om} \J\cdot\overline{\w_N}\,dx=\int_\Om \eps\tilde{\u}\cdot\nabla\overline{s^+}\,dx=c_{\u}\int_{\Om}\div(\eps\nabla s^{+})\,\overline{s^+}\,dx.
\]
But Lemma \ref{lemmaNRJ} in Appendix guarantees that $\textstyle\Im m\,\int_{\Om}\div(\eps\nabla s^{+})\,\overline{s^+}\,dx\ne0$. Therefore we find the desired formula.
\end{proof}

\subsection{Limiting absorption principle}\label{ParagLimiting}
In \S\ref{subsec-mainanalysisE}, we have proved well-posedness of the problem for the electric field in the space $\mX^{\mrm{out}}_N(\eps)$. But up to now, we have not explained why we select this framework. In particular, as mentioned in \S\ref{subsec-hypersingularities}, well-posedness also holds in $\mX^{\mrm{in}}_N(\eps)$ where $\mX^{\mrm{in}}_N(\eps)$ is defined as $\mX^{\mrm{out}}_N(\eps)$ with $s^+$ replaced by $s^-$ (see (\ref{DefSinguEss}) for the definitions of $s^{\pm}$). In general, the solution in $\mX^{\mrm{in}}_N(\eps)$ differs from the one in $\mX^{\mrm{out}}_N(\eps)$. Therefore one can build infinitely many solutions of Maxwell's problem as linear interpolations of these two solutions. Then the question is: which solution is physically relevant? Classically, the answer can be obtained thanks to the limiting absorption principle. The idea is the following. In practice, the dielectric permittivity takes complex values, the imaginary part being related to the dissipative phenomena in the materials. Set 
\[
\eps^\delta:=\eps+i\delta
\]
where $\eps$ is defined as previously (see (\ref{sec-assumptions})) and  $\delta>0$ (the sign of $\delta$ depends on the convention for the time-harmonic dependence (in $e^{-i\om t}$ here)). Due to the imaginary part of $\eps^\delta$ which is uniformly positive, one recovers some coercivity properties which allow one to prove well-posedness of the corresponding problem for the electric field in the classical framework. The physically relevant solution for the problem with the real-valued $\eps$ then should be the limit of the sequence of solutions for the problems involving $\eps^\delta$  when  $\delta$ tends to zero. The goal of the present paragraph is to explain how to show that this limit is the solution of the problem set in $\mX^{\mrm{out}}_N(\eps)$. 

\subsubsection{Limiting absorption principle for the scalar case}

Our proof relies on a similar result for the 3D scalar problem which is the analogue of what has been done in 2D in \cite[Theorem 4.3]{BoCC14}. Consider  the problem
 \begin{equation}\label{pbclassic}
  \text{Find } \varphi^\delta\in\mH^1_0(\Om) \text{ such that } -\div(\eps^\delta \nabla \varphi^\delta)=f ,
 \end{equation}
 where $f\in(\mH^1_0(\Om))^\ast$. 
 Since $\delta>0$, by the Lax-Milgram lemma, this problem is   well-posed for all $f\in(\mH^1_0(\Om))^\ast$ and in particular for all $f\in(\mathring{\mV}^1_\beta(\Om))^\ast$, $\beta>0$.  
 Our objective is to prove that $ \varphi^\delta$ converges  when $\delta$ tends to zero to the unique solution of the problem
 \begin{equation} \label{eq-limitpbsca}
   \text{Find } \varphi\in \mathring{\mV}^{\mrm{out}}\text{ such that }  A^{\mrm{out}}_{\eps}\varphi=f.
 \end{equation}
We expect a convergence in a space $\mathring{\mV}^1_\beta(\Om)$ with $0<\beta<\beta_0$.
We first need a decomposition of $\varphi^\delta$ as a sum of a singular part and a regular part. Since problem \eqref{pbclassic} is strongly elliptic, one can directly apply the theory presented in \cite{KoMR97}. On the one hand, from the assumptions of Section \ref{sec-assumptions}, one can verify that for $\delta$ small enough, there exists one and only one singular exponent  $\lambda^\delta\in\C$  such that $\Re e\,\lambda^\delta\in(-1/2;-1/2+\beta_0-\sqrt{\delta})$. We denote by $\mathfrak{s}^\delta$ the corresponding singular function such that
\[
\mathfrak{s}^\delta(r,\theta,\varphi)=r^{\lambda^\delta}\, \Phi^\delta(\theta,\phi).
\]
Note that it satisfies $ \div(\eps^\delta \nabla \mathfrak{s}^\delta)=0$ in $\mathcal{K}$. As in \eqref{DefSinguEss} for $s^{\pm}$, we set 
\begin{equation}\label{singexprdelta}
s^\delta(x)=\chi(r)\, r^{-1/2+i\eta^\delta}\,\Phi^\delta(\theta,\phi),
\end{equation}
where $\eta^\delta\in\C$ is the number such that $\lambda^\delta=-1/2+i\eta^\delta$. By applying \cite[Theorem 5.4.1]{KoMR97}, we get the following result.
\begin{lemma}	\label{lem-decompphidelta}
Let $0<\beta<\beta_0$ and $f\in(\mathring{\mV}^1_\beta(\Om))^\ast$. The solution $\varphi^\delta$ of \eqref{pbclassic} decomposes as 
\begin{equation}\label{asymudel}
\varphi^\delta =c^\delta s^\delta+\tilde{\varphi}^\delta 
\end{equation}
where $c^\delta\in \C$ and $\tilde{\varphi}^\delta \in \mathring{\mV}^1_{-\beta}(\Om)$.
\end{lemma}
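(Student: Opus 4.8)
The plan is to obtain the decomposition as a direct application of the Kondratiev asymptotic theory for elliptic boundary value problems in a domain with a conical point, in the form given by \cite[Theorem 5.4.1]{KoMR97}. The feature that makes this theory available is that, for $\delta>0$, the operator $-\div(\eps^\delta\nabla\cdot)$ is elliptic near the tip $O$: although $\eps$ changes sign, the principal symbol $\eps^\delta(x)|\xi|^2$ has imaginary part $\delta|\xi|^2$, hence never vanishes for $\xi\neq 0$. The transmission problem across $\partial\mathcal{K}$ at the tip is therefore covered by the general theory, and the singular exponents are the eigenvalues of the operator pencil associated with the frozen-coefficient model problem on $\mathbb{S}^2$, that is the $\delta$-analogue of (\ref{spectralPb}).

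First I would translate the membership of a homogeneous function into a position of the corresponding exponent: $r^{\lambda}\Phi\in\mathring{\mV}^1_\gamma(\Om)$ near $O$ if and only if $\Re e\,\lambda>-1/2-\gamma$, so that the weights $\gamma=0$ and $\gamma=-\beta$ correspond to the two Mellin lines $\Re e\,\lambda=-1/2$ and $\Re e\,\lambda=-1/2+\beta$. Since $\varphi^\delta\in\mH^1_0(\Om)=\mathring{\mV}^1_0(\Om)$ and the target remainder must lie in $\mathring{\mV}^1_{-\beta}(\Om)$, the relevant strip is $-1/2<\Re e\,\lambda<-1/2+\beta$. By the spectral fact established just above the statement, for $\delta$ small enough this strip contains exactly one exponent of the pencil, namely $\lambda^\delta=-1/2+i\eta^\delta$ (the branch issued from $-1/2+i\eta$ as $\delta\to0$), with eigenfunction $\Phi^\delta$; and, choosing $\beta<\beta_0-\sqrt\delta$, no exponent sits on either bounding line, so that the weighted operators at $\gamma=0$ and $\gamma=-\beta$ are both Fredholm. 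The associated singular function is exactly $s^\delta$ of (\ref{singexprdelta}).

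With these two ingredients, Theorem 5.4.1 of \cite{KoMR97} applies to $\varphi^\delta$, solution of $-\div(\eps^\delta\nabla\varphi^\delta)=f$ with $f\in(\mathring{\mV}^1_\beta(\Om))^\ast$: shifting the weight from $0$ to $-\beta$ picks up a single singular term, for the unique exponent in the strip, whence $\varphi^\delta=c^\delta s^\delta+\tilde\varphi^\delta$ with $c^\delta\in\C$ and $\tilde\varphi^\delta\in\mV^1_{-\beta}(\Om)$. That $\tilde\varphi^\delta$ belongs to the subspace $\mathring{\mV}^1_{-\beta}(\Om)$ with vanishing trace follows because $\varphi^\delta=0$ on $\partial\Om$ while $s^\delta$ is supported near $O$ by the cut-off $\chi$, so $\tilde\varphi^\delta=\varphi^\delta-c^\delta s^\delta$ also vanishes on $\partial\Om$. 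I expect the only genuine difficulty to be the bookkeeping that links the abstract statement to our spaces — matching the weight conventions of \cite{KoMR97}, and checking that the single admissible exponent $\lambda^\delta$ in $(-1/2;-1/2+\beta_0-\sqrt\delta)$ really falls inside $(-1/2,-1/2+\beta)$ with the bounding lines free of exponents, uniformly for small $\delta$; the heavier spectral-perturbation work needed to locate $\lambda^\delta$ has already been carried out before the statement.
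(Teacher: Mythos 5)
Your proposal is correct and takes essentially the same route as the paper: the paper likewise notes that for $\delta>0$ the problem is strongly elliptic, that for $\delta$ small there is exactly one singular exponent $\lambda^\delta$ with $\Re e\,\lambda^\delta\in(-1/2;-1/2+\beta_0-\sqrt{\delta})$, and then obtains the decomposition as a direct application of \cite[Theorem 5.4.1]{KoMR97}. Your extra bookkeeping (location of the Mellin lines for the weights $0$ and $-\beta$, absence of exponents on the bounding lines, vanishing trace of the remainder) just makes explicit what the paper leaves implicit.
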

\noindent Let us first study the limit of the singular function.
\begin{lemma}
For all $\beta>0$, when $\delta$  tends to zero, the function $s^\delta$ converges in $\mathring{\mV}^1_\beta(\Om)$ to $s^+$ and not to $s^-$ (see the definitions in \eqref{DefSinguEss}).
 \end{lemma}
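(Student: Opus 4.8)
The plan is to reduce the statement to the convergence of the exponent $\lambda^\delta=-1/2+i\eta^\delta$ and of the angular profile $\Phi^\delta$, and then to decide between $s^+$ and $s^-$ by a first order perturbation of the spherical spectral problem that uses assumption \eqref{eq-intPhineq0}. Recall from \eqref{singexprdelta} that $s^\delta=\chi(r)\,r^{-1/2+i\eta^\delta}\Phi^\delta$ and from \eqref{DefSinguEss} that $s^{\pm}=\chi(r)\,r^{-1/2\pm i\eta}\Phi$. I would first observe that the selection $\Re e\,\lambda^\delta>-1/2$ forces $\Im m\,\eta^\delta<0$, so that $|r^{-1/2+i\eta^\delta}|=r^{-1/2-\Im m\,\eta^\delta}$ is \emph{less} singular than $r^{-1/2}$ and $s^\delta$ indeed belongs to $\mathring{\mV}^1_\beta(\Om)$ for every $\beta>0$. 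Writing $s^\delta-s^+=\chi\,r^{-1/2+i\eta^\delta}(\Phi^\delta-\Phi)+\chi\,r^{-1/2}(r^{i\eta^\delta}-r^{i\eta})\Phi$, the first term is controlled by $\|\Phi^\delta-\Phi\|_{\mH^1(\mathbb{S}^2)}$ and a uniform bound on the weight, while for the second I would use $|r^{i\eta^\delta}-r^{i\eta}|=|e^{i(\eta^\delta-\eta)\ln r}-1|\le 2$ for $r<1$ together with the pointwise limit $r^{i\eta^\delta}\to r^{i\eta}$, so that $\int_0^\rho r^{2\beta-1}|e^{i(\eta^\delta-\eta)\ln r}-1|^2\,dr\to0$ by dominated convergence (the radial and angular derivative contributions are treated identically, the extra factor $r^{-1}$ being absorbed by the weight $r^\beta$). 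Hence it suffices to prove $\eta^\delta\to\eta$ and $\Phi^\delta\to\Phi$.

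This last point is the heart of the matter. Setting $\Lambda^\delta:=\lambda^\delta(\lambda^\delta+1)$, the exponent $\lambda^\delta$ is a root of $\lambda(\lambda+1)=\Lambda^\delta$, where $\Lambda^\delta$ is the eigenvalue of the spherical pencil \eqref{spectralPb}, with $\eps$ replaced by $\eps^\delta=\eps+i\delta$, that branches from $\Lambda_0:=-\tfrac14-\eta^2$ as $\delta\to0$. Since $\int_{\mathbb{S}^2}\eps|\Phi|^2\,ds\neq0$, the eigenvalue $\Lambda_0$ is simple, so $\delta\mapsto(\Lambda^\delta,\Phi^\delta)$ is analytic near $0$; in particular $\Phi^\delta\to\Phi$. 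I would then compute $\Lambda_0':=\tfrac{d}{d\delta}\Lambda^\delta|_{\delta=0}$ by differentiating the weak identity $\int_{\mathbb{S}^2}\eps^\delta\nabla_S\Phi^\delta\cdot\nabla_S\overline{\Phi'}\,ds=\Lambda^\delta\int_{\mathbb{S}^2}\eps^\delta\Phi^\delta\overline{\Phi'}\,ds$ and testing with $\Phi'=\Phi$. Because the pencil is Hermitian at $\delta=0$ and $\Phi$ is real, the terms carrying $\tfrac{d}{d\delta}\Phi^\delta$ cancel after one use of the unperturbed equation, and $\tfrac{d}{d\delta}\eps^\delta=i$ yields
\[
\Lambda_0'=i\,\frac{\displaystyle\int_{\mathbb{S}^2}|\nabla_S\Phi|^2\,ds+\Big(\tfrac14+\eta^2\Big)\int_{\mathbb{S}^2}|\Phi|^2\,ds}{\displaystyle\int_{\mathbb{S}^2}\eps|\Phi|^2\,ds}.
\]
The numerator is real and strictly positive, so $\Lambda_0'$ is purely imaginary and $\Im m\,\Lambda_0'$ has the sign of $\int_{\mathbb{S}^2}\eps|\Phi|^2\,ds$, which by \eqref{eq-intPhineq0} is exactly the sign of $\eta$.

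It remains to feed this into the square root. From $\tfrac14+\Lambda^\delta=-\eta^2+i\delta\,\Im m\,\Lambda_0'+o(\delta)$ we see that $\tfrac14+\Lambda^\delta$ leaves the negative real axis through the upper (resp. lower) half-plane according to the sign of $\Im m\,\Lambda_0'$; its square root with positive real part, which is the one selected by $\Re e\,\lambda^\delta>-1/2$, therefore lies near $+i|\eta|$ (resp. $-i|\eta|$). In both cases this reads $\lambda^\delta\to-\tfrac12+i\,\mathrm{sign}(\Im m\,\Lambda_0')\,|\eta|$, and since $\mathrm{sign}(\Im m\,\Lambda_0')=\mathrm{sign}(\eta)$ we obtain $\eta^\delta\to\eta$, that is $\lambda^\delta\to-\tfrac12+i\eta$. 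Combined with $\Phi^\delta\to\Phi$ and the first paragraph, this gives $s^\delta\to s^+$ (and not $s^-$) in $\mathring{\mV}^1_\beta(\Om)$.

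The main obstacle is the sign bookkeeping in the second and third paragraphs: one must check the simplicity of $\Lambda_0$ (granted by $\int_{\mathbb{S}^2}\eps|\Phi|^2\,ds\neq0$), verify carefully that the eigenfunction variation drops out of $\Lambda_0'$, and then track which of the two roots of $\lambda(\lambda+1)=\Lambda^\delta$ is picked by the condition $\Re e\,\lambda^\delta>-1/2$. It is precisely here that assumption \eqref{eq-intPhineq0} enters, and it is the small \emph{positive} excess $\Re e\,\lambda^\delta+\tfrac12$ that simultaneously keeps $s^\delta$ in the weighted space and encodes the outgoing (black-hole) selection; the discarded root, converging to $s^-$, would have $\Re e\,\lambda^\delta<-1/2$. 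The surrounding weighted-norm estimates are then routine.
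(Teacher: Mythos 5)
Your proof is correct and follows essentially the same route as the paper: the heart of both arguments is the first-order perturbation of the spherical spectral problem tested against $\Phi$, which produces the purely imaginary derivative $\Lambda_0'=\lambda'(2\lambda^0+1)$ whose sign is fixed by \eqref{eq-intPhineq0}, and your square-root branch bookkeeping under the constraint $\Re e\,\lambda^\delta>-1/2$ is equivalent to the paper's observation that $\lambda'$ is real and positive, forcing $\eta^0=\eta$. The only differences are presentational: you justify differentiability via analytic perturbation of a simple eigenvalue rather than postulating formal expansions, and you spell out the dominated-convergence argument for the $\mathring{\mV}^1_\beta(\Om)$ convergence, which the paper leaves implicit.
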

\begin{proof}
The pair $(\Phi^\delta,\lambda^\delta)$ solves the spectral problem
\begin{equation}\label{PbSpectralDelta}
\begin{array}{|l}
\mbox{Find }(\Phi^\delta,\lambda^\delta)\in\mH^1(\mathbb{S}^2)\setminus\{0\}\times\C\mbox{ such that }\\
\dsp\int_{\mathbb{S}^2}\eps^\delta\nabla_S\Phi^\delta\cdot\nabla_S\overline{\Psi}\,ds 
= \lambda^\delta(\lambda^\delta+1)\int_{\mathbb{S}^2}\eps^\delta\Phi^\delta\,\overline{\Psi}\,ds,\qquad \forall\Psi\in\mH^1(\mathbb{S}^2).
\end{array}
\end{equation}
Postulating the expansions $\Phi^\delta=\Phi^0 +\delta\Phi'+\dots$, $\lambda^\delta=\lambda^0+\delta\lambda'+\dots$ in this problem and identifying the terms in $\delta^0$, we get $\Phi^0=\Phi$ and we find that $\lambda^0=-1/2+i\eta^0$ where $\eta^0$ coincides with $\eta$ or $-\eta$ (see an illustration with Figure \ref{FigureLimitingAbsorptionPrinciple}). At order $\delta$, we get the variational equality 
\begin{equation}\label{PbLim2}
\begin{array}{rcl}
\dsp\int_{\mathbb{S}^2}\eps\nabla_S\Phi'\cdot\nabla_S\overline{\Psi}\,ds +i\dsp\int_{\mathbb{S}^2}\nabla_S\Phi\cdot\nabla_S\overline{\Psi}\,ds 
&\hspace{-0.25cm}=&\hspace{-0.25cm} \dsp\lambda^0(\lambda^0+1)\bigg(\int_{\mathbb{S}^2}\eps\Phi'\,\overline{\Psi}\,ds+i\int_{\mathbb{S}^2}\Phi\,\overline{\Psi}\,ds\bigg)\hspace{-0.5cm}\\[10pt]
 & & \hspace{-1cm}\dsp+\lambda'(2\lambda^0+1)\int_{\mathbb{S}^2}\eps\Phi\,\overline{\Psi}\,ds,\qquad \forall\Psi\in\mH^1(\mathbb{S}^2).
\end{array}
\end{equation}
Taking $\Psi=\Phi$ in (\ref{PbLim2}), using (\ref{spectralPb}) and observing that $\lambda^0(\lambda^0+1)=-\eta^2-1/4$, this implies 
\[
\dsp\int_{\mathbb{S}^2}|\nabla_S\Phi|^2+(\eta^2+1/4)|\Phi|^2\,ds=\lambda'2\eta^0\int_{\mathbb{S}^2}\eps|\Phi|^2\,ds.
\]
Thus $\lambda'$ is real. Since by definition of $\lambda^{\delta}$, we have $\Re e\,\lambda^{\delta}>-1/2$ for $\delta>0$, we infer that $\lambda'>0$. As a consequence, we have
\[
\eta^0\int_{\mathbb{S}^2}\eps|\Phi|^2\,ds>0
\]
which according to the definition of $\eta$ in (\ref{eq-intPhineq0}) ensures that $\eta^0=\eta$. Therefore the pointwise limit of $s^\delta$ when $\delta$ tends to zero is indeed $s^+$ and not $s^-$. This is enough to conclude that $s^\delta$ converges to $s^+$ in  $\mathring{\mV}^1_\beta(\Om)$ for $\beta>0$.
\end{proof}

\begin{figure}[!ht]
\centering
\begin{tikzpicture}[scale=1.3]
\draw[draw=black,line width=1pt,->](-3,0)--(2,0);
\draw[dashed,line width=1pt](-1.5,-1.5)--(-1.5,1.5);
\draw[draw=black,line width=1pt,->](0,-1.5)--(0,2);
\filldraw [red,draw=none] (-1.5,-0.965) circle (0.1);
\filldraw [red,draw=none] (-1.5,0.965) circle (0.1);
\draw[xscale=3] (-0.313,-0.965) node[cross] {};
\draw[xscale=3] (-0.374,-0.958) node[cross] {};
\draw[xscale=3] (-0.436,-0.963) node[cross] {};
\draw[xscale=3] (-0.487,-0.965) node[cross] {};

\draw[xscale=3] (-0.687,0.965) node[cross] {};
\draw[xscale=3] (-0.626,0.958) node[cross] {};
\draw[xscale=3] (-0.564,0.958) node[cross] {};
\draw[xscale=3] (-0.514,0.963) node[cross] {};

\node at (-1.3,1) [red,anchor=west] {$-\lambda^0$};
\node at (-1.7,-1) [red,anchor=east] {$\lambda^0$};
\node at (2.4,0) [anchor=west] {$\Re e\,\lambda$};
\node at (0.7,1.7) [anchor=south] {$\Im m\,\lambda$};
\node at (-1.6,-0.3) [anchor=center] {\small $ -1/2$};
\node at (-2,1.5) [anchor=south] {\small \textcolor{blue}{$-\lambda^\delta-1$ when $\delta\to0^+$}};
\node at (-1,-1.5) [anchor=north] {\small \textcolor{blue}{$\lambda^\delta$ when $\delta\to0^+$}};
\begin{scope}[xshift=-1.2cm,yshift=-0.2cm]
\draw[blue,dotted,->] (-1.2,1.4) .. controls (-0.85,1.5) .. (-0.35,1.49);
\end{scope}
\begin{scope}[xshift=-1.8cm,yshift=0.2cm]
\draw[blue,dotted,->] (1.2,-1.4) .. controls (0.85,-1.5) .. (0.35,-1.49);
\end{scope}
\end{tikzpicture}\qquad
\raisebox{2.5cm}{$\renewcommand{\arraystretch}{1.4} \begin{array}{|c|c|}
\hline
\delta & \lambda^{\delta}\\\hline
0 &  -0.5-0.965i\\\hline
0.001 &  -0.498-0.965i\\\hline
0.01 &  -0.487-0.965i\\\hline
0.05 &  -0.436-0.963i\\\hline
0.1 &  -0.374-0.958i\\\hline
\end{array}$}
\caption{Behaviour of the eigenvalue $\lambda^\delta$ close to the line $\Re e\,\lambda=-1/2$ as the dissipation $\delta$ tends to zero. Here the values have been obtained solving the problem (\ref{PbSpectralDelta}) with a Finite Element Method. We work in the conical tip defined via (\ref{ConicalTip}) with $\alpha=2\pi/3$ and $\kappa_\eps=-1.9$.\label{FigureLimitingAbsorptionPrinciple}}
\end{figure}
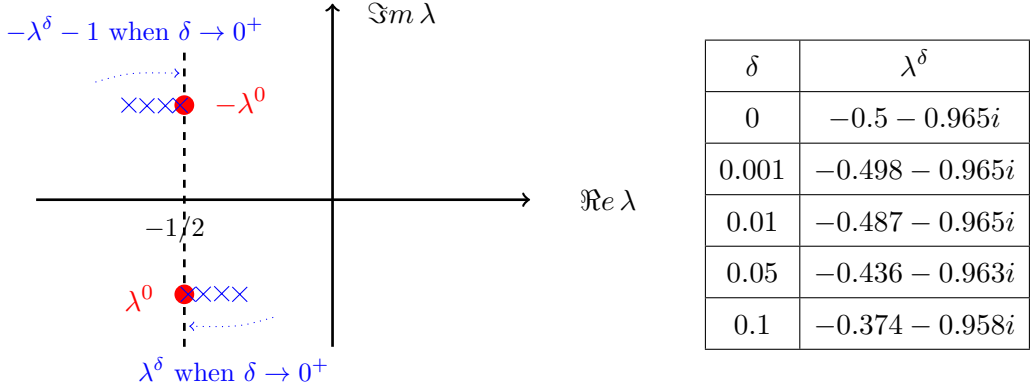

\noindent Then proceeding exactly as in the proof of \cite[Theorem 4.3]{BonCheCla13}, one can establish the following result.
\begin{lemma}\label{Abslimscalaire}
Let $0<\beta<\beta_0$ and $f\in(\mathring{\mV}^1_\beta(\Om))^\ast$.  If Assumption 3  holds, then $(\varphi^\delta=c^{\delta}\,s^\delta+\tilde{\varphi}^\delta)$ converges to $\varphi=c\,s^++\tilde{\varphi}$ in $\mathring{\mV}^1_\beta(\Om)$ as $\delta$ tends to zero. Moreover, $(c^\delta,\tilde{\varphi}^\delta)$ converges to $(c,\tilde{\varphi})$ in $\C\times\mathring{\mV}^1_{-\beta}(\Om)$. In this statement, $\varphi^\delta$ (resp. $\varphi$) is the solution of \eqref{pbclassic} (resp. \eqref{eq-limitpbsca}). 
 \end{lemma}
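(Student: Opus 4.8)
The plan is to mimic the 2D argument from \cite{BonCheCla13}, Theorem 4.3, exploiting the singular/regular decomposition of Lemma \ref{lem-decompphidelta} together with the already-established convergence $s^\delta\to s^+$ in $\mathring{\mV}^1_\beta(\Om)$. The key structural fact is that, since $A^{\mrm{out}}_\eps:\mathring{\mV}^{\mrm{out}}\to(\mathring{\mV}^1_\beta(\Om))^\ast$ is an isomorphism (Assumption 3), it suffices to obtain a uniform-in-$\delta$ a priori bound of the form $|c^\delta|+\|\tilde\varphi^\delta\|_{\mV^1_{-\beta}(\Om)}\le C\,\|f\|_{(\mathring{\mV}^1_\beta(\Om))^\ast}$, after which a compactness/uniqueness argument upgrades weak limits to the genuine limit $\varphi=c\,s^++\tilde\varphi$.

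First I would insert the decomposition \eqref{asymudel} into the equation $-\div(\eps^\delta\nabla\varphi^\delta)=f$ and test against functions in $\mathring{\mV}^1_\beta(\Om)$, isolating the action of the perturbed operator on the regular part $\tilde\varphi^\delta$. Because $\div(\eps^\delta\nabla s^\delta)=0$ near $O$ and $\eps^\delta=\eps+i\delta$ with $\delta\to0$, the source seen by $\tilde\varphi^\delta$ is $f$ plus terms that are controlled by $|c^\delta|$ and that converge (in the relevant dual norm) to the corresponding terms with $s^+$. Thus $\tilde\varphi^\delta$ solves, up to a vanishing perturbation, an equation governed by an operator converging to $A^{\mrm{out}}_\eps$ in the operator norm. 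I would then argue by contradiction for the uniform bound: if $\|\varphi^\delta\|\to\infty$ along a subsequence, normalize, extract weak limits of $(c^\delta,\tilde\varphi^\delta)$, pass to the limit in the variational identity (using the strong convergence $s^\delta\to s^+$ and the norm-convergence of the perturbed forms), and obtain a nonzero element of $\ker A^{\mrm{out}}_\eps$, contradicting injectivity guaranteed by Assumption 3.

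With the uniform bound in hand, I would extract weak limits $c^\delta\rightharpoonup c$ in $\C$ and $\tilde\varphi^\delta\rightharpoonup\tilde\varphi$ in $\mathring{\mV}^1_{-\beta}(\Om)$, set $\varphi=c\,s^++\tilde\varphi\in\mathring{\mV}^{\mrm{out}}$, and verify that $A^{\mrm{out}}_\eps\varphi=f$ by passing to the limit in the weak formulation; the strong convergence $s^\delta\to s^+$ in $\mathring{\mV}^1_\beta(\Om)$ ensures the singular contributions converge correctly and selects $s^+$ rather than $s^-$ in the limit. Uniqueness for \eqref{eq-limitpbsca} (again Assumption 3) then forces the whole family, not merely a subsequence, to converge, giving convergence of $(c^\delta,\tilde\varphi^\delta)$ to $(c,\tilde\varphi)$ in $\C\times\mathring{\mV}^1_{-\beta}(\Om)$ and hence of $\varphi^\delta$ to $\varphi$ in $\mathring{\mV}^1_\beta(\Om)$.

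The main obstacle, I expect, is establishing the uniform-in-$\delta$ a priori estimate rigorously. The operator $A^\delta_\eps$ associated with $\eps^\delta$ acts between the classical spaces $\mH^1_0(\Om)$ and its dual, whereas the limit operator $A^{\mrm{out}}_\eps$ lives on the nonstandard space $\mathring{\mV}^{\mrm{out}}$; one must therefore track the singular exponent $\lambda^\delta=-1/2+i\eta^\delta$ carefully as it approaches the critical line $\Re e\,\lambda=-1/2$, and control the constants in the Kondratiev decomposition uniformly as the weight contour degenerates. This is precisely the delicate point handled in \cite{BonCheCla13}; the 3D adaptation is essentially notational once the spectral convergence $\lambda^\delta\to-1/2+i\eta$ and the mode convergence $\Phi^\delta\to\Phi$ are known, both of which follow from the preceding lemma.
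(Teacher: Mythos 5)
Your proposal is correct and follows essentially the same route as the paper, which gives no details here and simply states that one proceeds "exactly as in the proof of [BonCheCla13, Theorem 4.3]": decompose via Lemma \ref{lem-decompphidelta}, use $s^\delta\to s^+$ and $\eps^\delta\to\eps$ to get convergence of the perturbed operators to $A^{\mrm{out}}_{\eps}$ (viewed on the fixed space $\C\times\mathring{\mV}^1_{-\beta}(\Om)$), deduce a uniform a priori bound, and conclude by the injectivity/uniqueness granted by Assumption 3. You also correctly identify the genuinely delicate point — uniformity of the Kondratiev constants as $\lambda^\delta$ approaches the line $\Re e\,\lambda=-1/2$ — which is exactly the technical content delegated to the cited 2D proof.
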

\noindent Note that the results of Lemma \ref{Abslimscalaire} still hold if we  replace $f$ by a family of source terms $(f^\delta)\in(\mathring{\mV}^1_\beta(\Om))^\ast$ that converges to $f$ in $(\mathring{\mV}^1_\beta(\Om))^\ast$ when $\delta$ tends to zero.

\subsubsection{Limiting absorption principle for the electric problem} 
 The problem
\begin{equation}\label{pb-udelta}
 \text{Find}~\u^\delta \in  \mX_N(\eps^\delta)~\text{such that}~~ \curl\mu^{-1}\curl \u^\delta-\om^2\eps^\delta\u^\delta=i\om \J , 
\end{equation}
with $\mX_N(\eps^\delta)= \{\boldsymbol{E}\in \Hspace_N(\curl)\,|\,\div(\eps^\delta\boldsymbol{E})=0\}$, is well-posed for all $\om\in\R$ and all $\delta>0$. This result is classical when $\mu$ takes positive values while it can be shown by using \cite{BoCC14} when $\mu$ changes sign. We want to study the convergence of $\u^\delta$ when $\delta$ goes to zero. Let $(\delta_n)$ be a sequence of positive numbers such that $\textstyle\lim_{n\to+\infty}\delta_n=0$. To simplify, we denote the quantities with an index $n$ instead of $\delta_n$ (for example we write $\eps^n$ instead of $\eps^{\delta_n}$). 
 \begin{lemma}\label{lem-udeltan}
Suppose that $(\u^{n})$ is a sequence of elements of $\mX_N(\eps^n)$ such that $(\curl \u^{n})$ is bounded in $\Lspace^2(\Om)$. Then, under Assumption 3, for all $\beta$ satisfying \eqref{betainfbeta_0et1demi}, for all $n\in\N$, $\u^n$ admits the decomposition $\u^n=c^n\nabla s^n+\tilde{\u}^n$  with $c^n\in\mathbb{C}$ and $\tilde{\u}^n\in \boldsymbol{\mV}^0_{-\beta}(\Om)$. Moreover, there exists a subsequence such that $(c^n)$ converges to some $c$ in $\C$ while $(\tilde{\u}^n)$ converges to some $\tilde{\u}$ in $\boldsymbol{\mV}^0_{-\beta}(\Om)$. Finally, the field $\u:=c\nabla s^++\tilde{\u}$ belongs to $\mX^{\mrm{out}}_N(\eps)$.
 \end{lemma}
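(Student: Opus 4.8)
The plan is to follow the strategy of the proof of Lemma~\ref{LemmaNormeEquiv}, but now keeping track of the dependence on $\delta_n$ and feeding the resulting scalar problem into the limiting absorption principle of Lemma~\ref{Abslimscalaire}. First, since each $\u^n$ belongs to $\Hspace_N(\curl)\subset\Lspace^2(\Om)$, item $iv)$ of Proposition~\ref{propoPotential} provides the Helmholtz decomposition
\[
\u^n=\nabla\varphi^n+\curl\boldsymbol{\psi}^n,\qquad \varphi^n\in\mH^1_0(\Om),\ \boldsymbol{\psi}^n\in\mX_T(1).
\]
Because $\varphi^n$ vanishes on $\partial\Om$ (so $\nabla\varphi^n\times\nu=0$) and $\u^n\times\nu=0$, we get $\curl\boldsymbol{\psi}^n\times\nu=0$ on $\partial\Om$, while $-\boldsymbol{\Delta\psi}^n=\curl\u^n$ is bounded in $\Lspace^2(\Om)$ by hypothesis. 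Proposition~\ref{propoLaplacienVect} then gives $\curl\boldsymbol{\psi}^n\in\boldsymbol{\mV}^0_{-\beta}(\Om)$, and Proposition~\ref{propoLaplacienVectcompact} allows us to extract a subsequence along which $\curl\boldsymbol{\psi}^n$ converges in $\boldsymbol{\mV}^0_{-\beta}(\Om)$ to some field $\boldsymbol{g}$.

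Next, the constraint $\div(\eps^n\u^n)=0$ rewrites as $-\div(\eps^n\nabla\varphi^n)=\div(\eps^n\curl\boldsymbol{\psi}^n)=:f^n$, so that $\varphi^n$ is exactly the solution of the dissipative scalar problem~\eqref{pbclassic} with source $f^n\in(\mathring{\mV}^1_\beta(\Om))^{\ast}$. Since $\eps^n=\eps+i\delta_n\to\eps$ uniformly and $\curl\boldsymbol{\psi}^n\to\boldsymbol{g}$ in $\boldsymbol{\mV}^0_{-\beta}(\Om)$, the sources converge, $f^n\to f:=\div(\eps\boldsymbol{g})$ in $(\mathring{\mV}^1_\beta(\Om))^{\ast}$. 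Lemma~\ref{lem-decompphidelta} furnishes the decomposition $\varphi^n=c^n s^n+\tilde{\varphi}^n$ with $c^n\in\C$ and $\tilde{\varphi}^n\in\mathring{\mV}^1_{-\beta}(\Om)$, and the version of Lemma~\ref{Abslimscalaire} with a convergent family of right-hand sides (the remark following that lemma) yields $(c^n,\tilde{\varphi}^n)\to(c,\tilde{\varphi})$ in $\C\times\mathring{\mV}^1_{-\beta}(\Om)$, where $c\,s^++\tilde{\varphi}$ solves~\eqref{eq-limitpbsca} with source $f$. Setting $\tilde{\u}^n:=\nabla\tilde{\varphi}^n+\curl\boldsymbol{\psi}^n$ we obtain the announced decomposition $\u^n=c^n\nabla s^n+\tilde{\u}^n$ with $\tilde{\u}^n\in\boldsymbol{\mV}^0_{-\beta}(\Om)$, together with $c^n\to c$ and $\tilde{\u}^n\to\tilde{\u}:=\nabla\tilde{\varphi}+\boldsymbol{g}$ in $\boldsymbol{\mV}^0_{-\beta}(\Om)$.

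It then remains to check that $\u:=c\nabla s^++\tilde{\u}$ lies in $\mX^{\mrm{out}}_N(\eps)$. Since $\curl(\nabla s^+)=0$, we have $\curl\u=\curl\tilde{\u}$; passing to the distributional limit in $\curl\tilde{\u}^n=\curl\u^n$, which is bounded in $\Lspace^2(\Om)$, identifies $\curl\u$ with a weak-$\Lspace^2$ limit, so $\curl\u\in\Lspace^2(\Om)$. The tangential trace condition $\u\times\nu=0$ follows because $\nabla s^+$ is supported away from $\partial\Om$ while $\tilde{\u}^n\times\nu=0$ passes to the limit through the weak $\Hspace(\curl)$ convergence of $\tilde{\u}^n$. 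Finally, writing the constraint $\div(\eps^n\u^n)=0$ in the weak form analogous to~\eqref{defDivFaible-ter}, namely $-c^n\int_\Om\div(\eps^n\nabla s^n)\overline{\phi}\,dx+\int_\Om\eps^n\tilde{\u}^n\cdot\nabla\overline{\phi}\,dx=0$ for all $\phi\in\mathring{\mV}^1_\beta(\Om)$, and using $c^n\to c$, $\tilde{\u}^n\to\tilde{\u}$, $\eps^n\to\eps$ together with $\div(\eps^n\nabla s^n)\to\div(\eps\nabla s^+)$ in $\Lspace^2(\Om)$ (from the preceding lemma establishing $s^n\to s^+$), we pass to the limit and recover exactly~\eqref{defDivFaible-ter} for $\u$, that is $\div(\eps\u)=0$ in $\Om\setminus\{O\}$. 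Hence $\u\in\mX^{\mrm{out}}_N(\eps)$.

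The main obstacle is the correct handling of the $n$-dependence: the singular exponent, its profile, and thus the singular function $s^n$ themselves move with $\delta_n$, so one cannot argue within a fixed singular space. The key is that the regular/singular splitting of $\varphi^n$ must be produced by the $\delta$-dependent Kondratiev decomposition (Lemma~\ref{lem-decompphidelta}), and that convergence of the coefficient $c^n$ and of the remainder $\tilde{\varphi}^n$ is supplied precisely by the scalar limiting absorption principle with a varying source rather than by a naive uniform a priori estimate. The second delicate point is to ensure that $f^n$ converges in $(\mathring{\mV}^1_\beta(\Om))^{\ast}$, which forces us to first extract the convergent subsequence of $\curl\boldsymbol{\psi}^n$ before invoking the scalar result.
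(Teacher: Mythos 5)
Your proof is correct and follows essentially the same route as the paper's: Helmholtz decomposition via item $iv)$ of Proposition \ref{propoPotential}, the compactness of Propositions \ref{propoLaplacienVect}--\ref{propoLaplacienVectcompact} applied to $\boldsymbol{\psi}^n$, reduction of the divergence constraint to the dissipative scalar problem handled by Lemmas \ref{lem-decompphidelta} and \ref{Abslimscalaire} (with convergent sources), and passage to the limit in the weak divergence identity to place $\u$ in $\mX^{\mrm{out}}_N(\eps)$. The only difference is that you spell out a few points the paper leaves implicit (convergence of the sources $f^n$, the tangential trace and the $\Lspace^2$ bound on $\curl\u$), which is harmless.
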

\begin{proof}
For all $n\in\N$, we have $\u^n\in\mX_N(\eps^\delta)\subset\Lspace^2(\Om)$. Therefore, there exist $\varphi^n\in\mH^1_0(\Om)$ and $\boldsymbol{\psi}^n\in\mX_T(1)$, satisfying $\curl\boldsymbol{\psi^n}\times \nu=0$ on $\partial\Omega$ such that	$\u^n=\nabla\varphi^n+\curl \boldsymbol{\psi}^n$. Moreover, we have the estimate 
\[
\|\Delta \boldsymbol{\psi^n}\|_\Omega=\|\curl \u^{n}\|_\Omega\leq C.
\]
As a consequence, Proposition \ref{propoLaplacienVect} guarantees that  $(\curl\boldsymbol{\psi^n})$ is a bounded sequence of $\boldsymbol{\mV}^0_{-\beta}(\Om)$, and Proposition \ref{propoLaplacienVectcompact} ensures that there exists a subsequence such that  $(\curl\boldsymbol{\psi^n})$ converges in $\boldsymbol{\mV}^0_{-\beta}(\Om)$. Now from the fact that $\div(\eps^{n}\u^n)=0$, we obtain 
\[
\div(\eps^{n} \nabla\varphi^n)=-\div(\eps^{n} \curl\boldsymbol{\psi}^n)\in(\mathring{\mV}^1_\beta(\Om))^\ast.
\]
By Lemmas \ref{lem-decompphidelta} and \ref{Abslimscalaire}, this implies that the function  $\varphi^n$ decomposes as 
	 $\varphi^n=c^ns^{n} +\tilde{\varphi}^n$ with 
	 $c^n\in\mathbb{C}$ and $\tilde{\varphi}^n\in\mathring{\mV}^1_{-\beta}(\Om)$. Moreover, $(c^n)$ converges to $c$ in $\C$ while $(\tilde{\varphi}^n)$ converges to $\tilde{\varphi}$ in $\mV^1_{-\beta}(\Om)$.\\
\newline
Summing up, we have that $\u^n=c^n\nabla s^{n}+\tilde{\u}^n$ where  $\tilde{\u}^n=\nabla\tilde{\varphi}^n+\curl \boldsymbol{\psi}^n$ converges to $\tilde{\u}$ in $\boldsymbol{\mV}^0_{-\beta}(\Om)$. In particular, this implies that $\u^n$ converges to $\u= c\nabla s^++\tilde{\u}$ in $\boldsymbol{\mV}^0_{\gamma}(\Om)$ for all $\gamma>0$. It remains to prove that $\u \in \mX^{\mrm{out}}_N(\eps) $, which amounts to show that $\u$ satisfies \eqref{defDivFaible-4}. To proceed, we take the limit as $n\rightarrow +\infty$ in the identity
\[
-c^n\int_{\Om}\div(\eps^{n}\nabla s^{n})\varphi \,dx+\int_{\Om}\eps^{n}\tilde{\u}^n\cdot\nabla\varphi\,dx=0
\]
which holds for all $\varphi\in \mathring{\mV}^1_\beta(\Om)$ because $\u^n\in\mX_N(\eps^{n})$. 
\end{proof}
\begin{theorem} Let $\om\in\R$.  
	Suppose that   Assumptions 1, 2 and 3 hold, and that $\u=0$ is the only function of $\mX_N(\eps)$ satisfying 
	\begin{equation}
	\label{PbhomogdansXN}
\curl\mu^{-1}\curl \u-\om^2\eps\u=0.
	\end{equation}
Then the sequence of solutions $(\u^\delta=c^\delta\nabla s^{\delta}+\tilde{\u}^\delta)$ of \eqref{pb-udelta} converges, as $\delta$ tends to 0, to the unique solution $\u=c \nabla s^++\tilde{\u}\in\mX^{\mrm{out}}_N(\eps) $ of (\ref{MainPbE}) in the following sense: $(c_\delta)$ converges to $c$ in $\C$, $(\tilde{\u}^\delta)$ converges to $\tilde{\u}$ in $\boldsymbol{\mV}^0_{-\beta}(\Om)$ and 
$(\curl \u^\delta)$ converges to $\curl \u$ in $\Lspace^2(\Om)$.
\end{theorem}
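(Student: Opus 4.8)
The plan is to combine the a priori compactness/convergence information supplied by Lemma \ref{lem-udeltan} with the uniqueness hypothesis \eqref{PbhomogdansXN}, following the classical strategy for limiting absorption principles: first extract a convergent subsequence, then identify its limit as a solution of the limit problem \eqref{MainPbE}, and finally use uniqueness to upgrade subsequential convergence to convergence of the whole family. A central preliminary step will be to obtain a uniform bound on $\curl\u^\delta$ in $\Lspace^2(\Om)$, which is precisely the hypothesis needed to invoke Lemma \ref{lem-udeltan}.

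\medskip

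\noindent\textbf{Step 1: uniform bound.} First I would argue by contradiction that $(\curl\u^\delta)$ stays bounded in $\Lspace^2(\Om)$. Suppose not; after renormalizing by $t_\delta:=\|\curl\u^\delta\|_\Om\to+\infty$, set $\v^\delta:=\u^\delta/t_\delta$, so that $\|\curl\v^\delta\|_\Om=1$ while the right-hand side $i\om\J/t_\delta\to0$. Then $(\curl\v^\delta)$ is bounded, and Lemma \ref{lem-udeltan} applies to $(\v^\delta)$: along a subsequence $\v^\delta=c^\delta\nabla s^\delta+\tilde{\v}^\delta$ with $(c^\delta,\tilde{\v}^\delta)$ converging to $(c,\tilde{\v})$ in $\C\times\boldsymbol{\mV}^0_{-\beta}(\Om)$, and the limit $\v:=c\nabla s^++\tilde{\v}\in\mX^{\mrm{out}}_N(\eps)$. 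Passing to the limit in the variational form of \eqref{pb-udelta} (using $\eps^\delta\to\eps$ in $\mrm{L}^\infty$, the convergence of $\tilde{\v}^\delta$ in $\boldsymbol{\mV}^0_{-\beta}(\Om)$ and the definition \eqref{newint-epsuvbar} of the weighted integral) shows $\v$ solves the homogeneous problem \eqref{MainPbE}. By Proposition \ref{trappedmodes} we get $c=0$, so $\v\in\mX_N(\eps)$ is a solution of \eqref{PbhomogdansXN}; the uniqueness hypothesis forces $\v=0$, whence $\tilde{\v}^\delta\to0$ in $\Lspace^2(\Om)$. But then the weak formulation tested against $\v^\delta$ itself, together with the $\mathbb{T}$-coercivity estimate underlying Proposition \ref{propoUniqueness} (equivalence of $\|\curl\cdot\|_\Om$ with the full norm, Lemma \ref{LemmaNormeEquiv}), yields $\|\curl\v^\delta\|_\Om\to0$, contradicting $\|\curl\v^\delta\|_\Om=1$.

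\medskip

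\noindent\textbf{Step 2: extraction and identification.} With the uniform bound in hand, Lemma \ref{lem-udeltan} applies directly to $(\u^\delta)$: along a subsequence, $c^\delta\to c$ in $\C$, $\tilde{\u}^\delta\to\tilde{\u}$ in $\boldsymbol{\mV}^0_{-\beta}(\Om)$, and $\u:=c\nabla s^++\tilde{\u}\in\mX^{\mrm{out}}_N(\eps)$. I would then pass to the limit in the variational formulation of \eqref{pb-udelta}, tested against an arbitrary fixed $\v\in\mX^{\mrm{out}}_N(\eps)$ (approximating $\v$ by the decomposition of Lemma \ref{LemmaNormeEquiv} if convenient), to identify $\u$ as a solution of \eqref{MainPbE}. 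The curl term passes to the limit once one knows $\curl\u^\delta\to\curl\u$ in $\Lspace^2(\Om)$; this last convergence follows as in the proof of Lemma \ref{lem-udeltan} from the convergence of the vector-potential component $\curl\boldsymbol{\psi}^n$ in $\boldsymbol{\mV}^0_{-\beta}(\Om)$ together with Proposition \ref{propoLaplacienVectcompact}, noting $\curl\u^\delta=\curl\curl\boldsymbol{\psi}^\delta=-\boldsymbol{\Delta\psi}^\delta$.

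\medskip

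\noindent\textbf{Step 3: whole-family convergence.} By Theorem \ref{MainThmE} the limit operator is Fredholm of index zero, and the present uniqueness hypothesis makes it an isomorphism, so the solution $\u$ of \eqref{MainPbE} is unique. Since every subsequence of $(\u^\delta)$ admits a further subsequence converging (in the stated sense) to this same unique $\u$, the whole family converges, giving the three asserted convergences: $c^\delta\to c$ in $\C$, $\tilde{\u}^\delta\to\tilde{\u}$ in $\boldsymbol{\mV}^0_{-\beta}(\Om)$, and $\curl\u^\delta\to\curl\u$ in $\Lspace^2(\Om)$. \textbf{The main obstacle} I anticipate is Step 1, the uniform bound: the weighted integral $\fint_\Om\eps\u\cdot\overline{\v}\,dx$ is genuinely non-hermitian (cf.\ \eqref{FormulaAdjoint}), so the coercivity argument must be run carefully through the $\mathbb{T}$-coercivity operator and the compactness of $\mathbb{K}^{\mrm{out}}_N$ rather than by a naive energy estimate, and one must control the $\delta$-dependence of the singular exponent $\lambda^\delta$ and of $s^\delta$ uniformly, which is exactly what the scalar limiting absorption result (Lemma \ref{Abslimscalaire}) and the convergence $s^\delta\to s^+$ are there to provide.
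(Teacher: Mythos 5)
Your overall architecture (contradiction argument for a uniform bound on $\|\curl\u^\delta\|_{\Om}$, extraction and identification of the limit via Lemma \ref{lem-udeltan}, uniqueness to upgrade to whole-family convergence) is the same as the paper's, up to the order of the two main steps. But there is a genuine gap at the one point where the argument is not routine: the strong convergence of $(\curl\u^\delta)$ in $\Lspace^2(\Om)$, which is both part of the conclusion of the theorem and the fact you need to contradict $\|\curl\v^\delta\|_{\Om}=1$ in your Step 1. Your Step 2 claims this convergence "follows as in the proof of Lemma \ref{lem-udeltan} from the convergence of $\curl\boldsymbol{\psi}^n$ in $\boldsymbol{\mV}^0_{-\beta}(\Om)$ together with Proposition \ref{propoLaplacienVectcompact}, noting $\curl\u^\delta=-\boldsymbol{\Delta\psi}^\delta$." This does not work: Proposition \ref{propoLaplacienVectcompact} gives (sub)convergence of $\curl\boldsymbol{\psi}^n$ in $\boldsymbol{\mV}^0_{-\beta}(\Om)$, i.e.\ one derivative \emph{below} $\boldsymbol{\Delta\psi}^n$; it says nothing about strong $\Lspace^2$-convergence of $\boldsymbol{\Delta\psi}^n$ itself, which is only bounded, hence weakly convergent. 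Your alternative route in Step 1 — testing against $\v^\delta$ and invoking "the $\mathbb{T}$-coercivity estimate underlying Proposition \ref{propoUniqueness}" — is also not justified as stated: testing against $\v^\delta$ only gives $\int_{\Om}\mu^{-1}|\curl\v^\delta|^2\,dx\to0$, which controls nothing when $\mu$ changes sign, and the operator $\mathbb{T}$ of Proposition \ref{propoUniqueness} is built on $\mX^{\mrm{out}}_N(\eps)$ for the limit permittivity, so $\mathbb{T}\v^\delta$ is neither in $\Lspace^2(\Om)$ nor an admissible test function for the $\delta$-problem without further work.

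The missing ingredient, which is the actual content of the paper's first step, is a \emph{second} Helmholtz decomposition applied to the magnetic-type quantity: write $\mu^{-1}\curl\u^n=\nabla h^n+\curl\w^n$ with $h^n\in\mH^1_{\#}(\Om)$ and $\w^n\in\mX_N(1)$, test the difference of the equations for indices $n$ and $m$ against $\overline{\w^n-\w^m}$ to get
\begin{equation*}
\int_\Om|\curl\w^n-\curl\w^m|^2\,dx=\om^2\int_\Om\big(\eps^{n}\u^n-\eps^{m}\u^m\big)\cdot\overline{(\w^n-\w^m)}\,dx,
\end{equation*}
and use the compact embedding of $\mX_N(1)$ into $\boldsymbol{\mV}^0_{-\beta}(\Om)$ (Lemma \ref{LemmaWeightedClaBis}) together with the already-established convergence of $(c^n,\tilde{\u}^n)$ to see that $(\curl\w^n)$ is Cauchy in $\Lspace^2(\Om)$; then Assumption 2 ($A_\mu$ an isomorphism) gives convergence of $(\nabla h^n)$, hence of $(\curl\u^n)$ in $\Lspace^2(\Om)$. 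Once this is in place, your contradiction in Step 1 closes correctly (the renormalized sequence has curls converging strongly to $\curl\,0=0$), and the rest of your proposal is sound. Without it, neither the uniform bound nor the asserted $\Lspace^2$-convergence of the curls is proved.
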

\begin{proof}
Let $(\delta_n)$ be a sequence of positive numbers such that $\textstyle\lim_{n\to+\infty}\delta_n=0$. Denote by $\u^n$ the unique function of $\mX_N(\eps^{n})$ such that 
\begin{equation}
\label{eq-suiteu_n}
\curl\mu^{-1}\curl \u^n-\om^2\eps^{n}\u^n=i\om \J. 
\end{equation}
Note that we set again $\eps^n$ instead of $\eps^{\delta_n}$. The proof is in two steps. First, we establish the desired property by assuming that $	(\|\curl \u^n\|_\Omega)$ is bounded. Then we show that this hypothesis is indeed satisfied.
	\\	
	{\bf 	First step.} Assume that there is a constant $C>0$ such that for all $n\in\N$
		\begin{equation}\label{eq-curlun}
		\|\curl \u^n\|_\Omega\leq C.
		\end{equation}
		By lemma \ref{lem-udeltan}, we can extract a subsequence from $(\u^n=c^n\nabla s^{n}+\tilde{\u}^n)$ such that $(c^n)$ converges to $c$ in $\C$, $(\tilde{\u}^n)$ 	converges to $\tilde{\u}$ in $\boldsymbol{\mV}^0_{-\beta}(\Om)$, with $\u=\tilde{u}+c\nabla s^+\in \mX^{\mrm{out}}_N(\eps)$. Besides, since for all $n\in\N$, $\curl\u^n\in\Lspace^2(\Om)$, there exist $h^n\in\mH^1_\#(\Om)$ and $\boldsymbol{\w}^n\in\mX_N(1)$, such that	
\begin{equation}\label{helm-decomp-mu}
\mu^{-1}\curl\u^n=\nabla h^n+\curl \boldsymbol{\w}^n.
\end{equation}
Observing that $(\w^n)$ is bounded in $\mX_N(1)$, from Lemma \ref{LemmaWeightedClaBis}, we deduce that it admits a subsequence which converges in $\boldsymbol{\mV}^0_{-\beta}(\Om)$. Multiplying \eqref{eq-suiteu_n} taken for two indices $n$ and $m$  by $\overline{(\w^n-\w^m)}$, and integrating by parts, we obtain 
\[
\int_\Om|\curl\w^n-\curl\w^m|^2\,dx=\om^2\int_\Om\left(\eps^{n}\u^n-\eps^{m}\u^m\right)\overline{(\w^n-\w^m)}\,dx. 
\]
This implies that $(\curl\w^n)$ converges in $\Lspace^2(\Om)$. Then, from \eqref{helm-decomp-mu}, we deduce that
\[
\div  \left(\mu \nabla h^n\right)=-\div \left(\mu\, \curl\w^n\right)\mbox{ in }\Om.
\]
By Assumption 2, the operator $A_{\mu}: \mH_\#^1(\Om)\to (\mH_\#^1(\Om))^\ast$ is an isomorphism. Therefore $(\nabla h^n)$ converges in $\Lspace^2(\Om)$. From \eqref{helm-decomp-mu}, this shows that  $(\curl\u^n)$ converges to $\curl \u$ in $\Lspace^2(\Om)$. Finally, we know that $\u^n$ satisfies
\[
\int_{\Om}\mu^{-1}\curl\u^n\cdot\curl\overline{{\v}}\,dx -\om^2 \int_\Om \eps^{n} \u^n\cdot\overline{{\v}}\,dx= i\om\int_\Om \J\cdot \overline{{\v}}\,dx
\]
for all ${\v}\in\boldsymbol{\mV}^0_{-\beta}(\Om)$. 
				Taking the limit, we get that  $\u$ satisfies
				\begin{equation}
				\label{eq-variationnel-u}
				\int_{\Om}\mu^{-1}\curl\u\cdot\curl\overline{{\v}}\,dx -\om^2 \fint_\Om \eps \u\cdot\overline{{\v}}\,dx= i\om\int_\Om \J\cdot \overline{{\v}}\,dx
				\end{equation}
					for all ${\v}\in\boldsymbol{\mV}^0_{-\beta}(\Om)$. 
			Since in addition, $\u$ satisfies \eqref{defDivFaible-4}, \eqref{eq-variationnel-u} also holds for $\v=\nabla s^+$ and we get that $\u$ is  the unique solution   $u$ of (\ref{MainPbE}). 
\\		
\textbf{Second step.} Now we prove that the assumption \eqref{eq-curlun} is satisfied. Suppose by contradiction that there exists a subsequence of $(\u^n)$ such that 	
\[
\|\curl \u^n\|_\Omega\rightarrow +\infty
\]
and consider the sequence $(\v^n)$ with for all $n\in\N$, 	$\v^n:=\u^n/\|\curl \u^n\|_\Omega$. We have
	\begin{equation}
	\label{eq-suitev_n}
	\v^n \in  \mX_N(\eps^{n})\quad\mbox{ and }\quad\curl\mu^{-1}\curl \v^n-\om^2\eps^{n}\v^n=i\om \J/\|\curl \u^n\|_\Omega. 
	\end{equation}
Following the first step of the proof, we find that we can extract a subsequence from $(\v^n)$ which converges, in the sense given in the theorem, to the unique solution of  the homogeneous problem (\ref{MainPbE}) with $\J=0$. But by Proposition \ref{trappedmodes}, this solution also solves \eqref{PbhomogdansXN}. As a consequence, it is equal to zero. In particular, it implies that $(\curl \v^n)$ converges to zero in $\Lspace^2(\Om)$, which is impossible since by construction, for all $n\in\N$, we have $\|\curl \v^n\|_\Omega=1$.
\end{proof}
\section{Analysis of the problem for the magnetic component}\label{SectionChampH}

In this section, we turn our attention to the analysis of the Maxwell's problem for the magnetic component. Importantly, in the whole section, we suppose that  $\beta$ satisfies (\ref{betainfbeta_0et1demi}), that is $0<\beta<\min(1/2,\beta_0)$. Contrary to the analysis for the electric component, we define functional spaces which depend on $\beta$:
\[
\mZ^{\mrm{out}}_T(\mu):=\{\u\in\Lspace^2(\Omega)\,|\,\curl\u\in\mrm{span}(\eps\nabla s^{+})\oplus\boldsymbol{\mV}^0_{-\beta}(\Om)
,\,\div(\mu\u)=0\mbox{ in }\Om,\,\mu\u\cdot\nu=0\mbox{ on }\partial\Om\}
\]
and for $\xi\in\mL^\infty(\Om)$, 
\[
\mZ^{\pm\beta}_T(\xi):=\{ \u\in\Lspace^2(\Om)\,|\,\curl\boldsymbol{\u}\in \boldsymbol{\mV}^0_{\pm\beta}(\Om),\,\div\,(\xi \u)=0\mbox{ in }\Om\mbox{ and }\xi\u\cdot\nu=0 \mbox{ on }\partial\Om \}.
\]
Note that we have $\mZ^{-\beta}_T(\mu)\subset\mZ^{\mrm{out}}_T(\mu)\subset \mZ^{\beta}_T(\mu)$. The conditions $\div(\mu\u)=0$ in $\Om$ and $\mu\u\cdot\nu=0$ on $\partial\Om$ for the elements of these spaces boil down to impose 
\[
\int_{\Om}\mu\u\cdot\nabla\varphi\,dx=0,\qquad\forall \varphi\in\mH^1_{\#}(\Om).
\]
\begin{remark}
Observe that the elements of $\mZ^{\mrm{out}}_T(\mu)$ are in $\Lspace^2(\Omega)$ but have a singular curl. On the other hand, the elements of $\mX^{\mrm{out}}_N(\eps)$ are singular but have a curl in $\Lspace^2(\Omega)$. This is coherent with the fact that for the situations we are considering in this work, the electric field is singular while the magnetic field is not.
\end{remark}
\noindent The analysis of the problem for the magnetic component leads to consider the formulation 
\begin{equation}\label{MainPbH}
\begin{array}{|l}
\mbox{Find }\u\in\mZ^{\mrm{out}}_T(\mu)\mbox{ such that }\\
\dsp\fintH_{\Om}\eps^{-1}\curl\u\cdot\curl\overline{\v}\,dx -\om^2 \int_\Om \mu \u\cdot\overline{\v}= \int_\Om \eps^{-1}\J\cdot\curl\overline{\v},\qquad \forall\v\in\mZ^{\beta}_T(\mu),

\end{array}
\end{equation}
where $\J\in\boldsymbol{\mV}^0_{-\beta}(\Om)$. 
Again, the first integral in the left-hand side of (\ref{MainPbH}) is not a classical integral. Similarly to definition (\ref{defDivFaible-4}), we set
\[
\fintH_{\Om}\nabla s^+\cdot\curl\overline{\v}\,dx:=0,\qquad\forall \v\in\mZ^{\beta}_T(\mu).
\]
As a consequence, for $\u\in \mZ^{\mrm{out}}_T(\mu)$ such that $\curl\u=c_{\u}\,\eps\nabla s^++\boldsymbol{\zeta_{\u}}$ (we shall use this notation throughout the section) and $\v\in\mZ^{\beta}_T(\mu)$, there holds
\begin{equation}\label{eq-intH-symZero}
\fintH_{\Om}\eps^{-1}\curl\u\cdot\curl\overline{\v}\,dx=\int_{\Om}\eps^{-1}\boldsymbol{\zeta_{\u}}\cdot\curl\overline{\v}\,dx.
\end{equation}
Note that for $\u$, $\v$ in $\mZ^{\mrm{out}}_T(\mu)$ such that $\curl\u=c_{\u}\,\eps\nabla s^++\boldsymbol{\zeta_{\u}}$, $\curl\v=c_{\v}\,\eps\nabla s^++\boldsymbol{\zeta_{\v}}$, we have 
\begin{equation}\label{eq-intH-sym}
\begin{array}{rcl}
\fintH_{\Om}\eps^{-1}\curl\u\cdot\curl\overline{\v}\,dx&=&\int_{\Om}\eps^{-1}\boldsymbol{\zeta_{\u}}\cdot(\overline{c_{\v}\,\eps\nabla s^++\boldsymbol{\zeta_{\v}}})\,dx \\[10pt]
&=&\int_{\Om}\eps^{-1}\boldsymbol{\zeta_{\u}}\cdot\overline{\boldsymbol{\zeta_{\v}}}\,dx-\overline{c_{\v}}\int_{\Om}\div(\boldsymbol{\zeta_{\u}})\,\overline{s^{+}}\,dx \\[10pt]
&=&\int_{\Om}\eps^{-1}\boldsymbol{\zeta_{\u}}\cdot\overline{\boldsymbol{\zeta_{\v}}}\,dx+c_{\u}\overline{c_{\v}}\int_{\Om}\div(\eps\nabla s^+)\,\overline{s^{+}}\,dx.
\end{array}
\end{equation}
We denote by $a_T(\cdot,\cdot)$ (resp. $\ell_T(\cdot)$) the sesquilinear form (resp. the antilinear form) appearing in the left-hand side (resp. right-hand side) of (\ref{MainPbH}).

\begin{remark}
Note that in (\ref{MainPbH}), the solution and the test functions do not belong to the same space. This is different from the formulation (\ref{MainPbE}) for the electric field but seems necessary in the analysis below to obtain a well-posed problem (in particular to prove Proposition \ref{propoUniquenessH}). Note also that even if the functional framework depends on $\beta$, the solution will not if $\J$ is regular enough (see the explanations in Remark \ref{frameworkIndbeta}). 
\end{remark}

\subsection{Equivalent formulation}

Define the spaces
\[
\begin{array}{lcl}
\Hspace^{\beta}(\curl)&\hspace{-0.2cm}:=&\hspace{-0.2cm}\{\u\in\Lspace^2(\Omega)\,|\,\curl\u\in\boldsymbol{\mV}^0_{\beta}(\Om)\}\\
\Hspace^{\mrm{out}}(\curl)&\hspace{-0.2cm}:=&\hspace{-0.2cm}\{\u\in\Lspace^2(\Omega)\,|\,\curl\u\in\mrm{span}(\eps\nabla s^{+})\oplus\boldsymbol{\mV}^0_{-\beta}(\Om)\}.
\end{array}
\]

\begin{lemma}\label{EquivHcurlH}
Under Assumptions 1--2, the field $\u$ is a solution of (\ref{MainPbH}) if and only if it solves the problem 
\begin{equation}\label{Formu2H}
\begin{array}{|l}
\mbox{Find }\u\in \Hspace^{\mrm{out}}(\curl)\mbox{ such that }\\[2pt]
a_T(\u,\v)= \ell_T(\v),\quad\forall \v \in \Hspace^{\beta}(\curl).
\end{array}
\end{equation}
\end{lemma}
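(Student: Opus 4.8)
The plan is to follow the template of Lemma \ref{lemmaEquivE}, proving the two inclusions separately. The essential ingredient, playing here the role that $A^{\mrm{out}}_{\eps}$ played in the electric case, is Assumption 2: since $A_{\mu}:\mH^1_{\#}(\Om)\to(\mH^1_{\#}(\Om))^{\ast}$ is an isomorphism, any $\Lspace^2$ field admits a Helmholtz-type decomposition adapted to $\mu$, which lets me replace an arbitrary test function by a $\mu$-solenoidal one with the same curl. Throughout I use that the averaged integral depends on the test function only through its curl, so that gradient directions are ``invisible'' to the forms $a_T$ and $\ell_T$.

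First I would treat the implication (\ref{MainPbH}) $\Rightarrow$ (\ref{Formu2H}). Let $\u\in\mZ^{\mrm{out}}_T(\mu)\subset\Hspace^{\mrm{out}}(\curl)$ solve (\ref{MainPbH}) and pick an arbitrary $\v\in\Hspace^{\beta}(\curl)$. By Assumption 2 there is a unique $p\in\mH^1_{\#}(\Om)$ such that $\int_{\Om}\mu\nabla p\cdot\nabla\overline{q}\,dx=\int_{\Om}\mu\v\cdot\nabla\overline{q}\,dx$ for all $q\in\mH^1_{\#}(\Om)$. Setting $\hat{\v}:=\v-\nabla p$, we get $\curl\hat{\v}=\curl\v\in\boldsymbol{\mV}^0_{\beta}(\Om)$ while $\div(\mu\hat{\v})=0$ in $\Om$ and $\mu\hat{\v}\cdot\nu=0$ on $\partial\Om$, so $\hat{\v}\in\mZ^{\beta}_T(\mu)$. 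Because $\curl\hat{\v}=\curl\v$, both $\ell_T(\v)=\ell_T(\hat{\v})$ and the singular term coincide, since by (\ref{eq-intH-symZero}) the quantity $\fintH_{\Om}\eps^{-1}\curl\u\cdot\curl\overline{\v}\,dx=\int_{\Om}\eps^{-1}\boldsymbol{\zeta_{\u}}\cdot\curl\overline{\v}\,dx$ depends on $\v$ only through $\curl\v$; moreover $\u$ being $\mu$-solenoidal gives $\int_{\Om}\mu\u\cdot\overline{\nabla p}\,dx=0$ (note $\overline{p}\in\mH^1_{\#}(\Om)$). Hence $a_T(\u,\v)=a_T(\u,\hat{\v})$, and since $\hat{\v}\in\mZ^{\beta}_T(\mu)$ the formulation (\ref{MainPbH}) yields $a_T(\u,\hat{\v})=\ell_T(\hat{\v})$, so $a_T(\u,\v)=\ell_T(\v)$, which is exactly (\ref{Formu2H}).

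For the converse (\ref{Formu2H}) $\Rightarrow$ (\ref{MainPbH}), let $\u\in\Hspace^{\mrm{out}}(\curl)$ solve (\ref{Formu2H}). Since $\mZ^{\beta}_T(\mu)\subset\Hspace^{\beta}(\curl)$, the identity $a_T(\u,\v)=\ell_T(\v)$ already holds for every $\v\in\mZ^{\beta}_T(\mu)$; it only remains to check that $\u\in\mZ^{\mrm{out}}_T(\mu)$, i.e. that $\u$ is $\mu$-solenoidal. I would test (\ref{Formu2H}) against $\v=\nabla q$ with $q\in\mH^1_{\#}(\Om)$, an admissible element of $\Hspace^{\beta}(\curl)$ because $\curl\nabla q=0\in\boldsymbol{\mV}^0_{\beta}(\Om)$. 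Then $\curl\overline{\v}=0$ annihilates the right-hand side $\ell_T(\nabla q)=\int_{\Om}\eps^{-1}\J\cdot\curl\overline{\nabla q}\,dx$ and, via the convention $\fintH_{\Om}\nabla s^{+}\cdot\curl\overline{\v}\,dx:=0$, the first term of $a_T$, leaving $-\om^2\int_{\Om}\mu\u\cdot\overline{\nabla q}\,dx=0$. For $\om\neq0$ this gives $\int_{\Om}\mu\u\cdot\nabla\overline{q}\,dx=0$ for all $q\in\mH^1_{\#}(\Om)$, which is precisely $\div(\mu\u)=0$ in $\Om$ and $\mu\u\cdot\nu=0$ on $\partial\Om$; hence $\u\in\mZ^{\mrm{out}}_T(\mu)$ and $\u$ solves (\ref{MainPbH}).

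The main obstacle is structural rather than computational: because the trial and test spaces in (\ref{MainPbH}) differ and neither equals its unconstrained counterpart, one must verify that enlarging the test space to $\Hspace^{\beta}(\curl)$ introduces no genuinely new equations and that the constraint defining $\mZ^{\mrm{out}}_T(\mu)$ is automatically enforced. Both points rest on the gradient directions being harmless, which in turn relies on the $\curl$-dependence of $a_T$ and $\ell_T$ together with the convention on the singular term, and on Assumption 2 producing the $\mu$-solenoidal representative $\hat{\v}\in\mZ^{\beta}_T(\mu)$. A preliminary step I would be careful about is checking that $\fintH_{\Om}\eps^{-1}\curl\u\cdot\curl\overline{\v}\,dx$ is well defined on all of $\Hspace^{\mrm{out}}(\curl)\times\Hspace^{\beta}(\curl)$ through the duality pairing between $\boldsymbol{\mV}^0_{-\beta}(\Om)$ and $\boldsymbol{\mV}^0_{\beta}(\Om)$, so that every expression above makes sense.
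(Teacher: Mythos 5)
Your proof is correct and follows essentially the same route as the paper: the same $\mu$-adapted Helmholtz correction $\hat{\v}=\v-\nabla p$ via Assumption 2 for one direction, and testing with gradients $\nabla q$, $q\in\mH^1_{\#}(\Om)$, for the other. Your explicit remark that the converse direction needs $\om\neq 0$ (the standing assumption of the paper, since for $\om=0$ the term constraining $\div(\mu\u)$ drops out) is a point the paper's proof leaves implicit.
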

\begin{proof}
If $\u$ satisfies (\ref{Formu2H}), then taking $\v=\nabla\varphi$ with $\varphi\in\mH^1_{\#}(\Om)$ in (\ref{Formu2H}), we get that $\u\in\mZ^{\mrm{out}}_T(\mu)$. This proves that $\u$ solves (\ref{MainPbH}).\\
\newline
Assume now that $\u$ is a solution of (\ref{MainPbH}). Let $\v$ be an element of $\Hspace^{\beta}(\curl)$. Introduce $\varphi\in\mH^1_{\#}(\Om)$ the function such that
\[
\int_{\Om}\mu\nabla\varphi\cdot\nabla\varphi'\,dx=\int_{\Om}\mu\v \cdot\nabla\varphi'\,dx,\qquad\forall \varphi'\in\mH^1_{\#}(\Om).
\]
The field $\hat{\v}:=\v-\nabla\varphi$ belongs to $\mZ^{\beta}_T(\mu)$.
Moreover, there holds $\curl\hat{\v}=\curl\v$ and since for $\u\in\mZ^{\mrm{out}}_T(\mu)$, we have
\[
\int_{\Om}\mu\u\cdot\nabla\varphi\,dx=0,\qquad\forall\varphi\in\mH^1_{\#}(\Om),
\]
we deduce that $a_T(\u,\v)=a_T(\u,\hat{\v})=\ell_T(\hat{\v})=\ell_T(\v)$. 
\end{proof}

\subsection{Norms in $\mZ^{\pm\beta}_T(\mu)$ and $\mZ^{\mrm{out}}_T(\mu)$}

We endow the space $\mZ^{\beta}_T(\mu)$ with the norm
\[
\|\u\|_{\mZ^{\beta}_T(\mu)}=(\|\u\|^2_{\Om}+\|\curl\u\|^2_{\boldsymbol{\mV}^0_{\beta}(\Om)})^{1/2},
\]
so that it is a Banach space. 
\begin{lemma}\label{LemmaNormeEquivHPlus}
Under Assumptions 1--2, there is a constant $C>0$ such that for all $\u\in\mZ^{\beta}_T(\mu)$, we have
\[
\|\u\|_{\Om}\le C\,\|\curl\u\|_{\boldsymbol{\mV}^0_{\beta}(\Om)}.
\]
As a consequence, the norm $\|\cdot\|_{\mZ^{\beta}_T(\mu)}$ is equivalent to the norm $\|\curl\cdot\|_{\boldsymbol{\mV}^0_{\beta}(\Om)}$ in $\mZ^{\beta}_T(\mu)$.
\end{lemma}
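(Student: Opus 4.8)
The plan is to prove the estimate directly, mirroring the proof of Lemma \ref{LemmaNormeEquiv} for the electric field but exchanging the roles of the Dirichlet/Neumann conditions and of $\eps$/$\mu$: decompose $\u$ into a gradient part and a rotational part, control the rotational part by a weighted estimate for the vector Laplacian, and control the gradient part through the isomorphism $A_{\mu}$. The only genuinely new point compared with Lemma \ref{LemmaNormeEquiv} is that here the curl of $\u$ lives only in the rough space $\boldsymbol{\mV}^0_{\beta}(\Om)$ (which is larger than $\Lspace^2(\Om)$), so the vector-potential estimate must be used in its weight-shifted form.

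First I would take $\u\in\mZ^{\beta}_T(\mu)\subset\Lspace^2(\Om)$ and apply item $iv)$ of Proposition \ref{propoPotential} (exactly the decomposition already used in Lemma \ref{LemmaNormeEquiv}) to write $\u=\nabla\varphi+\curl\boldsymbol{\psi}$ with $\varphi\in\mH^1_0(\Om)$ and $\boldsymbol{\psi}\in\mX_T(1)$; since $\u$ and $\nabla\varphi$ lie in $\Lspace^2(\Om)$, so does $\curl\boldsymbol{\psi}$. Because $\div\boldsymbol{\psi}=0$, one has $-\boldsymbol{\Delta}\boldsymbol{\psi}=\curl\curl\boldsymbol{\psi}=\curl\u$, so that $-\boldsymbol{\Delta}\boldsymbol{\psi}\in\boldsymbol{\mV}^0_{\beta}(\Om)$. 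The key step is then the weighted a priori estimate
\[
\|\curl\boldsymbol{\psi}\|_{\Om}\le C\,\|\curl\u\|_{\boldsymbol{\mV}^0_{\beta}(\Om)}.
\]
This is the content of Proposition \ref{propoLaplacienVect} applied with a right-hand side in $\boldsymbol{\mV}^0_{\beta}(\Om)$ instead of $\Lspace^2(\Om)$: in Lemma \ref{LemmaNormeEquiv} the same proposition turned an $\Lspace^2$ right-hand side into $\curl\boldsymbol{\psi}\in\boldsymbol{\mV}^0_{-\beta}(\Om)$, and shifting every weight index by $\beta$ turns a $\boldsymbol{\mV}^0_{\beta}(\Om)$ right-hand side into $\curl\boldsymbol{\psi}\in\boldsymbol{\mV}^0_{0}(\Om)=\Lspace^2(\Om)$ with the corresponding bound. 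I expect this to be the main obstacle, and it is precisely here that the restriction $\beta<1/2$ in (\ref{betainfbeta_0et1demi}) is needed, so as to stay strictly below the first singular exponent of the vector Laplacian and keep the operator an isomorphism between the relevant Kondratiev spaces.

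It then remains to estimate the gradient part. By definition of $\mZ^{\beta}_T(\mu)$, the conditions $\div(\mu\u)=0$ in $\Om$ and $\mu\u\cdot\nu=0$ on $\partial\Om$ mean $\int_{\Om}\mu\u\cdot\nabla\varphi'\,dx=0$ for all $\varphi'\in\mH^1_{\#}(\Om)$. Inserting $\u=\nabla\varphi+\curl\boldsymbol{\psi}$ and denoting by $\pi\varphi\in\mH^1_{\#}(\Om)$ the mean-zero representative of $\varphi$ (so that $\nabla\pi\varphi=\nabla\varphi$), this reads $\langle A_{\mu}\pi\varphi,\varphi'\rangle=-\int_{\Om}\mu\,\curl\boldsymbol{\psi}\cdot\nabla\varphi'\,dx$ for all $\varphi'\in\mH^1_{\#}(\Om)$. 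Since the right-hand side is a continuous antilinear form on $\mH^1_{\#}(\Om)$ of norm at most $C\,\|\curl\boldsymbol{\psi}\|_{\Om}$, and since $A_{\mu}$ is an isomorphism by Assumption 2, we get $\|\nabla\varphi\|_{\Om}=\|\nabla\pi\varphi\|_{\Om}\le C\,\|\curl\boldsymbol{\psi}\|_{\Om}$.

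Gathering the two bounds gives
\[
\|\u\|_{\Om}\le\|\nabla\varphi\|_{\Om}+\|\curl\boldsymbol{\psi}\|_{\Om}\le C\,\|\curl\boldsymbol{\psi}\|_{\Om}\le C\,\|\curl\u\|_{\boldsymbol{\mV}^0_{\beta}(\Om)},
\]
which is the claimed inequality. The equivalence of $\|\cdot\|_{\mZ^{\beta}_T(\mu)}$ with $\|\curl\cdot\|_{\boldsymbol{\mV}^0_{\beta}(\Om)}$ is then immediate, since the definition of $\|\cdot\|_{\mZ^{\beta}_T(\mu)}$ only adds the $\Lspace^2$ norm of $\u$, which we have just controlled by $\|\curl\u\|_{\boldsymbol{\mV}^0_{\beta}(\Om)}$.
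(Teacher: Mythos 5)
There is a genuine gap, and it lies in the choice of Helmholtz decomposition. You use item $iv)$ of Proposition \ref{propoPotential}, writing $\u=\nabla\varphi+\curl\boldsymbol{\psi}$ with $\varphi\in\mH^1_0(\Om)$ and $\boldsymbol{\psi}\in\mX_T(1)$, exactly as in Lemma \ref{LemmaNormeEquiv}. But every route from $-\boldsymbol{\Delta}\boldsymbol{\psi}=\curl\u$ to a bound on $\curl\boldsymbol{\psi}$ --- whether case $ii)$ of Proposition \ref{propoLaplacienVect}, a hypothetical weight-shifted version of it, or a direct integration by parts of $\int_\Om\curl\curl\boldsymbol{\psi}\cdot\overline{\boldsymbol{\psi}}\,dx$ --- requires the boundary condition $\curl\boldsymbol{\psi}\times\nu=0$ on $\partial\Om$. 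In Lemma \ref{LemmaNormeEquiv} this was available because elements of $\mX^{\mrm{out}}_N(\eps)$ satisfy $\u\times\nu=0$ and the scalar potentials vanish on $\partial\Om$. Here, elements of $\mZ^{\beta}_T(\mu)$ carry only the \emph{normal} condition $\mu\u\cdot\nu=0$; since $\curl\boldsymbol{\psi}\times\nu=\u\times\nu-\nabla\varphi\times\nu=\u\times\nu$, there is no reason for this trace to vanish, the boundary term in the integration by parts survives, and your key estimate $\|\curl\boldsymbol{\psi}\|_{\Om}\le C\,\|\curl\u\|_{\boldsymbol{\mV}^0_{\beta}(\Om)}$ is not justified. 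The correct move is to use item $v)$ instead: $\u=\nabla\varphi+\curl\boldsymbol{\psi}$ with $\varphi\in\mH^1_{\#}(\Om)$ and $\boldsymbol{\psi}\in\mX_N(1)$, which is the decomposition adapted to the normal boundary condition (and which also makes your detour through the mean-zero representative $\pi\varphi$ unnecessary).

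A second, more minor point: the ``weight-shifted Proposition \ref{propoLaplacienVect}'' you invoke is not stated or proved anywhere in the paper, and proving it would essentially reproduce the difficulty above. The paper gets the key bound much more cheaply, by duality: with $\boldsymbol{\psi}\in\mX_N(1)$, Lemma \ref{LemmaWeightedClaBis} (the \emph{unshifted} result) gives $\|\boldsymbol{\psi}\|_{\boldsymbol{\mV}^0_{-\beta}(\Om)}\le C\,\|\curl\boldsymbol{\psi}\|_{\Om}$; then pairing $\curl\curl\boldsymbol{\psi}=\curl\u$ with $\overline{\boldsymbol{\psi}}$ (the boundary term vanishes since $\boldsymbol{\psi}\times\nu=0$) yields $\|\curl\boldsymbol{\psi}\|^2_{\Om}\le\|\curl\u\|_{\boldsymbol{\mV}^0_{\beta}(\Om)}\|\boldsymbol{\psi}\|_{\boldsymbol{\mV}^0_{-\beta}(\Om)}$, and dividing gives $\|\curl\boldsymbol{\psi}\|_{\Om}\le C\,\|\curl\u\|_{\boldsymbol{\mV}^0_{\beta}(\Om)}$ with no new elliptic theory. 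Your treatment of the gradient part via the isomorphism $A_{\mu}$ is correct and matches the paper.
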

\begin{remark}
The result of Lemma \ref{LemmaNormeEquivHPlus} holds for all $\beta$ such that $0\le \beta < 1/2$  and not only for $0<\beta<\min(1/2,\beta_0)$.
\end{remark}
\begin{proof}
Let $\u$ be an element of $\mZ^{\beta}_T(\mu)$. Since $\u$ belongs to $\Lspace^2(\Om)$, according to the item $v)$ of Proposition \ref{propoPotential}, there are $\varphi\in\mH^1_{\#}(\Om)$ and $\boldsymbol{\psi}\in \mX_N(1)$ such that 
\begin{equation}\label{DecompoInter}
\u=\nabla\varphi+\curl\boldsymbol{\psi}.
\end{equation}
Lemma \ref{LemmaWeightedClaBis} guarantees that $\boldsymbol{\psi}\in\boldsymbol{\mV}^0_{-\beta}(\Om)$ with the estimate 
\begin{equation}\label{EstimaWeightedClaInterm}
\|\boldsymbol{\psi}\|_{\boldsymbol{\mV}^0_{-\beta}(\Om)} \le C\,\|\curl\boldsymbol{\psi}\|_{\Om}.
\end{equation}
Multiplying the equation $\curl\curl\boldsymbol{\psi}=\curl\u$ in $\Om$ by $\boldsymbol{\psi}$ and integrating by parts, we get 
\begin{equation}\label{EstimaWeightedClaInterm2}
\|\curl\boldsymbol{\psi}\|^2_{\Om}\le \|\curl\u\|_{\boldsymbol{\mV}^0_{\beta}(\Om)}\|\boldsymbol{\psi}\|_{\boldsymbol{\mV}^0_{-\beta}(\Om)}.
\end{equation}
Gathering (\ref{EstimaWeightedClaInterm}) and (\ref{EstimaWeightedClaInterm2}) leads to
\begin{equation}\label{EstimPlusInter}
\|\curl\boldsymbol{\psi}\|_{\Om}\le C\,\|\curl\u\|_{\boldsymbol{\mV}^0_{\beta}(\Om)}.
\end{equation}
On the other hand, using that 
\[
\int_{\Om}\mu\u\cdot\nabla\varphi'\,dx=0,\qquad \forall\varphi'\in\mH^1_{\#}(\Om)
\] 
and that $A_{\mu}:\mH^1_{\#}(\Om)\to(\mH^1_{\#}(\Om))^{\ast}$ is an isomorphism, we deduce that $\|\nabla\varphi\|_{\Om} \le C\,\|\curl\boldsymbol{\psi}\|_{\Om}$. Using this estimate and (\ref{EstimPlusInter}) in the decomposition (\ref{DecompoInter}), finally we obtain the desired result.
\end{proof}
\noindent If $\u\in\mZ^{\mrm{out}}_T(\mu)$, we have $\curl\u=c_{\u}\,\eps\nabla s^++\boldsymbol{\zeta_{\u}}$ with $c_{\u}\in\Cplx$ and $\boldsymbol{\zeta_{\u}}\in\boldsymbol{\mV}^0_{-\beta}(\Om)$. We endow the space $\mZ^{\mrm{out}}_T(\mu)$ with the norm
\[
\|\u\|_{\mZ^{\mrm{out}}_T(\mu)}=(\|\u\|^2_{\Om}+|c_{\u}|^2+\|\boldsymbol{\zeta_{\u}}\|^2_{\boldsymbol{\mV}^0_{-\beta}(\Om)})^{1/2},
\]
so that it is a Banach space.
\begin{lemma}\label{LemmaNormeEquivHOut}
Under Assumptions 1--3, there is $C>0$ such that for all $\u\in\mZ^{\mrm{out}}_T(\mu)$, we have
\begin{equation}\label{MainEstimaSpaceOut}
\|\u\|_{\Om}+|c_{\u}|\le C\,\|\boldsymbol{\zeta_{\u}}\|_{\boldsymbol{\mV}^0_{-\beta}(\Om)}.
\end{equation}
As a consequence, the norm $\|\u\|_{\mZ^{\mrm{out}}_T(\mu)}$ is equivalent to the norm $\|\boldsymbol{\zeta_{\u}}\|_{\boldsymbol{\mV}^0_{-\beta}(\Om)}$ for $\u\in\mZ^{\mrm{out}}_T(\mu)$.
\end{lemma}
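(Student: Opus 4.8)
The plan is to reduce the statement to Lemma~\ref{LemmaNormeEquivHPlus}, after having first gained control of the singular coefficient $c_{\u}$ of $\curl\u$ in terms of the regular part $\boldsymbol{\zeta_{\u}}$. Indeed, for $\u\in\mZ^{\mrm{out}}_T(\mu)$ we have $\curl\u=c_{\u}\,\eps\nabla s^++\boldsymbol{\zeta_{\u}}$ with $\boldsymbol{\zeta_{\u}}\in\boldsymbol{\mV}^0_{-\beta}(\Om)$; since $\boldsymbol{\mV}^0_{-\beta}(\Om)\subset\boldsymbol{\mV}^0_{\beta}(\Om)$ and $\eps\nabla s^+\in\boldsymbol{\mV}^0_{\beta}(\Om)$ (because $s^+\in\mathring{\mV}^1_{\beta}(\Om)$ and $\eps\in\mL^{\infty}(\Om)$), the field $\u$ already belongs to $\mZ^{\beta}_T(\mu)$. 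Thus once $|c_{\u}|\le C\,\|\boldsymbol{\zeta_{\u}}\|_{\boldsymbol{\mV}^0_{-\beta}(\Om)}$ is established, Lemma~\ref{LemmaNormeEquivHPlus} yields $\|\u\|_{\Om}\le C\,\|\curl\u\|_{\boldsymbol{\mV}^0_{\beta}(\Om)}\le C\,(|c_{\u}|+\|\boldsymbol{\zeta_{\u}}\|_{\boldsymbol{\mV}^0_{-\beta}(\Om)})\le C\,\|\boldsymbol{\zeta_{\u}}\|_{\boldsymbol{\mV}^0_{-\beta}(\Om)}$, which is exactly the estimate \eqref{MainEstimaSpaceOut}.

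To control $c_{\u}$, the key is the relation $\div\,(\curl\u)=0$. Writing it out gives $c_{\u}\,\div(\eps\nabla s^+)+\div\boldsymbol{\zeta_{\u}}=0$ in $\Om$, and since $\div(\eps\nabla s^+)\in\mL^2(\Om)$ (a consequence of the cut-off in the definition \eqref{DefSinguEss}), this shows $\div\boldsymbol{\zeta_{\u}}=-c_{\u}\,\div(\eps\nabla s^+)\in\mL^2(\Om)$. I would then test this identity against $\overline{s^+}$: on the one hand the right-hand side gives $-c_{\u}\int_{\Om}\div(\eps\nabla s^+)\,\overline{s^+}\,dx$, while on the other hand, integrating by parts (legitimate because $s^+$ vanishes near $\partial\Om$ and $\mathring{\mV}^1_{\beta}(\Om)\hookrightarrow\mL^2(\Om)$ for $\beta<1$, so $\mathscr{C}^{\infty}_0(\Om\setminus\{O\})$ approximations of $s^+$ pass to the limit), the left-hand side equals $-\int_{\Om}\boldsymbol{\zeta_{\u}}\cdot\nabla\overline{s^+}\,dx$. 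Hence
\[
c_{\u}\int_{\Om}\div(\eps\nabla s^+)\,\overline{s^+}\,dx=\int_{\Om}\boldsymbol{\zeta_{\u}}\cdot\nabla\overline{s^+}\,dx.
\]
The denominator does not vanish: by Lemma~\ref{lemmaNRJ} its imaginary part equals $\eta\int_{\mathbb{S}^2}\eps|\Phi|^2\,ds$, which is nonzero by \eqref{eq-intPhineq0}. The pairing on the right is bounded by $\|\boldsymbol{\zeta_{\u}}\|_{\boldsymbol{\mV}^0_{-\beta}(\Om)}\|\nabla s^+\|_{\boldsymbol{\mV}^0_{\beta}(\Om)}\le C\,\|\boldsymbol{\zeta_{\u}}\|_{\boldsymbol{\mV}^0_{-\beta}(\Om)}$, which gives the required bound $|c_{\u}|\le C\,\|\boldsymbol{\zeta_{\u}}\|_{\boldsymbol{\mV}^0_{-\beta}(\Om)}$.

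Combining the two steps yields \eqref{MainEstimaSpaceOut}. The equivalence of norms then follows immediately: \eqref{MainEstimaSpaceOut} shows $\|\u\|_{\mZ^{\mrm{out}}_T(\mu)}\le C\,\|\boldsymbol{\zeta_{\u}}\|_{\boldsymbol{\mV}^0_{-\beta}(\Om)}$, while the reverse inequality $\|\boldsymbol{\zeta_{\u}}\|_{\boldsymbol{\mV}^0_{-\beta}(\Om)}\le\|\u\|_{\mZ^{\mrm{out}}_T(\mu)}$ is trivial from the definition of the norm on $\mZ^{\mrm{out}}_T(\mu)$.

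The main obstacle is the middle step: one must make rigorous the duality pairing that isolates $c_{\u}$, i.e. justify the integration by parts $\int_{\Om}\div\boldsymbol{\zeta_{\u}}\,\overline{s^+}\,dx=-\int_{\Om}\boldsymbol{\zeta_{\u}}\cdot\nabla\overline{s^+}\,dx$ in the weighted setting where $\boldsymbol{\zeta_{\u}}\in\boldsymbol{\mV}^0_{-\beta}(\Om)$, $\div\boldsymbol{\zeta_{\u}}\in\mL^2(\Om)$ and $s^+\in\mathring{\mV}^1_{\beta}(\Om)$, the two factors sitting precisely in dual Kondratiev weights. This is where the non-degeneracy provided by Lemma~\ref{lemmaNRJ} together with the normalisation \eqref{eq-intPhineq0} is essential, since without it the singular coefficient could not be recovered from the regular part of the curl, and the whole reduction to Lemma~\ref{LemmaNormeEquivHPlus} would break down.
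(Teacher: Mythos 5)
Your proof is correct, and its overall skeleton matches the paper's: reduce to Lemma \ref{LemmaNormeEquivHPlus} via the inclusion $\mZ^{\mrm{out}}_T(\mu)\subset\mZ^{\beta}_T(\mu)$, and obtain the missing control of $c_{\u}$ from the relation $c_{\u}\,\div(\eps\nabla s^+)=-\div\boldsymbol{\zeta_{\u}}$ coming from $\div(\curl\u)=0$. Where you genuinely diverge is in how you extract $|c_{\u}|$ from that relation. The paper observes that $A^{\mrm{out}}_{\eps}(c_{\u}s^+)=\div\boldsymbol{\zeta_{\u}}$ and invokes Assumption 3 (the isomorphism $A^{\mrm{out}}_{\eps}:\mathring{\mV}^{\mrm{out}}\to(\mathring{\mV}^1_{\beta}(\Om))^{\ast}$), whose inverse bounds the $\mV^{\mrm{out}}$-norm, hence $|c_{\u}|$, by $\|\div\boldsymbol{\zeta_{\u}}\|_{(\mathring{\mV}^1_{\beta}(\Om))^{\ast}}\le C\|\boldsymbol{\zeta_{\u}}\|_{\boldsymbol{\mV}^0_{-\beta}(\Om)}$. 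You instead pair the relation with $\overline{s^+}$ and use the non-degeneracy $\Im m\int_{\Om}\div(\eps\nabla s^+)\overline{s^+}\,dx\neq0$ supplied by Lemma \ref{lemmaNRJ} and \eqref{eq-intPhineq0}. Your integration by parts is legitimate for exactly the reason you give (density of $\mathscr{C}^{\infty}_0(\Om\setminus\{O\})$ in $\mathring{\mV}^1_{\beta}(\Om)$, with $\boldsymbol{\zeta_{\u}}\in\boldsymbol{\mV}^0_{-\beta}(\Om)$ and $\div\boldsymbol{\zeta_{\u}}\in\mL^2(\Om)$ supported away from $O$), and it is in fact the same manipulation the paper performs to derive \eqref{eq-intH-sym}. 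What each approach buys: the paper's argument is shorter once Assumption 3 is granted and generalizes immediately to the case where $A^{-\beta}_{\eps}$ has a kernel modulo finitely many conditions; yours is more elementary in that it bypasses the isomorphism of $A^{\mrm{out}}_{\eps}$ entirely for this step, relying only on the energy-flux lemma and the normalisation \eqref{eq-intPhineq0} — it isolates $c_{\u}$ by an explicit dual pairing, which is the same mechanism the paper later exploits in \S\ref{paraSingCoef} to compute the singular coefficient. Both yield the stated constant, and the norm equivalence then follows as you say.
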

\begin{proof}
Let $\u$ be an element of $\mZ^{\mrm{out}}_T(\mu)$. Since $\mZ^{\mrm{out}}_T(\mu)\subset\mZ^{\beta}_T(\mu)$, Lemma \ref{LemmaNormeEquivHPlus} provides the estimate
\begin{equation}\label{MainEstimaSpaceOutInterm}
\|\u\|_{\Om}\le C\,\|\curl\u\|_{\boldsymbol{\mV}^0_{\beta}(\Om)}\le C\,(|c_{\u}|+\|\boldsymbol{\zeta_{\u}}\|_{\boldsymbol{\mV}^0_{-\beta}(\Om)}).
\end{equation}
On the other hand, taking the divergence of $\curl\u=c_{\u}\,\eps\nabla s^++\boldsymbol{\zeta_{\u}}$, we obtain $c_{\u}\,\div(\eps\nabla s^+)=-\div\,\boldsymbol{\zeta_{\u}}$. Using the fact that $A^{\mrm{out}}_{\eps}:\mathring{\mV}^{\mrm{out}}\to(\mathring{\mV}^1_{\beta}(\Om))^{\ast}$ is an isomorphism, we get 
\[
|c_{\u}|\le C\,\|\div\,\boldsymbol{\zeta_{\u}}\|_{(\mathring{\mV}^1_{\beta}(\Om))^{\ast}}\le C\,\|\boldsymbol{\zeta_{\u}}\|_{\boldsymbol{\mV}^0_{-\beta}(\Om)}.
\]
Using this inequality in (\ref{MainEstimaSpaceOutInterm}) leads to (\ref{MainEstimaSpaceOut}).
\end{proof}

\subsection{Main analysis for the magnetic field}
\noindent Define the continuous operators $\mathbb{A}^{\mrm{out}}_T:\mZ^{\mrm{out}}_T(\mu)\to(\mZ^{\beta}_T(\mu))^{\ast}$ and $\mathbb{K}^{\mrm{out}}_T:\mZ^{\mrm{out}}_T(\mu)\to(\mZ^{\beta}_T(\mu))^{\ast}$ such that for all $\u\in\mZ^{\mrm{out}}_T(\mu)$, $\v\in\mZ^{\beta}_T(\mu)$, 
\begin{equation}\label{DefOpChampH}
\langle\mathbb{A}^{\mrm{out}}_T\u,\v\rangle = \fintH_{\Om}\eps^{-1}\curl\u\cdot\curl\overline{\v}\,dx,\qquad\qquad\langle\mathbb{K}^{\mrm{out}}_T\u,\v\rangle = \int_{\Om}\mu\u\cdot\overline{\v}\,dx.
\end{equation}
With this notation, we have $\langle(\mathbb{A}^{\mrm{out}}_T-\om^2\mathbb{K}^{\mrm{out}}_T)\u,\v\rangle=a_{T}(\u,\v)$.

\begin{proposition}\label{propoUniquenessH}
Under Assumptions 1--3, the operator $\mathbb{A}^{\mrm{out}}_T:\mZ^{\mrm{out}}_T(\mu)\to(\mZ^{\beta}_T(\mu))^{\ast}$ is an isomorphism. 
\end{proposition}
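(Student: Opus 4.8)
The plan is to prove that $\mathbb{A}^{\mrm{out}}_T$ is injective and surjective, and then to invoke the bounded inverse theorem; since the solution and test spaces differ, a direct Lax--Milgram argument as in Proposition \ref{propoUniqueness} is not available, but the two scalar ingredients (Assumptions 2 and 3) will play roles dual to those in the electric case, following the method of \cite{BoCC14}. The key reduction is identity \eqref{eq-intH-symZero}: for $\u\in\mZ^{\mrm{out}}_T(\mu)$ with $\curl\u=c_{\u}\,\eps\nabla s^++\boldsymbol{\zeta_{\u}}$ and any $\v\in\mZ^{\beta}_T(\mu)$, one has $\langle\mathbb{A}^{\mrm{out}}_T\u,\v\rangle=\int_{\Om}\eps^{-1}\boldsymbol{\zeta_{\u}}\cdot\curl\overline{\v}\,dx$, so the form depends on $\v$ only through $\curl\v$. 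Recalling that $\curl$ is injective on $\mZ^{\beta}_T(\mu)$ with range the solenoidal fields of $\boldsymbol{\mV}^0_{\beta}(\Om)$ (Lemma \ref{LemmaNormeEquivHPlus} together with Assumption 1), while $\|\u\|_{\mZ^{\mrm{out}}_T(\mu)}\simeq\|\boldsymbol{\zeta_{\u}}\|_{\boldsymbol{\mV}^0_{-\beta}(\Om)}$ (Lemma \ref{LemmaNormeEquivHOut}), the whole analysis reduces to the duality pairing $(\eps^{-1}\boldsymbol{\zeta_{\u}},\curl\v)$ between $\boldsymbol{\mV}^0_{-\beta}(\Om)$ and the solenoidal part of $\boldsymbol{\mV}^0_{\beta}(\Om)$.

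To establish injectivity, I would suppose $\langle\mathbb{A}^{\mrm{out}}_T\u,\v\rangle=0$ for all $\v\in\mZ^{\beta}_T(\mu)$, i.e. $\int_{\Om}\eps^{-1}\boldsymbol{\zeta_{\u}}\cdot\overline{\boldsymbol{g}}\,dx=0$ for every solenoidal $\boldsymbol{g}\in\boldsymbol{\mV}^0_{\beta}(\Om)$. This orthogonality forces $\eps^{-1}\boldsymbol{\zeta_{\u}}$ to be a gradient, $\eps^{-1}\boldsymbol{\zeta_{\u}}=\nabla q$ with $q\in\mathring{\mV}^1_{-\beta}(\Om)$, whence $\curl\u=\eps\nabla p$ with $p:=c_{\u}s^++q\in\mathring{\mV}^{\mrm{out}}$. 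Taking the divergence and using $\div\curl\u=0$ gives $A^{\mrm{out}}_{\eps}p=0$, so $p=0$ by the injectivity of $A^{\mrm{out}}_{\eps}$ (Assumption 3); hence $\curl\u=0$ and $\u=0$ by Lemma \ref{LemmaNormeEquivHOut}.

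For surjectivity, let $\boldsymbol{F}\in(\mZ^{\beta}_T(\mu))^{\ast}$. Since the form factors through $\curl$, I would first represent $\boldsymbol{F}$ by a field $\boldsymbol{G}\in\boldsymbol{\mV}^0_{-\beta}(\Om)$ such that $\langle\boldsymbol{F},\v\rangle=\int_{\Om}\boldsymbol{G}\cdot\curl\overline{\v}\,dx$ for all $\v\in\mZ^{\beta}_T(\mu)$ (using Lemma \ref{LemmaNormeEquivHPlus} and $(\boldsymbol{\mV}^0_{\beta}(\Om))^{\ast}=\boldsymbol{\mV}^0_{-\beta}(\Om)$). Then I would solve the scalar problem $A^{\mrm{out}}_{\eps}p=\div(\eps\boldsymbol{G})\in(\mathring{\mV}^1_{\beta}(\Om))^{\ast}$, which is possible by Assumption 3, and split $p=c_{\u}s^++q$ with $q\in\mathring{\mV}^1_{-\beta}(\Om)$. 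The field $\boldsymbol{C}:=\eps\nabla p+\eps\boldsymbol{G}=c_{\u}\eps\nabla s^++\eps(\nabla q+\boldsymbol{G})$ is then divergence-free, carries the admissible singular structure with $\boldsymbol{\zeta_{\u}}:=\eps(\nabla q+\boldsymbol{G})\in\boldsymbol{\mV}^0_{-\beta}(\Om)$, and satisfies $\eps^{-1}\boldsymbol{\zeta_{\u}}=\boldsymbol{G}+\nabla q$. Finally I would recover $\u$ from $\curl\u=\boldsymbol{C}$ through a vector potential (Proposition \ref{propoPotential}) enjoying the weighted regularity of Lemma \ref{LemmaWeightedClaBis}, corrected by a gradient so as to enforce $\div(\mu\u)=0$ and $\mu\u\cdot\nu=0$ via the isomorphism $A_{\mu}$ (Assumption 2). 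By construction $\u\in\mZ^{\mrm{out}}_T(\mu)$ and, since $\int_{\Om}\nabla q\cdot\curl\overline{\v}\,dx=0$, one gets $\langle\mathbb{A}^{\mrm{out}}_T\u,\v\rangle=\int_{\Om}\boldsymbol{G}\cdot\curl\overline{\v}\,dx=\langle\boldsymbol{F},\v\rangle$.

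The main obstacle lies in this surjectivity step: one must guarantee that the prescribed curl $\boldsymbol{C}$ is genuinely realizable as the curl of an $\Lspace^2$ field and that the vector potential can be chosen in the weighted framework adapted to the conical tip, with regular part in $\boldsymbol{\mV}^0_{-\beta}(\Om)$ and controlled norm; this is exactly where the fine potential theory near the tip (Proposition \ref{propoPotential} and Lemma \ref{LemmaWeightedClaBis}) is needed, and where the simple connectedness of $\Om$ and connectedness of $\partial\Om$ (Assumption 1) enter to make the flux conditions automatic. Once injectivity and surjectivity are established and all maps are bounded, the bounded inverse theorem yields that $\mathbb{A}^{\mrm{out}}_T:\mZ^{\mrm{out}}_T(\mu)\to(\mZ^{\beta}_T(\mu))^{\ast}$ is an isomorphism. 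Throughout, the roles of $A_{\mu}$ and $A^{\mrm{out}}_{\eps}$ are dual to those in the proof of Proposition \ref{propoUniqueness}.
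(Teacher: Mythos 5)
Your proposal reaches the right conclusion but by a genuinely different route from the paper, and it contains one real gap at precisely the step you yourself single out as ``the main obstacle''.

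On the difference of route: the paper does not separate injectivity and surjectivity the way you do. It builds an explicit operator $\mathbb{T}:\mZ^{\beta}_T(\mu)\to\mZ^{\mrm{out}}_T(\mu)$ (solve $A^{\mrm{out}}_{\eps}\varphi=-\div(r^{2\beta}\eps\,\curl\u)$, take a weighted vector potential of $\eps\,(r^{2\beta}\curl\u-\nabla\varphi)$, correct by a gradient via $A_{\mu}$) so that $\langle\mathbb{A}^{\mrm{out}}_T\mathbb{T}\u,\v\rangle=\int_{\Om}r^{2\beta}\curl\u\cdot\curl\overline{\v}\,dx$; Lax--Milgram applied to $\mathbb{A}^{\mrm{out}}_T\mathbb{T}$ together with Lemma \ref{LemmaNormeEquivHPlus} then gives surjectivity, and injectivity is obtained by taking the imaginary part of $\langle\mathbb{A}^{\mrm{out}}_T\u,\u\rangle$ (which kills $c_{\u}$ thanks to Lemma \ref{lemmaNRJ} and \eqref{eq-intPhineq0}) followed by the identity $\langle\mathbb{A}^{\mrm{out}}_T\u,\mathbb{T}\u\rangle=\overline{\langle\mathbb{A}^{\mrm{out}}_T\mathbb{T}\u,\u\rangle}$. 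Your injectivity argument is sound and arguably cleaner: it avoids the energy/imaginary-part computation entirely. Testing against all of $\mZ^{\beta}_T(\mu)$, whose curls sweep out the solenoidal fields needed (here those test curls lie in $\Lspace^2(\Om)$, so item $i)$ of Proposition \ref{propoPotential} plus the $A_{\mu}$ correction do suffice), forces $\eps^{-1}\boldsymbol{\zeta_{\u}}=\nabla q$ with $q\in\mathring{\mV}^1_{-\beta}(\Om)$ by the weighted Helmholtz decomposition built on Proposition \ref{PropoLaplaceOp}, and then $A^{\mrm{out}}_{\eps}(c_{\u}s^++q)=0$ finishes via Assumption 3 and Lemma \ref{LemmaNormeEquivHOut}.

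The gap is in surjectivity. Once you have built $\boldsymbol{C}=c_{\u}\,\eps\nabla s^++\eps(\nabla q+\boldsymbol{G})$, you must produce $\u\in\Lspace^2(\Om)$ with $\curl\u=\boldsymbol{C}$. But $\boldsymbol{C}$ is \emph{not} in $\Lspace^2(\Om)$ --- it carries the term $c_{\u}\,\eps\nabla s^+$, and in general $c_{\u}\neq 0$ --- so neither Proposition \ref{propoPotential} (which requires a divergence-free field of $\Lspace^2(\Om)$) nor Lemma \ref{LemmaWeightedClaBis} (a regularity/compactness statement for $\mX_N(1)$) applies; as cited, this step fails. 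What is needed is the existence of a vector potential in $\mZ^{\beta}_T(1)\subset\Lspace^2(\Om)$ for a divergence-free field that is only in $\boldsymbol{\mV}^0_{\beta}(\Om)$, which is exactly Proposition \ref{propoPotentialWeight}; its proof occupies Appendix B and is not a routine extension of the classical case (it rests on the weighted well-posedness result of Lemma \ref{LemmaWPPoids}). The paper makes this point explicitly in its own proof: in general $\nabla\varphi\in\boldsymbol{\mV}^0_{\beta}(\Om)\setminus\Lspace^2(\Om)$, and this is why Proposition \ref{propoPotentialWeight} had to be established. With that proposition substituted for the tools you cite, your surjectivity argument closes and the bounded inverse theorem finishes the proof as you say.
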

\begin{proof}
We have 
\[
\langle\mathbb{A}^{\mrm{out}}_T\u,\v\rangle=\int_{\Om}\eps^{-1}\boldsymbol{\zeta_{\u}}\cdot\curl\overline{\v}\,dx,\qquad\forall\u\in\mZ^{\mrm{out}}_T(\mu),\,\forall\v\in\mZ^{\beta}_T(\mu).
\]
Let us construct a continuous operator $\mathbb{T}:\mZ^{\beta}_T(\mu)\to\mZ^{\mrm{out}}_T(\mu)$ such that 
\begin{equation}\label{RelationIdentity}
\langle \mathbb{A}^{\mrm{out}}_T\,\mathbb{T}\u,\v\rangle=\int_{\Om}r^{2\beta}\curl\u\cdot\curl\overline{\v}\,dx,\qquad\forall\u,\,\v\in\mZ^{\beta}_T(\mu).
\end{equation}
Let $\u$ be an element of $\mZ^{\beta}_T(\mu)$. Then the field $r^{2\beta}\eps\,\curl\u$ belongs to $\boldsymbol{\mV}^0_{-\beta}(\Om)$. Since $A^{\mrm{out}}_{\eps}:\mathring{\mV}^{\mrm{out}}\to(\mathring{\mV}^1_{\beta}(\Om))^{\ast}$ is an isomorphism, there is a unique $\varphi=\alpha\,s^++\tilde{\varphi}\in\mathring{\mV}^{\mrm{out}}$ such that $A^{\mrm{out}}_{\eps}\varphi=-\div(r^{2\beta}\eps\,\curl\u)$. Observing that $\w:=r^{2\beta}\curl\u-\nabla\varphi\in\boldsymbol{\mV}^0_{\beta}(\Om)$ is such that $\div\,\w=0$ in $\Om$, according to the result of Proposition \ref{propoPotentialWeight}, we know that there is a unique $\boldsymbol{\psi}\in\mZ^{\beta}_T(1)$ such that 
\[
\curl \boldsymbol{\psi}=\eps\,(r^{2\beta}\curl\u-\nabla\varphi).
\]
At this stage, we emphasize that in general $\nabla\varphi\in\boldsymbol{\mV}^0_{\beta}(\Om)\setminus\Lspace^2(\Om)$. This is the reason why we are obliged to establish Proposition \ref{propoPotentialWeight}. Since $\boldsymbol{\psi}$ is in $\Lspace^2(\Om)$, when $A_{\mu}:\mH^1_{\#}(\Om)\to(\mH^1_{\#}(\Om))^{\ast}$ is an isomorphism, there is a unique $\phi\in\mH^1_{\#}(\Om)$ such that
\[
\int_{\Om} \mu\nabla\phi\cdot\nabla\overline{\phi'}\,dx=\int_{\Om} \mu\boldsymbol{\psi}\cdot\nabla\overline{\phi'}\,dx,\qquad\forall\phi'\in\mH^1_{\#}(\Om).
\]
Finally, we set $\mathbb{T}\u=\boldsymbol{\psi}-\nabla\phi$. It can be easily checked that this defines a continuous operator $\mathbb{T}:\mZ^{\beta}_T(\mu)\to\mZ^{\mrm{out}}_T(\mu)$. Moreover we have 
\[
\curl\mathbb{T}\u=\alpha\,\eps\nabla s^++\boldsymbol{\zeta_{\mathbb{T}\u}}\qquad\mbox{ with }\boldsymbol{\zeta_{\mathbb{T}\u}}=\eps\,(r^{2\beta}\curl\u-\nabla\tilde{\varphi}).
\]
As a consequence, indeed we have identity (\ref{RelationIdentity}). From Lemma \ref{LemmaNormeEquivHPlus}, we deduce that $\mathbb{A}^{\mrm{out}}_T\,\mathbb{T}:\mZ^{\beta}_T(\mu)\to(\mZ^{\beta}_T(\mu))^{\ast}$ is an isomorphism, and so that $\mathbb{A}^{\mrm{out}}_T$ is onto. 
It remains to show that $\mathbb{A}^{\mrm{out}}_T$ is injective.\\
\newline
If $\u\in\mZ^{\mrm{out}}_T(\mu)$ is in the kernel of $\mathbb{A}^{\mrm{out}}_T$, we have
$
\langle\mathbb{A}^{\mrm{out}}_T\u,\v\rangle=0
$
for all $\v\in\mZ^{\beta}_T(\mu)$. In particular from (\ref{eq-intH-sym}), we can write
\[
\langle\mathbb{A}^{\mrm{out}}_T\u,\u\rangle=\int_{\Om}\eps^{-1}|\boldsymbol{\zeta_{\u}}|^2\,dx+|c_{\u}|^2\int_{\Om}\div(\eps\nabla s^{+}) \overline{s^{+}}\,dx=0.
\]
Taking the imaginary part of the above identity, we obtain $c_{\u}=0$ (see the details in the proof of Proposition \ref{trappedmodes-H}). We deduce that $\u$ belongs to $\mZ^{-\beta}_T(\mu)$ and from (\ref{eq-intH-sym}), we infer that  $\langle\mathbb{A}^{\mrm{out}}_T\u,\mathbb{T}\u\rangle=\overline{\langle\mathbb{A}^{\mrm{out}}_T\,\mathbb{T}\u,\u\rangle}$. This gives
\[
0=\int_{\Om}r^{2\beta}|\curl\u|^2\,dx=0
\]
and shows that $\u=0$.
\end{proof}
\begin{proposition}\label{PropCompactH}
Under Assumptions 1--3, the embedding of the space $\mZ^{\mrm{out}}_T(\mu)$ in $\Lspace^2(\Om)$ is compact. As a consequence, the operator $\mathbb{K}^{\mrm{out}}_T:\mZ^{\mrm{out}}_T(\mu)\to(\mZ^{\beta}_T(\mu))^{\ast}$ defined in (\ref{DefOpChampH}) is compact. 
\end{proposition}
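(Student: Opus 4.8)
The plan is to show that every sequence $(\u_n)$ that is bounded in $\mZ^{\mrm{out}}_T(\mu)$ admits a subsequence converging in $\Lspace^2(\Om)$. Writing $\curl\u_n=c_n\,\eps\nabla s^++\boldsymbol{\zeta}_n$, the boundedness in $\mZ^{\mrm{out}}_T(\mu)$ gives that $(\u_n)$ is bounded in $\Lspace^2(\Om)$, that $(c_n)$ is bounded in $\Cplx$ and that $(\boldsymbol{\zeta}_n)$ is bounded in $\boldsymbol{\mV}^0_{-\beta}(\Om)$ (Lemma \ref{LemmaNormeEquivHOut}); extracting a subsequence I may assume $c_n\to c$. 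The essential difficulty, and the point where this differs from the electric case of Proposition \ref{PropCompactE}, is that the elements of $\mZ^{\mrm{out}}_T(\mu)$ have a \emph{singular} curl carrying the non-$\Lspace^2$ direction $\eps\nabla s^+$, so the $\Lspace^2$-based compactness of the vector Laplacian used there cannot be invoked directly. The remedy is to peel off this one-dimensional singular direction by means of a fixed reference field.

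More precisely, I first produce, once and for all, a field $\u_*\in\mZ^{\mrm{out}}_T(\mu)$ whose singular coefficient equals one, proceeding exactly as in the construction of the operator $\mathbb{T}$ in the proof of Proposition \ref{propoUniquenessH}. Since $\div(\eps\nabla s^+)\in\mL^2(\Om)$ vanishes near $O$ and has zero mean (because $s^+$ is supported away from $\partial\Om$), I solve a Neumann Poisson problem to obtain $q$ with $\nabla q\in\boldsymbol{\mV}^0_{-\beta}(\Om)$ and $\div(\eps\nabla s^++\nabla q)=0$; the field $\eps\nabla s^++\nabla q$ is then divergence free and belongs to $\boldsymbol{\mV}^0_{\beta}(\Om)$, so Proposition \ref{propoPotentialWeight} furnishes $\boldsymbol{\psi}_*\in\mZ^{\beta}_T(1)$ with $\curl\boldsymbol{\psi}_*=\eps\nabla s^++\nabla q$. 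Finally, using that $A_{\mu}$ is an isomorphism (Assumption 2), I correct by a gradient $\nabla p$, $p\in\mH^1_{\#}(\Om)$, so that $\u_*:=\boldsymbol{\psi}_*-\nabla p$ satisfies $\div(\mu\u_*)=0$ and $\mu\u_*\cdot\nu=0$. By construction $\curl\u_*=\eps\nabla s^++\nabla q$, whence $\u_*\in\mZ^{\mrm{out}}_T(\mu)$ with singular coefficient $1$ and $\boldsymbol{\zeta}_{\u_*}=\nabla q\in\boldsymbol{\mV}^0_{-\beta}(\Om)$.

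Now set $\boldsymbol{w}_n:=\u_n-c_n\u_*$. Then $\curl\boldsymbol{w}_n=\boldsymbol{\zeta}_n-c_n\nabla q$ lies in $\boldsymbol{\mV}^0_{-\beta}(\Om)\subset\Lspace^2(\Om)$ and is bounded there, while $\boldsymbol{w}_n$ inherits $\div(\mu\boldsymbol{w}_n)=0$ and $\mu\boldsymbol{w}_n\cdot\nu=0$; hence $(\boldsymbol{w}_n)$ is bounded in $\mX_T(\mu)$, i.e. in $\Hspace(\curl)$. At this stage classical compactness applies: writing the Helmholtz decomposition $\boldsymbol{w}_n=\nabla\alpha_n+\curl\boldsymbol{\beta}_n$ with $\alpha_n\in\mH^1_{\#}(\Om)$ and $\boldsymbol{\beta}_n\in\mX_N(1)$ (item $v)$ of Proposition \ref{propoPotential}), the field $\curl\boldsymbol{\beta}_n$ belongs to $\mX_T(1)$ (its divergence vanishes, and $\curl\boldsymbol{\beta}_n\cdot\nu=0$ on $\partial\Om$ since $\boldsymbol{\beta}_n\times\nu=0$) and is bounded in $\Hspace(\curl)$ because $\curl(\curl\boldsymbol{\beta}_n)=\curl\boldsymbol{w}_n$ is bounded in $\Lspace^2(\Om)$. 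Proposition \ref{PropoEmbeddingCla} then yields a subsequence along which $\curl\boldsymbol{\beta}_n$ converges in $\Lspace^2(\Om)$. Testing $\div(\mu\boldsymbol{w}_n)=0$ against $\mH^1_{\#}(\Om)$ and using Assumption 2, the component $\nabla\alpha_n$ is controlled by $\curl\boldsymbol{\beta}_n$ and converges as well, so $\boldsymbol{w}_n$ converges in $\Lspace^2(\Om)$. Since $c_n\u_*\to c\,\u_*$, the sequence $\u_n=c_n\u_*+\boldsymbol{w}_n$ converges in $\Lspace^2(\Om)$, which establishes the compact embedding.

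For the operator $\mathbb{K}^{\mrm{out}}_T$ defined in (\ref{DefOpChampH}), the bound $|\langle\mathbb{K}^{\mrm{out}}_T\u,\v\rangle|=|\int_{\Om}\mu\,\u\cdot\overline{\v}\,dx|\le\|\mu\|_{\mrm{L}^{\infty}(\Om)}\,\|\u\|_{\Om}\,\|\v\|_{\mZ^{\beta}_T(\mu)}$ shows that $\mathbb{K}^{\mrm{out}}_T$ factors as the embedding $\mZ^{\mrm{out}}_T(\mu)\hookrightarrow\Lspace^2(\Om)$ followed by a bounded map into $(\mZ^{\beta}_T(\mu))^{\ast}$; as the embedding is compact, $\mathbb{K}^{\mrm{out}}_T$ is compact. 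I expect the main obstacle to be precisely the first step: the singular direction $\eps\nabla s^+$ must be removed before any classical compactness result applies, which forces one to exhibit a genuine element $\u_*\in\mZ^{\mrm{out}}_T(\mu)$ with prescribed singular coefficient and to verify carefully that all the divergence and boundary constraints are preserved under the subtraction $\u_n-c_n\u_*$.
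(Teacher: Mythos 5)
Your proof is correct and follows essentially the same strategy as the paper: subtract $c_n$ times a fixed reference element of $\mZ^{\mrm{out}}_T(\mu)$ with unit singular coefficient so that the remainder lies in $\mX_T(\mu)$, then invoke the compact embedding of $\mX_T(\mu)$ in $\Lspace^2(\Om)$ under Assumption 2. The only differences are cosmetic: the paper does not construct the reference field explicitly (it simply picks one if it exists and notes that otherwise $\mZ^{\mrm{out}}_T(\mu)=\mZ^{-\beta}_T(\mu)\subset\mX_T(\mu)$), and it cites \cite[Theorem 5.3]{BoCC14} for the compactness of $\mX_T(\mu)\hookrightarrow\Lspace^2(\Om)$ rather than re-deriving it via the Helmholtz decomposition as you do.
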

\begin{proof}
Let $(\u_n)$ be a sequence of elements of $\mZ^{\mrm{out}}_T(\mu)$ which is bounded. For all $n\in\N$, we have $\curl\u_n=c_{\u_n}\eps\nabla s^++\boldsymbol{\zeta}_{\u_n}$. By definition of the norm of $\mZ^{\mrm{out}}_T(\mu)$, the sequence $(c_{\u_n})$ is bounded in $\Cplx$. Let $\w$ be an element of $\mZ^{\mrm{out}}_T(\mu)$ such that $c_{\w}=1$ (if such $\w$ did not exist, then we would have $\mZ^{\mrm{out}}_T(\mu)=\mZ^{-\beta}_T(\mu)\subset\mX_T(\mu)$ and the proof would be even simpler). The sequence $(\u_n-c_{\u_n}\w)$ is bounded in $\mX_T(\mu)$. Since this space is compactly embedded in $\Lspace^2(\Om)$ when $A_{\mu}:\mH^1_{\#}(\Om)\to(\mH^1_{\#}(\Om))^{\ast}$ is an isomorphism (see \cite[Theorem 5.3]{BoCC14}), we infer we can extract from $(\u_n-c_{\u_n}\w)$ a subsequence which converges in $\Lspace^2(\Om)$. Since clearly we can also extract a subsequence of $(c_{\u_n})$ which converges in $\Cplx$, this shows that we can extract from $(\u_n)$ a subsequence which converges in $\Lspace^2(\Om)$. This shows that the embedding of $\mZ^{\mrm{out}}_T(\mu)$ in $\Lspace^2(\Om)$ is compact.\\
Now observing that for all $\u\in\mZ^{\mrm{out}}_T(\mu)$, we have
\[
\|\mathbb{K}^{\mrm{out}}_T\u\|_{(\mZ^{\beta}_T(\mu))^{\ast}} \le C\,\|\u\|_{\Om},
\]
we deduce that $\mathbb{K}^{\mrm{out}}_T:\mZ^{\mrm{out}}_T(\mu)\to(\mZ^{\beta}_T(\mu))^{\ast}$ is a compact operator.
\end{proof}
\noindent We can now state the main theorem of the analysis of the problem for the magnetic field.
\begin{theorem}\label{MainThmH}
Under Assumptions 1--3, for all $\om\in\R$ the operator $\mathbb{A}^{\mrm{out}}_T-\om^2\mathbb{K}^{\mrm{out}}_T:\mZ^{\mrm{out}}_T(\mu)\to(\mZ^{\beta}_T(\mu))^{\ast}$ is Fredholm of index zero. 
\end{theorem}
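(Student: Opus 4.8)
The plan is to obtain the result as an immediate consequence of Propositions \ref{propoUniquenessH} and \ref{PropCompactH}, in complete analogy with the way Theorem \ref{MainThmE} followed from Propositions \ref{propoUniqueness} and \ref{PropCompactE}. First I would recall the two general facts on which the argument rests: a bounded isomorphism between two Banach spaces is Fredholm of index zero (its kernel and cokernel are both trivial), and the set of Fredholm operators of a given index is stable under the addition of a compact operator.

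Concretely, Proposition \ref{propoUniquenessH} asserts that $\mathbb{A}^{\mrm{out}}_T:\mZ^{\mrm{out}}_T(\mu)\to(\mZ^{\beta}_T(\mu))^{\ast}$ is an isomorphism, hence in particular a Fredholm operator of index zero. For any fixed $\om\in\R$, the operator $\om^2\mathbb{K}^{\mrm{out}}_T$ is a scalar multiple of the compact operator $\mathbb{K}^{\mrm{out}}_T:\mZ^{\mrm{out}}_T(\mu)\to(\mZ^{\beta}_T(\mu))^{\ast}$ furnished by Proposition \ref{PropCompactH}, and is therefore itself compact. Applying the stability of the Fredholm index under compact perturbations, one concludes that $\mathbb{A}^{\mrm{out}}_T-\om^2\mathbb{K}^{\mrm{out}}_T$ is Fredholm of index zero.

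The only point requiring a moment of attention is that the domain $\mZ^{\mrm{out}}_T(\mu)$ and the codomain $(\mZ^{\beta}_T(\mu))^{\ast}$ are genuinely distinct spaces, unlike in the symmetric electric formulation; but this is harmless, since the Fredholm theory and the invariance of the index under compact perturbations are valid for operators acting between two different Banach spaces. I do not anticipate any real obstacle in this theorem: the substantive work has already been absorbed into the two preceding propositions, namely the $\mathbb{T}$-coercivity construction establishing that $\mathbb{A}^{\mrm{out}}_T$ is an isomorphism and the compact embedding of $\mZ^{\mrm{out}}_T(\mu)$ into $\Lspace^2(\Om)$ underlying the compactness of $\mathbb{K}^{\mrm{out}}_T$. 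Consequently the statement reduces to a one-line corollary of Propositions \ref{propoUniquenessH} and \ref{PropCompactH}.
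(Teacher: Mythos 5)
Your argument is correct and is exactly the paper's proof: combine Proposition \ref{propoUniquenessH} (isomorphism, hence Fredholm of index zero) with Proposition \ref{PropCompactH} (compactness of $\mathbb{K}^{\mrm{out}}_T$) and invoke the stability of the Fredholm index under compact perturbations, which indeed holds for operators between two distinct Banach spaces. Nothing further is needed.
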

\begin{proof}
Since $\mathbb{K}^{\mrm{out}}_T:\mZ^{\mrm{out}}_T(\mu)\to(\mZ^{\beta}_T(\mu))^{\ast}$ is compact (Proposition \ref{PropCompactH}) and $\mathbb{A}^{\mrm{out}}_T:\mZ^{\mrm{out}}_T(\mu)\to(\mZ^{\beta}_T(\mu))^{\ast}$ is an isomorphism (Proposition \ref{propoUniquenessH}), $\mathbb{A}^{\mrm{out}}_T-\om^2\mathbb{K}^{\mrm{out}}_T:\mZ^{\mrm{out}}_N\to(\mZ^{\beta}_T(\mu))^{\ast}$ is Fredholm of index zero. 
\end{proof}
\noindent Finally we establish a result similar to Proposition \ref{trappedmodes} by using the formulation for the magnetic field.
\begin{proposition}\label{trappedmodes-H}
Under Assumptions 1 and 3, if $\u\in \mZ^{\mrm{out}}_T(\mu)$ is a solution of (\ref{MainPbH}) for $\J=0$, then $\u\in\mZ^{-\gamma}_T(\mu)\subset\mX_T(\mu)$ for all $\gamma$ satisfying (\ref{betainfbeta_0et1demi}).
\end{proposition}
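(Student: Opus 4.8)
The plan is to mirror the proof of Proposition~\ref{trappedmodes}: first annihilate the singular coefficient of $\curl\u$ by an imaginary-part (energy) identity, and then recover the full weighted regularity by transporting the problem back to the electric setting. Since $\mZ^{\mrm{out}}_T(\mu)\subset\mZ^{\beta}_T(\mu)$, I may test the homogeneous problem (\ref{MainPbH}) (with $\J=0$) against $\v=\u$. As $\mu$ is real-valued, $\int_\Om\mu|\u|^2\,dx\in\R$, and by (\ref{eq-intH-sym}) with $\v=\u$ the quantity $\fintH_\Om\eps^{-1}\curl\u\cdot\curl\overline{\u}\,dx$ splits into $\int_\Om\eps^{-1}|\boldsymbol{\zeta_{\u}}|^2\,dx$, which is real because $\eps$ is real, plus $|c_{\u}|^2\int_\Om\div(\eps\nabla s^+)\,\overline{s^+}\,dx$. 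Taking imaginary parts therefore gives $|c_{\u}|^2\,\Im m\big(\int_\Om\div(\eps\nabla s^+)\,\overline{s^+}\,dx\big)=0$, and Lemma~\ref{lemmaNRJ} together with the normalisation (\ref{eq-intPhineq0}) ensures this imaginary part does not vanish. Hence $c_{\u}=0$, so that $\curl\u=\boldsymbol{\zeta_{\u}}\in\boldsymbol{\mV}^0_{-\beta}(\Om)$ and already $\u\in\mZ^{-\beta}_T(\mu)$ for the weight $\beta$ fixed in the section.

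To upgrade this to every admissible weight, I set $\boldsymbol{e}:=\eps^{-1}\curl\u=\eps^{-1}\boldsymbol{\zeta_{\u}}\in\Lspace^2(\Om)$. Since $c_{\u}=0$, the field $\curl\u$ is divergence free, so $\div(\eps\boldsymbol{e})=\div(\curl\u)=0$ in $\Om$. It then remains to show $\boldsymbol{e}\in\Hspace_N(\curl)$, after which $\boldsymbol{e}\in\mX_N(\eps)$. For this I reinterpret the variational identity: passing to the equivalent formulation (\ref{Formu2H}) of Lemma~\ref{EquivHcurlH}, with $\J=0$ and $c_{\u}=0$ it reads $\int_\Om\boldsymbol{e}\cdot\curl\overline{\v}\,dx=\om^2\int_\Om\mu\u\cdot\overline{\v}\,dx$ for all $\v\in\Hspace^{\beta}(\curl)$; testing against $\v\in(\mathscr{C}^{\infty}_0(\Om))^3$ identifies $\curl\boldsymbol{e}=\om^2\mu\u\in\Lspace^2(\Om)$ in the distributional sense, and varying the tangential trace of $\v$ yields $\boldsymbol{e}\times\nu=0$. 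Thus $\boldsymbol{e}\in\mX_N(\eps)\subset\mX^{\mrm{out}}_N(\eps)$, and since $\nabla s^+\notin\Lspace^2(\Om)$ its singular coefficient in $\mX^{\mrm{out}}_N(\eps)$ vanishes. Applying Lemma~\ref{LemmaNormeEquiv} to $\boldsymbol{e}$, which holds for each weight satisfying (\ref{betainfbeta_0et1demi}) because $\mX_N(\eps)$ and $\mX^{\mrm{out}}_N(\eps)$ do not depend on $\beta$ (exactly as in the remark following Proposition~\ref{trappedmodes}), gives $\boldsymbol{e}\in\boldsymbol{\mV}^0_{-\gamma}(\Om)$ for all $\gamma$ satisfying (\ref{betainfbeta_0et1demi}). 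Consequently $\curl\u=\eps\boldsymbol{e}\in\boldsymbol{\mV}^0_{-\gamma}(\Om)$ and $\u\in\mZ^{-\gamma}_T(\mu)\subset\mX_T(\mu)$ for every such $\gamma$.

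The first step is routine once (\ref{eq-intH-sym}) and Lemma~\ref{lemmaNRJ} are available; the delicate point is the identification $\boldsymbol{e}\in\Hspace_N(\curl)$ in the second step. The difficulty is that the test space $\mZ^{\beta}_T(\mu)$ carries the constraint $\div(\mu\v)=0$, so one cannot directly insert arbitrary fields of $(\mathscr{C}^{\infty}_0(\Om))^3$ to read off the strong equation and the boundary condition. This is precisely the role of the unconstrained equivalent formulation over $\Hspace^{\beta}(\curl)$; alternatively one removes the constraint by subtracting from each test field the gradient furnished by the $\mu$-problem. Once $\boldsymbol{e}\in\mX_N(\eps)$ is secured, the weight-by-weight improvement is inherited for free from the electric analysis, so that no additional Kondratiev bootstrap is needed.
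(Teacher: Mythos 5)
Your proof is correct and follows essentially the same route as the paper: the imaginary-part identity via (\ref{eq-intH-sym}) and Lemma \ref{lemmaNRJ} to kill $c_{\u}$, then the observation that $\eps^{-1}\curl\u\in\mX_N(\eps)\subset\mX^{\mrm{out}}_N(\eps)$ so that Lemma \ref{LemmaNormeEquiv} upgrades the weight. The only difference is that you justify the membership $\eps^{-1}\curl\u\in\Hspace_N(\curl)$ explicitly through the unconstrained formulation (\ref{Formu2H}), a step the paper's proof leaves implicit (and which, as you note, quietly invokes Assumption 2 via Lemma \ref{EquivHcurlH}).
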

\begin{proof}
Assume that $\u\in\mZ^{\mrm{out}}_T(\mu)$  satisfies
\[
\fintH_{\Om}\eps^{-1}\curl\u\cdot\curl\overline{\v}\,dx -\om^2 \int_\Om \mu \u\cdot\overline{\v}=0,\qquad \forall \v\in \mZ^{\beta}_T(\mu).
\]
Taking the imaginary part of this identity for $\v=\u$, since $\om$ is real, we get
\[
\Im m\,\left(\fintH_{\Om}\eps^{-1}\curl\u\cdot\curl\overline{\u}\,dx\right)=0.
\]
If $\curl\u=c_{\u}\,\eps\nabla s^++\boldsymbol{\zeta_{\u}}$ with $c_{\u}\in \Cplx$ and $\boldsymbol{\zeta_{\u}}\in\boldsymbol{\mV}^0_{-\beta}(\Om)$, 
according to (\ref{eq-intH-sym}), this can be written as 
\[
|c_{\u}|^2	\Im m\,\left(\int_{\Om}\div(\eps\nabla s^+ )\,\overline{s^{+}}\,dx\right) =0.
\]
Then one concludes as in the proof of Proposition \ref{trappedmodes} that $c_{\u}=0$, so that $\curl\u\in\boldsymbol{\mV}^0_{-\beta}(\Om)$. Therefore we have $\eps^{-1}\curl\u\in\mX_N(\eps)\subset\mX^{\mrm{out}}_N(\eps)$. From Lemma \ref{LemmaNormeEquiv}, we deduce that $\eps^{-1}\curl\u\in\boldsymbol{\mV}^0_{-\gamma}(\Om)$ for all $\gamma$ satisfying (\ref{betainfbeta_0et1demi}). This shows that $\u\in\mZ^{-\gamma}_T(\mu)$ for all $\gamma$ satisfying (\ref{betainfbeta_0et1demi}).
\end{proof}
\begin{remark}\label{frameworkIndbeta}
Assume that $\J\in\boldsymbol{\mV}^0_{-\gamma}(\Om)$ for all $\gamma$ satisfying (\ref{betainfbeta_0et1demi}). Assume also that zero is the only solution of (\ref{MainPbH}) with $\J=0$ for a certain $\beta_0$ satisfying (\ref{betainfbeta_0et1demi}). Then Theorem \ref{MainThmH} and Proposition \ref{trappedmodes-H} guarantee that (\ref{MainPbH}) is well-posed for all $\gamma$ satisfying (\ref{betainfbeta_0et1demi}). Moreover Proposition \ref{trappedmodes-H} allows one to show that all the solutions of (\ref{MainPbH}) for $\gamma$ satisfying (\ref{betainfbeta_0et1demi}) coincide.
\end{remark}

\subsection{Analysis in the classical framework}
In the previous paragraph, we proved that the formulation (\ref{MainPbH}) for the magnetic field with a solution in $\mZ^{\mrm{out}}_T(\mu)$ and test functions in $\mZ^{\beta}_T(\mu)$ is well-posed. Here, we study the properties of the problem for the magnetic field set in the classical space $\mX_T(\mu)$. More precisely, we consider the problem
\begin{equation}\label{MainPbHCla}
\begin{array}{|l}
\mbox{Find }\u\in\mX_T(\mu)\mbox{ such that }\\
\dsp\int_{\Om}\eps^{-1}\curl\u\cdot\curl\overline{\v}\,dx -\om^2 \int_\Om \mu \u\cdot\overline{\v}= \int_\Om \eps^{-1}\J\cdot \curl\overline{\v},\qquad \forall\v\in\mX_T(\mu).
\end{array}
\end{equation}
Working as in the proof of Lemma \ref{EquivHcurlH}, one shows that under Assumptions 1, 2, the field $\u$ is a solution of (\ref{MainPbHCla}) if and only if it solves the problem 
\begin{equation}\label{MainPbHClaHcurl}
\begin{array}{|l}
\mbox{Find }\u\in\Hspace(\curl)\mbox{ such that }\\
\dsp\int_{\Om}\eps^{-1}\curl\u\cdot\curl\overline{\v}\,dx -\om^2 \int_\Om \mu \u\cdot\overline{\v}= \int_\Om \eps^{-1}\J\cdot \curl\overline{\v},\qquad \forall\v\in\Hspace(\curl).
\end{array}
\end{equation}
\noindent Define the continuous operators $\mathbb{A}_T:\mX_T(\mu)\to(\mX_T(\mu))^{\ast}$ and $\mathbb{K}_T:\mX_T(\mu)\to(\mX_T(\mu))^{\ast}$ such that for all $\u\in\mX_T(\mu)$, $\v\in\mX_T(\mu)$,
\[
\langle\mathbb{A}_T\u,\v\rangle = \int_{\Om}\eps^{-1}\curl\u\cdot\curl\overline{\v}\,dx,\qquad\langle\mathbb{K}_T\u,\v\rangle = \int_{\Om}\mu\u\cdot\overline{\v}\,dx.
\]
As for $\mathbb{A}_N$ and $\mathbb{K}_N$, we emphasize that these are the classical operators which appear in the analysis of the magnetic field, for example when $\eps$ and $\mu$ are positive in $\Om$.
\begin{proposition}\label{propoUniquenessCla}
Under Assumptions 1--3, for all $\om\in\mathbb{C}$ the operator $\mathbb{A}_T-\om^2\mathbb{K}_T:\mX_T(\mu)\to(\mX_T(\mu))^{\ast}$ is not Fredholm. 
\end{proposition}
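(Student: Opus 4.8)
The plan is to reduce the statement to the non-Fredholmness of $\mathbb{A}_T$ alone, and then to exhibit a singular Weyl sequence built from the scalar singularity. Since $A_\mu:\mH^1_\#(\Om)\to(\mH^1_\#(\Om))^\ast$ is an isomorphism (Assumption 2), the embedding of $\mX_T(\mu)$ in $\Lspace^2(\Om)$ is compact (see \cite[Theorem 5.3]{BoCC14}, as already used in Proposition \ref{PropCompactH}), so $\mathbb{K}_T:\mX_T(\mu)\to(\mX_T(\mu))^\ast$ is compact. Because a compact perturbation does not change the Fredholm character, it suffices to prove that $\mathbb{A}_T$ itself is not Fredholm; the conclusion for $\mathbb{A}_T-\om^2\mathbb{K}_T$ then holds for every $\om\in\C$. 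Observe also that $\mathbb{A}_T$ is bounded and Hermitian, since $\eps^{-1}$ is real-valued.

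To build the singular sequence I would start from the scalar sequence $(u_n)\subset\mH^1_0(\Om)$ introduced in \S\ref{subsec-hypersingularities}, which satisfies $\|u_n\|_{\mH^1(\Om)}=1$, $\|u_n\|_\Om\to0$ and $\|\div(\eps\nabla u_n)\|_{(\mH^1_0(\Om))^\ast}\to0$. Set $\F_n:=\eps\nabla u_n$ and perform the orthogonal Helmholtz decomposition $\F_n=\nabla p_n+\F_n^{\mrm{sol}}$ in $\Lspace^2(\Om)$, with $p_n\in\mH^1_0(\Om)$ and $\div\,\F_n^{\mrm{sol}}=0$. The crucial point is the identity $\|\nabla p_n\|_\Om=\|\div(\eps\nabla u_n)\|_{(\mH^1_0(\Om))^\ast}\to0$, whereas $\|\F_n^{\mrm{sol}}\|_\Om$ stays bounded away from zero, because $\|\F_n\|_\Om\ge\|\eps^{-1}\|_\infty^{-1}\|\nabla u_n\|_\Om\to1$. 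As $\F_n^{\mrm{sol}}$ is a divergence-free $\Lspace^2$ field in the simply connected domain $\Om$, Proposition \ref{propoPotential} furnishes a vector potential, and subtracting a suitable $\mu$-gradient (obtained by solving an $A_\mu$ problem) yields $\u_n\in\mX_T(\mu)$ with $\curl\u_n=\F_n^{\mrm{sol}}$. By the norm equivalence on $\mX_T(\mu)$, the sequence $(\u_n)$ is bounded and $\|\curl\u_n\|_\Om$ is bounded below.

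The two decisive properties are then verified as follows. First, writing $\eps^{-1}\F_n^{\mrm{sol}}=\nabla u_n-\eps^{-1}\nabla p_n$ and using that $\int_\Om\nabla u_n\cdot\curl\overline{\v}\,dx=0$ for every $\v\in\mX_T(\mu)$ (a gradient of an $\mH^1_0(\Om)$ function is $\Lspace^2$-orthogonal to curls), one obtains $\langle\mathbb{A}_T\u_n,\v\rangle=-\int_\Om\eps^{-1}\nabla p_n\cdot\curl\overline{\v}\,dx$, whence $\|\mathbb{A}_T\u_n\|_{(\mX_T(\mu))^\ast}\le C\,\|\nabla p_n\|_\Om\to0$. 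Second, $(\u_n)$ converges weakly to $0$: any weak limit $\u$ has $\curl\u=0$ because $\F_n^{\mrm{sol}}=\eps\nabla u_n-\nabla p_n\rightharpoonup0$ in $\Lspace^2(\Om)$ (using $u_n\rightharpoonup0$ in $\mH^1_0(\Om)$, which follows from $\|u_n\|_{\mH^1(\Om)}=1$ and $\|u_n\|_\Om\to0$), and a curl-free field of $\mX_T(\mu)$ vanishes under Assumption 2. Yet $(\u_n)$ cannot converge strongly, since $\|\curl\u_n\|_\Om$ is bounded below. Thus $(\u_n)$ is a genuine singular Weyl sequence for $\mathbb{A}_T$.

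Finally, a Hermitian Fredholm operator admits no such sequence: were $\mathbb{A}_T$ Fredholm, it would be bounded below on a closed complement of its finite-dimensional kernel, and together with $\u_n\rightharpoonup0$ and $\mathbb{A}_T\u_n\to0$ this would force $\u_n\to0$ strongly, contradicting the lower bound on $\|\curl\u_n\|_\Om$. Hence $\mathbb{A}_T$, and therefore $\mathbb{A}_T-\om^2\mathbb{K}_T$ for every $\om\in\C$, fails to be Fredholm. I expect the construction step to be the main obstacle: one must simultaneously drive the solenoidal correction $\nabla p_n$ to zero and keep $\|\curl\u_n\|_\Om$ bounded below while lifting $\F_n^{\mrm{sol}}$ to an admissible field of $\mX_T(\mu)$. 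The identity $\|\nabla p_n\|_\Om=\|\div(\eps\nabla u_n)\|_{(\mH^1_0(\Om))^\ast}$ is precisely what makes the orthogonality computation close, and it is the analytic heart of the argument.
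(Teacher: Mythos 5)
Your proof is correct, but it takes a genuinely different route from the paper's. The paper argues by contradiction at the level of operators: assuming $\mathbb{A}_T$ Fredholm (hence of index zero, being self-adjoint), it builds from a (generalized) inverse of $\mathbb{A}_T$ an explicit operator $\texttt{T}:\mH^1_0(\Om)\to\mH^1_0(\Om)$ which is a left parametrix for $A_{\eps}$, forcing $A_{\eps}$ to be Fredholm and contradicting the non-closedness of its range established in \S\ref{subsec-hypersingularities}; this requires a case distinction on whether $\ker\mathbb{A}_T$ is trivial, but yields the stronger structural statement that Fredholmness of $\mathbb{A}_T$ is \emph{equivalent} to that of $A_{\eps}$, with $\dim\ker A_{\eps}=\dim\ker\mathbb{A}_T$. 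You instead transfer the degeneracy at the level of Weyl sequences: you lift the scalar singular sequence $(u_n)$ into a vector singular sequence for $\mathbb{A}_T$ via the orthogonal Helmholtz decomposition of $\eps\nabla u_n$, the key points being the (correct) orthogonality computation $\langle\mathbb{A}_T\u_n,\v\rangle=-\int_{\Om}\eps^{-1}\nabla p_n\cdot\curl\overline{\v}\,dx$ and the lower bound on $\|\curl\u_n\|_{\Om}$ coming from $\|\eps\nabla u_n\|_{\Om}\geq\|\eps^{-1}\|_{\infty}^{-1}\|\nabla u_n\|_{\Om}$. Both arguments rest on the same two pillars — the scalar sequence $(u_n)$ of \S\ref{subsec-hypersingularities} witnessing the non-closed range of $A_{\eps}$, and the $\mu$-machinery (Assumption 2 plus the potentials of Proposition \ref{propoPotential} and the norm equivalence on $\mX_T(\mu)$ from \cite{BoCC14}) to pass between the scalar and vector settings — but yours avoids the kernel case distinction and exhibits concretely why the range of $\mathbb{A}_T$ fails to be closed, at the price of losing the converse implication the paper records in its subsequent remark. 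Two harmless imprecisions: the identity $\|\nabla p_n\|_{\Om}=\|\div(\eps\nabla u_n)\|_{(\mH^1_0(\Om))^{\ast}}$ is an equality only if the dual norm is taken with respect to $\|\nabla\cdot\|_{\Om}$ (otherwise it is a two-sided equivalence, which suffices), and the existence of $(u_n)$ is asserted rather than proved in the paper, so you inherit exactly the same unproved ingredient that the paper's own proof uses through the statement ``$A_{\eps}$ is not Fredholm''.
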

\begin{proof}
From \cite[Theorem 5.3 and Corollary 5.4]{BoCC14}, we know that under the Assumptions 1, 2, the embedding of $\mX_T(\mu)$ in $\Lspace^2(\Om)$ is compact. This allows us to prove that $\mathbb{K}_T:\mX_T(\mu)\to(\mX_T(\mu))^{\ast}$ is a compact operator. Therefore, it suffices to show that $\mathbb{A}_T:\mX_T(\mu)\to(\mX_T(\mu))^{\ast}$ is not Fredholm. Let us work by contraction assuming that $\mathbb{A}_T$ is Fredholm. Since this operator is self-adjoint (it is symmetric and bounded), necessarily it is of index zero.\\ 
\newline
$\star$ If $\mathbb{A}_T$ is injective, then it is an isomorphism. Let us show that in this case, $A_{\eps}:\mH^1_0(\Om)\to(\mH^1_0(\Om))^{\ast}$ is an isomorphism (which is not the case by assumption). To proceed, we construct a continuous operator $\texttt{T}:\mH^1_0(\Om)\to\mH^1_0(\Om)$ such that 
\begin{equation}\label{IdentityIsom}
\langle A_{\eps}\varphi,\texttt{T}\varphi'\rangle=\int_{\Om}\eps\nabla\varphi\cdot\nabla(\overline {\texttt{T}\varphi'})\,dx=\int_{\Om}\nabla\varphi\cdot\nabla\overline {\varphi'}\,dx,\qquad \forall \varphi,\varphi'\in\mH^1_0(\Om).
\end{equation}
When $\mathbb{A}_T$ is an isomorphism, for any $\varphi'\in\mH^1_0(\Om)$, there is a unique $\boldsymbol{\psi}\in\mX_T(\mu)$ such that
\[
\int_{\Om}\eps^{-1}\curl\boldsymbol{\psi}\cdot\curl\overline{\boldsymbol{\psi}'}\,dx=\int_{\Om}\eps^{-1}\nabla\varphi'\cdot\curl\overline{\boldsymbol{\psi}'}\,dx,\qquad\forall \boldsymbol{\psi}'\in\mX_T(\mu).
\]
Using item $iii)$ of Proposition \ref{propoPotential}, one can show that there is a unique $\texttt{T}\varphi'\in\mH^1_0(\Om)$ such that
\[
\nabla(\texttt{T}\varphi')=\eps^{-1}(\nabla\varphi'-\curl\boldsymbol{\psi}).
\]
This defines our operator $\texttt{T}:\mH^1_0(\Om)\to\mH^1_0(\Om)$ and one can verify that it is continuous. Moreover, integrating by parts, we indeed get (\ref{IdentityIsom}) which guarantees, according to the Lax-Milgram theorem, that $A_{\eps}:\mH^1_0(\Om)\to\mH^1_0(\Om)$   is an isomorphism.\\
\newline
$\star$ If $\mathbb{A}_T$ is not injective, it has a kernel of finite dimension $N\ge1$ which coincides with $\mrm{span}(\boldsymbol{\lambda}_1,\dots,\boldsymbol{\lambda}_N)$, where $\boldsymbol{\lambda}_1,\dots,\boldsymbol{\lambda}_N\in\mX_T(\mu)$ are linearly independent functions such that $(\curl\boldsymbol{\lambda}_i,\curl\boldsymbol{\lambda}_j)_{\Om}=\delta_{ij}$ (the Kronecker symbol). Introduce the space
\[
\tilde{\mX}_T(\mu):=\{\u\in\mX_T(\mu)\,|\,(\curl\u,\curl\boldsymbol{\lambda}_i)_{\Om}=0,\,i=1,\dots N\}.
\]
as well as the operator $\tilde{\mathbb{A}}_T:\tilde{\mX}_T(\mu)\to\tilde{\mX}_T(\mu)$ such that
\[
\langle\tilde{\mathbb{A}}_T\u,\v\rangle = \int_{\Om}\eps^{-1}\curl\u\cdot\curl\overline{\v}\,dx,\qquad\forall\u,\v\in\tilde{\mX}_T(\mu).
\]
Then $\tilde{\mathbb{A}}_T$ is an isomorphism. Let us construct a new operator $\texttt{T}:\mH^1_0(\Om)\to\mH^1_0(\Om)$ to have something looking like (\ref{IdentityIsom}). For a given $\varphi'\in\mH^1_0(\Om)$, introduce $\boldsymbol{\psi}\in\tilde{\mX}_T(\mu)$ the function such that
\begin{equation}\label{LineSmallPb}
\int_{\Om}\eps^{-1}\curl\boldsymbol{\psi}\cdot\curl\overline{\boldsymbol{\psi}'}\,dx=\int_{\Om}(\eps^{-1} \nabla\varphi' -\sum_{i=1}^{N} \alpha_i \curl\boldsymbol{\lambda}_i)\cdot\curl\overline{\boldsymbol{\psi}'}\,dx,\qquad\forall \boldsymbol{\psi}'\in\tilde{\mX}_T(\mu),
\end{equation}
where for $i=1,\dots,N$, we have set $\alpha_i:=\textstyle\int_\Omega \eps^{-1}\nabla \varphi'\cdot\curl\boldsymbol{\lambda}_i\,dx$. Observing that (\ref{LineSmallPb}) is also valid for $\boldsymbol{\psi}'=\boldsymbol{\lambda}_i$, $i=1,\dots,N$, we infer that there holds 
\[
\int_{\Om}\eps^{-1}\curl\boldsymbol{\psi}\cdot\curl\overline{\boldsymbol{\psi}'}\,dx=\int_{\Om}(\eps^{-1} \nabla\varphi' -\sum_{i=1}^{N} \alpha_i \curl\boldsymbol{\lambda}_i)\cdot\curl\overline{\boldsymbol{\psi}'}\,dx,\qquad\forall \boldsymbol{\psi}'\in\mX_T(\mu).
\]
Using again item $iii)$ of Proposition \ref{propoPotential}, we deduce that there is a unique $\texttt{T}\varphi'\in\mH^1_0(\Om)$ such that
\[
\nabla(\texttt{T}\varphi')=\eps^{-1}(\nabla\varphi'-\curl\boldsymbol{\psi})-\sum_{i=1}^{N} \alpha_i \curl\boldsymbol{\lambda}_i.
\] 
This defines the new continuous operator $\texttt{T}:\mH^1_0(\Om)\to\mH^1_0(\Om)$. Then one finds 
\[
\langle A_{\eps}\varphi,\texttt{T}\varphi'\rangle=\int_{\Om}\eps\nabla\varphi\cdot\nabla(\overline {\texttt{T}\varphi'})\,dx=\int_{\Om}\nabla\varphi\cdot\nabla\overline {\varphi'}\,dx-\sum_{i=1}^{N}\overline{\alpha_i}\int_\Om  \eps\nabla\varphi\cdot\curl\overline {\lambda_i}\,dx ,\quad \forall \varphi,\varphi'\in\mH^1_0(\Om).
\]
This shows that $\texttt{T}$ is a left parametrix for the self adjoint operator  $A_{\eps}$. Therefore, $A_{\eps}:\mH^1_0(\Om)\to\mH^1_0(\Om)$ is Fredholm of index zero. Note that then, one can verify that $\dim\ker\,A_{\eps}=\dim\ker\,\mathbb{A}_T$. And more precisely, we have $\ker\,A_{\eps}=\mrm{span}(\gamma_1,\dots,\gamma_N)$ where $\gamma_i\in\mH^1_0(\Om)$ is the function such that
\[
\nabla \gamma_i=\eps^{-1}\curl\boldsymbol{\lambda}_i
\]
(existence and uniqueness of $\gamma_i$ is again a consequence of item $iii)$ of Proposition \ref{propoPotential}). But by assumption, $A_{\eps}$ is not a Fredholm operator. This ends the proof by contradiction.
\end{proof}
\begin{remark}
In the article \cite{BoCC14}, it is proved that if $A_{\eps}:\mH^1_0(\Om)\to\mH^1_0(\Om)$  is an isomorphism (resp. a Fredholm operator of index zero), then $\mathbb{A}_T:\mX_T(1)\to(\mX_T(1))^{\ast}$ is an isomorphism (resp. a  Fredholm operator of index zero). Here we have established the converse statement.
\end{remark}
\begin{remark}
We emphasize that the problems (\ref{MainPbECla}) for the electric field and (\ref{MainPbHCla}) for the magnetic in the usual spaces $\mX_N(\eps)$ and $\mX_T(\mu)$ have different properties. Problem (\ref{MainPbECla}) is well-posed but is not equivalent to the corresponding  problem in $\Hspace_N(\curl)$, so that its solution in general is not a distributional solution of Maxwell's equations. On the contrary, problem (\ref{MainPbHCla}) is equivalent to problem (\ref{MainPbHClaHcurl}) in $\Hspace(\curl)$ but it is not well-posed.
\end{remark}

\subsection{Expression of the singular coefficient}
Under Assumptions 1--3, Theorem \ref{MainThmH} guarantees that for all $\om\in\R$ the operator $\mathbb{A}^{\mrm{out}}_T-\om^2\mathbb{K}^{\mrm{out}}_T:\mZ^{\mrm{out}}_T(\mu)\to(\mZ^{\beta}_T(\mu))^{\ast}$ is Fredholm of index zero. Assuming that it is injective, the problem (\ref{MainPbH}) admits a unique solution $\u$ with $\curl\u=c_{\u}\,\eps\nabla s^++\boldsymbol{\zeta}_{\u}$. As in \S\ref{paraSingCoef}, the goal of this paragraph is to derive a formula for the coefficient $c_{\u}$ which does not require to know $\u$.\\
\newline
For $\om\in\R$, introduce the field $\w_T\in\mZ^{\beta}_T(\mu)$ such that 
\begin{equation}\label{defwSinguBis}
\dsp\int_{\Om}\eps^{-1}\boldsymbol{\zeta_{\v}}\cdot\curl\overline{\w_T}\,dx -\om^2 \int_\Om \mu \v\cdot \overline{\w_T}\,dx
		= \int_\Om \zeta_{\v}\cdot\nabla\overline{s^+}\,dx,\qquad \forall\v\in\mZ^{\mrm{out}}_T(\mu).
\end{equation}
Note that $\w_T$ is well-defined because $(\mathbb{A}^{\mrm{out}}_T-\om^2\mathbb{K}^{\mrm{out}}_T)^{\ast}:\mZ^{\beta}_T(\mu)\to(\mZ^{\mrm{out}}_T(\mu))^{\ast}$ is an isomorphism. 
\begin{theorem}
Assume that $\om\in\R$, Assumptions 1--3 are valid and $\mathbb{A}^{\mrm{out}}_T-\om^2\mathbb{K}^{\mrm{out}}_T:\mZ^{\mrm{out}}_T(\mu)\to(\mZ^{\beta}_T(\mu))^{\ast}$ is injective. Let $\u$ denote the solution of the magnetic problem (\ref{MainPbH}). Then the coefficient $c_{\u}$ in the decomposition $\curl\u=c_{\u}\,\eps\nabla s^++\boldsymbol{\zeta}_{\u}$ is given by the formula
\begin{equation}\label{FormulaCoefSinguH}
c_{\u}=i\om\int_{\Om} \eps^{-1}\J\cdot\curl\overline{\w_T}\,dx\bigg/\int_{\Om}\div(\eps\nabla s^{+})\,\overline{s^+}\,dx.
\end{equation}
Here $\w_T$ is the function which solves (\ref{defwSinguBis}).
\end{theorem}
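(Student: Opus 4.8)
The plan is to transcribe the argument of Theorem~\ref{ThmCoefSinguE} to the magnetic setting, the auxiliary field $\w_T$ having been tailored in (\ref{defwSinguBis}) precisely so that pairing the formulation (\ref{MainPbH}) against it extracts the singular coefficient $c_{\u}$. Throughout I would use the standing hypothesis that $\mathbb{A}^{\mrm{out}}_T-\om^2\mathbb{K}^{\mrm{out}}_T$ is injective, hence (being Fredholm of index zero by Theorem~\ref{MainThmH}) an isomorphism; this guarantees both that the solution $\u$ of (\ref{MainPbH}) exists and that $\w_T$ is well-defined, since (\ref{defwSinguBis}) involves the adjoint $(\mathbb{A}^{\mrm{out}}_T-\om^2\mathbb{K}^{\mrm{out}}_T)^{\ast}$, itself an isomorphism.

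First I would test (\ref{MainPbH}) with $\v=\w_T$, which is admissible because $\w_T\in\mZ^{\beta}_T(\mu)$, obtaining
\[
\fintH_{\Om}\eps^{-1}\curl\u\cdot\curl\overline{\w_T}\,dx-\om^2\int_\Om\mu\,\u\cdot\overline{\w_T}\,dx=\int_\Om\eps^{-1}\J\cdot\curl\overline{\w_T}\,dx.
\]
Symmetrically, since the solution $\u$ lies in $\mZ^{\mrm{out}}_T(\mu)$, I would take $\v=\u$ in the defining relation (\ref{defwSinguBis}), which yields
\[
\int_{\Om}\eps^{-1}\boldsymbol{\zeta_{\u}}\cdot\curl\overline{\w_T}\,dx-\om^2\int_\Om\mu\,\u\cdot\overline{\w_T}\,dx=\int_\Om\boldsymbol{\zeta_{\u}}\cdot\nabla\overline{s^+}\,dx.
\]
The crucial point is that the two left-hand sides coincide: by the identity (\ref{eq-intH-symZero}) one has $\fintH_{\Om}\eps^{-1}\curl\u\cdot\curl\overline{\w_T}\,dx=\int_{\Om}\eps^{-1}\boldsymbol{\zeta_{\u}}\cdot\curl\overline{\w_T}\,dx$, while the terms $-\om^2\int_\Om\mu\,\u\cdot\overline{\w_T}\,dx$ are identical, so subtracting the two relations eliminates $\u$ and $\w_T$ from the left and leaves
\[
\int_\Om\eps^{-1}\J\cdot\curl\overline{\w_T}\,dx=\int_\Om\boldsymbol{\zeta_{\u}}\cdot\nabla\overline{s^+}\,dx.
\]

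It then remains to rewrite the right-hand side in terms of $c_{\u}$ only. Taking the divergence of $\curl\u=c_{\u}\,\eps\nabla s^++\boldsymbol{\zeta_{\u}}$ gives $\div\boldsymbol{\zeta_{\u}}=-c_{\u}\,\div(\eps\nabla s^+)$, and integrating by parts (with no boundary term, $s^+$ being supported away from $\partial\Om$) reproduces the computation already performed in (\ref{eq-intH-sym}), namely $\int_\Om\boldsymbol{\zeta_{\u}}\cdot\nabla\overline{s^+}\,dx=c_{\u}\int_{\Om}\div(\eps\nabla s^+)\,\overline{s^+}\,dx$. Combining yields the scalar identity $c_{\u}\int_{\Om}\div(\eps\nabla s^+)\,\overline{s^+}\,dx=\int_\Om\eps^{-1}\J\cdot\curl\overline{\w_T}\,dx$; since Lemma~\ref{lemmaNRJ} ensures $\Im m\,\int_{\Om}\div(\eps\nabla s^{+})\,\overline{s^+}\,dx\neq0$ the denominator is nonzero, and division produces Formula~(\ref{FormulaCoefSinguH}), the scalar prefactor being inherited directly from the load term on the right-hand side of (\ref{MainPbH}).

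I do not anticipate a genuine difficulty, the scheme being a faithful copy of the electric case; the only points demanding attention are bookkeeping ones. One must respect the asymmetry of the functional setting — the solution lives in $\mZ^{\mrm{out}}_T(\mu)$ while the test functions of (\ref{MainPbH}) range over the larger space $\mZ^{\beta}_T(\mu)$ — so that each field is inserted only where it is an admissible argument, and one must handle the non-hermitian pairing $\fintH$ exclusively through (\ref{eq-intH-symZero})--(\ref{eq-intH-sym}) rather than manipulate it as an ordinary symmetric $\Lspace^2$ inner product. The existence of $\w_T$, which underpins the whole computation, is already granted by the isomorphism property recorded just before the statement.
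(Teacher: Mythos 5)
Your proposal is correct and follows essentially the same route as the paper: test (\ref{MainPbH}) with $\v=\w_T$, test (\ref{defwSinguBis}) with $\v=\u$, identify the two left-hand sides via (\ref{eq-intH-symZero}), and convert $\int_\Om\boldsymbol{\zeta_{\u}}\cdot\nabla\overline{s^+}\,dx$ into $c_{\u}\int_{\Om}\div(\eps\nabla s^{+})\,\overline{s^+}\,dx$ using the computation of (\ref{eq-intH-sym}), with Lemma \ref{lemmaNRJ} guaranteeing a nonzero denominator. Your remark that the scalar prefactor is inherited from the load term is also the right reading of the (purely notational) discrepancy between the right-hand side of (\ref{MainPbH}) and the factor $i\om$ appearing in (\ref{FormulaCoefSinguH}).
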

\begin{proof}
By definition of $\u$, we have
\[
\dsp\int_{\Om}\eps^{-1}\boldsymbol{\zeta_{\u}}\cdot\curl\overline{\w_T}\,dx -\om^2 \int_\Om \mu \u\cdot \overline{\w_T}\,dx= i\om\int_{\Om} \eps^{-1}\J\cdot\curl\overline{\w_T}\,dx.
\]
On the other hand, from (\ref{defwSinguBis}), we can write
\[
\dsp\int_{\Om}\eps^{-1}\boldsymbol{\zeta_{\u}}\cdot\curl\overline{\w_T}\,dx -\om^2 \int_\Om \mu \u\cdot \overline{\w_T}\,dx
		= \int_\Om \boldsymbol{\zeta_{\u}}\cdot\nabla\overline{s^+}\,dx.
\]
From these two relations, using (\ref{eq-intH-sym}), we deduce that 
\[
i\om\int_{\Om} \eps^{-1}\J\cdot\curl\overline{\w_T}\,dx=\int_\Om \boldsymbol{\zeta_{\u}}\cdot\nabla\overline{s^+}\,dx=c_{\u}\int_{\Om}\div(\eps\nabla s^{+})\,\overline{s^+}\,dx.
\]
This gives (\ref{FormulaCoefSinguH}).
\end{proof}

\section{Conclusion}\label{SectionConclusion}
In this work, we studied the Maxwell's equations in presence of hypersingularities for the scalar problem involving $\eps$. We considered both the problem for the electric field and for the magnetic field. Quite naturally, in order to obtain a framework where well-posedness holds, it is necessary to modify the spaces in different ways. More precisely,  we changed the condition on the field itself for the electric problem and on the curl of the field for the magnetic problem. A noteworthy difference in the analysis of the two problems is that for the electric field, we are led to work in a Hilbertian framework, whereas for the magnetic field we have not been able to do so.\\
\newline
Of course, we could have assumed that the scalar problem involving $\eps$ is well-posed in $\mH^1_0(\Om)$ and that hypersingularities exist for the problem in $\mu$. This would have been similar mathematically. Physically, however, this situation seems to be a bit less relevant because it is harder to produce negative $\mu$ without dissipation. We assumed that the domain $\Om$ is simply connected and that $\partial\Om$ is connected. When these assumptions are not met, it is necessary to adapt the analysis (see \S8.2 of \cite{BoCC14} for the study in the case where the scalar problems are well-posed in the usual $\mH^1$ framework). This has to be done. Moreover, for the conical tip, at least numerically, one finds that several singularities can exist (see the calculations in \cite{KCHWS14}). In this case, the analysis should follow the same lines but this has to be written. On the other hand, in this work, we focused our attention on a situation where the interface between the positive and the negative material has a conical tip. It would be interesting to study a setting where there is a wedge instead. In this case, roughly speaking, one should deal with a continuum of singularities. We have to mention that the analysis of the scalar problems for a wedge of negative material in the non standard framework has not been done. Finally, considering a conical tip with both critical $\eps$ and $\mu$ is a direction that we are investigating.  

\appendix
\section{Vector potentials, part 1}
\begin{proposition}\label{propoPotential}
Under Assumption 1, the following assertions hold.\\
\newline
i)\, According to \cite[Theorem 3.12]{AmrBerDau98}, if $\u\in \Lspace^2(\Om)$ satisfies $\div\,\u=0$ in $\Om$, then there exists a unique $\boldsymbol{\psi}\in \mX_T(1)$ such that $\u= \curl \boldsymbol{\psi}$.\\
\newline
ii)\, According to \cite[Theorem 3.17]{AmrBerDau98}), if $\u\in \Lspace^2(\Om)$ satisfies $\div\,\u=0$ in $\Omega$ and  $\u\cdot\nu=0$ on $\partial\Omega$, then there exists a unique $\boldsymbol{\psi}\in \mX_N(1)$ such that $\u= \curl \boldsymbol{\psi}$.\\
\newline
iii)\, If $\u\in \Lspace^2(\Om)$ satisfies $\curl\u=0$ in $\Omega$ and $\u\times \nu=0$ on $\partial\Omega$, then there exists (see \cite[Theorem 3.41]{Mon03}) a unique $p\in \mH^1_0(\Omega)$ such that $\u= \nabla p$.\\
\newline
iv)\, Every $\u\in \Lspace^2(\Om)$ can be decomposed as follows (\cite[Theorem 3.45]{Mon03})
$$
\u= \nabla p +\curl \boldsymbol{\psi},
$$
with $p\in \mH^1_0(\Omega)$ and $\boldsymbol{\psi}\in \mX_T(1)$ which are uniquely defined. \\
\newline
v)\, Every $\u\in \Lspace^2(\Om)$ can be decomposed as follows (\cite[Remark 3.46]{Mon03})
$$
\u= \nabla p +\curl \boldsymbol{\psi},
$$
with $p\in \mH^1_{\#}(\Om)$ and $\boldsymbol{\psi}\in \mX_N(1)$ which are uniquely defined. 
\end{proposition}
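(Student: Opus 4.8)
The plan is to obtain each of the five items from the classical theory of scalar and vector potentials on a domain satisfying the topological Assumption 1, reproducing the arguments of \cite{AmrBerDau98,Mon03} to which the statements refer. The decisive role of Assumption 1 (simple connectedness of $\Om$ and connectedness of $\partial\Om$) is to guarantee that the relevant de Rham cohomology spaces are trivial, so that no harmonic field obstructs the constructions; every existence and uniqueness claim below ultimately rests on this fact.

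I would first dispose of the scalar potential statement iii). If $\u\in\Lspace^2(\Om)$ satisfies $\curl\u=0$, simple connectedness produces $p\in\mH^1(\Om)$ with $\u=\nabla p$; the tangential condition $\u\times\nu=0$ then forces $\nabla p$ to have vanishing tangential trace on $\partial\Om$, so $p$ is constant on $\partial\Om$ (here the connectedness of $\partial\Om$ is used), and after normalising the additive constant we obtain $p\in\mH^1_0(\Om)$. Uniqueness is immediate, since $\nabla p=0$ with $p\in\mH^1_0(\Om)$ gives $p=0$.

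The core of the proposition is items i) and ii), the existence and uniqueness of vector potentials $\boldsymbol{\psi}$ for divergence-free fields, living respectively in $\mX_T(1)$ and in $\mX_N(1)$. These are exactly \cite[Theorem 3.12]{AmrBerDau98} and \cite[Theorem 3.17]{AmrBerDau98}, whose hypotheses are met under Assumption 1, so I would simply invoke them. The uniqueness parts reduce to the assertion that a field in $\mX_T(1)$, resp. $\mX_N(1)$, with vanishing curl must be zero: writing such a field as $\nabla q$ (possible by simple connectedness), the conditions defining $\mX_T(1)$ turn $q$ into a harmonic function with vanishing normal derivative, while those defining $\mX_N(1)$ make $q$ harmonic with constant boundary trace; in both cases $q$ is constant and the field vanishes. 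Items iv) and v) then follow by combining a scalar elliptic solve with i), resp. ii): given $\u\in\Lspace^2(\Om)$, for iv) I would solve the Dirichlet problem $\Delta p=\div\,\u$ in $\mH^1_0(\Om)$, well-posed by Proposition \ref{PropoLaplaceOp} with $\gamma=0$, so that $\u-\nabla p$ is divergence-free and item i) supplies a unique $\boldsymbol{\psi}\in\mX_T(1)$ with $\u-\nabla p=\curl\boldsymbol{\psi}$; for v) the only change is to solve instead the Neumann problem for $p\in\mH^1_{\#}(\Om)$, which renders $\u-\nabla p$ divergence-free with vanishing normal trace and lets item ii) apply. Uniqueness of both decompositions comes from uniqueness in the corresponding scalar problem together with the uniqueness in i)/ii).

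The main obstacle is not in iii)--v), which are elementary once i) and ii) are in hand, but in the vector potential results i)--ii) themselves: their proof is precisely where the global topology of $\Om$ enters, and in the present paper they are borrowed from \cite{AmrBerDau98} rather than reestablished.
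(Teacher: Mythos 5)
Your proposal is correct and matches the paper's treatment: the paper gives no proof of this proposition, relying entirely on the cited classical results of \cite{AmrBerDau98} and \cite{Mon03}, and your reconstruction (invoking those theorems for i)--ii) and deriving iii)--v) by the standard scalar-potential and Dirichlet/Neumann-solve arguments under Assumption 1) is exactly how those references establish the statements. Nothing is missing.
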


\begin{proposition}\label{propoLaplacienVect}
Under Assumption 1, if $\boldsymbol{\psi}$ satisfies one of the following conditions
\newline 
i) $\boldsymbol{\psi}\in\mX_N(1)$ and $\boldsymbol{\Delta \psi}\in \Lspace^2(\Omega)$,
\newline 
ii) $\boldsymbol{\psi}\in\mX_T(1)$, $\curl\boldsymbol{\psi}\times \nu=0$ on $\partial\Omega$ and $\boldsymbol{\Delta \psi}\in \Lspace^2(\Omega)$, \newline then for all $\beta<1/2$,  we have $\curl\boldsymbol{\psi}\in\boldsymbol{\mV}^0_{-\beta}(\Om)$ and there is a constant $C>0$ independent of $\boldsymbol{\psi}$ such that
\begin{equation}\label{EstimateWeightedLap}
\|\curl\boldsymbol{\psi}\|_{\boldsymbol{\mV}^0_{-\beta}(\Om)} \le C\,\|\boldsymbol{\Delta \psi}\|_{\Om}.
\end{equation}
\end{proposition}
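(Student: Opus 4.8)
The plan is to reduce both cases to a single weighted estimate for the field $\u:=\curl\boldsymbol{\psi}$, and to exploit that the weight $r$ is the distance to the \emph{interior} point $O$ (recall $B(O,\rho)\subset\Om$), so that $r^{-\beta}$ is only active on a ball on which $\u$ turns out to be smooth. Since $\boldsymbol{\psi}$ lies in $\mX_N(1)$ (case $i$) or in $\mX_T(1)$ (case $ii$), we have $\div\boldsymbol{\psi}=0$, whence the distributional identity $\boldsymbol{\Delta \psi}=\nabla\div\boldsymbol{\psi}-\curl\curl\boldsymbol{\psi}=-\curl\curl\boldsymbol{\psi}$ shows that $\curl\u=-\boldsymbol{\Delta \psi}\in\Lspace^2(\Om)$; in particular $\u\in\Hspace(\curl)$ and $\div\u=0$.

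First I would identify the space containing $\u$. In case $ii$, the extra hypothesis $\curl\boldsymbol{\psi}\times\nu=0$ reads $\u\times\nu=0$, so $\u\in\Hspace_N(\curl)$ with $\div\u=0$, i.e. $\u\in\mX_N(1)$. In case $i$, rather than manipulating tangential traces on the merely Lipschitz boundary, I would test against gradients: for every $q\in\mH^1(\Om)$ the Green formula $\int_\Om\curl\boldsymbol{\psi}\cdot\nabla q\,dx=\int_\Om\boldsymbol{\psi}\cdot\curl\nabla q\,dx-\int_{\partial\Om}(\boldsymbol{\psi}\times\nu)\cdot\nabla q\,ds=0$ holds because $\curl\nabla q=0$ and $\boldsymbol{\psi}\times\nu=0$; this weak identity is exactly the statement that $\div\u=0$ in $\Om$ and $\u\cdot\nu=0$ on $\partial\Om$, so $\u\in\mX_T(1)$. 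In either case Proposition \ref{PropoEmbeddingCla} then gives the unweighted bound $\|\u\|_\Om\le C\,\|\curl\u\|_\Om=C\,\|\boldsymbol{\Delta \psi}\|_\Om$, which controls the part of the norm away from $O$.

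It remains to gain the weight near $O$. Choosing $\rho'\in(0,\rho)$ and a cut-off $\chi\in\mathscr{C}^{\infty}_0(B(O,\rho))$ with $\chi\equiv1$ on $B(O,\rho')$, the field $\chi\u$ is compactly supported in $\Om$ with $\curl(\chi\u)=\chi\,\curl\u+\nabla\chi\times\u$ and $\div(\chi\u)=\nabla\chi\cdot\u$ both in $\Lspace^2(\Om)$. The identity $\|\nabla\v\|_\Om^2=\|\curl\v\|_\Om^2+\|\div\v\|_\Om^2$, valid for compactly supported fields, then yields $\u\in\mH^1(B(O,\rho'))$ with $\|\u\|_{\mH^1(B(O,\rho'))}\le C\,(\|\u\|_\Om+\|\curl\u\|_\Om)\le C\,\|\boldsymbol{\Delta \psi}\|_\Om$. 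Since $\beta<1/2<1$, one has $r^{-2\beta}\in\Lspace^{3/2}(B(O,\rho'))$, so Hölder together with the Sobolev embedding $\mH^1\hookrightarrow\Lspace^6$ gives $\|r^{-\beta}\u\|_{\Lspace^2(B(O,\rho'))}\le C\,\|\u\|_{\mH^1(B(O,\rho'))}$. On $\Om\setminus B(O,\rho')$ the weight $r^{-\beta}$ is bounded and the $\Lspace^2$ bound of the previous paragraph suffices. Summing the two contributions proves $\u\in\boldsymbol{\mV}^0_{-\beta}(\Om)$ together with \eqref{EstimateWeightedLap}.

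The only genuinely delicate point is the boundary bookkeeping of the second paragraph, namely deducing $\u\cdot\nu=0$ from $\boldsymbol{\psi}\times\nu=0$ on a Lipschitz boundary; phrasing it through the weak pairing against $\nabla q$ circumvents any explicit trace theory. By contrast the weighted part is soft: because the singular weight is centred at the \emph{interior} point $O$, away from which $\u$ has full $\mH^1$ regularity, the admissible range is in fact $\beta<1$, so the claimed range $\beta<1/2$ is comfortably attained.
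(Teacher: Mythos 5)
Your proof is correct, but it follows a genuinely different route from the paper's. The paper never passes to the field $\u=\curl\boldsymbol{\psi}$ as you do: it cuts off the \emph{potential} itself, shows that each component $\chi\boldsymbol{\psi}_i$ lies in $\mH^1_0(\Om)$ with $\Delta(\chi\boldsymbol{\psi}_i)\in\mL^2(\Om)\subset(\mathring{\mV}^1_\beta(\Om))^{\ast}$, and then invokes the Kondratiev isomorphism $A^{-\beta}:\mathring{\mV}^1_{-\beta}(\Om)\to(\mathring{\mV}^1_{\beta}(\Om))^{\ast}$ of Proposition \ref{PropoLaplaceOp} to place $\chi\boldsymbol{\psi}_i$ in $\mathring{\mV}^1_{-\beta}(\Om)$, whence $\curl(\chi\boldsymbol{\psi})\in\boldsymbol{\mV}^0_{-\beta}(\Om)$; the restriction $\beta<1/2$ is exactly the range of validity of that isomorphism. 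You instead identify $\u$ as an element of $\mX_T(1)$ or $\mX_N(1)$ (your weak-trace argument via testing against $\nabla q$ is the right way to handle the Lipschitz boundary, and Proposition \ref{PropoEmbeddingCla} then gives the unweighted bound), obtain interior $\mH^1$ regularity of $\u$ near $O$ from the div-curl identity for compactly supported fields, and convert it into the weighted estimate by Sobolev embedding and H\"older (equivalently, Hardy's inequality in $\R^3$). This is more elementary — no Kondratiev machinery is needed — and it actually yields the conclusion for all $\beta<1$, a wider range than stated (the paper only ever uses $\beta<1/2$, which it needs elsewhere anyway). The one thing the paper's potential-based strategy buys is reusability: the companion compactness result (Proposition \ref{propoLaplacienVectcompact}) is proved by upgrading the same cut-off argument to $\mV^2_\gamma$ estimates on $\chi\boldsymbol{\psi}$; with your approach one would instead invoke the compactness of $\mH^1\hookrightarrow\mL^q$ for $q<6$, which also works but is a separate argument.
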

\begin{proof}
It suffices to prove the result for $\beta\in(0;1/2)$. Let $\boldsymbol{\psi}\in\mX_N(1)\cup \mX_T(1)$. Since $\curl\curl\boldsymbol{\psi}=-\boldsymbol{\Delta \psi} $, integrating by parts we get    
\[
\|\curl\boldsymbol{\psi}\|^2_{\Om}=-\int_{\Om}\boldsymbol{\Delta \psi}\cdot\overline{\boldsymbol{\psi}}\,dx.
\]
Note that the boundary term vanishes because either  $\boldsymbol{\psi}\times \nu=0$ or  $\curl\boldsymbol{\psi}\times \nu=0$ on $\partial\Omega$. 
This furnishes the estimate 
\begin{equation}
\|\curl\boldsymbol{\psi}\|_{\Om} \le C\,\|\boldsymbol{\Delta \psi}\|_{\Om}.\label{nomrcurlL2}
\end{equation}  Now working with cut-off functions, we refine the estimate at the origin to get (\ref{EstimateWeightedLap}). 
\newline
	Let us consider a smooth cut-off function $\chi$, compactly supported in $\Omega$,  equal to one in a neighbourhood of $O$.
	To prove the proposition,  it suffices in addition to (\ref{nomrcurlL2}) to prove that $\curl(\chi\boldsymbol{\psi})\in\boldsymbol{\mV}^0_{-\beta}(\Om)$  together with the following estimate    $\|\curl (\chi\boldsymbol{\psi})\|_{\boldsymbol{\mV}^0_{-\beta}(\Om)} \le C\,\|\boldsymbol{\Delta \psi}\|_{\Om}$. \\
	First of all,  since $\curl (\chi\boldsymbol{\psi})\in \Lspace^2(\Omega)$ and $\div(\chi\boldsymbol{\psi})=\nabla\chi\cdot\boldsymbol{\psi}\in \mL^2(\Omega)$, we know that  $\chi\boldsymbol{\psi_i}\in\mH^1_0(\Omega)$ for $i=1,2,3$ and we  have  
\[
	\|\curl (\chi\boldsymbol{\psi})\|_{\Omega}^2+\|\div (\chi\boldsymbol{\psi})\|_{\Omega}^2=\sum_{i=1}^3\|\nabla(\chi\boldsymbol{\psi_i})\|_{\Omega}^2.
\]
	From the previous identity, (\ref{nomrcurlL2}) and  Proposition \ref{PropoEmbeddingCla}, we deduce 
\begin{equation}\label{normH1}
	\left(\|\boldsymbol{\psi}\|^2_{\Om}+\sum_{i=1}^3\|\nabla (\chi\boldsymbol{\psi_i})\|_{\Omega}^2\right)^{1/2}\leq  C\,\|\boldsymbol{\Delta \psi}\|_{\Om}. 
\end{equation} 
Note that, \eqref{normH1} is also valid if we replace $\chi$ by any other smooth function with compact support in $\Om$. Now setting $f_i=\Delta(\chi \boldsymbol{\psi_i})$ for $i=1,2,3$, we have
\begin{equation}	\label{laplaciantranc}
	f_i= \chi {\Delta} \boldsymbol{\psi}_i+2\,\nabla \chi\cdot\nabla\boldsymbol{\psi}_i +\boldsymbol{\psi}_i  {\Delta} \chi . 
	\end{equation} 
	By writing that $\nabla \chi\cdot\nabla\boldsymbol{\psi}_i=\div(\boldsymbol{\psi}_i\nabla \chi)-\boldsymbol{\psi}_i\Delta\chi$ and replacing  $\chi$ by $\partial_j\chi$ in 
	(\ref{normH1}) for $j=1,2,3$,  we deduce that for $i=1,2,3$, $f_i$ belongs to $\mL^2(\Omega)$ and satisfies
\[
\|f_i\|_{ \Om}\leq C \|\boldsymbol{\Delta \psi}\|_{\Om}.
\]
Note that since $\beta\in(0;1/2)$, we have $\mathring{\mV}^1_\beta(\Om)\subset\mV^0_{\beta-1}\subset\mL^2(\Om)$ and so $\mL^2(\Om)\subset(\mathring{\mV}^1_\beta(\Om))^\ast$. Now starting from the fact that $\chi \boldsymbol{\psi_i}\in  \mH^1_0(\Om)$ in addition to  $\Delta(\chi \boldsymbol{\psi_i})=f_i\in\mL^2(\Om)\subset(\mathring{\mV}^1_\beta(\Om))^\ast$, by applying  Proposition \ref{PropoLaplaceOp}, we deduce that $\chi \boldsymbol{\psi_i}\in\mathring{\mV}^1_{-\beta}(\Om)$ with the estimate
\[	
	\|\chi \boldsymbol{\psi_i} \|_{\mathring{\mV}^1_{-\beta}(\Om)}\leq C\, \|f_i\|_{(\mathring{\mV}^1_\beta(\Om))^\ast}\leq C\, \|f_i\|_\Om.
	\]
	As a consequence, $\curl(\chi\boldsymbol{\psi})\in\boldsymbol{\mV}^0_{-\beta}(\Om)$ and 
	\[\|\curl(\chi\boldsymbol{\psi})\|_{\boldsymbol{\mV}^0_{-\beta}(\Om)}\leq  C \sum_{i=1}^3 \|\chi \boldsymbol{\psi_i} \|_{\mathring{\mV}^1_{-\beta}(\Om)}\leq \sum_{i=1}^3  \|f_i\|_\Om \leq C \|\boldsymbol{\Delta \psi}\|_{\Om}, \]
	which concludes the proof.
\end{proof}
\begin{proposition}\label{propoLaplacienVectcompact}
Under Assumption 1, the following assertions hold:\\
i)	if $(\boldsymbol{\psi_n})$ is a bounded sequence of elements of $\mX_N(1)$  such that $(\boldsymbol{\Delta \psi_n})$ is bounded in $\Lspace^2(\Omega)$, then one can extract a subsequence such that $(\curl\boldsymbol{\psi_n})$ converges in $\boldsymbol{\mV}^0_{-\beta}(\Om)$ for all $\beta\in(0;1/2)$;
	\newline
ii) if $(\boldsymbol{\psi_n})$ is a bounded sequence of elements of $\mX_T(1)$  such that  $\curl\boldsymbol{\psi_n}\times \nu=0$ on $\partial\Omega$ and such that $(\boldsymbol{\Delta \psi_n})$ is bounded in $\Lspace^2(\Omega)$, then one can extract a subsequence such that $(\curl\boldsymbol{\psi_n})$ converges in $\boldsymbol{\mV}^0_{-\beta}(\Om)$  for all $\beta\in(0;1/2)$.  
\end{proposition}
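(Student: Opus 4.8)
The plan is to establish this as the compactness counterpart of Proposition \ref{propoLaplacienVect}, reusing its localisation around the conical tip. The two items are proved in the same way, exchanging the roles of $\mX_N(1)$ and $\mX_T(1)$, so I will treat case $i)$ in detail and only indicate the modification for $ii)$. Fix $\beta\in(0;1/2)$ and a smooth cut-off function $\chi$, compactly supported in $\Om$ and equal to one in a neighbourhood of $O$, as in the proof of Proposition \ref{propoLaplacienVect}. I will extract successive subsequences so that $\curl(\chi\boldsymbol{\psi_n})$ converges in $\boldsymbol{\mV}^0_{-\beta}(\Om)$ (the genuinely weighted part, near $O$) and $\curl((1-\chi)\boldsymbol{\psi_n})$ converges in $\boldsymbol{\mV}^0_{-\beta}(\Om)$ (the part supported away from $O$, where this norm is equivalent to the $\Lspace^2(\Om)$ norm).

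For the part away from $O$, I would first observe that the field $\u_n:=\curl\boldsymbol{\psi_n}$ belongs to $\mX_T(1)$: indeed $\div\,\u_n=0$, one has $\curl\,\u_n=-\boldsymbol{\Delta\psi_n}\in\Lspace^2(\Om)$, and the condition $\boldsymbol{\psi_n}\times\nu=0$ forces $\u_n\cdot\nu=0$ on $\partial\Om$ (for every $q\in\mH^1(\Om)$, integration by parts gives $\int_\Om\curl\boldsymbol{\psi_n}\cdot\nabla\overline{q}\,dx=\int_{\partial\Om}(\nu\times\boldsymbol{\psi_n})\cdot\nabla\overline{q}\,ds=0$). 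Since $(\boldsymbol{\Delta\psi_n})$ is bounded in $\Lspace^2(\Om)$ and $\curl\boldsymbol{\psi_n}$ is bounded in $\Lspace^2(\Om)$ by the estimate of Proposition \ref{propoLaplacienVect}, the sequence $(\u_n)$ is bounded in $\mX_T(1)$, so by the compact embedding of Proposition \ref{PropoEmbeddingCla} a subsequence of $(\curl\boldsymbol{\psi_n})$ converges in $\Lspace^2(\Om)$. Together with the convergence of $(\boldsymbol{\psi_n})$ in $\Lspace^2(\Om)$ — again from Proposition \ref{PropoEmbeddingCla}, since $(\boldsymbol{\psi_n})$ is bounded in $\mX_N(1)$ — the identity $\curl((1-\chi)\boldsymbol{\psi_n})=(1-\chi)\curl\boldsymbol{\psi_n}-\nabla\chi\times\boldsymbol{\psi_n}$ then shows that $\curl((1-\chi)\boldsymbol{\psi_n})$ converges in $\Lspace^2(\Om)$, hence in $\boldsymbol{\mV}^0_{-\beta}(\Om)$ because $1-\chi$ vanishes near $O$. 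For item $ii)$, the same computation gives $\u_n\in\mX_N(1)$ (using $\curl\boldsymbol{\psi_n}\times\nu=0$) and the relevant fields are bounded in $\mX_T(1)$, so one invokes the two compact embeddings of Proposition \ref{PropoEmbeddingCla} in the other order.

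For the part near $O$, I would follow the computation of Proposition \ref{propoLaplacienVect} componentwise: each $\chi(\boldsymbol{\psi_n})_i\in\mH^1_0(\Om)$ solves $\Delta(\chi(\boldsymbol{\psi_n})_i)=f_{n,i}$ with $f_{n,i}=\chi\,\Delta(\boldsymbol{\psi_n})_i+2\,\div((\boldsymbol{\psi_n})_i\nabla\chi)-(\boldsymbol{\psi_n})_i\Delta\chi$, after rewriting $\nabla\chi\cdot\nabla(\boldsymbol{\psi_n})_i=\div((\boldsymbol{\psi_n})_i\nabla\chi)-(\boldsymbol{\psi_n})_i\Delta\chi$. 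The key point is that each piece of $f_{n,i}$ admits a convergent subsequence in $(\mathring{\mV}^1_\beta(\Om))^{\ast}$: the term $\chi\,\Delta(\boldsymbol{\psi_n})_i$ is only bounded in $\Lspace^2(\Om)$, but the embedding $\Lspace^2(\Om)\hookrightarrow(\mathring{\mV}^1_\beta(\Om))^{\ast}$ is compact, being dual to the compact Kondratiev embedding $\mathring{\mV}^1_\beta(\Om)\hookrightarrow\Lspace^2(\Om)$ (valid since $0<\beta<1$), so a subsequence converges there; the terms $(\boldsymbol{\psi_n})_i\Delta\chi$ and $\div((\boldsymbol{\psi_n})_i\nabla\chi)$ are images of $(\boldsymbol{\psi_n})_i$ under fixed bounded linear maps into $(\mathring{\mV}^1_\beta(\Om))^{\ast}$ — here one uses that $\nabla\chi$ is supported away from $O$, so that $\varphi\mapsto\int_\Om(\boldsymbol{\psi_n})_i\nabla\chi\cdot\nabla\overline{\varphi}\,dx$ is controlled by $\|\varphi\|_{\mathring{\mV}^1_\beta(\Om)}$ — hence they converge along the subsequence already extracted for the $\Lspace^2(\Om)$ convergence of $(\boldsymbol{\psi_n})$. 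Consequently $f_{n,i}$ converges in $(\mathring{\mV}^1_\beta(\Om))^{\ast}$, and since $\Delta:\mathring{\mV}^1_{-\beta}(\Om)\to(\mathring{\mV}^1_\beta(\Om))^{\ast}$ is an isomorphism by Proposition \ref{PropoLaplaceOp}, $\chi(\boldsymbol{\psi_n})_i$ converges in $\mathring{\mV}^1_{-\beta}(\Om)$. Therefore $\curl(\chi\boldsymbol{\psi_n})$ converges in $\boldsymbol{\mV}^0_{-\beta}(\Om)$, and combining with the previous step finishes the proof.

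The main obstacle is the treatment of the term $\chi\,\Delta(\boldsymbol{\psi_n})_i$ in the near-$O$ analysis: unlike in Proposition \ref{propoLaplacienVect}, where only a norm bound was needed, here $(\boldsymbol{\Delta\psi_n})$ is merely bounded and not convergent, so everything hinges on the compactness of the embedding $\Lspace^2(\Om)\hookrightarrow(\mathring{\mV}^1_\beta(\Om))^{\ast}$ for $\beta<1$. The second delicate point is to carry the divergence term $\div((\boldsymbol{\psi_n})_i\nabla\chi)$ into the weighted dual space, which works precisely because $\nabla\chi$ vanishes in a neighbourhood of the tip.
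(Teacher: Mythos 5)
Your proof is correct, and its global skeleton (cut-off $\chi$ near the tip, classical compact embeddings of $\mX_N(1)$ and $\mX_T(1)$ away from it, Kondratiev theory near it) matches the paper's. But the key step near $O$ is carried out by a genuinely different mechanism. The paper first shows that $(\chi\boldsymbol{\psi_n})$ is \emph{bounded} in $\boldsymbol{\mV}^2_{\gamma}(\Om)$ for $\gamma>1/2$, using the second-order weighted elliptic isomorphism $\Delta:\mV^2_{\gamma}\cap\mathring{\mV}^1_{\gamma-1}\to\mV^0_{\gamma}$ from \cite[\S1.6.2]{MaNP00}, and then extracts a convergent subsequence via the compact embedding $\mV^2_{\gamma}\hookrightarrow\mV^1_{\gamma'}$ for $\gamma-1<\gamma'$; in other words it gains one order of regularity and compactly embeds down. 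You instead stay at the first-order level: you make the \emph{right-hand side} $f_{n,i}$ converge in $(\mathring{\mV}^1_\beta(\Om))^{\ast}$ — the term $\chi\Delta(\boldsymbol{\psi_n})_i$ via the compactness of $\mL^2(\Om)\hookrightarrow(\mathring{\mV}^1_\beta(\Om))^{\ast}$ (Schauder adjoint of the compact embedding $\mathring{\mV}^1_\beta(\Om)\hookrightarrow\mL^2(\Om)$, valid for $\beta<1$), the commutator terms via the $\Lspace^2(\Om)$-convergence of $(\boldsymbol{\psi_n})$ and the fact that $\nabla\chi$ is supported away from $O$ — and then transport this convergence through the continuous inverse of $A^{-\beta}$ given by Proposition \ref{PropoLaplaceOp}. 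Your route is more self-contained with respect to the paper (it reuses only Proposition \ref{PropoLaplaceOp} plus the same Kondratiev compactness lemma \cite[Lemma 6.2.1]{KoMR97} at the indices $(1,\beta)\to(0,0)$ rather than $(2,\gamma)\to(1,\gamma')$, and avoids importing the $\mV^2$ regularity result); the paper's route yields the stronger by-product that $(\chi\boldsymbol{\psi_n})$ is bounded in $\boldsymbol{\mV}^2_{\gamma}(\Om)$, hence a single subsequence converging in $\boldsymbol{\mV}^0_{-\beta}(\Om)$ for every $\beta<1/2$ at once — though this also follows from your argument by taking $\beta$ close to $1/2$ and using the nesting $\boldsymbol{\mV}^0_{-\beta}(\Om)\subset\boldsymbol{\mV}^0_{-\beta'}(\Om)$ for $\beta'<\beta$. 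Your explicit treatment of the $(1-\chi)$ part, which the paper leaves implicit, is also welcome.
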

\begin{proof}
Let us establish the first assertion, the proof of the second one being similar. Let $(\boldsymbol{\psi_n})$ be a bounded sequence of elements of $\mX_N(1)$ such that $(\boldsymbol{\Delta \psi_n})$ is bounded in $\Lspace^2(\Omega)$. Observing that $\curl\curl\boldsymbol{\psi_n}=-\boldsymbol{\Delta \psi_n} $, we deduce that $(\curl\boldsymbol{\psi_n})$ is a bounded sequence of $\mX_T(1)$. Since the spaces $\mX_N(1)$ and $\mX_T(1)$ are compactly  embedded in  $\Lspace^2(\Omega)$ (see Proposition \ref{PropoEmbeddingCla}), one can extract a subsequence such that both $ (\boldsymbol{\psi_n})$ and $(\curl\boldsymbol{\psi_n})$ converge in $\Lspace^2(\Omega)$. 
	\newline
	Then, working as in the proof of Proposition \ref{propoLaplacienVect}, we can show that for a smooth cut-off function  $\chi$  compactly supported in $\Omega$ and  equal to one in a neighbourhood of $O$, the sequence $(\chi\boldsymbol{\psi_n})$ is bounded in $ \boldsymbol{\mV}^2_{\gamma}(\Om):=(\mV^2_{\gamma}(\Om))^3$  for all $\gamma>1/2$. To obtain this result, we use in particular the fact that if $\mathscr{O}\subset\R^3$ is a smooth bounded domain such that $O\in\mathscr{O}$, then $\Delta:\mV^2_{\gamma}(\mathscr{O})\cap\mathring{\mV}^1_{\gamma-1}(\mathscr{O})\to\mV^0_{\gamma}(\mathscr{O})$ is an isomorphism for all $\gamma\in(1/2;3/2)$ (see \cite[\S1.6.2]{MaNP00}).	 Finally, to conclude to the result of the proposition, we use the fact $ \boldsymbol{\mV}^2_{\gamma}(\mathscr{O})$  is compactly embedded in $ \boldsymbol{\mV}^1_{\gamma'}(\mathscr{O})$ a soon as $\gamma-1<\gamma'$ (\cite[Lemma 6.2.1]{KoMR97}). This allows us to prove that for all $\beta<1/2$, the subsequence  $(\chi\boldsymbol{\psi_n})$ converges in $\boldsymbol{\mV}^1_{-\beta}(\Om)$, so that  $(\curl\boldsymbol{\psi_n})$ converges in $\boldsymbol{\mV}^0_{-\beta}(\Om)$.  
	\end{proof}

\noindent The next two lemmas are results of additional regularity for the elements of classical Maxwell's spaces that are direct consequences of Propositions \ref{propoLaplacienVect} and \ref{propoLaplacienVectcompact}.
\begin{lemma}\label{LemmaWeightedCla}
	Under Assumption 1,  for all $\beta\in(0;1/2)$,  $\mX_T(1)$ is compactly embedded in $\boldsymbol{\mV}^0_{-\beta}(\Om)$. In particular,  there is a constant $C>0$ such that
	\begin{equation}\label{EstimaWeightedCla}
	\|\u\|_{\boldsymbol{\mV}^0_{-\beta}(\Om)} \le C\,\|\curl\u\|_{\Om},\qquad\forall\u\in\mX_T(1).
	\end{equation}
\end{lemma}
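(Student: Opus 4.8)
The plan is to reduce both assertions to the regularity results for the vector Laplacian already established in Propositions \ref{propoLaplacienVect} and \ref{propoLaplacienVectcompact}, the bridge being a representation of an element of $\mX_T(1)$ as the curl of a well-chosen vector potential. First I would take $\u\in\mX_T(1)$, so that $\div\,\u=0$ in $\Om$ and $\u\cdot\nu=0$ on $\partial\Om$. By item $ii)$ of Proposition \ref{propoPotential}, there is then a unique $\boldsymbol{\psi}\in\mX_N(1)$ with $\u=\curl\boldsymbol{\psi}$. Since $\div\,\boldsymbol{\psi}=0$, one has $-\boldsymbol{\Delta\psi}=\curl\curl\boldsymbol{\psi}=\curl\u\in\Lspace^2(\Om)$, where the membership holds because $\u\in\Hspace(\curl)$. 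Thus $\boldsymbol{\psi}$ satisfies condition $i)$ of Proposition \ref{propoLaplacienVect}, which gives at once $\u=\curl\boldsymbol{\psi}\in\boldsymbol{\mV}^0_{-\beta}(\Om)$ together with the bound $\|\u\|_{\boldsymbol{\mV}^0_{-\beta}(\Om)}\le C\,\|\boldsymbol{\Delta\psi}\|_{\Om}=C\,\|\curl\u\|_{\Om}$. This is exactly the estimate (\ref{EstimaWeightedCla}).

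For the compactness statement I would start from a bounded sequence $(\u_n)$ in $\mX_T(1)$ and construct, by the same representation, the potentials $\boldsymbol{\psi}_n\in\mX_N(1)$ with $\u_n=\curl\boldsymbol{\psi}_n$. The key point to check is that $(\boldsymbol{\psi}_n)$ is bounded in $\mX_N(1)$: indeed $\|\curl\boldsymbol{\psi}_n\|_{\Om}=\|\u_n\|_{\Om}$ is bounded, and Proposition \ref{PropoEmbeddingCla} applied in $\mX_N(1)$ yields $\|\boldsymbol{\psi}_n\|_{\Om}\le C\,\|\curl\boldsymbol{\psi}_n\|_{\Om}$, so $(\boldsymbol{\psi}_n)$ is bounded for the graph norm. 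Moreover $\boldsymbol{\Delta\psi}_n=-\curl\u_n$ is bounded in $\Lspace^2(\Om)$. Item $i)$ of Proposition \ref{propoLaplacienVectcompact} then provides a subsequence along which $\curl\boldsymbol{\psi}_n=\u_n$ converges in $\boldsymbol{\mV}^0_{-\beta}(\Om)$ for every $\beta\in(0;1/2)$, which is precisely the asserted compactness of the embedding of $\mX_T(1)$ into $\boldsymbol{\mV}^0_{-\beta}(\Om)$.

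The argument is essentially mechanical once the right potential is selected, so there is no serious analytic obstacle here — the hard work is packaged inside Propositions \ref{propoLaplacienVect} and \ref{propoLaplacienVectcompact}. The one point I would be careful about is the choice between the two potential representations of Proposition \ref{propoPotential}. Representing $\u$ as $\curl\boldsymbol{\psi}$ with $\boldsymbol{\psi}\in\mX_N(1)$ allows me to invoke condition $i)$ of Proposition \ref{propoLaplacienVect} directly. Had I instead used item $i)$ of Proposition \ref{propoPotential}, producing $\boldsymbol{\psi}\in\mX_T(1)$, I would be forced to verify the extra boundary condition $\curl\boldsymbol{\psi}\times\nu=\u\times\nu=0$ required by condition $ii)$ of Proposition \ref{propoLaplacienVect}, and this does not hold for a general element of $\mX_T(1)$ (which only satisfies $\u\cdot\nu=0$). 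Making the correct choice is therefore the crux of an otherwise routine proof.
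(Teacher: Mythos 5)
Your proof is correct and follows exactly the paper's own argument: represent $\u=\curl\boldsymbol{\psi}$ with $\boldsymbol{\psi}\in\mX_N(1)$ via item $ii)$ of Proposition \ref{propoPotential}, then invoke Proposition \ref{propoLaplacienVect} for the estimate and item $i)$ of Proposition \ref{propoLaplacienVectcompact} for compactness. Your explicit check that $(\boldsymbol{\psi}_n)$ is bounded in $\mX_N(1)$ via Proposition \ref{PropoEmbeddingCla}, and your remark on why the $\mX_N(1)$ potential (rather than the $\mX_T(1)$ one) is the right choice, are details the paper leaves implicit but are entirely consistent with it.
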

\begin{proof}
Let $\u$ be an element of $\mX_T(1)$. From the item $ii)$ of Proposition \ref{propoPotential}, we know that there exists $\boldsymbol{\psi}\in \mX_N(1)$ such that $\u= \curl \boldsymbol{\psi}$. Using that $\boldsymbol{-\Delta\psi}= \curl\u \in \Lspace^2(\Omega)$, from Proposition \ref{propoLaplacienVect}, we get that $\u\in\boldsymbol{\mV}^0_{-\beta}(\Om)$ together with the estimate 
\[
\|\curl\boldsymbol{\psi}\|_{\boldsymbol{\mV}^0_{-\beta}(\Om)} \le C\,\|\curl\u\|_{\Om}.
\]
This gives (\ref{EstimaWeightedCla}). Now suppose that $(\u_n)$ is a bounded sequence of elements of $\mX_T(1)$. Then there exists a bounded sequence $(\boldsymbol{\psi}_n)$ of elements of $\mX_N(1)$ such that $\u_n= \curl \boldsymbol{\psi}_n$. Since $(\curl \u_n=-\Delta \boldsymbol{\psi}_n)$ is bounded in $\Lspace^2(\Omega)$, the first item of Proposition \ref{propoLaplacienVectcompact} implies that there is a subsequence such that $(\u_n)$ converges in  $\boldsymbol{\mV}^0_{-\beta}(\Om)$.
\end{proof}

\begin{lemma}\label{LemmaWeightedClaBis}
Under Assumption 1,  for all $\beta\in(0;1/2)$,  $\mX_N(1)$ is compactly embedded in $\boldsymbol{\mV}^0_{-\beta}(\Om)$. In particular,  there is a constant $C>0$ such that
\[
\|\u\|_{\boldsymbol{\mV}^0_{-\beta}(\Om)} \le C\,\|\curl\u\|_{\Om},\qquad\forall\u\in\mX_N(1).
\]
\end{lemma}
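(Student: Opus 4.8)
The plan is to follow, \emph{mutatis mutandis}, the argument of Lemma \ref{LemmaWeightedCla}, simply exchanging the roles played by the tangential and normal boundary conditions. Concretely, where the proof of Lemma \ref{LemmaWeightedCla} invoked item $ii)$ of Proposition \ref{propoPotential} and item $i)$ of Propositions \ref{propoLaplacienVect} and \ref{propoLaplacienVectcompact}, here I would invoke item $i)$ of Proposition \ref{propoPotential} and item $ii)$ of Propositions \ref{propoLaplacienVect} and \ref{propoLaplacienVectcompact}.

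For the estimate, let $\u\in\mX_N(1)$. Since $\div\,\u=0$ in $\Om$, item $i)$ of Proposition \ref{propoPotential} furnishes a unique potential $\boldsymbol{\psi}\in\mX_T(1)$ such that $\u=\curl\boldsymbol{\psi}$. Because $\div\,\boldsymbol{\psi}=0$, there holds $-\boldsymbol{\Delta\psi}=\curl\curl\boldsymbol{\psi}=\curl\u\in\Lspace^2(\Om)$, and since $\u\in\Hspace_N(\curl)$ we also have $\curl\boldsymbol{\psi}\times\nu=\u\times\nu=0$ on $\partial\Om$. These are precisely the hypotheses of item $ii)$ of Proposition \ref{propoLaplacienVect}, which then yields $\u=\curl\boldsymbol{\psi}\in\boldsymbol{\mV}^0_{-\beta}(\Om)$ together with
\[
\|\u\|_{\boldsymbol{\mV}^0_{-\beta}(\Om)}=\|\curl\boldsymbol{\psi}\|_{\boldsymbol{\mV}^0_{-\beta}(\Om)}\le C\,\|\boldsymbol{\Delta\psi}\|_{\Om}=C\,\|\curl\u\|_{\Om},
\]
which is the desired inequality.

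For the compactness of the embedding, I would take a bounded sequence $(\u_n)$ of $\mX_N(1)$ and apply the above construction termwise, producing potentials $\boldsymbol{\psi}_n\in\mX_T(1)$ with $\u_n=\curl\boldsymbol{\psi}_n$. These are bounded in $\mX_T(1)$, since $\|\curl\boldsymbol{\psi}_n\|_{\Om}=\|\u_n\|_{\Om}$ is bounded and Proposition \ref{PropoEmbeddingCla} controls $\|\boldsymbol{\psi}_n\|_{\Om}$ by $\|\curl\boldsymbol{\psi}_n\|_{\Om}$. Moreover $(\boldsymbol{\Delta\psi}_n)=(-\curl\u_n)$ is bounded in $\Lspace^2(\Om)$ and $\curl\boldsymbol{\psi}_n\times\nu=0$ on $\partial\Om$. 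Item $ii)$ of Proposition \ref{propoLaplacienVectcompact} then provides a subsequence along which $(\curl\boldsymbol{\psi}_n)=(\u_n)$ converges in $\boldsymbol{\mV}^0_{-\beta}(\Om)$, which is exactly the claimed compact embedding.

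I do not anticipate any genuine obstacle: the whole content lies in correctly matching the boundary conditions attached to $\u$ with the hypotheses of the right item of Propositions \ref{propoLaplacienVect} and \ref{propoLaplacienVectcompact}. The only point deserving attention is to make sure the potential is taken in $\mX_T(1)$, so that $\div\,\boldsymbol{\psi}=0$ gives $-\boldsymbol{\Delta\psi}=\curl\u$, while its curl $\u$ carries the tangential condition $\u\times\nu=0$ required by the second items; this is what dictates the $i)$/$ii)$ pairing opposite to that of Lemma \ref{LemmaWeightedCla}.
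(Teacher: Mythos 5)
Your proof is correct and is precisely the argument the paper intends: its own proof of Lemma \ref{LemmaWeightedClaBis} simply states that it is ``similar to the one of Lemma \ref{LemmaWeightedCla}'', and your swap of item $i)$/$ii)$ in Propositions \ref{propoPotential}, \ref{propoLaplacienVect} and \ref{propoLaplacienVectcompact} is exactly the intended adaptation. The one point needing care, namely that $\boldsymbol{\psi}\in\mX_T(1)$ gives $\div\boldsymbol{\psi}=0$ hence $-\boldsymbol{\Delta\psi}=\curl\u$, while $\u\times\nu=0$ supplies the boundary condition $\curl\boldsymbol{\psi}\times\nu=0$ required by the second items, is handled correctly.
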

\begin{proof}
The proof is similar to the one of Lemma \ref{LemmaWeightedCla}.  
\end{proof}

\section{Vector potentials, part 2}

First we establish an intermediate lemma which can be seen as a result of well-posedness for Maxwell's equations in weighted spaces with $\eps=\mu=1$ in $\Om$. Define the continuous operator $\mathbb{B}_T:\mZ^{\beta}_T(1)\to(\mZ^{-\beta}_T(1))^{\ast}$ such that for all $\boldsymbol{\psi}\in\mZ^{\beta}_T(1)$, $\boldsymbol{\psi}'\in\mZ^{-\beta}_T(1)$, 
\[
\langle\mathbb{B}_T\boldsymbol{\psi},\boldsymbol{\psi}'\rangle = \int_{\Om}\curl\boldsymbol{\psi}\cdot\curl\overline{\boldsymbol{\psi}'}\,dx.
\]
\begin{lemma}\label{LemmaWPPoids}
Under Assumption 1, for $0\le\beta<1/2$, the operator $\mathbb{B}_T:\mZ^{\beta}_T(1)\to(\mZ^{-\beta}_T(1))^{\ast}$ is an isomorphism.
\end{lemma}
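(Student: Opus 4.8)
The plan is to treat $\mathbb{B}_T$ as a non-coercive but ``$\mathbb{T}$-coercive'' operator between the two scale-dual spaces $\mZ^\beta_T(1)$ and $(\mZ^{-\beta}_T(1))^\ast$: I would deduce injectivity and closed range from an explicitly constructed auxiliary operator, and then obtain surjectivity from a short density argument. First I would record that $\mathbb{B}_T$ is well defined and bounded, since for $\boldsymbol{\psi}\in\mZ^\beta_T(1)$ and $\boldsymbol{\psi}'\in\mZ^{-\beta}_T(1)$ the integrand $\curl\boldsymbol{\psi}\cdot\curl\overline{\boldsymbol{\psi}'}$ is integrable because $\curl\boldsymbol{\psi}\in\boldsymbol{\mV}^0_\beta(\Om)$ pairs continuously with $\curl\boldsymbol{\psi}'\in\boldsymbol{\mV}^0_{-\beta}(\Om)$, with $|\langle\mathbb{B}_T\boldsymbol{\psi},\boldsymbol{\psi}'\rangle|\le\|\curl\boldsymbol{\psi}\|_{\boldsymbol{\mV}^0_\beta(\Om)}\|\curl\boldsymbol{\psi}'\|_{\boldsymbol{\mV}^0_{-\beta}(\Om)}$.

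The main step is to construct a continuous operator $\mathbb{T}:\mZ^\beta_T(1)\to\mZ^{-\beta}_T(1)$ satisfying $\langle\mathbb{B}_T\v,\mathbb{T}\v\rangle=\|\curl\v\|^2_{\boldsymbol{\mV}^0_\beta(\Om)}$. Given $\v\in\mZ^\beta_T(1)$, the field $r^{2\beta}\curl\v$ lies in $\boldsymbol{\mV}^0_{-\beta}(\Om)$, so that $\div(r^{2\beta}\curl\v)\in(\mathring{\mV}^1_\beta(\Om))^\ast$. Since $-\beta\in(-1/2;1/2)$, Proposition \ref{PropoLaplaceOp} provides a unique $\varphi\in\mathring{\mV}^1_{-\beta}(\Om)$ with $\int_\Om\nabla\varphi\cdot\nabla\overline{\varphi'}\,dx=\int_\Om r^{2\beta}\curl\v\cdot\nabla\overline{\varphi'}\,dx$ for all $\varphi'\in\mathring{\mV}^1_\beta(\Om)$; it is crucial here to solve in $\mathring{\mV}^1_{-\beta}(\Om)$ rather than merely in $\mathring{\mV}^1_\beta(\Om)$, so that $\nabla\varphi\in\boldsymbol{\mV}^0_{-\beta}(\Om)$. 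Then $\w:=r^{2\beta}\curl\v-\nabla\varphi\in\boldsymbol{\mV}^0_{-\beta}(\Om)$ is divergence free in $\Om$, and item $i)$ of Proposition \ref{propoPotential} yields a unique $\boldsymbol{\psi}\in\mX_T(1)$ with $\curl\boldsymbol{\psi}=\w$. I would set $\mathbb{T}\v:=\boldsymbol{\psi}$: as $\boldsymbol{\psi}\in\mX_T(1)$ already satisfies $\div\boldsymbol{\psi}=0$ and $\boldsymbol{\psi}\cdot\nu=0$, and $\curl\boldsymbol{\psi}=\w\in\boldsymbol{\mV}^0_{-\beta}(\Om)$, indeed $\mathbb{T}\v\in\mZ^{-\beta}_T(1)$, with continuity following from the isomorphism estimate of Proposition \ref{PropoLaplaceOp}. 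The identity for $\langle\mathbb{B}_T\v,\mathbb{T}\v\rangle$ then reduces to $\int_\Om\curl\v\cdot\nabla\overline{\varphi}\,dx=0$, a consequence of $\div\curl\v=0$ and of $\varphi$ having zero trace, justified by density of $\mathscr{C}^\infty_0(\Om\setminus\{O\})$ in $\mathring{\mV}^1_{-\beta}(\Om)$ together with continuity of the $\boldsymbol{\mV}^0_\beta(\Om)\times\boldsymbol{\mV}^0_{-\beta}(\Om)$ pairing.

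With $\mathbb{T}$ in hand, combining this identity with Lemma \ref{LemmaNormeEquivHPlus} (valid for $0\le\beta<1/2$) gives the a priori bound $\|\v\|_{\mZ^\beta_T(1)}\le C\,\|\mathbb{B}_T\v\|_{(\mZ^{-\beta}_T(1))^\ast}$, so $\mathbb{B}_T$ is injective with closed range. To conclude I would show that the range is dense, i.e. that its annihilator in $\mZ^{-\beta}_T(1)$ is trivial: if $\u'\in\mZ^{-\beta}_T(1)$ satisfies $\int_\Om\curl\v\cdot\curl\overline{\u'}\,dx=0$ for all $\v\in\mZ^\beta_T(1)$, then, because $\mZ^{-\beta}_T(1)\subset\mZ^\beta_T(1)$, the choice $\v=\u'$ is admissible and yields $\|\curl\u'\|_\Om=0$; Proposition \ref{PropoEmbeddingCla} (or Lemma \ref{LemmaNormeEquivHPlus}) then forces $\u'=0$. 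Hence $\mathbb{B}_T$ is a bijection with closed range, therefore an isomorphism.

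I expect the construction of $\mathbb{T}$ to be the main obstacle, precisely because the target is the weighted space $\mZ^{-\beta}_T(1)$: one must guarantee that the gradient correction $\nabla\varphi$ which removes the divergence of $r^{2\beta}\curl\v$ is itself in $\boldsymbol{\mV}^0_{-\beta}(\Om)$, which is why the auxiliary scalar problem has to be solved in $\mathring{\mV}^1_{-\beta}(\Om)$ and exploits the full interval $(-1/2;1/2)$ in Proposition \ref{PropoLaplaceOp}. By contrast, density of the range is essentially immediate once one observes the inclusion $\mZ^{-\beta}_T(1)\subset\mZ^\beta_T(1)$, which legitimizes testing the annihilation condition against the element itself.
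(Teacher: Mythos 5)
Your proof is correct and follows essentially the same route as the paper: the same $\mathbb{T}$-coercivity operator built from $r^{2\beta}\curl\v$, the scalar correction solved in $\mathring{\mV}^1_{-\beta}(\Om)$ via Proposition \ref{PropoLaplaceOp}, and the vector potential of item $i)$ of Proposition \ref{propoPotential}, followed by surjectivity from the triviality of the annihilator of the range (equivalently, injectivity of the adjoint) using the inclusion $\mZ^{-\beta}_T(1)\subset\mZ^{\beta}_T(1)$. The only differences are presentational.
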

\begin{proof}
Let $\boldsymbol{\psi}$ be an element of $\mZ^{\beta}_T(1)$. According to Proposition \ref{PropoLaplaceOp}, there is a unique $\varphi\in\mathring{\mV}^1_{-\beta}(\Om)$ such that 
\[
\int_{\Om}\nabla\varphi\cdot\nabla\overline{\varphi'}\,dx=\int_{\Om}r^{2\beta}\curl\boldsymbol{\psi}\cdot\nabla\overline{\varphi'}\,dx,\qquad\forall \varphi'\in\mathring{\mV}^1_{\beta}(\Om).
\]
Then denote $\mathbb{T}\boldsymbol{\psi}\in\mZ^{-\beta}_T(1)$ the function such that
\[
\curl(\mathbb{T}\boldsymbol{\psi})=r^{2\beta}\curl\boldsymbol{\psi}-\nabla\varphi.
\]
Observe that $\mathbb{T}\boldsymbol{\psi}$ is well-defined according to the item $i)$ of Proposition \ref{propoPotential}. This defines a continuous operator $\mathbb{T}:\mZ^{\beta}_T(1)\to \mZ^{-\beta}_T(1)$. We have 
\[
\langle\mathbb{B}_T\boldsymbol{\psi},\mathbb{T}\boldsymbol{\psi}\rangle=\dsp\int_{\Om}\curl\boldsymbol{\psi}\cdot\overline{\curl(\mathbb{T}\boldsymbol{\psi})}\,dx=\|r^{\beta}\curl\boldsymbol{\psi}\|^2_{\Om}=\|\curl\boldsymbol{\psi}\|^2_{\boldsymbol{\mV}^0_{\beta}(\Om)}.
\]
Adapting the proof of Lemma \ref{LemmaNormeEquivHPlus}, one can show that $\|\curl\cdot\|_{\boldsymbol{\mV}^0_{\beta}(\Om)}$ is a norm which is equivalent to the natural norm of $\mZ^{\beta}_T(1)$. Therefore, from the Lax-Milgram theorem, we infer that $\mathbb{T}^{\ast}\mathbb{B}_T$ is an isomorphism which shows that $\mathbb{B}_T$ is injective and that its image is closed in $(\mZ^{-\beta}_T(1))^{\ast}$. And from that, we deduce that $\mathbb{B}_T$ is onto if and only if its adjoint is injective. The adjoint of $\mathbb{B}_T$ is the operator $\mathbb{B}_T^{\ast}:\mZ^{-\beta}_T(1)\to(\mZ^{\beta}_T(1))^{\ast}$ such that for all $\boldsymbol{\psi}\in\mZ^{-\beta}_T(1)$, $\boldsymbol{\psi}'\in\mZ^{\beta}_T(1)$, 
\begin{equation}\label{DefAdjoint}
\langle\mathbb{B}_T^{\ast}\boldsymbol{\psi},\boldsymbol{\psi}'\rangle = \int_{\Om}\curl\boldsymbol{\psi}\cdot\curl\overline{\boldsymbol{\psi}'}\,dx.
\end{equation}
If $\mathbb{B}_T^{\ast}\boldsymbol{\psi}=0$, then taking $\boldsymbol{\psi}'=\boldsymbol{\psi}\in\mZ^{-\beta}_T(1)\subset \mZ^{\beta}_T(1)$ in (\ref{DefAdjoint}), we obtain $\|\curl\boldsymbol{\psi}\|_{\Om}=0$. Since $\mZ^{-\beta}_T(1)\subset \mX_T(1)$ and  $\|\curl\cdot\|_{\Om}$ is a norm in $\mX_T(1)$ (Proposition \ref{PropoEmbeddingCla}), we deduce that $\boldsymbol{\psi}=0$. This shows that $\mathbb{B}_T^{\ast}$ is injective and that $\mathbb{B}_T$ is an isomorphism.
\end{proof}
\noindent Now we use the above lemma to prove the following result which is essential in the analysis of the Problem (\ref{MainPbH}) for the magnetic field. This is somehow an extension of the result of item $i)$ of Proposition \ref{propoPotential} for singular fields which are not in $\Lspace^2(\Om)$.
\begin{proposition}\label{propoPotentialWeight}
Under Assumption 1, for all $0\le\beta<1/2$, if $\u\in\boldsymbol{\mV}^0_{\beta}(\Om)$ satisfies $\div\,\u=0$ in $\Om$, then there exists a unique $\boldsymbol{\psi}\in\mZ^{\beta}_T(1)$ such that $\u= \curl \boldsymbol{\psi}$.
\end{proposition}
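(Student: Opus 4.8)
The plan is to produce $\boldsymbol{\psi}$ by inverting the isomorphism $\mathbb{B}_T$ of Lemma \ref{LemmaWPPoids} and then to verify a posteriori that the field so obtained has exactly the prescribed curl. Uniqueness is immediate and I would dispose of it first: if $\boldsymbol{\psi}\in\mZ^{\beta}_T(1)$ satisfies $\curl\boldsymbol{\psi}=0$, then $\curl\boldsymbol{\psi}\in\Lspace^2(\Om)$ so $\boldsymbol{\psi}\in\mX_T(1)$, and Proposition \ref{PropoEmbeddingCla} gives $\|\boldsymbol{\psi}\|_{\Om}\le C\|\curl\boldsymbol{\psi}\|_{\Om}=0$. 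Hence two potentials with the same curl coincide.

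For existence, the field $\u\in\boldsymbol{\mV}^0_\beta(\Om)$ defines a continuous antilinear form on $\mZ^{-\beta}_T(1)$ via $\boldsymbol{\psi}'\mapsto\int_\Om\u\cdot\curl\overline{\boldsymbol{\psi}'}\,dx$, because $|\int_\Om\u\cdot\curl\overline{\boldsymbol{\psi}'}\,dx|\le\|\u\|_{\boldsymbol{\mV}^0_\beta(\Om)}\|\curl\boldsymbol{\psi}'\|_{\boldsymbol{\mV}^0_{-\beta}(\Om)}$. By Lemma \ref{LemmaWPPoids} there is a unique $\boldsymbol{\psi}\in\mZ^{\beta}_T(1)$ with
\[
\int_\Om\curl\boldsymbol{\psi}\cdot\curl\overline{\boldsymbol{\psi}'}\,dx=\int_\Om\u\cdot\curl\overline{\boldsymbol{\psi}'}\,dx,\qquad\forall\boldsymbol{\psi}'\in\mZ^{-\beta}_T(1),
\]
so that $\w:=\curl\boldsymbol{\psi}-\u\in\boldsymbol{\mV}^0_\beta(\Om)$ satisfies $\int_\Om\w\cdot\curl\overline{\boldsymbol{\psi}'}\,dx=0$ for all such $\boldsymbol{\psi}'$; the whole point is to show $\w=0$. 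I would reinterpret this orthogonality: any divergence-free $\v\in\boldsymbol{\mV}^0_{-\beta}(\Om)$ lies in $\Lspace^2(\Om)$, hence by item $i)$ of Proposition \ref{propoPotential} equals $\curl\boldsymbol{\psi}'$ for some $\boldsymbol{\psi}'\in\mX_T(1)$, and since $\curl\boldsymbol{\psi}'=\v\in\boldsymbol{\mV}^0_{-\beta}(\Om)$ one has $\boldsymbol{\psi}'\in\mZ^{-\beta}_T(1)$. Therefore $\int_\Om\w\cdot\overline{\v}\,dx=0$ for \emph{every} divergence-free $\v\in\boldsymbol{\mV}^0_{-\beta}(\Om)$. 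Testing with $\v=\curl\boldsymbol{\Theta}$, $\boldsymbol{\Theta}\in\mathscr{C}^{\infty}_0(\Om\setminus\{O\})^3$, already gives $\curl\w=0$ in $\Om\setminus\{O\}$, while $\div\w=0$ in $\mathscr{D}'(\Om)$ (both $\curl\boldsymbol{\psi}$ and $\u$ being divergence-free) is automatic.

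The hard part will be to cross the conical tip $O$, i.e. to promote $\curl\w=0$ from $\Om\setminus\{O\}$ to all of $\Om$ using only the weighted control $\w\in\boldsymbol{\mV}^0_\beta(\Om)$. I would insert a cut-off $\chi_\epsilon$ vanishing near $O$, equal to one for $r>2\epsilon$, with $|\nabla\chi_\epsilon|\le C/\epsilon$, and write $\int_\Om\w\cdot\curl\overline{\boldsymbol{\Theta}}\,dx=\int_\Om\w\cdot\curl\overline{(1-\chi_\epsilon)\boldsymbol{\Theta}}\,dx$ for $\boldsymbol{\Theta}\in\mathscr{C}^{\infty}_0(\Om)^3$. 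The commutator term $\nabla\chi_\epsilon\times\boldsymbol{\Theta}$ is the delicate one; by Cauchy--Schwarz $\int_{\epsilon<r<2\epsilon}|\w|\,dx\le\|r^{-\beta}\|_{\Lspace^2(\epsilon<r<2\epsilon)}\|r^{\beta}\w\|_{\Lspace^2(\epsilon<r<2\epsilon)}\le C\,\epsilon^{3/2-\beta}o(1)$, so its contribution is $O(\epsilon^{-1}\epsilon^{3/2-\beta}o(1))=O(\epsilon^{1/2-\beta}o(1))\to0$. This is precisely where the assumption $\beta<1/2$ is essential, and it yields $\curl\w=0$ in $\mathscr{D}'(\Om)$.

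To conclude, $\Om$ being simply connected gives $\w=\nabla q$, and $\Delta q=\div\w=0$ in $\mathscr{D}'(\Om)$ makes $q$ harmonic, hence smooth up to $O$ by Weyl's lemma; in particular $\w$ is bounded near $O$. Consequently $\w$ is itself a divergence-free element of $\boldsymbol{\mV}^0_{-\beta}(\Om)$ (smoothness near $O$ makes $r^{-\beta}\w$ locally square-integrable there, the weight being harmless away from $O$). Taking $\v=\w$ in $\int_\Om\w\cdot\overline{\v}\,dx=0$ then forces $\|\w\|^2_{\Lspace^2(\Om)}=0$, so $\w=0$ and $\curl\boldsymbol{\psi}=\u$, which completes the construction.
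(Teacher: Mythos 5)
Your proof is correct, and it diverges from the paper's at the decisive step. Both arguments begin identically: invert $\mathbb{B}_T$ from Lemma \ref{LemmaWPPoids} to get $\boldsymbol{\psi}\in\mZ^{\beta}_T(1)$ whose curl is weakly equal to $\u$ against $\curl\mZ^{-\beta}_T(1)$, and settle uniqueness through Proposition \ref{PropoEmbeddingCla}. To kill the defect $\w=\curl\boldsymbol{\psi}-\u\in\boldsymbol{\mV}^0_{\beta}(\Om)$, the paper extends the orthogonality to \emph{all} of $\boldsymbol{\mV}^0_{-\beta}(\Om)$: it decomposes an arbitrary $\v\in\boldsymbol{\mV}^0_{-\beta}(\Om)$ as $\nabla p'+\curl\boldsymbol{\psi}'$ via item $iv)$ of Proposition \ref{propoPotential}, upgrades $p'$ to $\mathring{\mV}^1_{-\beta}(\Om)$ using Proposition \ref{PropoLaplaceOp}, and concludes $\w=0$ by the duality $\boldsymbol{\mV}^0_{\beta}$--$\boldsymbol{\mV}^0_{-\beta}$. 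You instead keep only divergence-free test fields and run a regularity bootstrap on $\w$ itself: curl-free and divergence-free, hence a gradient of a harmonic function, hence smooth across $O$, hence in $\boldsymbol{\mV}^0_{-\beta}(\Om)$, so it can be tested against itself. Both are sound; the paper's duality argument is shorter and reuses machinery already in place, while yours is more self-contained near the tip and makes the mechanism (the defect is harmonic, so the weight is irrelevant) more transparent. Two small remarks. First, your cut-off argument is superfluous: for $\boldsymbol{\Theta}\in(\mathscr{C}^{\infty}_0(\Om))^3$ the field $\curl\boldsymbol{\Theta}$ is bounded, compactly supported and divergence-free, hence already lies in $\boldsymbol{\mV}^0_{-\beta}(\Om)$ for any $\beta<3/2$, so it is an admissible test field directly and $\curl\w=0$ in $\mathscr{D}'(\Om)$ follows without excising the origin. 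Consequently your assertion that $\beta<1/2$ is ``essential'' at that point is misleading; the restriction $\beta<1/2$ is genuinely needed earlier, in Lemma \ref{LemmaWPPoids} and in Proposition \ref{PropoLaplaceOp}, not in crossing the tip. Second, when you invoke the Poincar\'e lemma to write $\w=\nabla q$, you are using it for an $\mrm{L}^1_{\mrm{loc}}$ (distributional) curl-free field on a simply connected domain; this is standard but worth stating, since Weyl's lemma is then what carries the smoothness across $O$.
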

\begin{proof}
Let $\u\in \boldsymbol{\mV}^0_{\beta}(\Om)$ be such that $\div\,\u=0$ in $\Om$. According to Lemma \ref{LemmaWPPoids}, we know that there is a unique $\boldsymbol{\psi}\in\mZ^{\beta}_T(1)$ such that
\[
\int_{\Om}\curl\boldsymbol{\psi}\cdot\curl\overline{\boldsymbol{\psi}'}\,dx=\int_{\Om}\u\cdot\curl\overline{\boldsymbol{\psi}'}\,dx,\qquad\forall\boldsymbol{\psi}'\in\mZ^{-\beta}_T(1).
\]
Then we have 
\begin{equation}\label{FirstOrtho}
\int_{\Om}(\u-\curl\boldsymbol{\psi})\cdot\curl\overline{\boldsymbol{\psi}'}\,dx=0,\qquad\forall\boldsymbol{\psi}'\in\mZ^{-\beta}_T(1).
\end{equation}
Since $\u$ is divergence free in $\Om$, we also have 
\begin{equation}\label{SecondOrtho}
\int_{\Om}(\u-\curl\boldsymbol{\psi})\cdot\nabla \overline{p'}\,dx=0,\qquad\forall p'\in\mathring{\mV}^1_{-\beta}(\Om).
\end{equation}
Now if $\v$ is an element of $\boldsymbol{\mV}^0_{-\beta}(\Om)\subset\Lspace^2(\Om)$, from item $iv)$ of Proposition \ref{propoPotential}, we know that there holds the decomposition
\begin{equation}\label{DecompHInterm}
\v= \nabla p' +\curl \boldsymbol{\psi}',
\end{equation}
for some $p'\in \mH^1_{0}(\Omega)$ and some $\boldsymbol{\psi}'\in \mX_T(1)$. Taking the divergence in (\ref{DecompHInterm}), we get 
\begin{equation}\label{SolLapl}
\Delta p'= \div \,\v\in (\mathring{\mV}^1_{\beta}(\Om))^\ast.
\end{equation} 
From Proposition \ref{PropoLaplaceOp}, since $0\le\beta<1/2$, we know that (\ref{SolLapl}) admits a solution in $\mathring{\mV}^1_{-\beta}(\Om)\subset\mH^1_0(\Om)$. Using uniqueness of the solution of (\ref{SolLapl}) in $\mH^1_0(\Om)$, we obtain that $p'\in\mathring{\mV}^1_{-\beta}(\Om)$. This implies that $\curl\boldsymbol{\psi}'=\v-\nabla p'\in\boldsymbol{\mV}^0_{-\beta}(\Om)$ and so $\boldsymbol{\psi}'\in\mZ^{-\beta}_T(1)$. From (\ref{FirstOrtho}) and (\ref{SecondOrtho}), we infer that
\[
\int_{\Om}(\u-\curl\boldsymbol{\psi})\cdot \overline{\v}\,dx=0,\qquad\forall \v\in\boldsymbol{\mV}^0_{-\beta}(\Om).
\]
This shows that $\u=\curl\boldsymbol{\psi}$. Finally, if $\boldsymbol{\psi}_1$, $\boldsymbol{\psi}_2$ are two elements of $\mZ^{\beta}_T(1)$ such that $\u=\curl\boldsymbol{\psi}_1=\curl\boldsymbol{\psi}_2$, then $\boldsymbol{\psi}_1-\boldsymbol{\psi}_2$ belongs to $\boldsymbol{\mX}_T(1)$ and satisfies $\curl(\boldsymbol{\psi}_1-\boldsymbol{\psi}_2)=0$ in $\Om$. From Proposition \ref{PropoEmbeddingCla}, we deduce that $\boldsymbol{\psi}_1=\boldsymbol{\psi}_2$.
\end{proof}

\section{Energy flux of the singular function}
\label{Appendix energy flux}
\begin{lemma}\label{lemmaNRJ}
With the notations of (\ref{defSing}), we have
\[
\Im m\,\bigg(\int_{\Om}\div(\eps\nabla\overline{s^{+}})\,{s^+}\,dx\bigg) =\eta\int_{\mathbb{S}^2}\eps|\Phi|^2ds.
\]
\end{lemma}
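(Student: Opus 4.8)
The plan is to reduce the left-hand side to a flux integral over a small sphere centred at the tip and to evaluate that flux explicitly using the homogeneity of the singular function. First I would record two elementary facts coming from the definition (\ref{DefSinguEss}). Since $\chi\equiv 1$ near $O$, we have $s^+=\mathfrak{s}=r^{-1/2+i\eta}\Phi$ in a punctured neighbourhood of the origin; and because $\Phi$ is real, $\overline{s^+}=r^{-1/2-i\eta}\Phi$ there, which is exactly the conjugate singularity $s^-$ and therefore also satisfies $\div(\eps\nabla\overline{s^+})=0$ near $O$ (recall that $\lambda=-1/2-i\eta$ is an eigenvalue of (\ref{spectralPb}) for the same eigenfunction $\Phi$). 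Consequently $\div(\eps\nabla\overline{s^+})$ is an $\mrm{L}^2$ function supported in the annulus where $\chi$ is non-constant, so the integrand in $I:=\int_{\Om}\div(\eps\nabla\overline{s^{+}})\,s^+\,dx$ vanishes near $O$ and $I$ is a genuinely convergent integral.

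Next, fix a small $t>0$ and set $\Om_t:=\Om\setminus\overline{B(O,t)}$, and integrate by parts on $\Om_t$. The field $\eps\nabla\overline{s^+}$ belongs to $\Hspace(\div,\Om_t)$ (both it and its divergence lie in $\Lspace^2(\Om_t)$), its distributional divergence carries no singular part on the interface $\Sigma$ because $s^+$ and $\eps\nabla\overline{s^+}\cdot\nu$ are continuous across $\Sigma$, and $s^+\in\mH^1(\Om_t)$. Hence Green's formula applies globally on $\Om_t$ with no interface contribution, and since $s^+=0$ on $\partial\Om$, only the inner sphere $\{r=t\}$, whose outward normal for $\Om_t$ is the inward radial direction $-\mathbf{e}_r$, survives:
\[
I=\int_{\Om_t}\div(\eps\nabla\overline{s^{+}})\,s^+\,dx=-\int_{\{r=t\}}s^+\,\eps\,\partial_r\overline{s^+}\,d\sigma-\int_{\Om_t}\eps\,|\nabla s^+|^2\,dx,
\]
an identity valid for every $t$ below the support of $\chi'$ (there the left-hand side equals $I$ exactly). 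The volume term is real since $\eps$ is real-valued, so taking imaginary parts removes it and leaves $\Im m\,I=-\Im m\,\int_{\{r=t\}}s^+\,\eps\,\partial_r\overline{s^+}\,d\sigma$.

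Finally I would evaluate the flux explicitly. On $\{r=t\}$ one has $s^+=t^{-1/2+i\eta}\Phi$, $\partial_r\overline{s^+}=(-1/2-i\eta)\,t^{-3/2-i\eta}\Phi$, and $d\sigma=t^2\,ds$ with $ds$ the surface measure of $\mathbb{S}^2$, whence
\[
\int_{\{r=t\}}s^+\,\eps\,\partial_r\overline{s^+}\,d\sigma=(-1/2-i\eta)\int_{\mathbb{S}^2}\eps\,\Phi^2\,ds,
\]
independently of $t$. As $\int_{\mathbb{S}^2}\eps\,\Phi^2\,ds$ is real, its imaginary part enters only through the factor $(-1/2-i\eta)$, which gives $\Im m\,I=\eta\int_{\mathbb{S}^2}\eps\,|\Phi|^2\,ds$ (using $\Phi^2=|\Phi|^2$), i.e. precisely the claimed formula.

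The main obstacle is the integration by parts: one must justify that no interface term appears on $\Sigma$ and that the only conditionally convergent Dirichlet energy is harmless, which is exactly where the $\Hspace(\div)$ transmission structure and the realness of $\eps$ are used. As a consistency check one could note that testing (\ref{spectralPb}) with $\Phi'=\Phi$ yields $\int_{\mathbb{S}^2}\eps\,|\nabla_S\Phi|^2\,ds=-(1/4+\eta^2)\int_{\mathbb{S}^2}\eps\,\Phi^2\,ds$, so that the angular integral of $\eps\,|\nabla\mathfrak{s}|^2$ vanishes and the volume term in fact has a finite limit as $t\to0$; this reassures about the computation but is not needed for the imaginary part.
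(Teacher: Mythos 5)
Your proof is correct and follows essentially the same route as the paper's: excise a small ball, integrate by parts so that the real Dirichlet term drops out under $\Im m$, and evaluate the remaining flux on $\{r=\tau\}$ using the explicit homogeneous form of $s^+$ near $O$, which yields the factor $-(1/2+i\eta)\int_{\mathbb{S}^2}\eps|\Phi|^2\,ds$. Your additional care about the transmission conditions across $\partial\mathcal{K}$ and the observation that the identity holds exactly for any fixed small $\tau$ (so no limit is actually needed for the imaginary part) are sound refinements of the same argument.
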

\begin{proof}
Set $\Om_\tau:=\Om\setminus \overline{B(O,\tau)}$. Noticing that $\div(\eps\nabla\overline{s^{+}})$ vanishes in a neighbourhood of the origin, we can write
\[
\begin{array}{lcl}
\int_{\Om}\div(\eps\nabla\overline{s^{+}} )\,{s^+}\,dx&=&\lim_{\tau\rightarrow 0}\int_{\Om_\tau}\div(\eps\nabla\overline{s^{+}} )\,{s^+}\,dx\\
&=&
\lim_{\tau\rightarrow 0}\bigg(-\int_{\Om_\tau}\eps|\nabla s^+|^2dx-\int_{\partial B(O,\tau)}\eps\frac{\overline{\partial s^+}}{\partial r}s^+ds\bigg).
\end{array}
\]
Taking the imaginary part and observing that 
\[
\int_{\partial B(O,\tau)}\eps\frac{\overline{\partial s^+}}{\partial r}s^+ds=-\left(\frac{1}{2}+i\eta\right)\int_{\mathbb{S}^2}\eps|\Phi|^2ds,
\]
the result follows. 
\end{proof}

\section{Dimension of $\mX^{\mrm{out}}_N(\eps)/\mX_N(\eps)$}
\begin{lemma}\label{LemmaCodim}
Under Assumptions 1--3, we have $\dim\,(\mX^{\mrm{out}}_N(\eps)/\mX_N(\eps))=1$.
\end{lemma}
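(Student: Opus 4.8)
The plan is to introduce the linear map $\Lambda:\mX^{\mrm{out}}_N(\eps)\to\Cplx$ sending $\u=c\nabla s^++\tilde{\u}$ to its singular coefficient $c$. This map is well defined and linear because the decomposition is unique: since $\nabla s^+\notin\Lspace^2(\Om)$ (as $s^+\notin\mH^1_0(\Om)$), an equality $c_1\nabla s^++\tilde{\u}_1=c_2\nabla s^++\tilde{\u}_2$ with $\tilde{\u}_i\in\Lspace^2(\Om)$ forces $c_1=c_2$. First I would identify $\ker\Lambda$ with $\mX_N(\eps)$: if $c=0$ then $\u=\tilde{\u}\in\Hspace_N(\curl)$, and the condition $\div(\eps\u)=0$ in $\Om\setminus\{O\}$, a priori tested only against $\mathscr{C}^{\infty}_0(\Om\setminus\{O\})$, extends to all of $\mH^1_0(\Om)$ by continuity of $\varphi\mapsto\int_\Om\eps\u\cdot\nabla\overline{\varphi}\,dx$ on $\mH^1_0(\Om)$ together with the density of $\mathscr{C}^{\infty}_0(\Om\setminus\{O\})$ in $\mH^1_0(\Om)=\mathring{\mV}^1_0(\Om)$. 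Hence $\ker\Lambda=\mX_N(\eps)$ and $\mX^{\mrm{out}}_N(\eps)/\mX_N(\eps)\cong\mrm{range}(\Lambda)\subseteq\Cplx$, so the dimension is at most one.

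It then remains to produce one element of $\mX^{\mrm{out}}_N(\eps)$ with $c\neq0$, i.e.\ to show $\Lambda\not\equiv0$. Mimicking the constructions used in the proofs of Lemma \ref{LemmaNormeEquiv} and Proposition \ref{propoUniqueness}, I would start from any divergence-free field $\G\in\boldsymbol{\mV}^0_{-\beta}(\Om)$ with $\curl\G\in\Lspace^2(\Om)$ and $\G\times\nu=0$ on $\partial\Om$ (such $\G$ exist by Proposition \ref{propoPotential} and Lemma \ref{LemmaWeightedCla}), so that $\div(\eps\G)\in(\mathring{\mV}^1_{\beta}(\Om))^{\ast}$. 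Using Assumption 3, I set $\zeta=(A^{\mrm{out}}_\eps)^{-1}(-\div(\eps\G))=\alpha\,s^++\tilde{\zeta}\in\mathring{\mV}^{\mrm{out}}$ and $\u:=\G+\nabla\zeta$. Then $\u=\alpha\nabla s^++(\G+\nabla\tilde{\zeta})$ with $\G+\nabla\tilde{\zeta}\in\Lspace^2(\Om)$, $\curl\u=\curl\G\in\Lspace^2(\Om)$, $\u\times\nu=0$ on $\partial\Om$ and $\div(\eps\u)=0$ in $\Om\setminus\{O\}$; thus $\u\in\mX^{\mrm{out}}_N(\eps)$ with $\Lambda(\u)=\alpha$. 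The whole matter is reduced to exhibiting one $\G$ for which $\alpha\neq0$.

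The hard part is this non-vanishing. Here I would use that $\alpha$ is the coefficient of $s^+$ in $\zeta$, so that $\alpha=0$ exactly when $-\div(\eps\G)$ lies in $\mrm{range}(A^{-\beta}_\eps)$; since $A^{-\beta}_\eps$ is Fredholm and, under Assumption 3, injective, its range is the annihilator of $\ker((A^{-\beta}_\eps)^{\ast})=\ker A^{\beta}_\eps$. A short computation using the representation (\ref{PropoRepresentation}) together with $\ker A^{\mrm{out}}_\eps=\ker A^{-\beta}_\eps=\{0\}$ shows that $\ker A^{\beta}_\eps$ is one-dimensional, spanned by a function $\vartheta=s^-+c_+\,s^++\tilde{\varphi}$ with $\tilde{\varphi}\in\mathring{\mV}^1_{-\beta}(\Om)$ (the $s^-$ component cannot vanish, otherwise $\vartheta\in\mathring{\mV}^{\mrm{out}}$ would be a nontrivial kernel element of $A^{\mrm{out}}_\eps$). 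Consequently $\alpha=0$ is equivalent to $\int_\Om\eps\,\G\cdot\nabla\overline{\vartheta}\,dx=0$, and I would defeat this by choosing $\G$ smooth and compactly supported in a small ball around an interior point of $\partial\mathcal{M}$, away from $O$ and $\partial\Om$: integrating by parts there and using that $\vartheta$ solves the homogeneous transmission problem gives $\int_\Om\eps\,\G\cdot\nabla\overline{\vartheta}\,dx=-(\eps_--\eps_+)\int_{\partial\mathcal{M}}(\G\cdot\bfn)\,\overline{\vartheta}\,ds$, where $\bfn$ is the normal to the interface. Since a smooth compactly supported divergence-free $\G=\curl\boldsymbol{\psi}$ can be arranged to have nonzero flux across a patch of $\partial\mathcal{M}$ on which $\vartheta\neq0$ (by unique continuation $\vartheta$ cannot vanish on an open piece of $\partial\mathcal{M}$), this pairing is nonzero, hence $\alpha\neq0$ and $\Lambda$ is onto. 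Combining the two bounds yields $\dim(\mX^{\mrm{out}}_N(\eps)/\mX_N(\eps))=1$. I expect the interface flux computation and the unique-continuation step to require the most care.
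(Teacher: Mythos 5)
Your upper bound and the overall architecture of your lower bound are essentially the paper's: the paper also gets $\dim\le 1$ by taking linear combinations, and for the lower bound it also corrects an auxiliary field $\tilde{\u}$ by $\nabla\zeta$ with $A^{\mrm{out}}_{\eps}\zeta=-\div(\eps\tilde{\u})$ and detects the $s^+$-coefficient by pairing $\eps\tilde{\u}$ against a generator $\mathfrak{s}=s^-+\tilde{\mathfrak{s}}$ of $\ker A^{\gamma}_{\eps}$, $\gamma>0$, which is exactly your $\vartheta$. Your duality step (range of $A^{-\beta}_{\eps}$ equals the annihilator of $\ker A^{\beta}_{\eps}$, which is one-dimensional and spanned by an element with nonzero $s^-$-component) is correct.

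The genuine gap is in the final non-vanishing argument, and it is created by a constraint you impose but never use: you require the auxiliary field $\boldsymbol{G}$ to be divergence-free. Nothing in the construction of $\u=\boldsymbol{G}+\nabla\zeta$ needs $\div\boldsymbol{G}=0$; the condition $\div(\eps\u)=0$ is enforced by $\zeta$, and membership in $\mX^{\mrm{out}}_N(\eps)$ only requires $\boldsymbol{G}\in\Lspace^2(\Om)$, $\curl\boldsymbol{G}\in\Lspace^2(\Om)$, $\boldsymbol{G}\times\nu=0$ and $\div(\eps\boldsymbol{G})\in(\mathring{\mV}^1_{\beta}(\Om))^{\ast}$. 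Because your $\boldsymbol{G}$ is divergence-free and compactly supported in a ball $B$ meeting the interface, its normal trace has zero mean over the patch $\Gamma=B\cap\partial\mathcal{M}$ (indeed $\int_{\Gamma}\boldsymbol{G}\cdot\bfn\,ds=\int_{B\cap\mathcal{M}}\div\boldsymbol{G}\,dx=0$), so the interface pairing $\int_{\Gamma}(\boldsymbol{G}\cdot\bfn)\overline{\vartheta}\,ds$ vanishes not only when $\vartheta|_{\Gamma}=0$ but whenever $\vartheta$ is \emph{constant} on $\Gamma$; you would need $\vartheta$ non-constant on some patch. Worse, the unique-continuation claim you invoke (that $\vartheta$ cannot vanish on an open piece of $\partial\mathcal{M}$) is not available as stated: Cauchy uniqueness requires both the Dirichlet and the Neumann traces to vanish, and vanishing of $\vartheta$ alone on a piece of the interface yields no contradiction. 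So the last step does not go through as written. The paper avoids all of this by taking $\tilde{\u}$ to be an arbitrary element of $(\mathscr{C}^{\infty}_0(\Om\setminus\{O\}))^3$ approximating $\mathbbm{1}_{\overline{B}}\nabla\mathfrak{s}$ for a ball $B\subset\mathcal{M}$, so that the pairing is close to $\int_{B}\eps|\nabla\mathfrak{s}|^2\,dx\ne 0$ (interior analyticity of $\mathfrak{s}$ in $\mathcal{M}$ together with its genuine singularity at $O$ rules out $\nabla\mathfrak{s}\equiv 0$ on $B$). Dropping the divergence-free requirement and making that choice of $\boldsymbol{G}$ repairs your proof.
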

\begin{proof}
If $\u_1=c_1\nabla s^{+}+\tilde{\u}_1$, $\u_2=c_2\nabla s^{+}+\tilde{\u}_2$ are two elements of $\mX^{\mrm{out}}_N(\eps)$, then $c_2\u_1-c_1\u_2\in\mX_N(\eps)$, which shows that $\dim\,(\mX^{\mrm{out}}_N(\eps)/\mX_N(\eps))\le1$.\\
Now let us prove that $\dim\,(\mX^{\mrm{out}}_N(\eps)/\mX_N(\eps))\ge1$. Introduce $\tilde{\mathfrak{s}}\in\mathring{\mV}^{\mrm{out}}$ the function such that $A^{\mrm{out}}_{\eps}\tilde{\mathfrak{s}}=\div(\eps\nabla  s^-)$.  Note that since $\div(\eps\nabla  s^-)$ vanishes in a neighbourhood of the origin, it belongs to $(\mathring{\mV}^1_{\gamma}(\Om))^{\ast}$ for all $\gamma\in\R$. Then set 
\begin{equation}\label{defs}
\mathfrak{s}=s^-+\tilde{\mathfrak{s}}.
\end{equation}
Observe that $\mathfrak{s}\in\mathring{\mV}^1_{\gamma}(\Om)$ for all $\gamma>0$ and that $\div(\eps\nabla \mathfrak{s})=0$ in $\Om\setminus\{O\}$ ($\mathfrak{s}$ is a non zero element of $\ker\,A^{\gamma}_{\eps}$ for all $\gamma>0$). Let $\tilde{\u}\in(\mathscr{C}^{\infty}_0(\Om\setminus\{O\}))^3$ be a field such that $\textstyle\int_{\Om}\eps\tilde{\u}\cdot\nabla\overline{\mathfrak{s}}\,dx\ne0$. The existence of such a $\tilde{\u}$ can be established thanks to the density of $(\mathscr{C}^{\infty}_0(\Om\setminus\{O\}))^3$ in $\Lspace^2(\Omega)$, considering for example an approximation of $\mathbbm{1}_{\overline{B}}\nabla \mathfrak{s}\in\Lspace^2(\Omega)$ where $\mathbbm{1}_{\overline{B}}$ is the indicator function of a ball included in $\mathcal{M}$. Introduce $\zeta=c\,s^++\tilde{\zeta}\in\mathring{\mV}^{\mrm{out}}$, with $c\in\Cplx$, $\tilde{\zeta}\in\mathring{\mV}^1_{-\beta}(\Om)$, the function such that $A^{\mrm{out}}_{\eps}\zeta=-\div(\eps\tilde{\u})$. This is equivalent to have
\[
-c\int_{\Om}\div(\eps\nabla s^+)\overline{\varphi'}\,dx+\int_{\Om}\eps\nabla\tilde{\zeta}\cdot\nabla\overline{\varphi'}\,dx=\int_{\Om}\eps\tilde{\u}\cdot\nabla\overline{\varphi'}\,dx,\qquad \forall\varphi'\in\mathring{\mV}^1_{\beta}(\Om).
\]
Clearly $\nabla\zeta-\tilde{\u}=c\nabla s^++(\nabla\tilde{\zeta}-\tilde{\u})$ is an element of $\mX^{\mrm{out}}_N(\eps)$. Moreover taking $\varphi'=\mathfrak{s}$ above, we get
\[
-c\int_{\Om}\div(\eps\nabla s^+)\overline{\mathfrak{s}}\,dx=\int_{\Om}\eps\tilde{\u}\cdot\nabla\overline{\mathfrak{s}}\,dx\ne0.
\]
This shows that $c\ne0$ and guarantees that $\dim\,(\mX^{\mrm{out}}_N(\eps)/\mX_N(\eps))\ge1$.
\end{proof}

\bibliographystyle{plain} 
\bibliography{Biblio}

\end{document}